\newcommand{\C}{\ensuremath{\mathcal{C}}\xspace}
\newcommand{\M}{\ensuremath{\mathcal{M}}\xspace}
\newcommand{\B}{\ensuremath{\mathcal{B}}\xspace}
\newcommand{\T}{\ensuremath{\mathcal{T}}\xspace}
\newcommand{\A}{\ensuremath{\mathcal{A}}\xspace}
\newcommand{\V}{\ensuremath{\mathcal{V}}\xspace}
\newcommand{\R}{\ensuremath{\mathcal{R}}\xspace}
\newcommand{\F}{\ensuremath{\mathcal{F}}\xspace}
\newcommand{\G}{\ensuremath{\mathcal{G}}\xspace}
\renewcommand{\H}{\ensuremath{\mathcal{H}}\xspace}
\newcommand{\K}{\ensuremath{\mathcal{K}}\xspace}
\newcommand{\J}{\ensuremath{\mathcal{J}}\xspace}
\newcommand{\sym}{\ensuremath{\mathfrak{S}}\xspace}
\newcommand{\sympattern}{\ensuremath{\preccurlyeq_{\sym}}\xspace}
\newcommand{\poly}{\ensuremath{\mathfrak{P}}\xspace}
\newcommand{\polypattern}{\ensuremath{\preccurlyeq_{\poly}}\xspace}
\newcommand{\matr}{\ensuremath{\mathfrak{M}}\xspace}
\newcommand{\Av}{Av}
\newcommand{\Avperm}{Av_{\mathfrak{S}}}
\newcommand{\Avp}{Av_{\mathfrak{P}}}
\newcommand{\Avm}{Av_{\mathfrak{M}}}
\newtheorem{theorem}{Theorem}
\newtheorem{definition}[theorem]{Definition}
\newtheorem{remark}[theorem]{Remark}
\newtheorem{proposition}[theorem]{Proposition}
\newtheorem{example}[theorem]{Example}
\newtheorem{corollary}[theorem]{Corollary}
\newtheorem{lemma}[theorem]{Lemma}
\def\ie{{\em i.e.}\xspace}
\begin{document}

\titlepage
\begin{titlepage}

\begin{center}


\begin{center}
\begin{minipage}[c]{.40\textwidth}
\centering
\includegraphics[width=.50\textwidth]{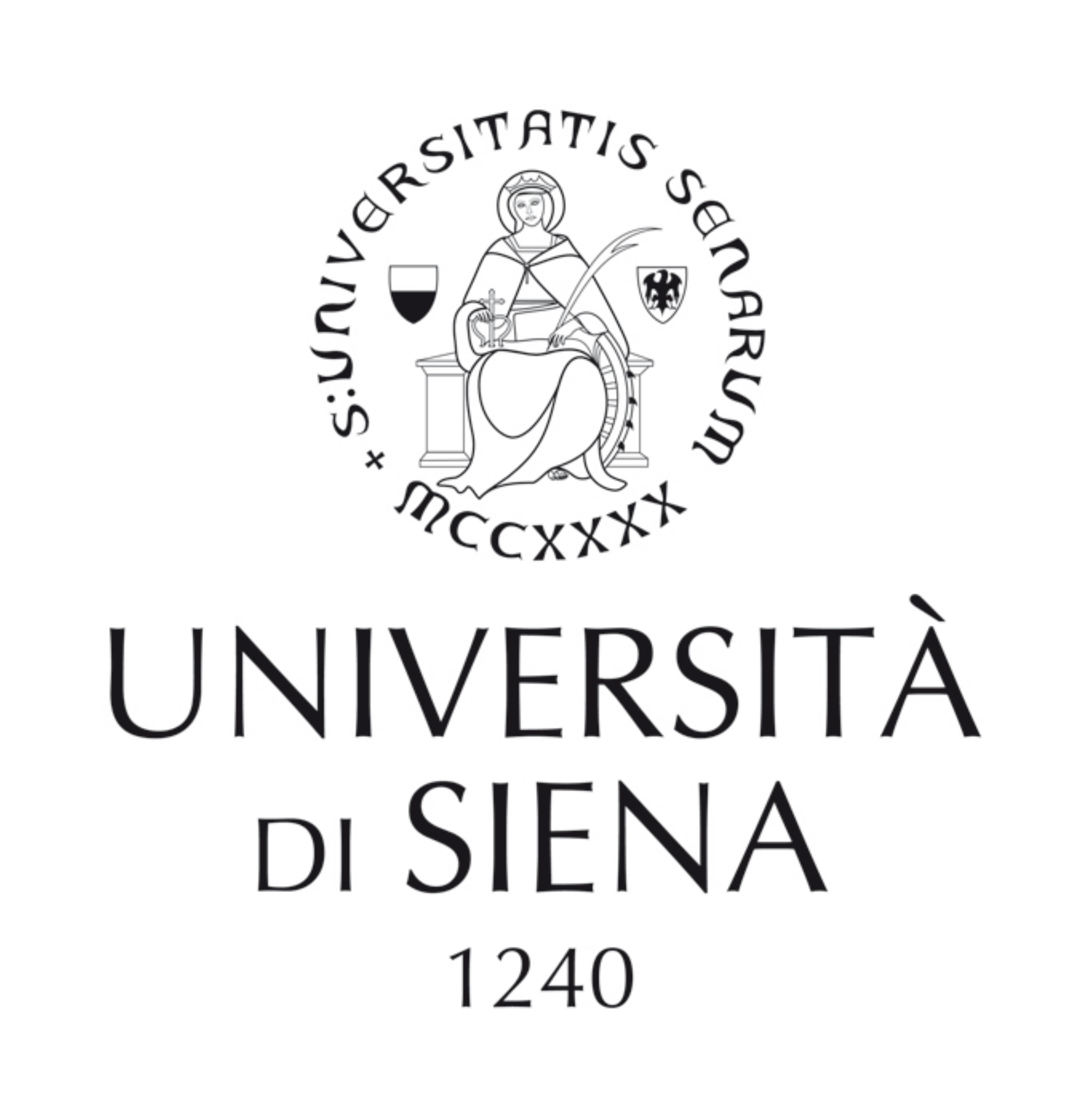}
\end{minipage}
\hspace{3mm}
\begin{minipage}[c]{.40\textwidth}
\centering
\includegraphics[width=.45\textwidth]{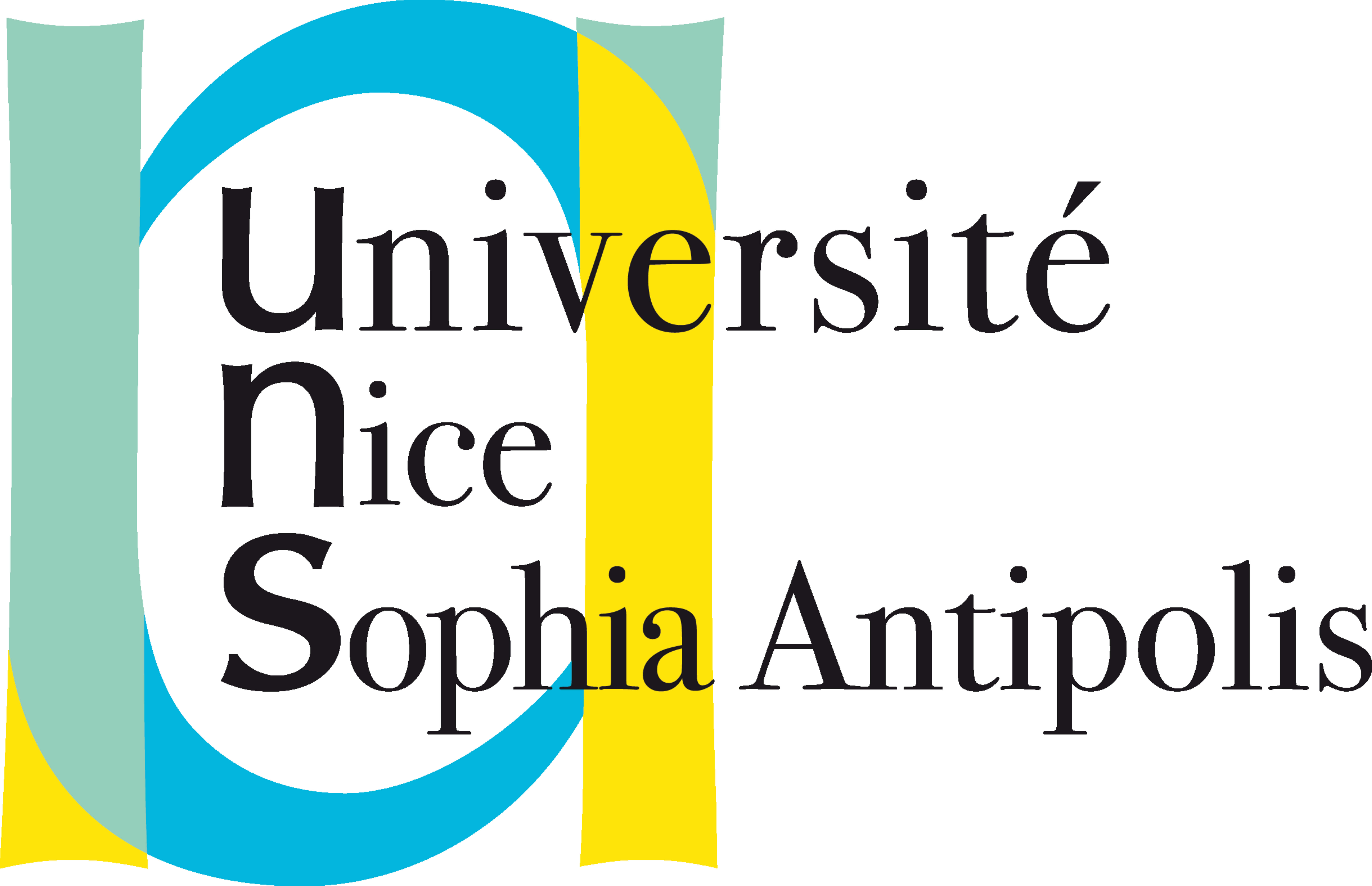}
\end{minipage}
\end{center}

\vspace{5pt}

%
%
%
\begin{LARGE}
\begin{center}
\textbf{Enumeration of polyominoes defined in terms of pattern avoidance or convexity constraints}
\end{center}
\end{LARGE}

\vspace{5pt}
\begin{center}
Thesis of the University of Siena and the University of Nice Sophia Antipolis\\
\vspace{1pt}
Advisors: Prof. Simone Rinaldi and Prof. Jean Marc Fédou
\end{center}

\vspace{10pt}
\begin{center}
 to obtain the
\end{center}

\vspace{5pt}

\begin{center}
 Ph.D. in Mathematical Logic, Informatics and Bioinformatics of the University of Siena\\
 Ph.D. in Information and Communication Sciences of the University of Nice Sophia Antipolis
\end{center}

\vspace{10pt}
Candidate: Daniela Battaglino
\vspace{15pt}





\begin{center}
\begin{minipage}[c]{.40\textwidth}
\textit{Jury composed by:
\vspace{4pt}
\\ Prof. Elena Barcucci
\\ Prof. Marilena Barnabei
\\ Prof. Srecko Brlek
\\ Prof. Enrica Duchi
\\ Prof. Jean Marc Fédou
\\ Prof. Rinaldi Simone}
\end{minipage}
\end{center}


\end{center}
\end{titlepage}

\pagenumbering{roman}

\tableofcontents

\cleardoublepage
\pagenumbering{arabic}

\phantomsection
\addcontentsline{toc}{chapter}{Introduction}

\pagestyle{plain}

\chapter*{Introduction}

This dissertation discusses some topics and applications in combinatorics.

Combinatorics is a branch of mathematics which concerns the study of classes of discrete objects which are often designed as models
for real objects. The motivations for studying these objects may arise from Informatics (models
for data structures, analysis of algorithms, ...), but even from biology - in
particular molecular and evolutive biology \cite{denise} - from physics as in \cite{baxt} or from chemistry \cite{bouvGreb}.

Combinatorialists are particularly interested in several aspects of a class
of objects: its different characterizations, the description of its properties,
the enumeration of its elements, and their generation both
randomly or exhaustively, by the use of algorithms, the definition of some
relations (as for example order relations) between the elements belonging to
the same class. 
We have taken into exam two remarkable subfields of combinatorics, which have often been considered in the literature. These two aspects are strictly related, and they permit us to give a deep insight on the nature of the combinatorial structures which are being studied: enumerative combinatorics and the study of patterns into combinatorial structures.

{\bfseries Enumerative Combinatorics.} An unavoidable step for a profound comprehension of the structure of an object is certainly the capability of counting its elements. Counting can not have an exhaustive definition since is something that flows back to a philosophical difficulty of language and understanding.
To our aim, the main concern of enumerative combinatorics is counting the number of elements of a finite class in an exact or approximate way.  
Various problems arising from different fields can be solved by analysing them from a combinatorial point of view. Usually, these problems have the common feature to be represented by simple
objects suitable to enumerative techniques of combinatorics. Given a class ${\cal O}$ of objects and
a parameter $p$ on this class, called the size, we focus on the set ${\cal O}_n$ of objects for which the value of the
parameter, is equal to $n$, where $n$ is a non negative integer. The parameter $p$ is discriminating if, for each non negative integer $n$, the number of objects of ${\cal O}_n$ is finite. Then, we ask for the cardinality $a_n$ of the set ${\cal O}_n$ for each possible $n$. Enumerative combinatorics answers to this question.
Only in rare cases the answer will be a completely explicit closed formula for $a_n$, involving
only well known functions, and free from summation symbols. However, a recurrence for $a_n$
may be given in terms of previously calculated values $a_k$, thereby giving a simple procedure
for calculating $a_n$ for any $n\in \mathbb{N}$. Another approach is based on generating functions: whether
we do not have a simple formula for $a_n$, we can hope to get one for the formal power series
$f(x) =\varSigma_n a_nx^n$, which is called the generating function of the class ${\cal O}$ according to the
parameter $p$. Notice that the $n$-th coefficient of the Taylor series of $f(x)$ is just the term $a_n$.
In some cases, once that the generating function is known, we can apply standard techniques
in order to obtain the required coefficients $a_n$ (see for instance \cite{GouldJack,GrahamKnuth}). Otherwise we can obtain an asymptotic value of the coefficients through the analysis of the singularities in the generating function (see \cite{flajolet}).

Several methods for the enumeration, using algebraic or analytical tools, have been developed in the last forty years. A first general and empirical approach consists in calculating the first terms of $a_n$ and then try to deduce the sequence. For instance, one can use the book from Sloane and Plouffe \cite{Sl, SlPl} in order to
compare the first numbers of the sequence with some known sequences and try to identify
$a_n$. More advanced techniques (Brak and Guttmann \cite{BrGut}) start from
the first terms of the sequence and find an algebraic or differential equation satisfied by the
generating function of the sequence itself. A more common approach consists in looking for
a construction of the studied class of objects and successively translating it into a recursive
relation or an equation, usually called functional equation, satisfied by the generating function
$f(x)$. The approach to enumeration of combinatorial objects by means of generating functions
has been widely used (see for instance Goulden and Jackson \cite{GouldJack} and Wilf \cite{wilf}).
Another technique which has often been applied to solve combinatorial problems is the Sch\"utzenberger methodology, also called DSV \cite{Schutz}, which can be decomposed into three steps. First construct a bijection between the objects and the words of an algebraic language in such a way that for every object the parameter to the length of the words of the language. At the next step, if the language is generated by an unambiguous
context-free grammar, then it is possible to translate the productions of the grammar into
a system of functional equations. Finally one deduces an equation for which the generating function of the sequence $a_n$ is the unique and algebraic solution (Sch\"utzenberger and Chomsky \cite{sch}). A variant of the DSV methodology are the operator grammars (Cori and Richard \cite{CoRich}). These grammars take in account some cases in which the language encoding the objects is not algebraic.
The theory of decomposable structure (Flajolet, Salvy, and Zimmermann \cite{FlajSalv1,FlajSalv2}), describes recursively the objects in terms of basic operations between them. These operations are directly translated into operations between the corresponding generating functions, cutting off the passage to words. A nice presentation of this theory appears in the book of
Flajolet and Sedgewick \cite{flajolet}. A variant is the theory of species, introduced by Bergeron, Labelle and Leroux \cite{Berg}, which also follows the philosophy of decomposable structures. Basing on the idea of Joyal \cite{Joy}, they define an algebra on species of structures, where the operations between the species immediately reflect on the generating functions.

Finally, a very convenient formalization of the approach of decomposable structures was
introduced by Dutour and Fedou \cite{Dut}. This method is based on the notion of object grammars, and describe objects
using very general kinds of operations.

A significantly different way of recursively describing objects appears in the ECO methodology, introduced by Barcucci, Del Lungo, Pergola, and Pinzani \cite{ECO}. In the ECO method each object is obtained from a smaller object by making some local expansions. Usually these local expansions are very regular and can be described in a simple way by a succession rule. Then a succession rule can be translated into a functional equation for the generating function. It has been shown that this method is very effective on large number of combinatorial structures.
%
Another approach is to find a bijection between the studied class of objects and another one, simpler to count. In order to have consistent enumerative results, the bijection has to preserve the size of the objects. Moreover, a bijective approach also permits a better comprehension of some properties of the studied class and to relate them to the class in bijection with it.

{\bfseries Patterns into combinatorial structures.} A possible strategy to understand more about the nature of some combinatorial structures and which provides a different way to look at a combinatorial object, is to describe it by the containment or avoidance of some given substructures, which are commonly known as patterns.
The concept of pattern within a combinatorial structure is undoubtedly one of the most investigated notions in combinatorics. It has been deeply studied for permutations, starting first with~\cite{Kn}.
More in details, given a permutation $\sigma$ we can say that $\sigma$ contains a certain pattern $\pi$ if such a pattern can be seen as a sort of ``subpermutation'' of $\sigma$. If $\sigma$ does not contain $\pi$ we say that $\sigma$ avoids $\pi$.

In particular, the concept of pattern containment on the set of all permutations can be seen as a partial order relation, and it was used to define permutation classes, \ie families of permutations downward closed under such pattern containment relation. So, every permutation class can be defined in terms of a set of avoided patterns, and the minimal of this sets is called the basis of the class.

These permutation classes can then be regarded as objects to be counted. 
We can find many results concerning this research guideline in the literature. For instance, we quote two works that collect  a large part of the obtained results. The first is the thesis of Guibert\cite{guibert} and the second is the work of Kitaev and Mansour \cite{KitMans}. In the latest, in addition to the list of the obtained results regarding the enumeration of set of permutations that avoid a set of patterns, the author also take into account the study of the number of objects which contains a fixed number of occurrences of a certain pattern and make an interesting parallel between the concept of pattern on the set of permutations and the concept of pattern on the set of words.

As regards the results obtained on the enumeration of classes that avoid patterns of small size, we mention the work of Simion and Schmidt \cite{SimionSchmidt}, in which we can find  an exhaustive study of all cases with patterns of length less than or equal to three. However, for results concerning patterns of size four we refer the reader to the work of Bona \cite{bona}.
One of the most important recent contributions is the one by Marcus and Tardos \cite{marcTard}, consisting in the proof of the so-called Stanley-Wilf conjecture, thus defining an exponential upper bound to the number of permutations avoiding any given pattern.
Later, given the enormous interest in this area, were taken into analysis not only patterns by the classical definition, but also patterns defined under the imposition of some constraints.

Babson and Steingr\'imsson \cite{babStein} introduced the notion of {\em generalized patterns}, which requires that two adjacent letters in a pattern must be adjacent in the permutation. The authors introduced such patterns to classify the family of {\em Mahonian permutation statistics}, which are uniformly distributed with the number of inversions.
Several results on the enumeration of permutations classes avoiding generalized patterns have been achieved. Claesson obtained the enumeration of permutations avoiding a generalized pattern of length three \cite{claesson} and the enumeration of permutations avoiding two generalized patterns of length three \cite{CM}. Another interesting result in terms of permutations avoiding a set of generalized patterns of length three was obtained by Bernini \textit{et al.} in \cite{BBF,BFP}, where one can find the enumeration of permutation avoiding set of generalized patterns as a function of its length and another parameter.

Another kind of patterns, called {\em bivincular patterns}, was introduced in \cite{BMCDK} with the aim to increase the symmetries of the classical patterns. In \cite{BMCDK}, the bijection between permutations avoiding a particular bivincular pattern was derived, as well as several other classes of combinatorial objects. Finally, we mention the {\em mesh patterns}, which were introduced in \cite{mesh} to generalize multiple varieties of permutation patterns.


Otherwise, from the algorithmic point of view, an interesting problem is to find an efficient way to establish if an element belongs to a permutation class \C. More in detail, if we know the elements of the basis of \C, and especially if the basis is finite, this problem consists in verifying if a permutation contains an element of the basis. Generally the complexity of the algorithms is high, but there are some special cases in which linear algorithms have been found, for instance in \cite{Kn}.

Another remarkable problem which has been considered is to calculate the basis of a given class of permutations. A very useful result in this direction was obtained by Albert and Atkinson in \cite{AA}, in which the authors provide a necessary and sufficient condition to ensure that a permutation class has a finite basis. 

As we have previously mentioned, some definitions analogous to those given for permutations were provided in the context of many other combinatorial structures, such as set partitions~\cite{Go,Kl,Sa}, words~\cite{Bj,Bu}, trees~\cite{DPTW,R}, and paths~\cite{ferrari}.

\vspace{1cm}

In the present thesis we examine the two previously quoted general issues,
on a rather remarkable class of combinatorial objects, i.e. the polyominoes. These objects arise in many scientific areas of research, for instance in combinatorics, physics, chemistry,... (more explicit details are given in Chapter \ref{chap:chapter1}).  In particular, in this thesis, we consider under a combinatorial and an enumerative point of view families of polyominoes defined by imposing several types of constraints.

The first type of constraint, which extends the well-known convexity constraint \cite{DV}, is the $k$-convexity constraint, introduced by Castiglione and Restivo \cite{lconv2}. A convex polyomino is said to be $k$-convex if every pair of its cells can be connected by a monotone path with at most $k$ changes of direction. The problem of enumerating $k$-convex polyominoes was solved only for the cases $k=1,2$, while
the case $k>2$ is yet open and seems difficult to solve. To get rid of this problem, we have taken into exam a particular subclass of $k$-convex polyominoes, the $k$-parallelogram polyominoes, \ie the $k$-convex polyominoes that are also parallelogram.



The second type of constraint we are going to consider, extends, in a natural way, the concept of pattern avoidance on the set of polyominoes. Since a polyomino can be represented in terms of binary matrix, we can say that a polyomino $P$ is a pattern of a polyomino $Q$ when the binary matrix representing $P$ is a submatrix of that representing $Q$. We have then attempted at reconsidering the same problems treated within polyomino classes even on the case of patterns avoiding polyominoes. 

Basing on this idea, we have defined a polyomino class to be a set of polyominoes which are downward closed w.r.t. the containment order. Then we have given a characterization to some known families of polyominoes, using this new notion of pattern avoidance.

This new approach also allowed us to study a new definition of permutations that avoid submatrices, and to compare it with the classical notion of pattern avoidance.

In details, the thesis is organized as follows.

Chapter \ref{chap:chapter1} provides the basic definitions of the most important combinatorial structures, which will be considered in the thesis and contains a brief state of the art. There are three main classes of objects we have studied in this work. The first one is the class of $k$-parallelogram polyominoes, which will be studied by an enumerative point of view. The second one is the class of permutations: in particular we will present the concept of patterns avoidance. The third and last class we have focused on is the one of partially ordered sets (p.o.sets or simply posets).

In Chapter \ref{chap:chapter2}, we deal with the problem of enumerating a subclass of $k$-convex polyominoes, the $k$-convex polyominoes which are also {\em parallelogram polyominoes}. More precisely we provide an unambiguous decomposition for the class of the $k$-parallelogram polyominoes, for any $k\geq 1$. Then, we also translate this decomposition in a functional equation for the generating function of $k$-parallelogram polyominoes, for any $k$. We are then able to express such a generating function in terms of the Fibonacci polynomials and thanks to this new expression we find a bijection between the class of $k$-parallelogram polyominoes and the class of planted planar trees having height less than or equal to $k+2$.

In Chapter \ref{chap:cap4} borrowing a known concept already used for several structures, the concept of pattern avoidance, we have found a new characterization of the set of permutations and polyominoes both seen as matrices. In particular, this approach allows us to define these classes of objects as the sets of elements that are downward closed under the pattern relation, that is a partial order relation. We then study the poset of polyominoes, by an algebraic and a combinatorial point of view. Moreover, we introduce several notions of bases, and we study the relations among these. We investigate families of polyominoes which can be described by the avoidance of matrices, and families which are not. In this case, we consider some possible extension of the concept of submatrix avoidance to be able to represent also these families.   


\pagestyle{plain}
\chapter{Polyominoes, permutations and posets}
\label{chap:chapter1}

This thesis studies the combinatorial and enumerative properties of some families of polyominoes, defined in terms of particular constraints of convexity and connectivity. Before we discuss these concepts in-depth, we need to summarise the principal definitions and classifications of polyominoes. More specifically, we introduce the notions of {\em polyomino, permutation} and {\em posets} (partially ordered set). The chapter is organised as follows. In Section~\ref{sec:cap1_pol} we briefly introduce the history of polyominoes; in Section~\ref{sec:polyFamilies} we discuss some of the most important families of polyominoes; in Section~\ref{sec:permutations} we focus on permutations; Section~\ref{sec:posets} concludes the chapter by discussing posets.

\section{Polyominoes}
\label{sec:cap1_pol}

The enumeration of {\em polyominoes} on a regular lattice is without any doubt one of the most studied topics in Combinatorics. The term polyomino was introduced by Golomb in $1953$ during a talk at the Harvard Mathematics Club (which was published one year later \cite{gol}) and popularized by Gardner in $1957$ \cite{gardner}. A polyomino is defined as follows.
\begin{definition}
In the plane $\mathbb{Z} \times\mathbb{Z}$ a cell is a unit square and a polyomino is a finite connected union of cells having no cut point. 
\end{definition}
Polyominoes are defined up to translations. Polyominoes can be similarly defined in other two-dimensional lattices (e.g. triangular or honeycomb); however, in this work we will focus exclusively on the square lattice.

A {\em column (resp. row)} of a polyomino is the intersection between the polyomino and an infinite strip of cells whose centers lie on a vertical (resp. horizontal) line. A polyomino is characterised by four parameters: {\em area}, {\em width}, {\em height} and {\em perimeter}. The area is the number of elementary cells of the polyomino; the width and height are respectively the number of columns and rows; the perimeter is the length of the polyomino's boundary.

As we already observed, polyominoes have been studied for a long time in Combinatorics, but they have also drawn the attention of physicists and chemists. The former in particular established a relationships with polyominoes by defining equivalent objects named {\em animals} \cite{dhar,haknad}, obtained by taking the center of the cells of a polyomino as shown in Figure~\ref{fig:animal}. These models allowed to simplify the description of phenomena like phase transitions (Temperley, $1956$ \cite{temperley}) or percolation (Hammersely, \cite{hamm}).

\begin{figure}[htd]
\centering
\includegraphics[width=8cm]{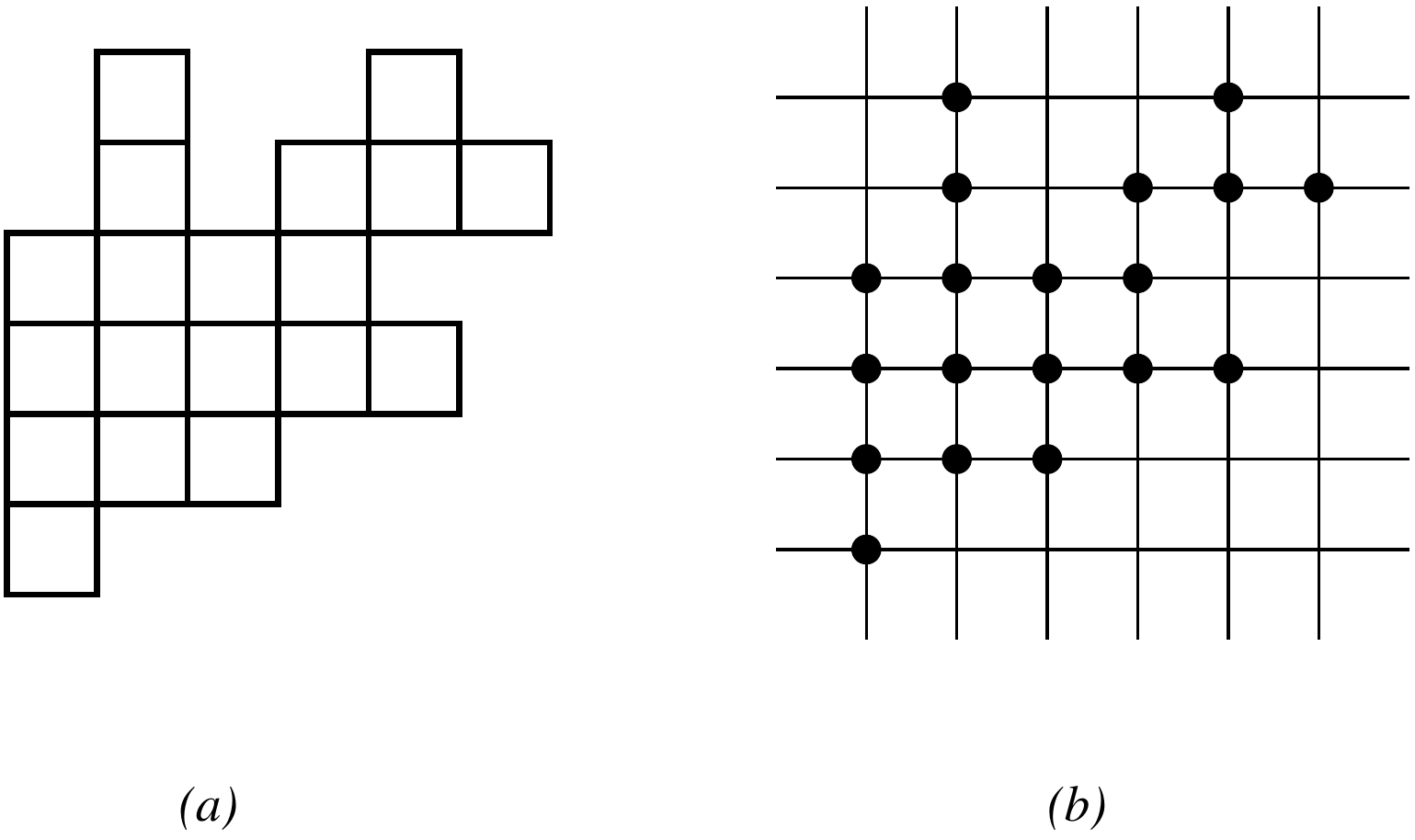}
\caption{A polyomino in $(a)$ and the corresponding animal in $(b)$.}
\label{fig:animal}
\end{figure}

Other important problems concerned with polyominoes are the problem
of covering a polyomino with rectangles \cite{chai} or problems of tiling regions by polyominoes
\cite{beniv,conway}.

In this work we are mostly interested in the problem enumerating polyominoes with respect to the area or perimeter. Several important results were obtained in the past in this field. For example, in \cite{klarner} Klarner proved that, given $a_n$ polyominoes of area $n$, the limit $${\lim_{n \to \infty}} a_n^{\frac{1}{n}}$$ tends to a growth constant $\mu$ such that: $$3.72 < \mu < 4.64\,.$$
Moreover, in $1995$ Conway and Guttmann \cite{congut} adapted a method previously used for polygons to calculate $a_n$ for $n\leq 25$. Further refinements by Jensen and Guttman \cite{jensengut} and Jensen \cite{jens} allowed to reach respectively $n=46$ and $n=56$. Despite these important results, the enumeration of general polyominoes still represents an open problem whose solution is not trivial but can be simplified, at least for certain families of polyominoes, by introducing some constraints such as convexity and directedness.

\section{Some families of polyominoes}
\label{sec:polyFamilies}

In this section we briefly summarize the basic definitions concerning some families of convex polyominoes. More specifically, we focus on
the enumeration with respect to the number of columns and/or rows, to the semi-perimeter and to the area. Given a polyomino $P$ we denote with:
\begin{enumerate}
 \item $A(P)$ the area of $P$ and with $q$ the corresponding variable;
 \item $p(P)$ the semi-perimeter of $P$ and with $t$ the corresponding variable;
 \item $w(P)$ the number of columns (width) of $P$ and with $x$ the corresponding variable;
 \item $h(P)$ the number of rows (height) of $P$ and with $y$ the corresponding variable.
\end{enumerate}

\begin{definition}
A polyomino is said to be {\em column-convex (row-convex)}
when its intersection with any vertical (horizontal) line is convex. 
\end{definition}
An example of column-convex and row-convex polyominoes are provided in Figure~\ref{fig:RCconvex} $(a)$ and $(b)$.

\begin{figure}[htd]
  \begin{center}
  \includegraphics[width=10cm]{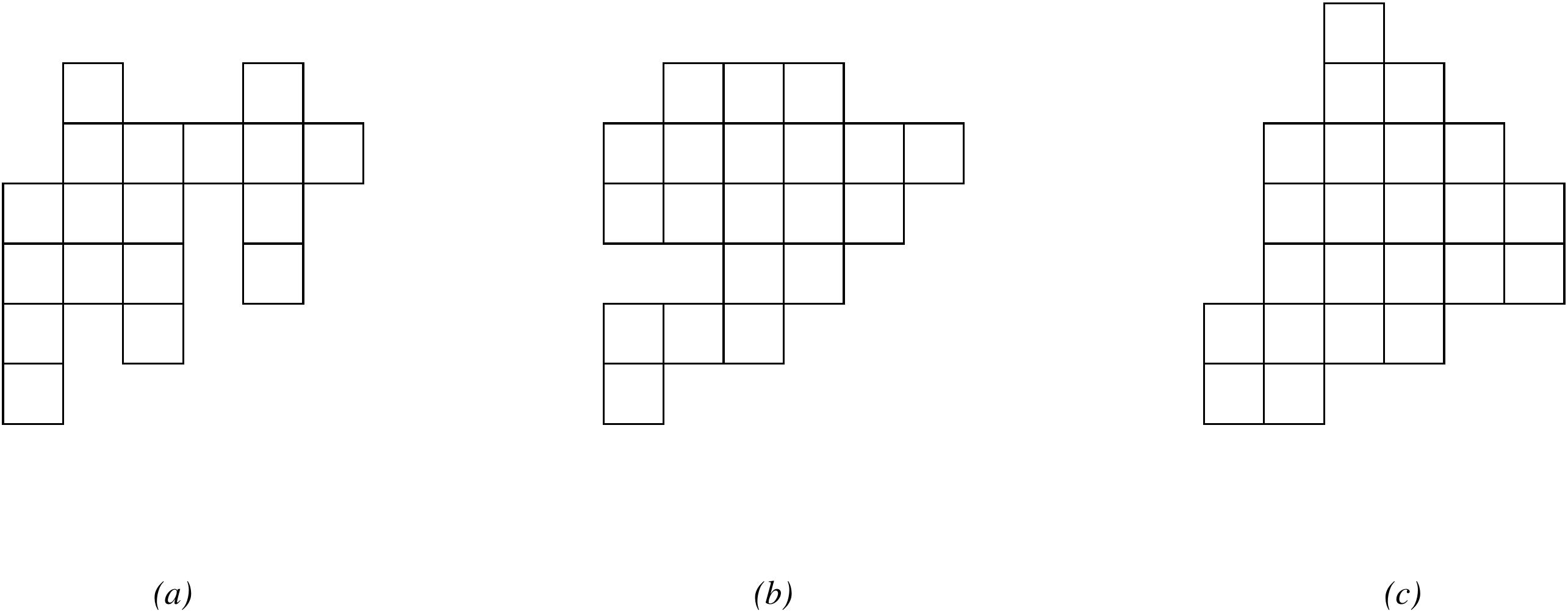}
  \caption{$(a)$: A column-convex polyomino; $(b)$: A row-convex polyomino; $(c)$: A convex polyomino.}
  \label{fig:RCconvex}
  \end{center}
  \end{figure}

In \cite{temperley}, Temperley proved that the generating function of column-convex polyominoes with respect to the perimeter is algebraic and found the following generating function according to the number of columns and to the area: 
\begin{equation}
f(x,q)=\frac{xq(1-q)^3}{(1-q)^4-xq(1-q)^2(1+q)-x^2q^3}\,.
\end{equation}

Inspired by this work, similar results were obtained in $1964$ by Klarner \cite{klarner} and in $1988$ by Delest \cite{delest}. The former was able to define the generating function of column-convex polyominoes according to the area, by means of a combinatorial interpretation of a Fredholm integral; the latter derived the expression for the generating function of column-convex polyominoes as a function of the area and the number of columns, by means of the Sch\"utzemberger methodology \cite{sch}.

In the same years, Delest \cite{delest}  derived the generating function for column-convex polyominoes according to the semi-perimeter by means of context-free languages and the computer software for algebra MACSYMA\footnote{Macsyma (Project MAC's SYmbolic MAnipulator) is a computer algebra system that was originally developed from 1968 to 1982.}. Such function is defined as follows:
\begin{equation}
f(t)=(1-t)\left( 1-\frac{2\sqrt{2}}{3\sqrt{2}-\sqrt{1+t+\sqrt{\frac{(t^2-6t+1)(1+t)^2}{(1-t)^2}}}}\right).
\label{eq:genFunVsSempiPer}
\end{equation}

In Equation~(\ref{eq:genFunVsSempiPer}), the number of column-convex polyominoes with semi-perimeter $n+2$ is the coefficient of $t^n$ in $f(t)$; it is worth noting that such coefficients are an instance of sequence $A005435$ \cite{Sl}, whose first few terms are:
$$1, 2, 7, 28, 122, 558, 2641, 12822, \cdots\,\,,$$
and they count, for example, the number of permutations avoiding $13-2$ that contain the pattern $23-1$ exactly twice, but there is no a combinatorial explanation of this fact.

Several studies were carried out in the attempt to improve the above formulation or to obtain a closed expression not relying on software, including: a generalization by Lin and Chang \cite{changLing}; an alternative proof by Fereti\'c \cite{feretic}; an equivalent result obtained by means of Temperley's methodology and the Mathematica software\footnote{See \url{www.wolfram.com/mathematica}.} by Brak \textit{et al.} \cite{BEG}.

\begin{definition}
A polyomino is  \em{convex} if it is both column and row convex (see Figure~\ref{fig:RCconvex} $(c)$). 
\end{definition}

It is worth noting that the semi-perimeter of a convex polyomino is equivalent to the sum of its rows and columns. \\

Bousquet-M\'elou derived several expressions for the generating function of convex polyominoes according to the area, the number of rows and columns, among which we mention the one obtained in collaboration with Fedou \cite{BMF} and the one in \cite{mbm}.

The generating function for convex polyominoes indexed by semi-perimeter obtained by Delest and Viennot in $1984$ \cite{DV} is the following:
\begin{equation}
f(t)=\frac{t^2(1-8t+21t^2-19t^3+4t^4)}{(1-2t)(1-4t)^2}-\frac{2t^4}{(1-4t)\sqrt{1-4t}}\,.
\end{equation}

The above expression is obtained by differencing two series with positive terms, whose combinatorial interpretation was given by Bousquet-M\'elou and Guttmann in \cite{BMG}.
The closed formula for the convex polyominoes is:
\begin{equation}
f_{n+2} = (2n + 11)4^n - 4(2n + 1)\binom{2n}{n},
\end{equation}
with $n \geq 0$, $f_0 = 1$ and $f_1 = 2$. Note that this is an instance of sequence $A005436$ \cite{Sl}, whose first few terms are:
$$1, 2, 7, 28, 120, 528, 2344, 10416, \cdots .$$

In \cite{changLing}, Lin and Chang derived the generating function for the number of convex polyominoes with $k + 1$ columns and $j + 1$ rows, where $k, j \geq 0$. Starting from their work, Gessel \cite{gessel} was able to infer that the number of such polyominoes is:
\begin{equation}
\frac{k+j+kj}{k+j}\binom{2k+2j}{2k}-2(k+j)\binom{k+j-1}{k}\binom{k+j-1}{j}\,.
\end{equation}

Finally, in \cite{DlDFR} the authors defined the generating function of convex polyominoes according to the semi-perimeter using the ECO method \cite{ECO}.

\begin{definition}
A polyomino $P$ is said to be {\em directed convex} when every cell of $P$ can be reached
from a distinguished cell, called {\em source} (usually the leftmost cell at the lowest
ordinate), by a path which is contained in $P$ and uses only north and east unit
steps. 
\end{definition}

An example of a directed convex polyomino is depicted in Figure~\ref{fig:FSPD} $(d)$.


The number of directed convex polyominoes with semi-perimeter $n + 2$ is equal to $b_{n-2}$, where $b_n$ are the central binomial coefficients:
$$b_n =\binom{2n}{n},$$
giving an instance of sequence $A000984$ \cite{Sl}.

The enumeration with respect to the semi-perimeter of this set was first obtained by Lin and Chang in $1988$ \cite{changLing} as follows:
\begin{equation}
f(t)=\frac{t^2}{\sqrt{1-4t}}\,.
\end{equation}

Furthermore, the generating function of directed convex polyominoes according to the area and the number of columns and rows, was derived by M. Bousquet-M\'elou and X. G. Viennot \cite{BMV}:
\begin{equation}
f(x,y,q)=y\frac{M_1}{J_0}
\end{equation}
where
\begin{equation}
M_1=\sum_{n\geq1}\frac{x^nq^n}{(yq)_n}\sum_{m=0}^{n-1}\frac{(-1)^mq^{\binom{m}{2}}}{(q)_m(yq^{m+1})_{n-m-1}}
\end{equation}
and
\begin{equation}
 J_0=\sum_{n\geq 0}\frac{(-1)^nx^nq^{\binom{n+1}{2}}}{(q)_n(yq)_n}\,,
\label{eq:J0}
\end{equation}
with $(a)_n=(a;q)_n=\prod_{i=0}^{n-1}(1-aq^i)$.

\subsubsection*{Definitions of polyominoes according to cells}

It is also possible to discriminate between different families of polyominoes by looking at the sets of cells $A, B, C$ and $D$ individuated by a convex polyomino and its {\em minimal bounding rectangle}, \ie the minimum rectangle that contains the polyomino itself (see Figure~\ref{fig:directed}). For instance, a polyomino $P$ is {\itshape directed convex} when $C$ is empty \ie, the lowest leftmost vertex belongs to $P$.

\begin{figure}[h!]
\centering
\includegraphics[width=3.5cm]{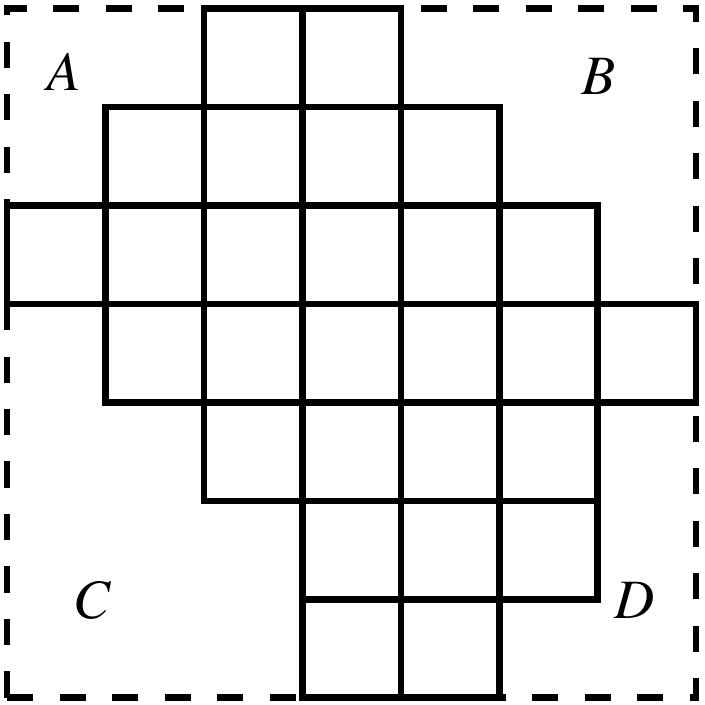}
\caption[Sets of cells $A, B, C$ and $D$ of convex polyominoes]{A convex polyomino and the $4$ sets of cells identified by its intersection with the minimal bounding rectangle.}
\label{fig:directed}
\end{figure}

Specifically, in this thesis we will consider the following families of polyominoes:
\begin{description}
 \item {\bfseries (a)} {\em Ferrer diagram}, \ie $A$, $C$ and $D$ empty;
 \item {\bfseries (b)} {\em Stack} polyomino,\ie $C$ and $D$ empty;
 \item {\bfseries (c)} {\em Parallelogram} polyomino, \ie $C$ and $B$ empty. 
\end{description}
We now review the most important results concerning the enumeration of the aforementioned sets of polyominoes.\\

\noindent
{\bfseries (a)} {\em Ferrer diagrams} (Figure~\ref{fig:FSPD} $(a)$) provide a graphical representation of integers partitions and have the same characteristics of the other families of convex polyominoes. The generating function with respect to the area, that was already known by Euler \cite{eulero}, is:
\begin{equation}
f(q)=\frac{1}{(q)_\infty}\,,
\end{equation}
while the generating function according to the number of columns and rows is:
\begin{equation}
f(x,y)=\frac{xy}{1-x-y}\,.
\label{eq:ferrerGenerating}
\end{equation}
The generating function of the Ferrer diagrams with respect to the semi-perimeter can be easily derived by setting all the variables of Equation~(\ref{eq:ferrerGenerating}) equal to $t$.\\

\noindent
{\bfseries (b)} {\em Stack} polyominoes (Figure~\ref{fig:FSPD} $(b)$) can be seen as a composition of two Ferrer diagrams. Their generating function according to the number of columns, rows and area, is \cite{wright}:
\begin{equation}
f(x,y,q)=\sum_{n\geq1}\frac{xy^nq^n}{(xq)_{n-1}(xq)_n}\,.
\end{equation}
The generating function with respect to semi-perimeter is rational \cite{DV}:
\begin{equation}
f(t)=\frac{t^2(1-t)}{1-3t+t^2}=\sum_{n\geq2}F_{2n-4}t^n\,,
\end{equation}
where $F_n$ denotes the $n$-th number of Fibonacci. For more details on the sequence of Fibonacci $A000045$ the reader is referred to \cite{Sl}. By definition, the first two numbers of the Fibonacci sequence are $F_0=0$ and $F_1=1$, and each subsequent number is the sum of the previous two. Consequently, their recurrence relation can be expressed as follows:
\begin{equation}
F_n=F_{n-1}+F_{n-2}\, \quad\mbox{with}\quad n\geq2\,.
\end{equation}

\noindent
{\bfseries (c)} {\em Parallelogram} polyominoes (Figure~\ref{fig:FSPD} $(c)$) are a particular class of  convex polyominoes uniquely identified by a pair of paths consisting only of north and east steps, such that the paths are disjoint except at their common ending points. The path beginning with a north (respectively east) step is called {\em upper} (respectively {\em lower}) path. 

It is known from  \cite{S} that the number of parallelogram polyominoes with semi-perimeter $n \geq 2$ is equal to the $(n -1)$-th {\em Catalan number}. The sequence of Catalan numbers is  widely used in several combinatorial problems across diverse scientific areas, including Mathematical Physics, Computational Biology and Computer Science. This sequence of integers was introduced in the $18th$ Century by Leonhard Euler in the attempt to determine the different ways to divide a polygon into triangles. The sequence is named after the Belgian mathematician Eug\`ene Charles Catalan, who discovered the connection to parenthesized expression of the {\em Towers of Hanoi} puzzle. Each number of the sequence is obtained as follows\footnote{More in-depth information on the Catalan sequence $A000108$ is provided in \cite{Sl}. The reader may also refer to the book by R. P. Stanley \cite{S}, where over $100$ different interpretations of Catalan numbers tackling with different counting problems of combinatorics are provided.}:
$$C_n = \frac{1}{n+1}\binom{2n}{n}.$$ 

The generating function of parallelogram polyominoes with respect to the number of columns and rows is:
\begin{equation}
f(x,y)=\frac{1-x-y-\sqrt{x^2+y^2-2x-2y-2xy+1}}{2}\,.
\label{eq:parallelogramColsRows}
\end{equation}
The corresponding function depending on the semi-perimeter is straightforwardly derived by setting all the variables equal to $t$. It is also worth noting that the function in Equation~(\ref{eq:parallelogramColsRows}) is algebraic.

Delest and Fedou \cite{DelFed} enumerated this set of polyominoes according to the area by generalizing the results by Klarner and Rivest \cite{KR} as follows:
\begin{equation}
f(q)=\frac{J_1}{J_0}\,,
\end{equation}
where:
\begin{equation}
J_1=\displaystyle\sum_{n\geq 1}\frac{(-1)^{n-1}x^nq^{\binom{n+1}{2}}}{(q)_{n-1}(yq)_n}
\end{equation}
and $J_0$ is the same of Equation~(\ref{eq:J0}).

\begin{figure}[h!]
  \begin{center}
  \includegraphics[width=12cm]{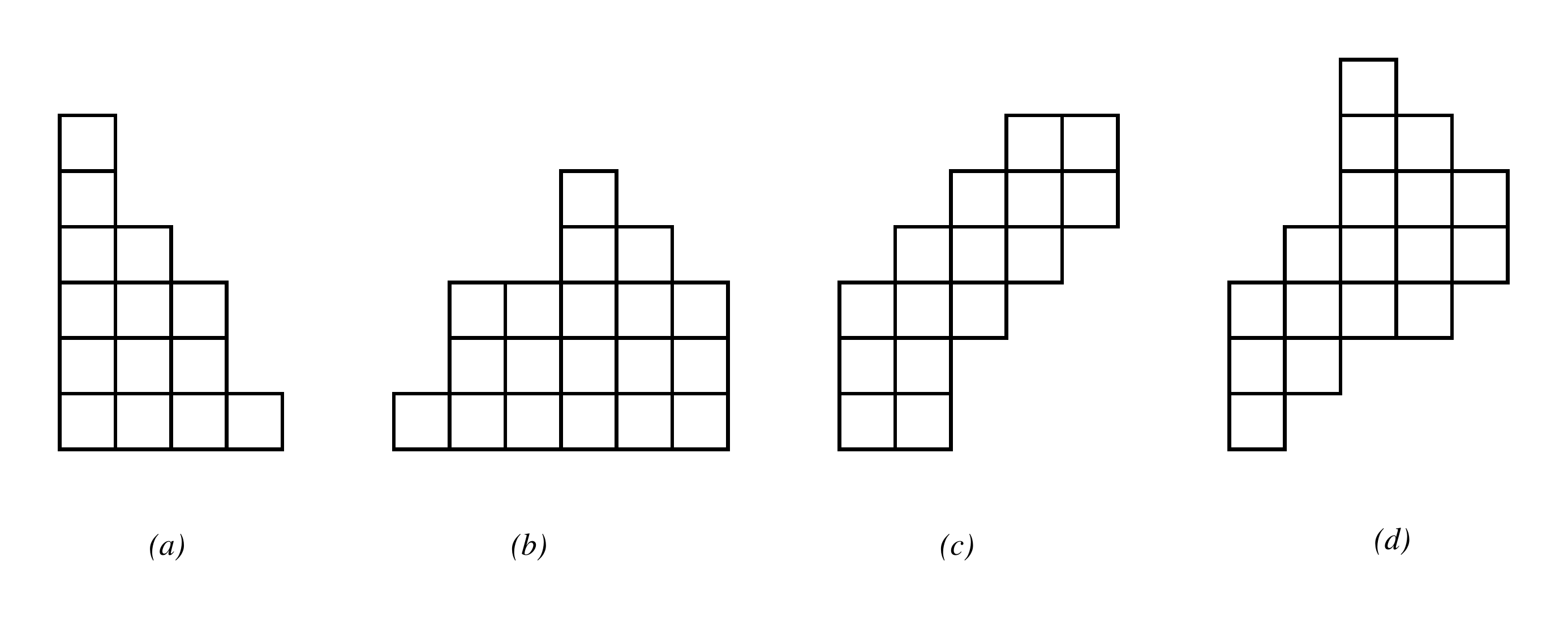}
  \caption{$(a)$: A Ferrer diagram; $(b)$: A stack polyomino; $(c)$: A parallelogram polyomino; $(d)$: A directed convex polyomino which is neither a parallelogram nor a stack one.}
  \label{fig:FSPD}
  \end{center}
  \end{figure}

\subsection{$k$-convex polyominoes}\label{subsec:kconv}
The studies of Castiglione and Restivo \cite{lconv2} pushed the interest of the research community towards the characterization of the convex polyominoes whose internal paths satisfy specific constraints.\\
We recall the following definition of internal path of a polyomino.
\begin{definition}\label{def:Internal-path}
 A path in a polyomino is a self-avoiding sequence of unit steps of four types: north $n=(0, 1)$,
south $s=(0, -1)$, east $e=(1, 0)$, and west $w=(-1, 0)$, entirely contained in the polyomino.
\end{definition}

A path connecting two distinct cells $A$ and $B$ of the polyomino starts from the center of $A$, and ends at the center of $B$ as shown in  Figure~\ref{monopath} $(a)$. We say that a path is {\em monotone} if it consists only of two types of steps, as in Figure~\ref{fig:monopath1} $(c)$. Given a path $w = u_1 \ldots u_k$, each pair of steps $u_i u_{i+1}$ such that $u_i \neq u_{i+1}$ , $0 < i < k$, is called a \textit{change of direction}.

\begin{figure}[h!]
\centering
\includegraphics[width=8cm]{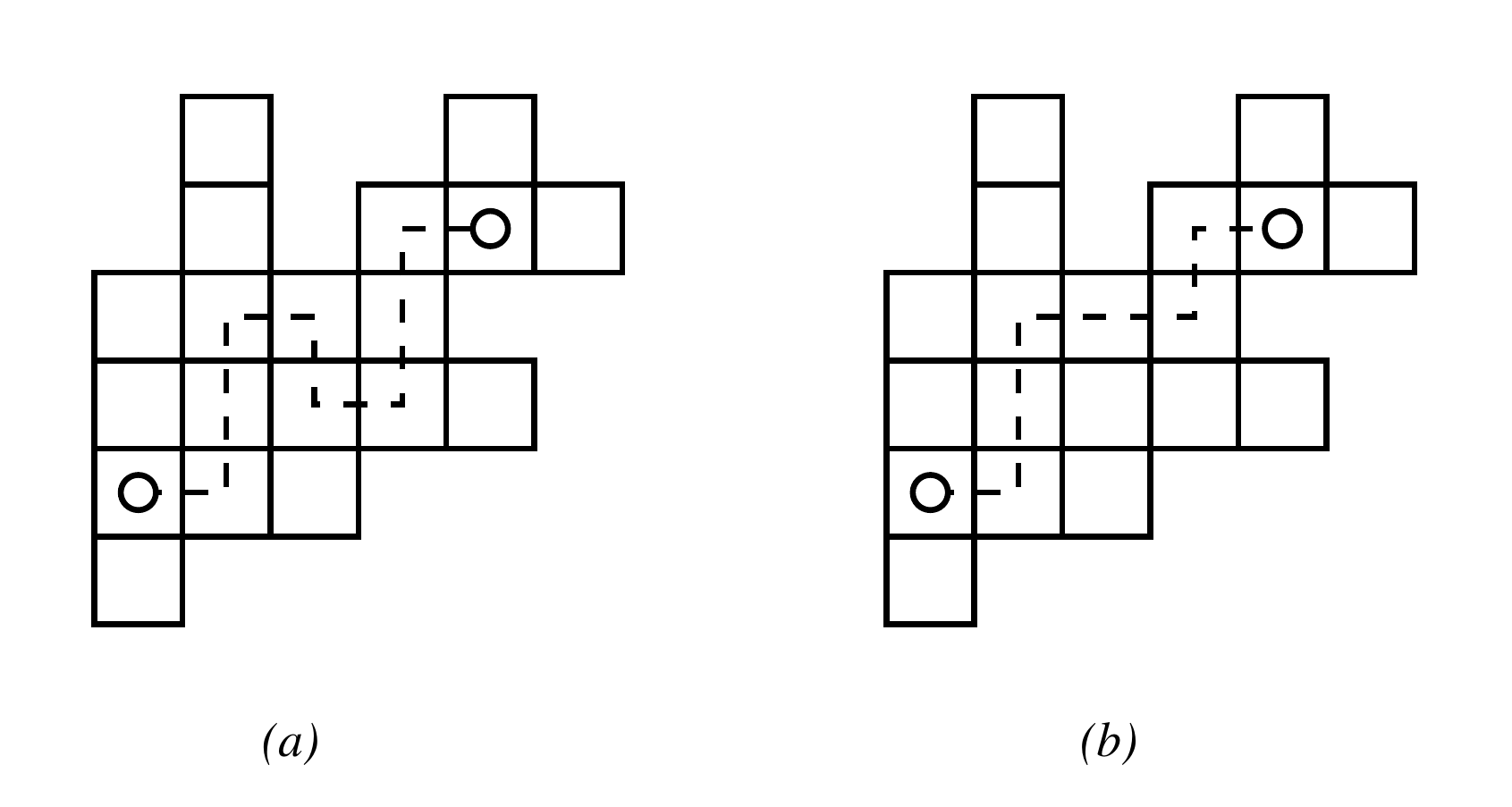}
\caption{$(a)$: A path between two cells of the polyomino ; $(b)$: A monotone path between two cells of the polyomino with four changes of direction.}
\label{fig:monopath1}
\end{figure}

In \cite{lconv2}, it has been observed that in convex polyominoes each pair of cells is connected by a monotone path; therefore, a classification of convex polyominoes based on the number of changes of direction in the paths connecting any two cells of the polyomino was proposed. 
\begin{definition}
 A convex polyomino is said to be $k$-convex if every pair of its cells can be connected by a monotone path with at most $k$ changes of direction. The parameter $k$ is referred to as the {\em convexity degree} of the polyomino.
\end{definition}

For $k=1$, we have the {\em $L$-convex polyominoes}, where any two cells can be connected by a path with at most one change of direction. Such objects have several interesting properties and can also characterized through their maximal rectangle. As a consequence, a convex polyomino $P$ is $L$-convex if and only if any two of its maximal rectangles have a non-void crossing intersection. Some examples of rectangles having non-crossing and crossing intersections are shown in Figure~\ref{fig:crossing}.

\begin{figure}[htd]
\centering
\includegraphics[width=10cm]{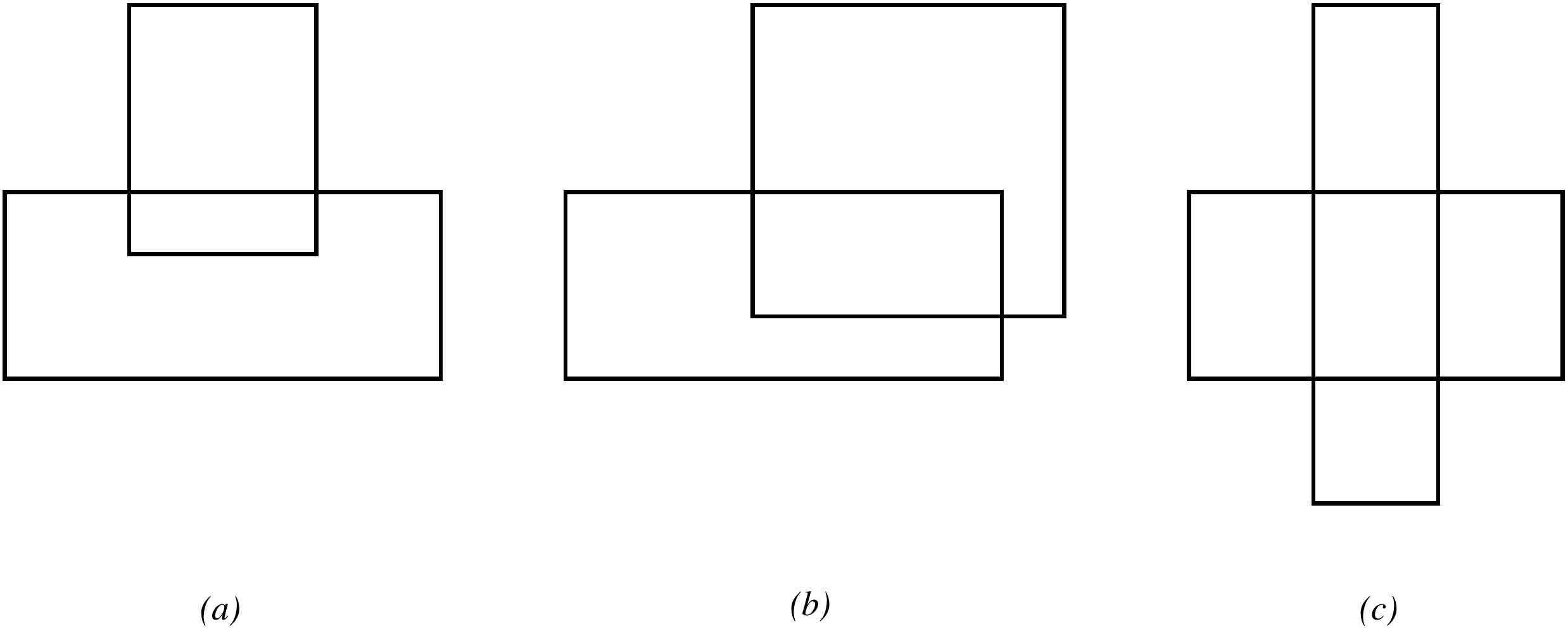}
\caption[Examples of crossing and non-crossing interesections]{(a), (b) Two rectangles having a non-crossing intersection; (c) Two rectangles having crossing intersection.} 
\label{fig:crossing}
\end{figure}
  
In recent literature, several aspects of the $L$-convex polyominoes have been studied: in \cite{CR}, it is shown that they are a well-ordering according to the sub-picture order; in \cite{CFRR}, it has been shown that $L$-convex polyominoes are uniquely determined by their horizontal and vertical projections; finally, it has been proved in \cite{lconv,CFRR2} that the number $f_n$ of $L-$convex polyominoes having semi-perimeter equal to $n + 2$ satisfies the recurrence relation:
\begin{equation}
f_{n+2} = 4 f_{n+1} - 2 f_{n},
\end{equation}
with  $n\geqslant 1$, $f_0 = 1$, $f_1 = 2$ and $f_2 = 7$.

For $k=2$, we have {\em $2$-convex} (or {\em$Z$-convex}) polyominoes, where each pair of cells can be connected by a path with at most two changes of direction. Unfortunately, $Z$-convex polyominoes do not inherit most of the combinatorial properties of $L$-convex polyominoes. In particular, standard enumeration techniques can not be applied to the enumeration of $Z$-convex polyominoes, even though this problem has been tackled with in \cite{DRS} by means of the so-called {\em inflation method}. The authors were able to demonstrate that the generating function is algebraic and the sequence asymptotically grows as $n4^n$, that is the same growth of the whole family of the convex polyominoes.\\ 

Because the solution found for the $Z$-convex polyominoes can not be directly extended to a generic $k$, the problem of enumerating $k$-convex polyominoes for $k>2$ is yet open and difficult to solve. Some recent results of the asymptotic behavior of $k$-convex polyominoes have been achieved by Micheli and Rossin in \cite{MR}. In this thesis we contribute to this topic by  enumerating a remarkable subset of $k$-convex polyominoes, \ie the $k$-convex polyominoes which are also {\em parallelogram polyominoes}, called for brevity {\em $k$-parallelogram polyominoes}.

\section{Permutations}
\label{sec:permutations}

In this section we introduce the family of permutations, which have an important role in several areas of Mathematics such as Computer Science (\cite{Kn,tarjan,west}) and Algebraic Geometry (\cite{sandhya}). Even though the existing literature on permutations is indeed vast, we are particularly interested on the topic of {\em pattern avoidance} (mainly of permutations  but also of other families of objects). Therefore, here we provide basic definitions concerning permutations that will help us extend the concept of permutation to the set of polyominoes in Chapter~\ref{chap:cap4}.

The topic of pattern-avoiding permutations (also known as restricted permutations) has raised a remarkable interest in the last twenty years and led to remarkable results including enumerations and new bijections. One of the most important recent contributions is the one by Marcus and Tardos \cite{marcTard}, consisting in the proof of the so-called Stanley-Wilf conjecture, thus defining an exponential upper bound to the number of permutations avoiding any given pattern. However, the study of statistics on restricted permutations started growing very recently, in particular towards the introduction of new kinds of patterns. 


\subsection{Basic definitions}
In the sequel we will indicate with $[n]$ the set $\{1, 2, . . . , n\}$ and with $\sym_n$ the symmetric group on $[n]$. Moreover, we will use a one-line notation for a permutation $\pi \in \sym_n$, that will be then written as $\pi = \pi_1\pi_2\cdots\pi_n$.

According to literature, there are two common interpretations of the notion of permutation, which can be regarded as a word $\pi$ or as as a bijection $\pi : [n] \mapsto [n]$. The concept of pattern avoidance stems from the first interpretation.

A permutation $\pi$ of length $n$ can be represented in three different ways:
\begin{enumerate}
 \item {\em Two-lines notation}: this is perhaps the most widely used method to represent a permutation and consists in organizing in the top row the numbers from $1$ to $n$ in ascending order and their image in the bottom row, as exemplified in Figure~\ref{fig:reprPerm} $(a)$.
 \item {\em One-line notation}: in this case only the second row of the corresponding two-lines notation is used.
 \item {\em Graphical representation}: it corresponds to the graph $$G(\pi)=\{(i,\pi_i):1\leq i\leq n\}\subseteq [1,n]\times[1,n]\,.$$ An example of $G(\pi)$ is displayed in Figure~\ref{fig:reprPerm} $(b)$. 
\end{enumerate}

\begin{figure}[!h]
\centering
\includegraphics[width=10cm]{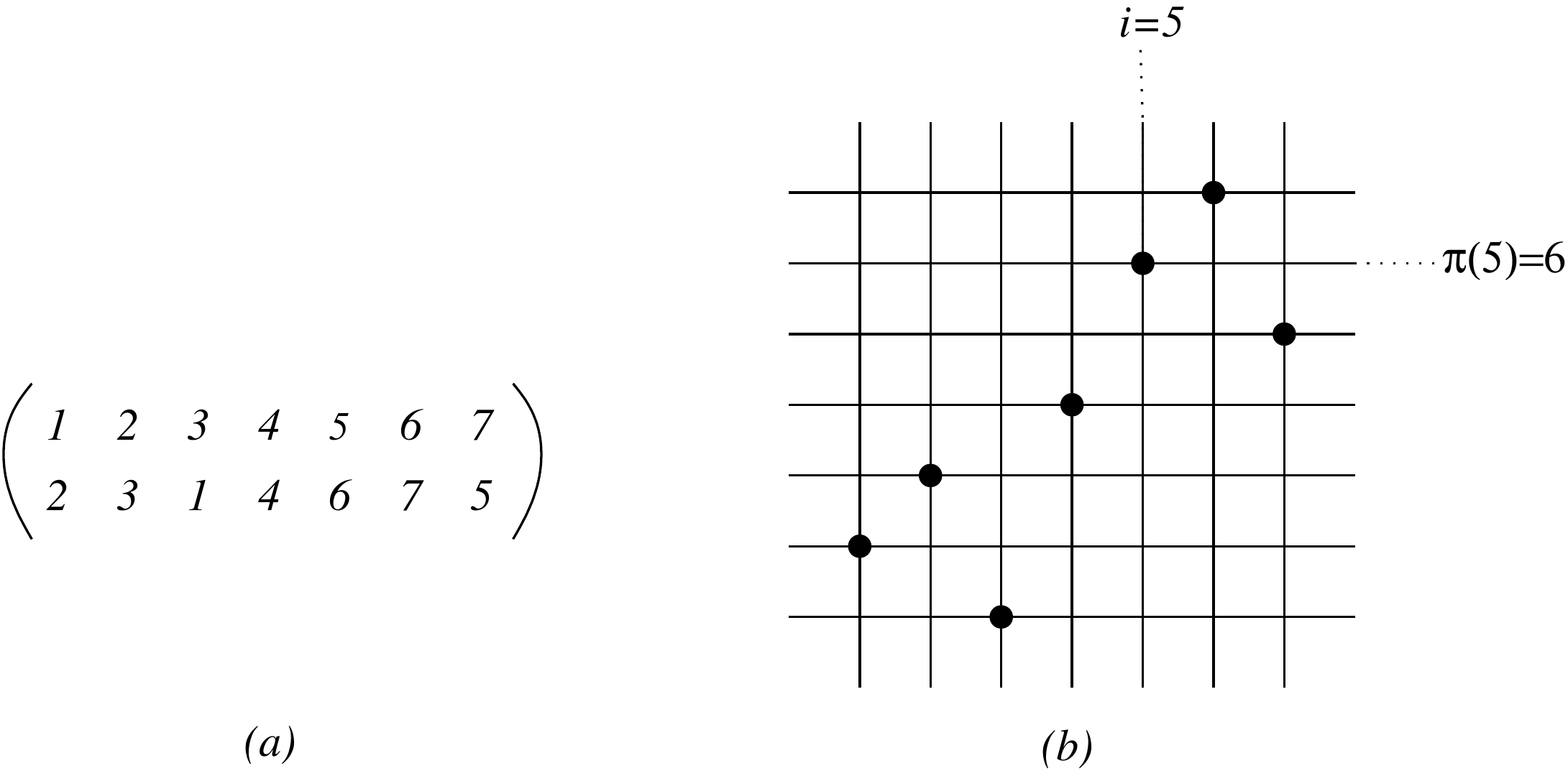}
\caption[Two-lines representation of a permutation]{Two-lines representation of the permutation $\pi=2314675$ in $(a)$ and in $(b)$ the graphical representation of $\pi$.}
\label{fig:reprPerm}
\end{figure}

Let $\pi$ be a permutation; $i$ is a {\em fixed point} of $\pi$ if $\pi_i = i$ and an {\em exceedance} of $\pi$ if $\pi_i > i$. The number of fixed points and exceedances of $\pi$ are indicated with $fp(\pi)$ and $exc(\pi)$ respectively. 

An element of a permutation that is neither a fixed point nor an exceedance, i.e. an $i$ for which $\pi_i < i$, is called {\em deficiency}. Permutations with no fixed points are often referred to as {\em derangements}. 

We say that $i \leq n - 1$ is a {\em descent} of $\pi \in \sym_n$ if $\pi_i > \pi_{i+1}$. Similarly, $i \leq n - 1$ is an {\em ascent} of $\pi \in \sym_n$ if $\pi_i < \pi_{i+1}$. The number of descents and ascents of $\pi$ are indicated with $des(\pi)$ and $asc(\pi)$ respectively. 

Given a permutation $\pi$, we can define the following subsets of points \cite{bona}:
\begin{enumerate}
 \item the set of {\em right-to-left minima} as the set of points: $$\{(i,\pi_i) : \pi_i<\pi_j \quad\forall j , 1\leq j<i\}\,;$$
 \item the set of {\em right-to-left maxima} as the set of points: $$\{(i,\pi_i) : \pi_i>\pi_j \quad\forall j , 1\leq j<i\}\,;$$
 \item the set of {\em left-to-right minima} as the set of points: $$\{(i,\pi_i) : \pi_i<\pi_j \quad\forall j , i<j\leq n\}\,;$$
 \item the set of {\em left-to-right maxima} as the set of points: $$\{(i,\pi_i) : \pi_i>\pi_j \quad\forall j , 1<j\leq n\}\,.$$
\end{enumerate}
An example of each of the sets defined above is provided in Figure~\ref{fig:rtl_ltr}.\\

\begin{figure}[htbp]
\centering
\includegraphics[width=14cm]{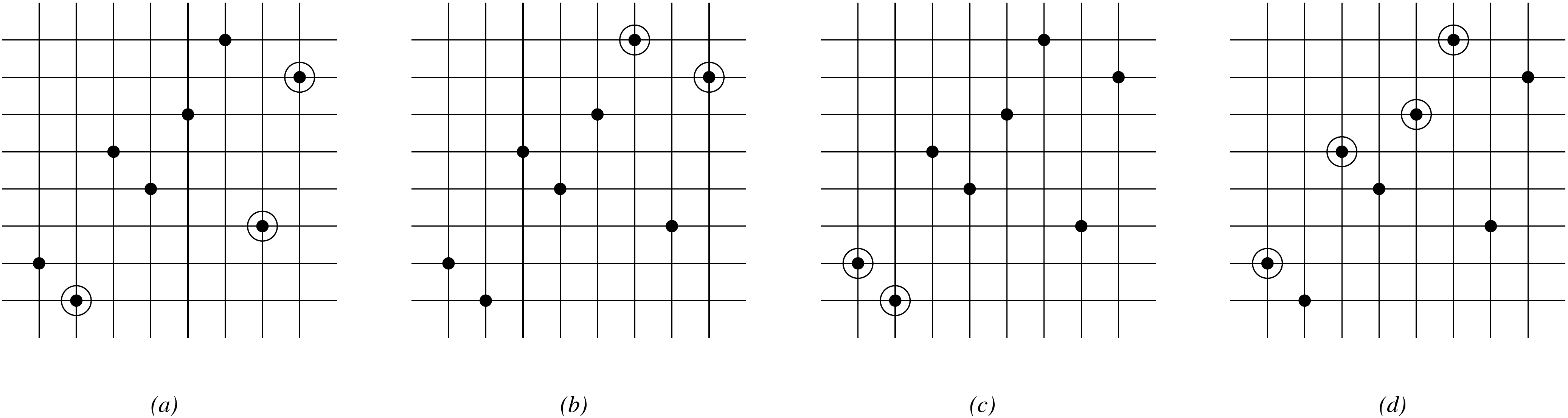}
\caption[Examples of the four subsets of points of $\pi$]{$(a)$ Set of right-to-left minima for $\pi=21546837$; $(b)$ set of right-to-left maxima; $(c)$ set of left-to-right minima; and $(d)$ set of left-to-right maxima.}
\label{fig:rtl_ltr}
\end{figure}

Let $lis(\pi)$ denote the length of the longest increasing subsequence of $\pi$, i.e., the largest $m$ for which there exist indexes $i_1 < i_2 < \cdots < i_m$ such that $\pi_{i_1} < \pi_{i_2} < \cdots< \pi_{i_m}$.

Define the {\em rank} of $\pi$, denoted $rank(\pi)$, to be the largest $k$ such that $\pi_i > k$ for all $i \leq k$. For example, if $\pi = 63528174$, then $fp(\pi) = 1$, $exc(\sigma) = 4$, $des(\pi) = 4$ and $rank(\pi) = 2$.

We say that a permutation $\pi\in \sym_n$ is an {\em involution} if $\pi = \pi^{-1}$. The set of involutions of length $n$ is indicated with ${\cal I}_n$.

\subsection{Pattern avoiding permutations}\label{sec:patternAv}


The concept of permutation patterns is well-known to many branches of Mathematics literature, as proved by the several works that have been proposed in the last decades. A comprehensive overview,  ``Patterns in Permutations'', has been proposed by Kitaev in \cite{kit}.

\begin{definition}\label{def:permutation_pattern}
Let $n, m$ be two positive integers with $m \leq n$, and let $\pi \in \sym_n$ and
$\sigma \in \sym_m$ be two permutations. We say that $\pi$ contains $\sigma$ if there exist indexes $i_1 < i_2 < \cdots < i_m$ such that $\pi_{i_1}\pi_{i_2}\cdots\pi_{i_m}$ is in the same relative order as $\sigma_1\sigma_2\cdots\sigma_m$ (that is, for all indexes $a$ and $b$, $\pi_{i_a} < \pi_{i_b}$ if and only if $\sigma_a < \sigma_b$). In that case, $\pi_{i_1}\pi_{i_2}\cdots\pi_{i_m}$ is called an occurrence of $\sigma$ in $\pi$ and we write $\sigma \prec\pi$. In this context, $\sigma$ is also called a pattern.
\end{definition}

If $\pi$ does not contain $\sigma$, we say that $\pi$ {\em avoids} $\sigma$, or that $\pi$ is {\em $\sigma$-avoiding}. For example, if $\sigma = 231$, then $\pi = 24531$ contains $231$, because the subsequence $\pi_2\pi_3\pi_5 = 451$ has the same relative order as $231$. However, $\pi = 51423$ is $231$-avoiding. We indicate with $\Av_n(\sigma)$ the set of $\sigma$-avoiding permutations in $\sym_n$. 

\begin{definition}
A class $\cal C$ of permutations is {\em stable} or {\em downward closed} for $\preceq$ if, for any $\pi \in \cal C$ and for any pattern $\sigma \prec \pi$, $\sigma \in \cal C$.
\end{definition}

It is a natural generalization to consider permutations that avoid several patterns at the same time. If ${\cal B} \subseteq \sym_k$, $k\geq1$, is any finite set of patterns, we denote by $\Av_n({\cal B})$, also called ${\cal B}$-avoiding permutation, the set of permutations in $\sym_n$ that avoid simultaneously all the patterns in ${\cal B}$. For example, if ${\cal B} = \{123, 231\}$, then $\Av_4({\cal B}) = \{1432, 2143, 3214, 4132, 4213, 4312, 4321\}$. We remark that for every set ${\cal B}$, $\Av_n({\cal B})$ is a class of permutations.

The sets of permutations pairwise-incomparable with respect to the order relation ($\preceq$) are called {\em antichains}.
\begin{definition}
 If ${\cal B}$ is an antichain, then ${\cal B}$ is unique and is called basis of the class of permutations $\Av_n({\cal B})$. In this case, it also true that $${\cal B}=\{\pi\notin \Av_n({\cal B}):\forall \sigma\prec\pi,\sigma\in \Av_n({\cal B})\}\,.$$
\end{definition}

\begin{proposition}\label{unicityBase}
 Let be ${\cal C}=\Av_n({\cal B}_1)=\Av_n({\cal B}_2)$. If ${\cal B}_1$ and ${\cal B}_2$ are two antichains then ${\cal B}_1={\cal B}_2$.
\end{proposition}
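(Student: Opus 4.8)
The plan is to show that any antichain basis of a permutation class $\mathcal{C}$ is forced to coincide with an object defined intrinsically in terms of $\mathcal{C}$ alone, namely the set of minimal elements of the complement of $\mathcal{C}$. Once this is established, the two candidate bases $\mathcal{B}_1$ and $\mathcal{B}_2$ must both equal this canonical set, and the equality $\mathcal{B}_1 = \mathcal{B}_2$ follows at once. Throughout I would let $\preceq$ denote pattern containment (allowing equality, so that every permutation contains itself) and $\prec$ its strict version, matching the ``downward closed for $\preceq$'' definition and the basis characterization, which uses $\prec$ for proper patterns.

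Concretely, I would introduce $M(\mathcal{C}) = \{\pi \notin \mathcal{C} : \sigma \in \mathcal{C} \text{ for every } \sigma \prec \pi\}$, the permutations outside $\mathcal{C}$ all of whose proper patterns lie in $\mathcal{C}$; this is exactly the set appearing in the definition of basis, and it depends only on $\mathcal{C}$. The entire proposition then reduces to the single claim: if $\mathcal{B}$ is an antichain with $\Av_n(\mathcal{B}) = \mathcal{C}$, then $\mathcal{B} = M(\mathcal{C})$. Applying this claim separately to $\mathcal{B}_1$ and to $\mathcal{B}_2$ yields $\mathcal{B}_1 = M(\mathcal{C}) = \mathcal{B}_2$.

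To prove the claim I would argue by double inclusion. For $\mathcal{B} \subseteq M(\mathcal{C})$, take $\beta \in \mathcal{B}$; since $\beta$ trivially contains itself it cannot avoid $\mathcal{B}$, so $\beta \notin \mathcal{C}$. If some proper pattern $\sigma \prec \beta$ failed to lie in $\mathcal{C}$, then $\sigma$ would contain some $\beta' \in \mathcal{B}$, and by transitivity $\beta' \prec \beta$ with both elements in $\mathcal{B}$, contradicting that $\mathcal{B}$ is an antichain; hence every proper pattern of $\beta$ lies in $\mathcal{C}$ and $\beta \in M(\mathcal{C})$. For the reverse inclusion $M(\mathcal{C}) \subseteq \mathcal{B}$, take $\pi \in M(\mathcal{C})$: since $\pi \notin \mathcal{C} = \Av_n(\mathcal{B})$, it must contain some $\beta \in \mathcal{B}$, i.e. $\beta \preceq \pi$. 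If this containment were strict, the minimality of $\pi$ would force $\beta \in \mathcal{C}$, which is impossible because $\beta$ contains itself and so cannot avoid $\mathcal{B}$; therefore $\beta = \pi$ and $\pi \in \mathcal{B}$.

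The main obstacle I anticipate is purely the bookkeeping around reflexive versus strict containment. The argument hinges on two facts: that every permutation contains itself, so no element of $\mathcal{B}$ can lie in $\Av_n(\mathcal{B})$; and that $\preceq$ is transitive, so a proper pattern of a basis element cannot itself contain another basis element without violating the antichain hypothesis. These are precisely the two places where the antichain assumption is genuinely used, and keeping the strict/non-strict distinctions straight is the only delicate point — everything else is a routine double inclusion.
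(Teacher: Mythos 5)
Your proof is correct. It takes the same basic viewpoint as the paper -- identifying the basis with the set of minimal permutations outside $\mathcal{C}$ -- but it actually supplies an argument where the paper does not: Proposition~\ref{unicityBase} is stated without proof, and the uniqueness clause of the general poset version (Proposition~\ref{prop:basis_of_downward_closed_subposet}) is dispatched with the bare assertion that ``for two different antichains $\mathcal{B}$ and $\mathcal{B}'$ the sets $\Av_{\mathfrak{X}}(\mathcal{B})$ and $\Av_{\mathfrak{X}}(\mathcal{B}')$ are also different,'' which is precisely the contrapositive of what needs proving. Your double inclusion showing that \emph{any} antichain $\mathcal{B}$ with $\Av(\mathcal{B})=\mathcal{C}$ equals $M(\mathcal{C})$ is the right way to close that gap, and you correctly isolate the two places where the antichain hypothesis enters (a basis element cannot avoid $\mathcal{B}$ because it contains itself; a proper pattern of a basis element cannot contain another basis element by transitivity). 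Note also that your argument nowhere needs well-foundedness, which is consistent with the fact that the proposition asserts only uniqueness, not existence, of an antichain basis. One minor point you silently (and reasonably) correct: the paper's notation $\Av_n(\mathcal{B})$ with a fixed length $n$ would make the statement false as literally written (distinct antichains can agree on permutations of a single length); the intended reading, which you adopt, is the full permutation class.
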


It is quite simple to demonstrate the following proposition.
\begin{proposition}
A class $\cal C$ of permutations that is stable for $\preceq$ is a class of pattern-avoiding permutations and so it can be characterized by its basis.
\end{proposition}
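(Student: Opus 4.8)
The plan is to exhibit an explicit antichain $\mathcal{B}$ and prove the identity $\mathcal{C} = \Av(\mathcal{B})$. First I would pass to the complement $\mathcal{D} = \sym \setminus \mathcal{C}$, the set of all permutations not belonging to $\mathcal{C}$, and define $\mathcal{B}$ to be the set of $\preceq$-minimal elements of $\mathcal{D}$, that is, those $\sigma \notin \mathcal{C}$ all of whose proper patterns lie in $\mathcal{C}$. Since the minimal elements of any partial order are pairwise incomparable, $\mathcal{B}$ is automatically an antichain; by Proposition~\ref{unicityBase} this will already guarantee that $\mathcal{B}$ is the \emph{unique} basis, once the equality $\mathcal{C} = \Av(\mathcal{B})$ has been checked.

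For the inclusion $\mathcal{C} \subseteq \Av(\mathcal{B})$, I would argue by contradiction. If some $\pi \in \mathcal{C}$ contained a pattern $\sigma \in \mathcal{B}$, then $\sigma \preceq \pi$ together with the stability of $\mathcal{C}$ under $\preceq$ would force $\sigma \in \mathcal{C}$, contradicting $\sigma \in \mathcal{B} \subseteq \mathcal{D}$. Hence no $\pi \in \mathcal{C}$ contains any element of $\mathcal{B}$, i.e.\ every element of $\mathcal{C}$ avoids $\mathcal{B}$.

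The reverse inclusion $\Av(\mathcal{B}) \subseteq \mathcal{C}$ carries the real content. Suppose, for contradiction, that $\pi \in \Av(\mathcal{B})$ while $\pi \in \mathcal{D}$. Consider the set of patterns of $\pi$ that belong to $\mathcal{D}$; this set is nonempty, since it contains $\pi$ itself, and it is finite, because $\pi$ admits only finitely many patterns. Choosing a $\preceq$-minimal element $\sigma$ of this finite set, every proper pattern $\tau \prec \sigma$ is also a pattern of $\pi$ by transitivity, and by minimality of $\sigma$ it cannot lie in $\mathcal{D}$, hence $\tau \in \mathcal{C}$. Thus $\sigma \in \mathcal{D}$ has all its proper patterns in $\mathcal{C}$, so $\sigma \in \mathcal{B}$; but $\sigma \preceq \pi$ then says $\pi$ contains an element of $\mathcal{B}$, contradicting $\pi \in \Av(\mathcal{B})$.

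The subtle point, and the only place where something could go wrong, is the existence of the minimal element $\sigma$: this rests on the well-foundedness of the pattern order. Because a permutation of length $n$ has patterns only of length at most $n$ and there are finitely many permutations of each length, the downset of any $\pi$ is finite, so every nonempty subset of it has minimal elements; this is exactly what makes the argument of the previous paragraph go through. Once both inclusions are established, $\mathcal{C} = \Av(\mathcal{B})$ shows that $\mathcal{C}$ is a class of pattern-avoiding permutations, and the antichain $\mathcal{B}$ is its basis, in agreement with the characterization $\mathcal{B} = \{\pi \notin \Av(\mathcal{B}) : \forall \sigma \prec \pi,\ \sigma \in \Av(\mathcal{B})\}$ recorded in the definition of basis.
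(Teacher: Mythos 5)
Your proof is correct and follows essentially the same route as the paper, which establishes this fact in the general setting of well-founded posets (Proposition~\ref{prop:basis_of_downward_closed_subposet}): take $\mathcal{B}$ to be the minimal elements of the upward-closed complement, note it is an antichain, and verify both inclusions, with well-foundedness guaranteeing the existence of minimal elements. Your explicit justification of well-foundedness via the finiteness of each permutation's downset is the only (minor) addition to the paper's argument.
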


%
%

Even though the majority of permutation classes analyzed in literature are characterized by finite bases, there exist classes of permutations with infinite basis (e.g. the {\em pin} permutations in \cite{bouvel}). Understanding whether a certain class of permutations is characterized by a finite or infinite basis is not an entirely solved problem; some, but suggestions on the decision criteria can be found in \cite{AA,AS}. 


\subsubsection*{Results on pattern avoidance}

\begin{definition}
Two patterns are Wilf equivalent and belong to the same Wilf class if, for each $n$, the same number of permutations of length $n$ avoids the same pattern.
\end{definition}
Wilf equivalence is a very important topic in the study of patterns. The smallest example of non-trivial Wilf equivalence is for the classical patterns of length $3$; in fact, the patterns $123$ and $321$ are Wilf equivalent, and the same is true for the remaining four patterns of length $3$, namely $132$, $213$, $231$, and $312$. All six of these patterns are Wilf equivalent, which is easy but non-trivial to demonstrate; each pattern is avoided by $C_n$ permutations of length $n$, where $C_n$ is the Catalan number $\frac{1}{n+1}{2n\choose n}$.\\

By extension, we can define the {\em strongly Wilf equivalence} as follows.
\begin{definition}
Two patterns $\pi$ and $\sigma$ are strongly Wilf equivalent if they have the same distribution on the set of permutations of length $n$ for each $n$, that is, if for each nonnegative integer $k$ the number of permutations of length $n$ with exactly $k$ occurrences of $\pi$ is the same as that for $\sigma$.
\end{definition}
For example, $\pi = 132$ is strongly Wilf equivalent to $\sigma = 231$, since the bijection defined by reversing a permutation turns an occurrence of $\pi$ into an occurrence of $\sigma$ and conversely. On the other hand, $132$ and $123$ are not strongly Wilf equivalent, although they are Wilf equivalent. Furthermore, the permutation $1234$ has four occurrences of $123$, but there is no permutation of length $4$ with four occurrences of $132$.

One of the most investigated problems is the enumeration of the elements of a given class $\cal C$ of permutations for any integer $n$. Interesting recent results in this direction can be found in \cite{guibert,kit} (respectively, $1995$ and $2003$). However, such enumeration problem was already known since $1973$ thanks to the work of Knuth \cite{Kn}, where permutations avoiding the pattern $231$ were considered.

As for the case of patterns of length three even for the patterns of length four we can reduce the problem to take into consideration the seven symmetrical classes and it is sufficient to study three of them to obtain the sequences of enumeration. We can find a few results relatives to this patterns in \cite{bona}. Only the problem of enumeration of the permutations that avoid $4231$ (or $1324$) remains unsolved. 

In $1990$, Stanley and Wilf conjectured that, for all classes $\cal C$, there exists a constant value $c$ such that for all integer $n$ the number of elements in ${\cal C}_n={\cal C}\cap \sym_n$ is less than or equal to $c^n$. In $2004$, Marcus and Tardos \cite{marcTard} proved the Stanley-Wilf conjecture. Before such result, Arratia \cite{arratia} showed that, being ${\cal C}_n = \Av_n(\sigma)$, the conjecture was equivalent to the existence of the limit:
$$SW(\sigma) = \lim_{n\to \infty}\Av_n(\sigma)\,,$$
which is called the {\em Stanley-Wilf limit} for $\sigma$.

The Stanley-Wilf limit is $4$ for all patterns of length three, which follows from the fact
that the number of avoiders of any one of these is the $n$-th Catalan number $C_n$, as
mentioned above. This limit is known to be $8$ for the pattern $1342$ (see \cite{bona2}). For the pattern $1234$, the limit is $9$; such limit was obtained as a special case of a result of Regev \cite{regev2,regev1}, who provided a formula for the asymptotic growth of the number of standard Young tableaux with at most $k$ rows. The same limit can also be derived from Gessel's general result \cite{gessel2} for the number of avoiders of an increasing pattern of any length.
The only Wilf class of patterns of length four for which the Stanley-Wilf limit is unknown
is represented by $1324$, for which a lower bound of $9.47$ was established by Albert et al. \cite{AERWZ}. Later, Bona \cite{bona0} was able to refine this bound by resorting to the method in \cite{CJS}; finally, Madras and Liu \cite{ML} estimated that the limit for the pattern$1324$ lies, with high likelihood, in the interval $[10.71, 11.83]$\footnote{This result was obtained by using Markov chain Monte Carlo methods to generate random $1324$-avoiders.}.

Finally, considering a permutation as a bijection we can take in exam some concepts such as fixed points and exceedances. This new way to see a permutation makes it interesting to study some of statistics together with the notion of pattern avoidance. There is a lot of mathematical literature devoted to permutation statistics (see for example \cite{ehrStein, foata, garsiaGessel, gesselReut}).
 
\subsection{Generalized patterns and other new patterns}\label{sec:genPatt}

Babson and Steingr\'imsson \cite{babStein} introduced the notion of {\em generalized patterns}, which requires that two adjacent letters in a pattern must be adjacent in the permutation, as shown in Figure~\ref{vincMesh} $(b)$. The authors introduced such patterns to classify the family of {\em Mahonian permutation statistics}, which are uniformly distributed with the number of inversions.

A generalized pattern can be written as a sequence wherein two adjacent elements may or may not be separated by a dash. With this notation, we indicate a classical pattern with dashes between any two adjacent letters of the pattern (for example, $1423$ as $1-4-2-3$). If we omit the dash between two letters, we mean that for it to be an occurrence in a permutation $\pi$, the corresponding elements of $\pi$ have to be adjacent. For example, in an occurrence of the pattern $12-3-4$ in a permutation $\pi$, the entries in $\pi$ that correspond to $1$ and $2$ are adjacent. The permutation $\pi = 3542617$ has only one occurrence of the pattern $12-3-4$, namely the subsequence $3567$, whereas $\pi$ has two occurrences of the pattern $1-2-3-4$, namely the subsequences $3567$ and $3467$.

If $\sigma$ is a generalized pattern, $\Av_n(\sigma)$ denotes the set of permutations in $\sym_n$ that have no occurrences of $\sigma$ in the sense described above. Throughout this chapter, a pattern represented with no dashes will always denote a classical pattern, i.e. one with no requirement about elements being consecutive, unless otherwise specified.

Several results on the enumeration of permutations classes avoiding generalized patterns have been achieved. Claesson obtained the enumeration of permutations avoiding a generalized pattern of length three \cite{claesson} and the enumeration of permutations avoiding two generalized patterns of length three \cite{CM}. Another interesting result in terms of permutations avoiding a  set of generalized patterns of length three was obtained by Bernini \textit{et al.} in \cite{BBF,BFP}, where one can find the enumeration of permutation avoiding set of generalized patterns as a function of its length and another parameter.

Another kind of patterns, called {\em bivincular patterns}, was introduced in \cite{BMCDK} with the aim to increase the symmetries of the classical patterns. In \cite{BMCDK}, the bijection between permutations avoiding a particular bivincular pattern was derived, as well as several other classes of combinatorial objects.

\begin{definition}
Let $p=(\sigma,X,Y)$ be a triple where $\sigma$ is a permutation of $\sym_n$ and $X$ and $Y$ are subsets of $\{0\}\cup[n]$. An occurrence of $p$ in $\pi$ is a subsequence $q=(\pi_{i_1},\cdots,\pi_{i_k})$ such that $q$ is an occurrence of $\sigma$ in $\pi$ and, with $(j_1<j_2<\cdots<j_k)$ being the set $\{\pi_{i_1},\cdots,\pi_{i_k}\}$ ordered (so $j_1=min_m\pi_{i_m}$ etc.), and $i_0=j_0=0$ and $i_{k+1}=j_{k+1}=n+1$,
 $$i_{x+1}=i_x+1\quad \forall x\in X\qquad \mbox{and}\qquad j_{y+1}=j_y+1\quad \forall y \in Y\,.$$
\end{definition}
Bivincular patterns are graphically represented by graying out the corresponding columns and rows in the Cartesian plane as exemplified in Figure~\ref{vincMesh} $(c)$. Clearly, bivincular patterns $(\sigma,\emptyset,\emptyset)$ coincide with the classical patterns, while bivincular patterns $(\sigma,X,\emptyset)$ coincide with the generalized patterns (hence, we will refer to them as {\em vincular} in the sequel).

\begin{figure}[!h]
\centering
\includegraphics[width=12cm]{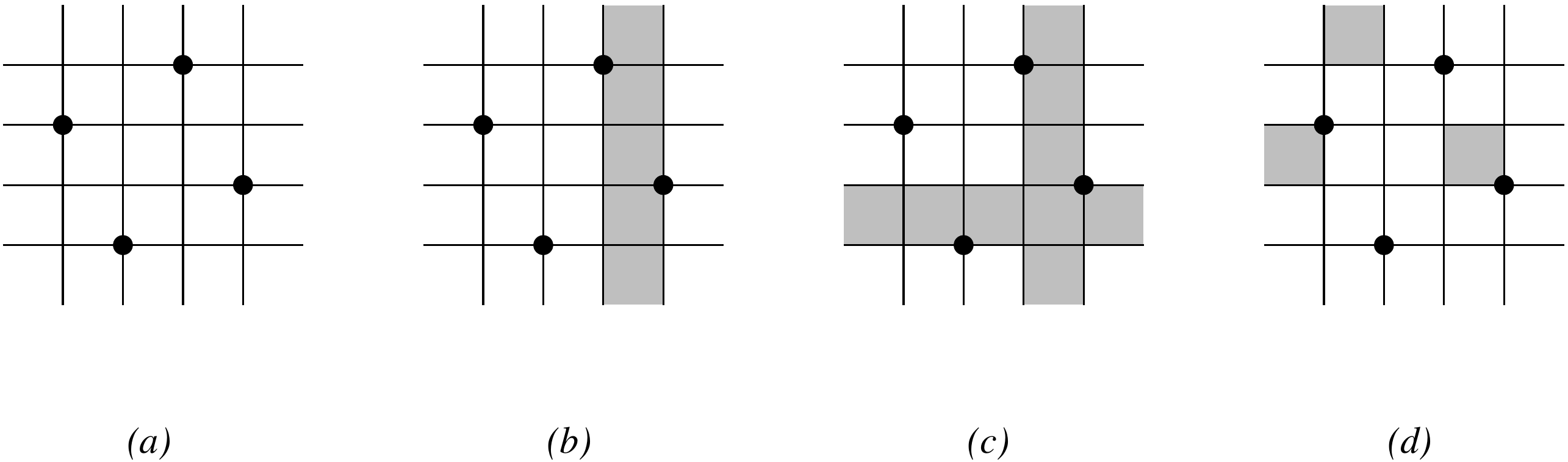}
\caption[Classical, generalized and bivincular patterns]{$(a)$ Classical pattern $3-1-4-2$; $(b)$ Generalized (or vincular) pattern $3-1-42$: $(c)$ Bivincular pattern $(3142,\{1\},\{3\})$; and $(d)$ Mesh pattern $(3142, R)$.}
\label{vincMesh}
\end{figure}

We now give the definition of {\em Mesh patterns}, which were introduced in \cite{mesh} to generalize multiple varieties of permutation patterns. To do so, we extend the above prohibitions determined by grayed out columns and rows to graying out an arbitrary subset of squares in the diagram. 

\begin{definition}
A mesh pattern is an ordered pair $(\sigma, R)$, where $\sigma$ is a permutation of $\sym_k$ and $R$ is a subset of the $(k+1)^2$ unit squares in $[0,k+1]\times [0,k+1]$, indexed by they lower-left corners.
\end{definition}

Thus, in an occurrence, in a permutation $\pi$, of the pattern $(3142,R)$, where $R= \{(0,2),(1,4),(4,2)\})$
in Figure \ref{vincMesh} $(d)$, there must, for example, be no letter in $\pi$ that precedes all letters in the occurrence and lies between the values of those corresponding to the $1$ and the $3$.
This is required by the shaded square in the leftmost column. For example, in the
permutation $425163$, $5163$ is not an occurrence of $(3142, R)$, since $4$ precedes $5$ and
lies between $5$ and $1$ in value, whereas the subsequence $4263$ is an occurrence of this mesh pattern.

The reader can find an extension of mesh patterns in \cite{tenner}, in which the author characterizes all mesh patterns in which the mesh is superfluous.

Both with regard to the bivincular patterns that mesh patterns is interesting to extend the results obtained in the case of classical patterns, in particular the analysis of classes Wilf equivalent. For example in \cite{P} we can find the classification of all bivincular patterns of length two and three according to the number of permutations avoiding them, and a partial classification of mesh patterns of small length in \cite{HJSVU}. 

\section{Partially ordered sets}
\label{sec:posets}

In this section we provide the basic notions and the most important definition on partially ordered sets (posets). For a more in-depth analysis, the interested reader can refer to \cite{S1}.

\begin{definition}
A {\em partially ordered set} or poset is a pair $P = (X; \leq)$ where $X$ is a set and
$\leq$ is a reflexive, antisymmetric, and transitive binary relation on $X$.
\end{definition}
$X$ is referred to as the {\em ground set}, while $P$ is a partial order on $X$. Elements of the ground set $X$ are also called {\em points}. A poset is finite if the ground set is finite.

In our work, we will consider only finite posets. Of course, the notation $x < y$ in $P$ means $x \leq y$ in $P$ and $x\neq y$. When the poset does not change throughout our analysis, we find convenient to abbreviate $x \leq y$ in $P$ with $x \leq_P y$. If $x, y \in X$ and either $x \leq y$ or $y \leq x$, we say that $x$ and $y$ are {\em comparable} in $P$; otherwise, we say that $x$ and $y$ are {\em incomparable} in $P$.

\begin{definition}
A partial order $P = (X; \leq)$ is called {\em total order} (or {\em linear order}) if for all $x, y \in X$, either $x \leq y$ in $P$ or $y \leq x$ in $P$.
\end{definition}

\begin{definition}
Let $x,y$ be two generic elements in $X$. A partial order $P = (X; \leq)$ is called {\em lattice} when there exist two elements, usually denoted by $x \vee y$ and by $x \wedge y$, such that:
\begin{itemize}
 \item $x \vee y$ is the {\em supremum} of the set $\{x, y\}$ in $P$
 \item$x \wedge y$ is the {\em infimum} of the set $\{x, y\}$ in $P$,
\end{itemize}
i.e. for all $z$ in $X$
$$z\geq x\vee y \Longleftrightarrow z\geq x \quad \mbox{and} \quad z\geq y$$
$$z\leq x\wedge y \Longleftrightarrow z\leq x \quad \mbox{and} \quad z\leq y\,\,.$$
\end{definition}

\begin{definition}
Given $x,y$ in a poset $P$, the interval $[x,y]$ is the poset $\{z \in P : x \leq z \leq y\}$ with the same order as $P$.
\end{definition}

\begin{definition}
Let $P = (X, \leq)$ be a poset and let $x$ and $y$ be distinct points from $X$. We say that ``$x$ is covered by $y$'' in $P$ when $x < y$ in $P$, and there is no point $z \in X$ for which $x < z$ in $P$ and $z < y$ in $P$.
\end{definition}

In some cases, it may be convenient to represent a poset with a diagram of the cover graph in the Euclidean plane. To do so, we choose a standard horizontal/vertical coordinate system in the plane and require that the vertical coordinate of the point corresponding to $y$ be larger than the vertical coordinate of the point corresponding to $x$ whenever $y$ covers $x$ in $P$. Each edge in the cover graph is represented by a straight line segment which contains no point corresponding to any element in the poset other than those associated with its two end points. Such diagrams, called {\em Hasse diagrams}, are defined as follows.

\begin{definition}
The Hasse diagram of a partially ordered set $P$ is the (directed) graph whose vertices are the elements of $P$ and whose edges are the pairs $(x, y)$ for which $y$ covers $x$. It is usually drawn so that elements are placed higher than the elements they cover.
\end{definition}

The Boolean algebra $B_n$ is the set of subsets of $[n]$, ordered by inclusion ($S \leq T$ means $S \subseteq T$). Generalizing $B_n$, any collection $P$ of subsets of a fixed set $X$ is a partially ordered set ordered by inclusion. Figure~\ref{hasse} displays the diagram obtained with $n=3$.
\begin{figure}[!h]
\centering
\includegraphics[width=4cm]{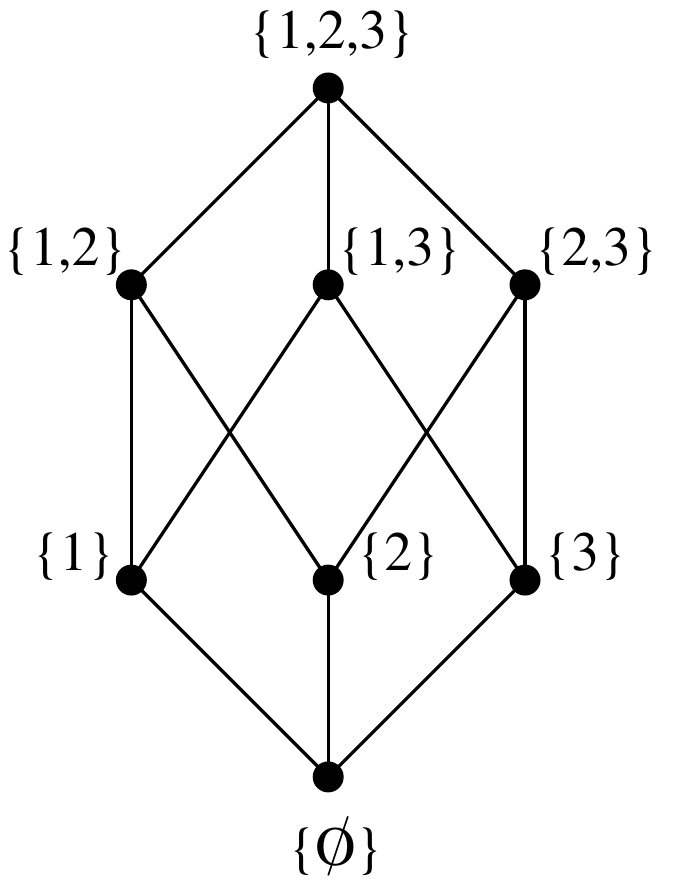}
\caption{The Hasse diagram of $B_3$.}
\label{hasse}
\end{figure}

In particular, Hasse diagrams are useful to visualize various properties of posets.

\begin{definition} 
A linear extension of a poset $P=(X,\leq)$, where $X$ has cardinality $|X|$, is a bijection $\lambda :X\rightarrow \{1,2,\cdots,|X|\}$ such that $x<y$ in $P$ implies $\lambda(x)<\lambda(y)$.
\end{definition}

\begin{definition}
If $P=(X,\leq)$ is a poset and $Y\subseteq X$, then $F_P(Y)=\{x\in X:\forall y\in Y, x>y\}$ (respectively $I_P(Y)=\{x\in X:\forall y\in Y, x<y\}$) is called the {\em filter} (respectively the {\em ideal}) of $P$ generated by $Y$.

If $P=(X,\leq)$ is a poset let ${\cal D}_P=\{I_P(\{x\}): x\in X\}$ and ${\cal U}_P=\{F_P(\{x\}): x\in X\}$ be respectively the set of principal ideals of $P$ and the set of principal filters of $P$.
\end{definition}

\begin{definition}
Given a poset $P=(X,\leq)$ an equivalence relation on $X$ is trivially defined by saying that two elements $x$ and $y$ are order equivalent in $P$ if and only if $I_P(\{x\})=I_P(\{y\})$ and $F_P(\{x\})=F_P(\{y\})$.
\end{definition}

\subsection{Operations on partially ordered sets}
Given two partially ordered sets P and Q, we can define the following new partially ordered sets:
\begin{enumerate}
 \item {\bf Disjoint union.} $P + Q$ is the disjoint union set $P\cup Q$, where $x\leq_{P + Q} y$ if and only if one of the following conditions holds:
\begin{itemize}
 \item $x, y \in P$ and $x \leq_P y$
 \item $x, y \in Q$ and $x \leq_Q y$
\end{itemize}
The Hasse diagram of $P + Q$ consists of the Hasse diagrams of $P$ and $Q$ drawn together.

 \item {\bf Ordinal sum.} $P \oplus Q$ is the set $P\cup Q$, where $x \leq_{P\oplus Q} y$ if and only if one of the following conditions holds:
\begin{itemize}
 \item $x \leq_{P + Q} y$
 \item $x \in P$ and $y \in Q$
\end{itemize}
Note that the ordinal sum operation is not commutative: in $P \oplus Q$, everything in $P$ is less than everything in $Q$.

The posets that can be described by using the operations $\oplus$ and $+$ starting from the single element poset (usually denoted by $1$) are called {\em series parallel orders} \cite{S1}. This set of posets has a nice characterization in terms of avoiding subposet. 

\item {\bf Cartesian product.} $P \times Q$ is the Cartesian product set $\{(x, y): x \in P, y \in Q\}$, where $(x, y) \leq_{P\times Q} (x', y')$ if and only if both $x \leq_P x$ and $y \leq_Q y$.
The Hasse diagram of $P \times Q$ is the Cartesian product of the Hasse diagrams of $P$ and $Q$.
\end{enumerate}
\begin{definition}
A chain of a partially ordered set $P$ is a totally ordered subset $C \subseteq P$, with $C = \{x_0,\cdots, x \}$ with $x_0 \leq \cdots \leq x_l$. The quantity $l= |C| - 1$ is the length of the chain and is equal to the number of edges in its Hasse diagram.
\end{definition}

If $1$ denotes the single element poset, then a chain composed by $n$ elements is the poset obtained by performing the ordinal sum exactly $n$ times: $1\oplus 1\oplus \cdots \oplus 1$.

\begin{definition}
A chain is {\em maximal} if there exist no other chain strictly containing it.
\end{definition}

\begin{definition}
The {\em rank} of $P$ is the length of the longest chain in $P$.
\end{definition}

The set of all permutations forms a poset $P$ with respect to classical pattern containment. That is, a permutation $\sigma$ is smaller than $\pi$ (i.e. $\sigma\leq \pi$) if $\sigma$ occurs as a pattern in $\pi$. This poset is the underlying object of all studies of pattern avoidance and containment.


\chapter[$k$-parallelogram polyominoes]{$K$-parallelogram polyominoes: characterization and enumeration}\label{chap:chapter2}
In this chapter we consider the problem of enumerating a subclass of $k$-convex polyominoes.
We recall (see Section \ref{sec:cap1_pol} for more details) that a convex polyomino is {\em $k$-convex} if every pair of its cells can be connected by means of a {\em monotone path}, internal to the polyomino (see Figure \ref{monopath} $(b)$ and $(c)$), and having at most $k$ changes of direction. In the literature we find some results regarding the enumeration of $k$-convex polyominoes of given semi-perimeter, but only for small values of $k$, precisely $k=1,2$, see again Chapter \ref{chap:chapter1} for more details.

Since the problem of counting $k$-convex polyominoes is difficult, we tackle the problem of enumerating a remarkable subclass of $k$-convex polyominoes, precisely the $k$-convex polyominoes which are also {\em parallelogram polyominoes}, called for brevity {\em $k$-parallelogram polyominoes} and denoted by $\poly_k$.\\
Figure \ref{monopath} $(a)$ shows an example of convex polyomino that is not parallelogram, while Figure \ref{monopath} $(c)$ depicts a $4$-parallelogram (non $3$-parallelogram) polyomino.
%
\begin{figure}[htbp]
  \begin{center}
  \includegraphics[width=10cm]{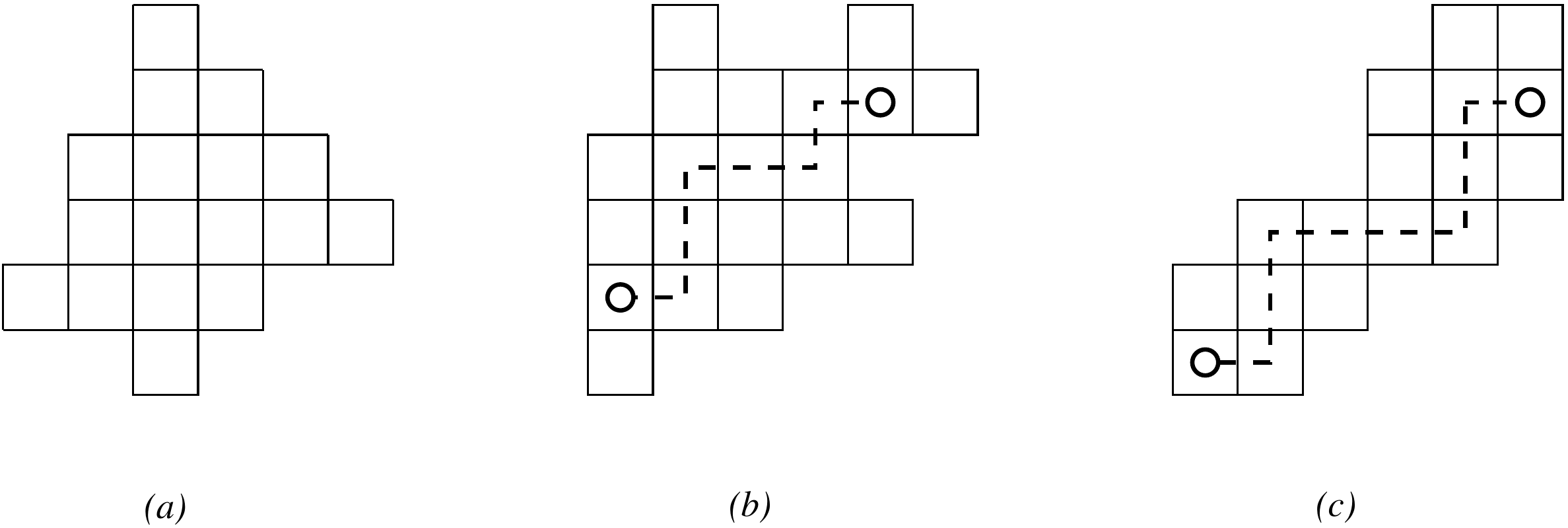}
  \caption{$(a)$ A convex polyomino; $(b)$ a monotone path between two cells of the polyomino with four changes of direction; $(c)$ a $4$-parallelogram.}
  \label{monopath}
  \end{center}
  \end{figure}

The class $\poly$ of $k$-parallelogram polyominoes can be treated in a simpler way than $k$-convex polyominoes, since we can use the simple fact that a parallelogram polyomino is $k$-convex if and only if there exists at least one monotone path having at most $k$-changes of direction running from the lower leftmost cell to the upper rightmost cell of the polyomino.

More precisely, using such a property in the next sections we will partition the class $\poly_k$ into three subclasses, namely the {\em flat}, {\em right}, and {\em up}  $k$-parallelogram polyominoes. We will
provide an unambiguous decomposition for each of the three classes, so we will use these decompositions in order to obtain the generating functions of the three classes and then of $k$-parallelogram polyominoes.
An interesting fact is that, while the generating function of parallelogram polyominoes is algebraic, for every $k$ the generating function of $k$-parallelogram polyominoes is rational.
Moreover, we will be able to express such generating function as continued fractions, and then in terms of the known {\em Fibonacci polynomials}. The final version of the generating function of $\poly_k$ in terms of Fibonacci polynomials suggests us to search some bijection with other combinatorial objects, in particular in \cite{knuth} it is proved that the generating function of plane trees having height less than or equal to a fixed value can be expressed using Fibonacci polynomials and so we found a nice bijection between these two objects. 

To our opinion, this work is a first step towards the enumeration of $k$-convex polyominoes, since it is possible to apply our decomposition strategy to some larger classes of $k$-convex polyominoes (such as, for instance, directed $k$-convex polyominoes).

\section{Classification and decomposition of the class $\poly_k$}

Let us start by providing some basic definitions which will be useful in the rest of the section.

As a Consequence of Definition \ref{def:Internal-path} in Section \ref{sec:cap1_pol} we can represent an internal path as a sequence of cells. 
\begin{definition}
 Let be $A$ and $A'$ two distinct cells of a polyomino; an internal path from $A$ to $A'$, denoted $\pi_{AA'}$, is a sequence of distinct cells $(B_1,\cdots, B_n)$ such that $B_1=A$, $B_n=A'$ and every two consecutive cells in this sequence are edge-connected.
\end{definition}

 Henceforth, since polyominoes are defined up to translation, we assume that the center of each cell of a polyomino corresponds to a point of the plane $\mathbb{Z} \times\mathbb{Z}$, and that the center of the lower leftmost cell of the minimal bounding rectangle (denoted by m.b.r.) of a polyomino corresponds to the origin of the axes. In our case, since we deal of parallelogram polyominoes, we have that the lower leftmost cell of the m.b.r belongs to the polyomino. So, according to the respective position of the cells $B_i$ and $B_{i+1}$, we say that the pair $(B_i,B_{i+1})$ forms:
\begin{enumerate}
 \item a {\em north} step $n$ in the path if $(x_{i+1},y_{i+1})=(x_i,y_{i}+1)$;
 \item an {\em east} step $e$ in the path if $(x_{i+1},y_{i+1})=(x_{i}+1,y_i)$;
 \item a {\em west} step $w$ in the path if $(x_{i+1},y_{i+1})=(x_{i}-1,y_i)$;
 \item a {\em south} step $s$ in the path if $(x_{i+1},y_{i+1})=(x_i,y_{i}-1)$.
\end{enumerate}
Moreover, since we will be working with parallelogram polyominoes which are convex polyominoes, for obvious reasons of symmetry, we will deal only with monotone paths using steps $n$ or $e$.
\begin{definition}
 Let $P$ be a parallelogram polyomino and $\pi$ a path internal to $P$. We call side every maximal sequence of steps of the same type into $\pi$.
\end{definition}
 
 \begin{definition}
  Let $P$ be a parallelogram polyomino. We denote by $S$ and $E$ the lower leftmost cell and the upper rightmost cell of $P$, respectively.  
 \end{definition}

\begin{definition}\label{def:hEv}
 The {\itshape vertical (horizontal)} path $v(P)$ (respectively $h(P)$) is the path - if it exists - internal to $P$, running from $S$ to $E$, and starting  with a north step $n$ (respectively $e$), where every side has maximal length (see Figure \ref{pOpUpR}).
\end{definition}

From now on, in order to make the decomposition more understandable, in the graphical representation the path will be represented using lines rather that cells.
In practice, to represent the path we use a line joining the centers of the cells, more precisely a dashed line to represent $v(P)$, and a solid line to represent $h(P)$.
We remark that our definition does not work if the first column (resp. row) of $P$ is made of one cell, and then in this case we set by definition that $v(P)$ and $h(P)$ coincide (Figure \ref{pOpUpR} $(d)$). Henceforth, if there are no ambiguities we will write $v$ (resp. $h$) in place of $v(P)$ (resp. $h(P)$).
So, by definition, a cell $V_i$ of $v$ (or $H_i$ of $h$) could correspond to one of two possible types of changes of direction, more precisely
\begin{description}
 \item[-] to a change {\em e-n} if $(x_{V_i}+1,y_{V_i})\notin P$ (resp. $(x_{H_i}+1,y_{H_i})\notin P$);
 \item[-] to a change {\em n-e} if $(x_{V_i},y_{V_i}+1)\notin P$ (resp. $(x_{H_i},y_{H_i}+1)\notin P$).
\end{description}

These two paths individuate two distinct types of cells into the polyomino at every change of direction. So, considering $h(P)=(H_1=S,\cdots, H_n=E)$ (respectively $v(P)=(V_1=S,\cdots, V_n=E)$), we can characterize each cell of $P$ that is not in $h(P)$ (or $v(P)$) as follows:
For every cell $B\in P$, $B\notin h(P)$ (resp. $B\notin v(P)$), there exists an index $i$, $1<i\leq n$, such that
\begin{center}
 $x_B<x_{H_i}\,\,$ and $\,\,y_B=y_{H_i}\,\,\,\,\,\,$ or $\,\,\,\,\,\,x_B>x_{H_i}\,\,$ and $\,\,y_B=y_{H_i}$
\end{center}
\begin{center}
 (resp. $\,\,x_B<x_{V_i}\,\,$ and $\,\,y_B=y_{V_i}\,\,\,\,\,\,$ or $\,\,\,\,\,\,x_B>x_{V_i}\,\,$ and $\,\,y_B=y_{V_i}\,$).
\end{center}
We say that in the first case $B$ is a cell of type {\em left-top}, denoted with $B^{\lrcorner}$ and in the second case that $B$ is a cell of type {\em right-bottom}, denoted with $B^\ulcorner$. The reader can observe in Figure \ref{typeOfCells} that the cell $B$ is an example of cell $B^\lrcorner$, in fact $x_B<x_{H_5}\,\,$ and $\,\,y_B=y_{H_5}$ and that $B'$ is an example of cell ${B'}^\ulcorner$, in fact $x_B<x_{H_7}\,\,$ and $\,\,y_B=y_{H_7}$.

\begin{figure}[htbp]
\begin{center}
\includegraphics[width=9cm]{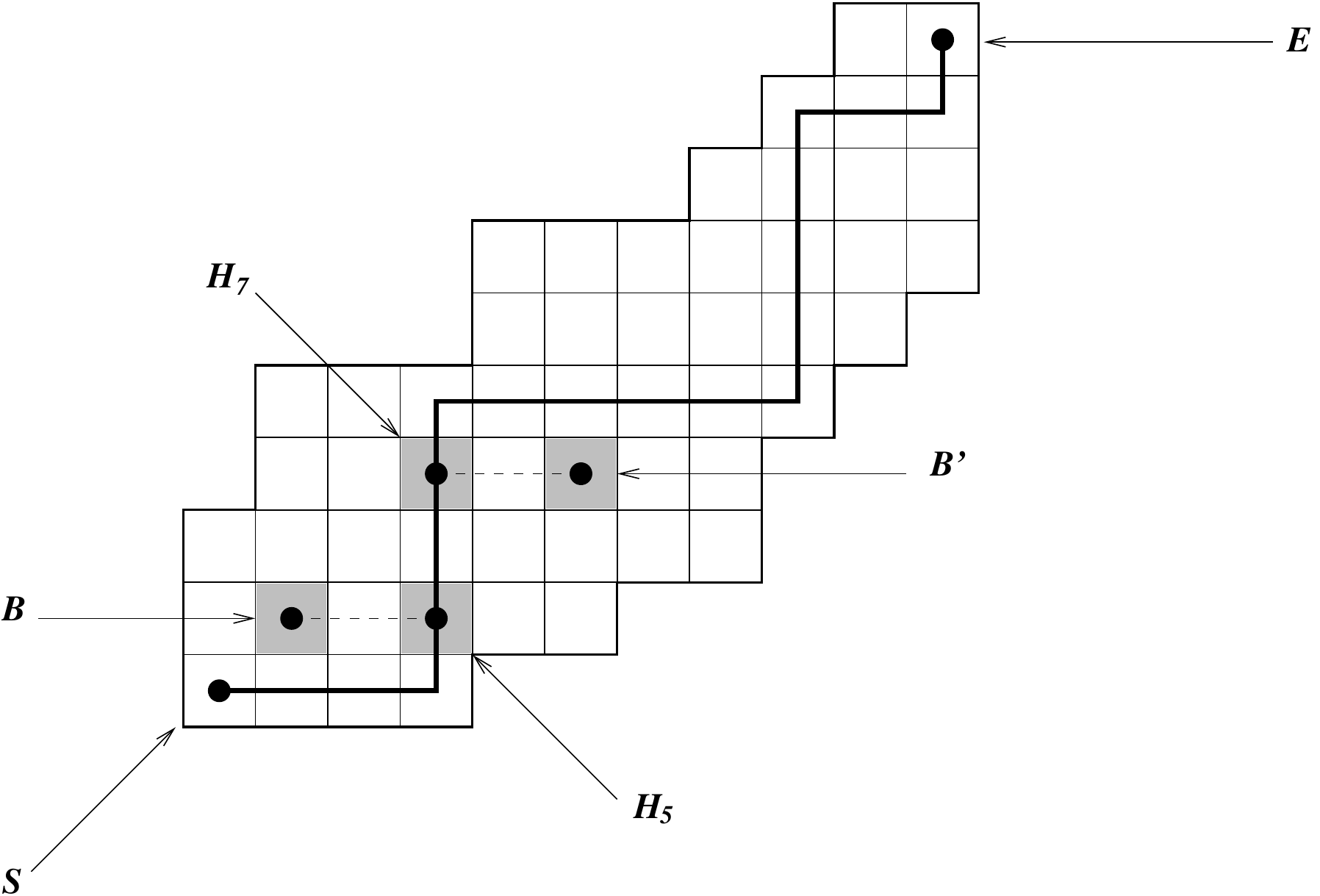}
\caption{An example of $4$-parallelogram polyomino and the internal path  $h={\tiny(H_1=S,H_2,\cdots,H_{20}=E)}$.}
\label{typeOfCells}
\end{center}
\end{figure}
  
Now, we are ready to prove the following important proposition.
\begin{proposition}\label{convexitydegree}
The convexity degree of a parallelogram polyomino $P$ is equal to the minimal number of changes of direction required to any path running from $S$ to $E$.
\end{proposition}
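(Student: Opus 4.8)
The plan is to reformulate the convexity degree as an extremal quantity and then show that the extremum is attained at the pair $(S,E)$. For two distinct cells $A,A'$ of $P$ let $d(A,A')$ denote the least number of changes of direction taken over all monotone paths joining $A$ and $A'$ (such a path exists because $P$ is convex). By definition a convex polyomino is $k$-convex exactly when $d(A,A')\le k$ for every pair, so its convexity degree equals $\max_{A,A'} d(A,A')$. Since $(S,E)$ is one such pair, the inequality $\max_{A,A'} d(A,A')\ge d(S,E)$ is immediate, and the whole content of the statement is the reverse inequality: for every pair of cells one has $d(A,A')\le d(S,E)$, that is, the corner pair $(S,E)$ is the hardest to connect. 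Throughout I use the description of a parallelogram polyomino by its columns $[b_i,t_i]$, $1\le i\le w$, where both $(b_i)$ and $(t_i)$ are weakly increasing and $b_{i+1}\le t_i$ (equivalently, $P$ lies between two north-east paths meeting only at $S$ and $E$); in particular every cell lies in the rectangle spanned by $S$ and $E$.

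Two distinct cells are either north-east comparable (one lies weakly south-west of the other) or anti-diagonal (one lies strictly north-west of the other). I first dispose of the anti-diagonal case, which turns out to be the easy one. If $A=(i,a)$ lies north-west of $A'=(j,a')$, so that $i<j$ and $a>a'$, the monotone path that first descends in column $i$ from height $a$ to height $a'$ and then moves east along row $a'$ to column $j$ stays inside $P$: the descent is legal because $a'\ge b_j\ge b_i$, and the eastward run is legal because for every intermediate column $\ell\in[i,j]$ one has $b_\ell\le b_j\le a'$ and $a'<a\le t_i\le t_\ell$, using the monotonicity of $(b_\ell)$ and $(t_\ell)$. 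This path has a single change of direction, so $d(A,A')\le 1\le d(S,E)$ whenever $P$ is not a single row or column (the degenerate cases contain no anti-diagonal pairs and are trivial).

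For a north-east comparable pair, with $A$ the south-west and $A'$ the north-east cell, every monotone path from $A$ to $A'$ is confined to the closed rectangle $B$ they span, because both coordinates vary monotonically. Hence $d_P(A,A')=d_{P'}(A,A')$, where $P'=P\cap B$. A direct check on the column intervals shows that $P'$ is again a parallelogram polyomino and that $A$ and $A'$ are precisely its cells $S_{P'}$ and $E_{P'}$: intersecting with $B$ replaces each $b_\ell$ by $\max(b_\ell,a)$ and each $t_\ell$ by $\min(t_\ell,a')$, which preserves the monotonicity and overlap conditions. The claim therefore reduces to the monotonicity statement that passing to a sub-parallelogram cut out by a rectangle never increases the corner-to-corner complexity, i.e. $d(S_{P'},E_{P'})\le d(S_P,E_P)$.

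I would prove this monotonicity by decomposing the rectangular cut into four elementary one-sided cuts (deleting the leftmost or rightmost column, or the bottom or top row) and treating one of them, say deletion of the leftmost column, the others following by the diagonal reflection (which exchanges rows and columns and preserves the class) and the $180^{\circ}$ rotation (which exchanges $S$ and $E$ and the two ends), both of which preserve the number of changes of direction. For the leftmost-column cut I take a minimal monotone path $\gamma$ from $S_P$ to $E_P$ with $d(S_P,E_P)$ turns and surger it into a path of the deleted polyomino with no more turns: following $\gamma$ until it leaves the first column through some cell $(1,y_0)$, I replace its portion in column $1$ by the vertical segment of the new first column from $S_{P''}$ up to $(2,y_0)$; a short case analysis on whether $\gamma$ continues east or north after $(2,y_0)$ shows this either keeps the number of turns the same or merges two turns into one, never increasing it. The main obstacle is exactly this surgery lemma: making precise that the re-routing along the new boundary column can always absorb the turn created at the deleted corner, and verifying that the elementary cuts compose to an arbitrary rectangle while staying inside the class of parallelogram polyominoes. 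Once the monotonicity lemma is in place, combining the two cases yields $d(A,A')\le d(S,E)$ for all pairs, which is the desired equality.
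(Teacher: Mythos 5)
Your proof is correct in outline but takes a genuinely different route from the paper's. The paper fixes the two canonical greedy monotone paths $h$ and $v$ from $S$ to $E$, classifies every cell not lying on them as ``left-top'' or ``right-bottom'' with respect to some turning cell $H_i$ or $V_i$, and connects an arbitrary pair of cells by sending each to the nearer canonical path with one extra turn and then following that path; the bound is the smaller of the numbers of turns of $h$ and $v$. You instead split pairs into anti-diagonal ones (dispatched with a single one-turn hook) and north-east comparable ones, reduce the latter to the corner-to-corner problem for the sub-parallelogram cut out by the spanned rectangle, and invoke a monotonicity lemma for one-sided cuts proved by path surgery. Your reduction is self-contained and, unlike the paper's argument, does not presuppose that the greedy paths $h$ and $v$ realize the minimum number of turns among all monotone paths from $S$ to $E$ --- a fact the paper uses only implicitly. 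The two points you flag as needing care do go through: the intersection with the spanned rectangle has column intervals $[\max(b_\ell,a),\min(t_\ell,a')]$, which remain nonempty, weakly increasing and overlapping, so it is a parallelogram polyomino with $A$ and $A'$ as its corner cells; and the leftmost-column surgery is a short three-case check (the minimal path starts east, in which case $b_2=b_1$ and truncation suffices; it starts north and then runs east past column $2$, in which case the turn merely migrates from $(1,y_0)$ to $(2,y_0)$; or it turns north again at column $2$, in which case two turns merge into none). What the paper's route buys is the machinery of $h$, $v$ and the cell classification, which is reused throughout the chapter for the decomposition of $\mathbb{P}_k$; what yours buys is a tighter match with the literal statement and a cleaner logical structure for this proposition in isolation.
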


\begin{proof}
 Let $P$ be a polyomino and let be $k$ the minimal number of changes of direction among $h$ and $v$. We want to prove that for every two cells of $P$, $A$ and $A'$ different from $S$ and $E$, exists a path $\pi_{AA'}$ having at most $k$ changes of direction.
 We have to take into consideration three different cases.
 \begin{enumerate}
  \item {\em Both $A$ and $A'$ belong to $v$ (or $h$).}\\
  This case is trivial because the path running from $A$ to $A'$, $\pi_{AA'}$, is a subpath of $v$ (or $h$), so the number of changes of direction is less than or equal to $k$.
  \item {\em Only one between $A$ and $A'$ belongs to $v$ (or $h$).}\\
  We can assume without loss of generality that $A'$ is the cell that belongs to $v$ (or $h$) and that $A$ is a cell of type $A^\lrcorner$. Then, there exists an index $i$ such that $x_B<x_{V_i}\,\,$ and $\,\,y_B=y_{V_i}$ (or $x_B<x_{H_i}\,\,$ and $\,\,y_B=y_{H_i}$), so with $x_{V_i}-x_B$ (or $x_{H_i}-x_B$) steps $e$ we can reach the path $v$ (or $h$) with only one change of direction but after $v$ (or $h$) have changed its direction at least once. At this point it is easy to see that the path $\pi_{AA'}$ has at most $k$ changes of direction, the first to reach the path $v$ (or $h$) and the subsequent ones are those made by the subpath of $v$ (or of $h$) to the cell $A'$.
  \item {\em Neither $A$ nor $A'$ belong to $v$ (or $h$).}\\
  The proof is similar to that one of the previous case.
 \end{enumerate}
\end{proof}

We are now ready to establish an important property of the paths $v$ and~$h$.
\begin{proposition}
 The numbers of changes of direction that $h$ and $v$ require to run from $S$ to $E$ may differ at most by one.
\end{proposition}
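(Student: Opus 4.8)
The plan is to reformulate the statement and then reduce it to a single inequality. Since each change of direction separates two consecutive maximal sides, the number of changes of direction of a path equals its number of sides minus one; hence it is equivalent to prove that the number of sides $L_v$ of $v$ and the number of sides $L_h$ of $h$ satisfy $|L_v-L_h|\le 1$. I would encode $P$ by its columns, writing column $i$ as the interval of rows $[b_i,t_i]$ with $b_0=0$, where convexity gives $b_0\le b_1\le\cdots$ and $t_0\le t_1\le\cdots$, and connectedness gives $b_{i+1}\le t_i$; then $S=(0,0)$ and $E=(X,t_X)$. Recall that $v$ and $h$ are the two greedy monotone paths from $S$ to $E$ that keep every side maximal, $v$ leaving $S$ to the north and $h$ to the east. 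Reflecting $P$ across the line $y=x$ produces a parallelogram polyomino for which north and east steps are exchanged and which carries $v$ to $h$ and $h$ to $v$ while preserving the number of sides; consequently it suffices to prove the one-sided bound $L_h\le L_v+1$, the reverse inequality following by applying it to the reflected polyomino.

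To prove $L_h\le L_v+1$ I would first localize the turns of either path. Because both paths have maximal sides, a north-to-east turn occurs exactly at a cell whose north neighbour lies outside $P$ (so it sits on the upper staircase $\{t_i\}$), while an east-to-north turn occurs exactly at a cell whose east neighbour lies outside $P$, i.e. where the lower profile rises ($b_{i+1}$ exceeds the current height). Using this, the north-runs of $v$ can be listed by the greedy \emph{climb} recurrence $\gamma_0=0$, $\gamma_{k+1}=\max\{i:b_i\le t_{\gamma_k}\}$, and by the reflection the east-runs of $h$ are governed by the symmetric recurrence on the left profile. The core of the argument would be to match the turns of $v$ with those of $h$ through the common boundary features that force them: each relevant corner of the upper staircase $\{t_i\}$ or of the lower staircase $\{b_i\}$ induces a turn in each of the two paths, and I would pair these so that all turns are matched except for a single one arising from the two different initial steps, giving $|L_v-L_h|\le1$. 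An alternative, and perhaps safer, route is an induction on the width of $P$: peel off the first column and control how $L_v$ and $L_h$ change, checking that the invariant $|L_v-L_h|\le1$ is preserved.

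The main obstacle is that $v$ and $h$ need not be nested: the maximal east-runs can drive $v$ far to the right at a low height while $h$ climbs early, so the two paths may actually cross, and one cannot rely on a planar ``$v$ lies above $h$'' ordering to organize the pairing. The matching of turns must therefore be extracted directly from the blocking conditions above rather than from an enclosure of one path by the other, and one must show that \emph{exactly} one turn can remain unpaired, not two. Finally, the degenerate situations excluded in the definition of $v$ and $h$ — when the first column or the first row of $P$ consists of a single cell, in which case $v$ and $h$ coincide by convention and the difference is $0$ — together with the bookkeeping of whether each path ends with a north or an east side, have to be treated as separate base cases. I expect this verification, that no more than one turn is ever left unmatched, to be the technical heart of the proof.
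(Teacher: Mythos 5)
The paper gives no written argument for this proposition: it simply declares the proof ``analogous'' to that of Proposition~\ref{convexitydegree} and leaves it to the reader, so there is no detailed proof in the paper to compare against. Your framework is sound, and indeed more explicit than anything in the text: the translation from changes of direction to numbers of sides, the reduction to the one-sided bound $L_h\le L_v+1$ via reflection in the diagonal (which does preserve the class of parallelogram polyominoes and exchanges $v$ with $h$), and the localization of the turns at cells blocked to the north or to the east are all correct.

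The gap is that the proposal stops exactly where the proof begins. You write that you ``would pair'' the turns of $v$ with those of $h$ so that at most one is left unmatched, and you yourself flag this pairing as the technical heart; but this pairing \emph{is} the proposition, and it is asserted rather than established. Concretely, with column $i$ occupying rows $[b_i,t_i]$ and $g(r)=\max\{i : b_i\le r\}$, the east-runs of $v$ end at columns $c_0=1$, $c_{k+1}=g(t_{c_k})$, while those of $h$ end at $d_1=g(b_1)$, $d_{k+1}=g(t_{d_k})$; the missing lemma is the interleaving $d_k\le c_k\le d_{k+1}$, which follows by a joint induction from the monotonicity of $g$ and of $(t_i)$, and which immediately shows that the two paths reach the last column within one east-run of each other (whence the bound, after accounting for the optional final north-run). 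Your proposal never states this interleaving; instead it raises the worry that the paths may cross and that two turns might remain unmatched, without resolving it, and the alternative route you offer (induction on the width) is likewise only named and is delicate, since deleting the first column does not simply truncate the two greedy paths. As written, the argument is a plausible plan rather than a proof.
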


The proof is analogous of that one of the previous property and it is left to the reader. Also the following property is straightforward.

\begin{proposition}
 A polyomino $P$ is $k$-parallelogram if and only if at least one among $v(P)$ and $h(P)$ has at most $k$ changes of direction.
\end{proposition}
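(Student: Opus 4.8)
The plan is to reduce the statement to Proposition~\ref{convexitydegree} and then to show that, among all monotone paths from $S$ to $E$, the minimum number of changes of direction is attained by $v(P)$ or by $h(P)$. Write $c(\pi)$ for the number of changes of direction of an internal path $\pi$. By Proposition~\ref{convexitydegree}, the polyomino $P$ is $k$-parallelogram precisely when $\min_{\pi_{SE}} c(\pi)\le k$, the minimum being taken over all monotone paths running from $S$ to $E$. Hence the proposition follows once we establish the identity
\[
\min_{\pi_{SE}} c(\pi)\;=\;\min\bigl(c(v),c(h)\bigr).
\]

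One inequality is immediate: since $v$ and $h$ are themselves monotone paths from $S$ to $E$, we have $\min_{\pi_{SE}} c(\pi)\le\min(c(v),c(h))$. This already yields the ``if'' direction, because if one of $v,h$ has at most $k$ changes of direction then so does some path from $S$ to $E$, and therefore $P\in\poly_k$.

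For the reverse inequality, and hence the ``only if'' direction, I would prove the following extremality lemma: $v$ minimizes $c$ among all monotone $S$--$E$ paths that begin with a north step, and symmetrically $h$ minimizes $c$ among those beginning with an east step. Since every monotone $S$--$E$ path begins with either $n$ or $e$, the lemma gives $c(\pi)\ge\min(c(v),c(h))$ for every such $\pi$, that is $\min_{\pi_{SE}} c(\pi)\ge\min(c(v),c(h))$, which closes the identity and finishes the proof.

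The lemma is where the real work lies, and I expect it to be the main obstacle. The idea is a greedy \emph{straightening} argument exploiting the fact that, by Definition~\ref{def:hEv}, every side of $v$ has maximal length. Starting from an arbitrary path $\pi$ that begins with $n$, I would repeatedly lengthen a non-maximal side: whenever $\pi$ turns from $n$ to $e$ at a cell from which it could still have continued north while staying inside $P$, I extend that north side and reroute the remainder of the path. Each such local move keeps the path monotone and internal to $P$ --- here the convexity of $P$ is essential, namely the staircase shape of a parallelogram polyomino, in which each row is an interval of cells whose two endpoints are nondecreasing functions of the row index, so that the rerouted portion still reaches $E$ --- and it never increases the number of turns, since two consecutive north sides are merged into one. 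Iterating until no side can be lengthened produces exactly $v$, whence $c(v)\le c(\pi)$; the argument for $h$ is identical with the roles of $n$ and $e$ interchanged. The delicate point to verify is precisely that a single straightening step can always be completed to a valid internal path reaching $E$ without introducing an additional change of direction, and this is exactly where the parallelogram (convexity) hypothesis enters.
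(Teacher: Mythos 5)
Your proof is correct. Note that the paper offers no argument at all for this proposition (it is simply declared ``straightforward''), so the substance of your write-up is the extremality lemma, which is precisely the point the paper leaves implicit --- indeed, the paper's own proof of Proposition~\ref{convexitydegree} already tacitly assumes that no monotone $S$--$E$ path does better than $h$ or $v$, since it only bounds the convexity degree from above by $\min(c(h),c(v))$. Your straightening argument does close: when you lift a north-to-east turn at a cell $c$ by one row, the displaced east side stays inside $P$ because the cell above the next corner already lies on the old path and rows of a parallelogram polyomino are intervals (symmetrically for east sides and columns); the process terminates because each move increases a bounded corner coordinate without increasing the number of sides, and the terminal path is by construction the one in which every side is maximal, i.e.\ $v$ (resp.\ $h$). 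A slightly cleaner route to the same lemma, avoiding the rerouting bookkeeping, is a domination induction: since the top cell of each column and the rightmost cell of each row are non-decreasing functions of the column and row index, one shows that after $j$ sides the $j$-th corner of $v$ weakly dominates, coordinatewise, the $j$-th corner of any monotone $S$--$E$ path starting with a north step, so $v$ reaches $E$ in at most as many sides. Either version completes the proof; just make sure to dispose of the degenerate case where $v$ and $h$ coincide by convention, in which every $S$--$E$ path starts with the same step and the lemma applies to that single direction.
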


We will begin our study with the class $\mathbb{P}_k$ of $k$-parallelogram polyominoes where the convexity degree is exactly equal to $k\geq 0$. Then the enumeration of $\poly_k$ will readily follow. According to our definition, $\mathbb{P}_0$ is made of horizontal and vertical bars of any length.
We further notice that, in the given parallelogram polyomino $P$,  there may exist a cell starting from which the two paths $h$ and $v$ are superimposed (see Figure \ref{pOpUpR} $(b)$, $(c)$). In this case, we denote such a cell by $C(P)$ (briefly, $C$)\label{cellC}.  Clearly $C$ may even coincide with $S$ (see Figure \ref{pOpUpR} $(d)$). If such cell does not exist, we assume that $C$ coincides with  $E$. Figure \ref{pOpUpR} depicts the various position of the cell $C$ into a parallelogram polyomino.

\begin{figure}[htbp]
  \begin{center}
     \includegraphics[width=13cm]{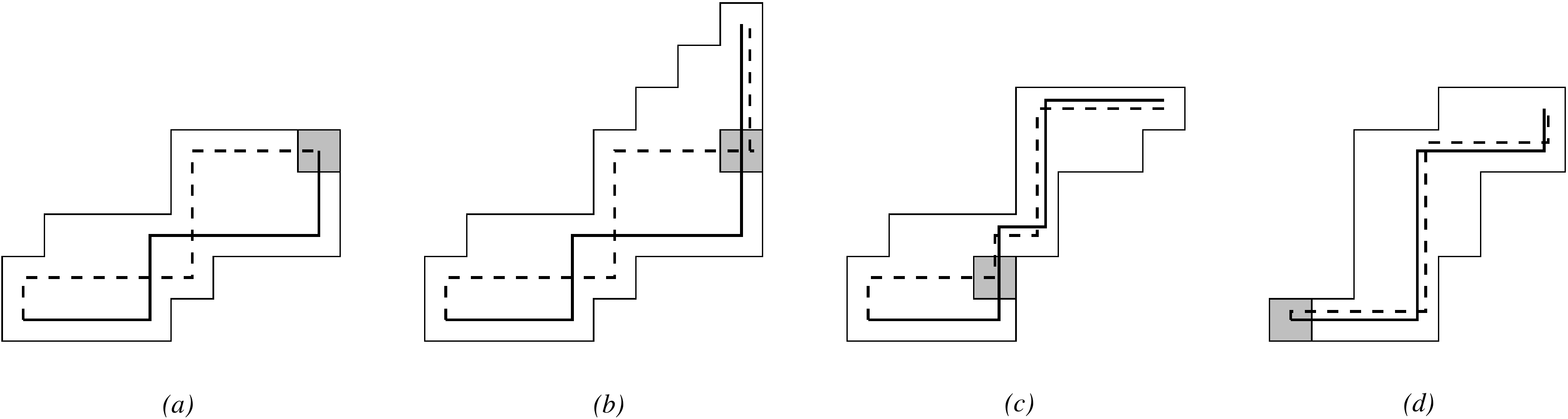}
    \caption{The paths $h$ (solid line) and $v$ (dashed line) in a parallelogram polyomino, where the cell $C$ has been highlighted; $(a)$ a polyomino in $\overline{\mathbb{P}}_3$; $(b)$ a polyomino in $\mathbb{P}_3^U$
    ; $(c)$ a polyomino in $\mathbb{P}_4^R$; $(d)$ a polyomino in $\mathbb{P}_3^U$ where $C$ coincides with $S$.}
    \label{pOpUpR}
  \end{center}
\end{figure}

From now on, unless otherwise specified, we will always assume that $k\geq 1$. Let us give a classification of the polyominoes in $\mathbb{P}_k$, based on the position of the cell $C$ inside the polyomino.
\begin{definition}
 A polyomino $P$ in $\mathbb{P}_k$ is said to be
\begin{enumerate}
 \item  a flat $k$-parallelogram polyomino if $C(P)$ coincides with $E$. The class of these polyominoes will be denoted by ${\overline{\mathbb{P}}_k}$.
 
 \item an up (resp. right) $k$-parallelogram polyomino, if the cell $C(P)$ is distinct from $E$ and $h$ and $v$ end with a north (resp. east) step. The class of up (resp. right) $k$-parallelogram  polyominoes will be denoted by  $\mathbb{P}_k^U$ (resp. $\mathbb{P}_k^R$).
 
 \end{enumerate}
 
\end{definition}
Figure \ref{pOpUpR} $(a)$ depicts a polyomino in ${\overline{\mathbb{P}}_k}$, while Figures \ref{pOpUpR} $(b)$, $(c)$, and $(d)$ depict polyominoes in $\mathbb{P}_k^U$ and $\mathbb{P}_k^R$.
According to this definition all rectangles having width and height greater than one belong to $\overline{\mathbb{P}}_1$.
 
The reader can easily check that up (resp. right) $k$-parallelogram polyominoes where the cell $C(P)$ is distinct from $S$ can be characterized as those parallelogram polyominoes where $h$ (resp. $v$) has $k$ changes of direction and $v$ (resp. $h$) has $k+1$ changes of direction.

\medskip

Now we present a unique decomposition of polyominoes in $\mathbb{P}_k$, based on the following idea: given a polyomino $P$, we are able to detect -- using the paths $h$ and $v$ -- a set of paths on the boundary of $P$, that uniquely identify the polyomino itself.

More precisely\label{dec}, let $P$ be a polyomino of $\mathbb{P}_k$; the cells of the path $h$ (resp. $v$) that coincide with a change of direction have at least an edge on the boundary of $P$, in particular if a cell corresponds to a change of direction {\em e-n} (resp. {\em n-e}) individuates an $e$ (resp. $n$) step on the upper (resp. lower) boundary of $P$. So we can say that the path $h$ (resp. $v$)  when encountering the boundary of $P$, determines $m$ (resp. $m'$) steps where $m$ (resp. $m'$) is equal to the number of changes of directions of $h$ (resp. $v$) plus one.
To refer to these steps we agree that the step encountered by $h$ (resp. $v$) for the $i$th time is called $X_i$ or $Y_i$ according if it is a horizontal or vertical one (see Fig. \ref{decomposition}).  We point out that if $P$ is flat all steps $X_i$ and $Y_i$ are distinct, otherwise there may be some indices $i$ for which $X_i=X_{i+1}$ (or $Y_i=Y_{i+1}$), and this happens precisely with the steps determined after the cell $C(P)$ (see Fig. \ref{sama1+sama2} $(b)$, $(c)$). The case $C(P)=S$ can be viewed as a degenerate case where the initial sequence of north (resp. east) steps of $v$ (resp. $h$) has length zero 
and we have to give an alternative definition of these steps, see Figure \ref{sama1+sama2} $(b)$:
\begin{description}
\item{i)} if the first column is made of one cell, i.e. $v$ coincides with $h$, we set $X_1$ to be equal to the leftmost east step of the upper path of $P$, and $Y_2, X_3, \ldots $ are determined as usual by $h$ encountering the boundary of $P$;
\item{ii)} if the lowest row is made of one cell, i.e. $h$ coincides with $v$, we set $Y_1$ to be equal to the leftmost north step of the lower path of $P$, and $X_2, Y_3, \ldots $ are determined as usual by $v$ encountering the boundary of $P$.
\end{description}



\begin{figure}[htbp]
  \begin{center}
     \includegraphics[width=14cm]{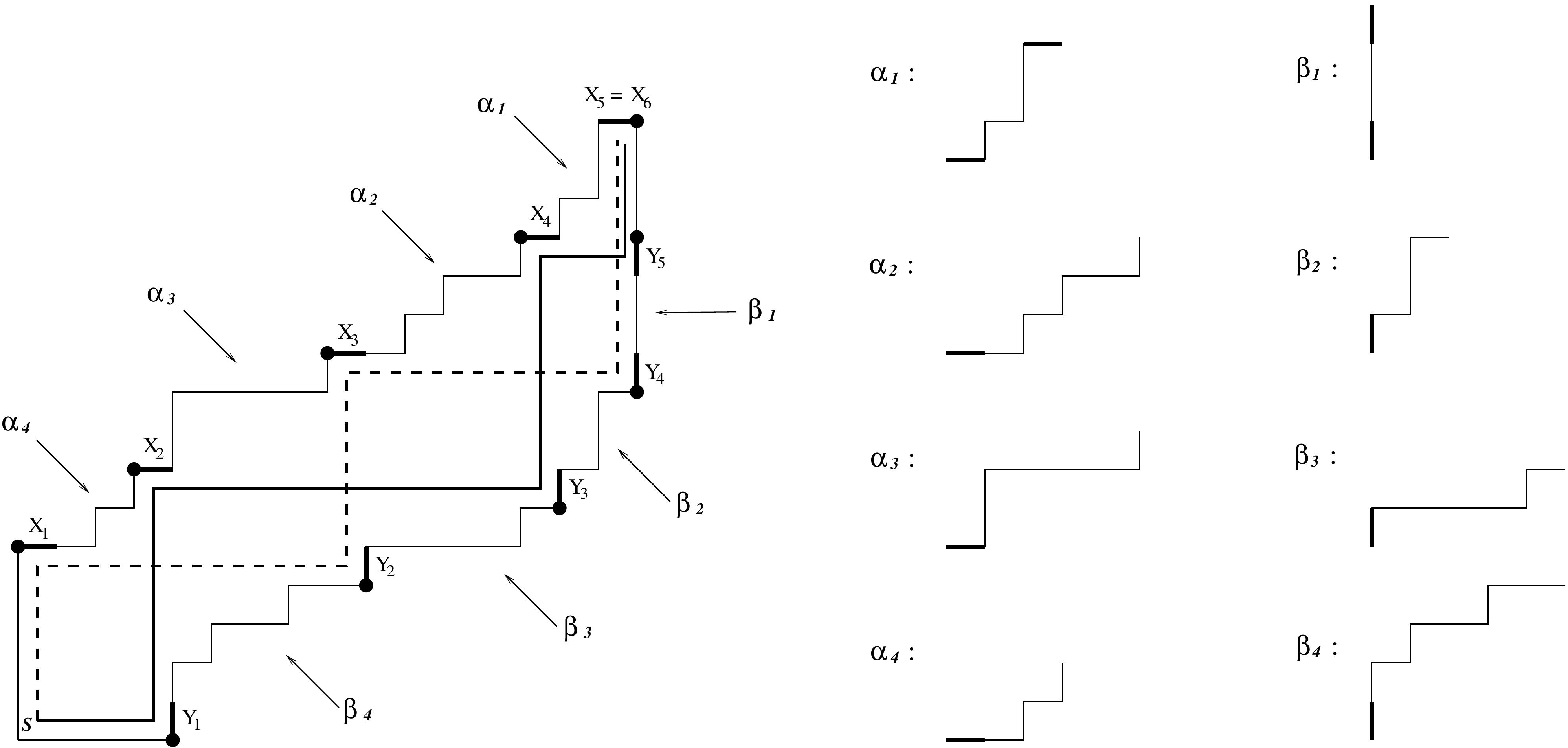}
    \caption{Decomposition of a polyomino of $\mathbb{P}_4^U$.}
    \label{decomposition}
  \end{center}
\end{figure}

Now we decompose the upper (resp. lower) path of $P$ in $k$ (possibly empty) subpaths $\alpha_1, \ldots , \alpha_k$ (resp. $\beta_1, \ldots , \beta_k$) using the following rule: $\alpha_1$ (resp. $\beta_1$) is the path running from the beginning of $X_{k}$ to end of $X_{k+1}$ (resp. from the beginning of $Y_{k}$ to $Y_{k+1}$);
let us consider now the $k-1$ (possibly empty) subpaths, $\alpha_i$ (resp. $\beta_i$) from the beginning of $X_{k+1-i}$ (resp. $Y_{k+1-i}$) to the beginning of $X_{k+2-i}$ (resp. $Y_{k+2-i}$), for $i=2\cdots k$. We observe that these paths are ordered from the right to the left of $P$.
For simplicity we say that a path is {\itshape flat} if it is composed of steps of just one type.


\begin{figure}[htbp]
  \begin{center}
    \includegraphics[width=14.5cm]{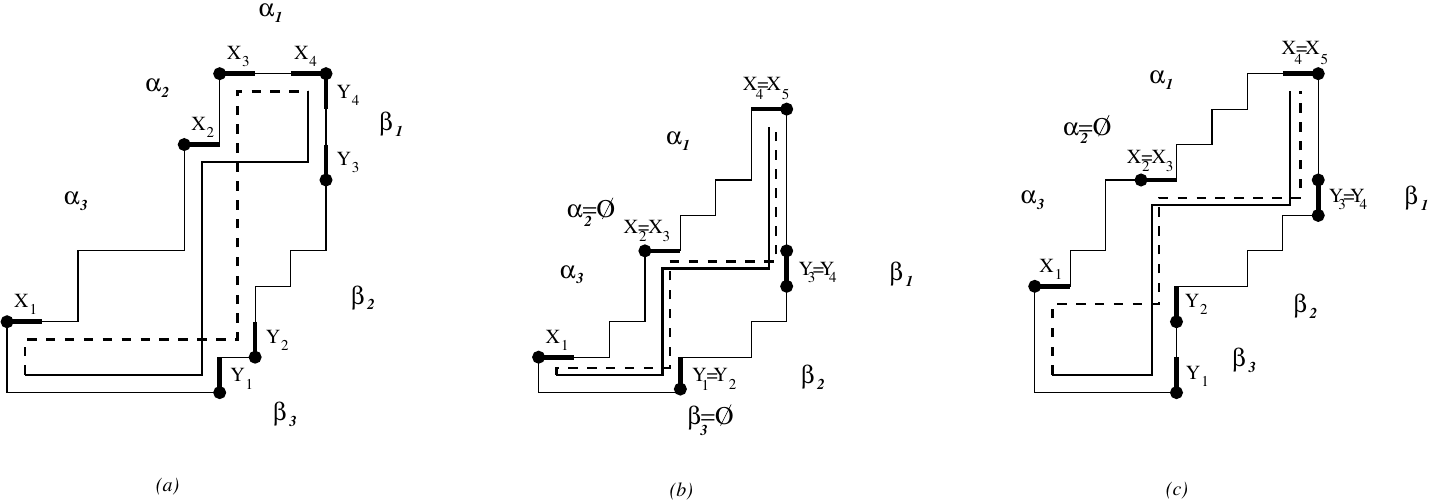}
    \caption{$(a)$ A polyomino $P\in \overline{\mathbb{P}}_3$ in which $\alpha_1$ and $\beta_1$ are flat and each other path is non empty and non flat. $(b)$ A polyomino $P\in \mathbb{P}_3^U$ where: $\beta_3$ is empty,  $\alpha_2$ is empty and $\beta_1$ is equal to a north unit step. $(c)$ A polyomino $P\in \mathbb{P}_3^U$ where $\beta_3$ is flat, $\alpha_2$ is empty and  $\beta_1$ is equal to a north unit step.}
    \label{sama1+sama2}
  \end{center}
\end{figure}

The following proposition provides a characterization of the polyominoes of $\mathbb{P}_k$ in term of the paths  $\alpha_1, \ldots , \alpha_{k}, \beta_1, \ldots , \beta_{k}$, see Figure \ref{decomposition}.

\begin{proposition}\label{propCamm}
 A polyomino $P$ in $\mathbb{P}_k$ is uniquely determined by a sequence of (possibly empty) paths $\alpha_1, \ldots , \alpha_{k}$, $\beta_1, \ldots , \beta_{k}$, each of which made by north and east unit steps. Moreover, these paths have to satisfy the following properties:

\begin{itemize}
 \item $\alpha_i$ and $\beta_{i+1}$ must have the same width, for every $i\neq 1$; if $i=1$, we have that $\alpha_1$ is always non empty and the width of $\alpha_1$ is equal to the width of $\beta_2$ plus one;
 \item $\beta_i$ and $\alpha_{i+1}$ must have the same height, for every $i\neq 1$; if $i=1$, we have that $\beta_1$ is always non empty and the height of $\beta_1$ is equal to the width of $\alpha_2$ plus one;


 \item if $\alpha_i$ ($\beta_i$) is non empty then it starts with an east (north) step, $i\geqslant 1$. In particular, for $i=1$, if  $\alpha_1$ ($\beta_1$) is different from the east (north) unit step, then it must start and end with an east (north) step.
 \end{itemize}

We want to notice that the semi-perimeter of $P$ is obtained as the sum $|\alpha_1|+ |\alpha_{2}|_e + \ldots + |\alpha_{k}|_e + |\beta_1|+|\beta_2|_n+ \ldots +|\beta_{k}|_n$, and that follows directly from our construction.

\end{proposition}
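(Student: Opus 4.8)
The statement is really a bijection claim: it asserts that the decomposition map $\Phi$ sending a polyomino $P\in\mathbb{P}_k$ to its sequence $(\alpha_1,\dots,\alpha_k,\beta_1,\dots,\beta_k)$ is a bijection onto the set of north--east path sequences obeying the three listed constraints. The plan is therefore to make $\Phi$ precise, to build an inverse reconstruction map $\Psi$, and to show $\Phi$ and $\Psi$ are mutually inverse.

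\textbf{Forward direction (well-definedness of $\Phi$).} Given $P\in\mathbb{P}_k$, I would construct $h$ and $v$ via Definition~\ref{def:hEv}. Since the convexity degree equals $k$, Proposition~\ref{convexitydegree} guarantees that at least one of $h,v$ realises exactly $k$ changes of direction, and by the preceding proposition the two counts differ by at most one; this is what forces exactly $k$ subpaths on each side. I would first argue that every change of direction of type $e$-$n$ of $h$ (resp. $n$-$e$ of $v$) forces a horizontal boundary step $X_i$ on the upper path (resp. a vertical step $Y_i$ on the lower path), so that the $X_i$ (resp. $Y_i$) cut the upper (resp. lower) path into $\alpha_1,\dots,\alpha_k$ (resp. $\beta_1,\dots,\beta_k$). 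The start/end conditions then follow immediately, since each nonempty $\alpha_i,\beta_i$ begins just after a change of direction and $P$ is convex. The width and height matchings are the crux: because $P$ is a parallelogram polyomino, the horizontal extent of the upper path between two consecutive cut points must coincide with the horizontal extent of the complementary piece of the lower path (and dually for heights), which yields $|\alpha_i|=|\beta_{i+1}|$ in width and $|\beta_i|=|\alpha_{i+1}|$ in height for $i\neq1$. The ``$+1$'' corrections at $i=1$ arise from the merge cell $C(P)$ and from the initial north/east unit steps of $v$/$h$, and the degenerate configurations $C(P)=S$ and a single-cell first column/row must be treated via the alternative definitions (i), (ii) stated just before the proposition.

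\textbf{Reverse direction and uniqueness.} Given any sequence satisfying the three properties, I would reassemble the $\alpha_i$ (in the listed right-to-left order $\alpha_k,\dots,\alpha_1$) and the $\beta_i$ into an upper and a lower path emanating from $S$, respecting the junction and overlap rules that define the $X_i,Y_i$. The width/height compatibility conditions are exactly what is needed for both paths to have equal total horizontal and vertical displacement, hence to share the endpoint $E$; the start-step and ``$+1$'' conditions guarantee that the upper path stays strictly above the lower one except at $S$ and $E$, so the enclosed region $\Psi(\text{seq})$ is a genuine parallelogram polyomino. Counting the pieces shows that $h$ and $v$ have the prescribed numbers of changes of direction, so by Proposition~\ref{convexitydegree} its convexity degree is exactly $k$ and $\Psi$ lands in $\mathbb{P}_k$. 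Finally I would verify $\Phi\circ\Psi=\mathrm{id}$ and $\Psi\circ\Phi=\mathrm{id}$: applying the decomposition to $\Psi(\text{seq})$ recovers the same cut points, since the steps $X_i,Y_i$ reappear precisely at the junctions of the glued pieces, and conversely gluing the pieces of $\Phi(P)$ rebuilds $P$.

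\textbf{Main obstacle.} The delicate point is the forward verification of the width/height matchings together with the $i=1$ exception. This demands careful bookkeeping of how $h$ and $v$ alternately touch the upper and lower boundary, plus a separate treatment of the flat case (where all $X_i,Y_i$ are distinct) versus the up/right case (where $X_i=X_{i+1}$ or $Y_i=Y_{i+1}$ past $C(P)$), and of the degenerate case $C(P)=S$. Turning the extents read off the figures into rigorous equalities is where the real work lies; once the matchings are established, the reconstruction and the mutual-inverse checks are essentially formal.
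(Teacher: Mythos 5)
The paper offers no proof of Proposition~\ref{propCamm}: it is stated as an immediate consequence of the decomposition via the steps $X_i,Y_i$ described in the preceding paragraphs, and the verification is left to the reader with a pointer to Figure~\ref{decomposition}. Your outline is precisely that intended argument --- the cut points induced by the changes of direction of $h$ and $v$, the interleaving of upper and lower boundary that yields the width/height matchings, the ``$+1$'' corrections at $i=1$ coming from the cell $C(P)$, and the separate treatment of the degenerate cases --- so there is no divergence of approach to report. The one caveat is that your text is a plan rather than a proof: the width/height equalities for $i\neq 1$ and the claim that the reconstruction $\Psi$ produces a genuine parallelogram polyomino (upper path staying weakly above the lower one, meeting only at $S$ and $E$) are announced but not derived, and you say so yourself in the final paragraph. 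Since those verifications are exactly the mathematical content of the proposition, a complete write-up would still have to carry out the bookkeeping you describe; as it stands your proposal matches the paper's own level of detail, which is to assert that the properties ``follow directly from the construction.''
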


The reader can easily check the decomposition of a polyomino of $\mathbb{P}_4^U$ in Figure \ref{decomposition}.
For clarity sake, we need to remark the following consequence of Proposition~\ref{propCamm}:

\begin{corollary}\label{empty}
Let $P\in \mathbb{P}_k$ be encoded by the paths  $\alpha_1, \ldots , \alpha_{k}, \beta_1, \ldots , \beta_{k}$. We have:
\begin{description}
\item{-} for every $i>1$, we have that $\alpha_i$ ($\beta_i$) is empty if and only if $\beta_{i+1}$ ($\alpha_{i+1}$) is empty or flat;
\item{-} $\alpha_1$ ($\beta_1$) is equal to the east (north) unit step if and only if $\beta_{2}$ ($\alpha_2$) is empty or flat.
\end{description}
\end{corollary}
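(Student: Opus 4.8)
The plan is to reduce every occurrence of the words \emph{empty} and \emph{flat} to a statement about the two numerical invariants of a path, its width $|\cdot|_e$ and its height $|\cdot|_n$, and then to read off the corollary directly from the matching relations supplied by Proposition~\ref{propCamm}. The guiding observation is that for these particular paths emptiness is detected by a single coordinate: by the third bullet of Proposition~\ref{propCamm}, a non-empty $\alpha_i$ must start with an east step and a non-empty $\beta_i$ must start with a north step, so a non-empty $\alpha_i$ has $|\alpha_i|_e \geq 1$ and a non-empty $\beta_i$ has $|\beta_i|_n \geq 1$. Consequently $\alpha_i$ is empty if and only if $|\alpha_i|_e = 0$, and $\beta_i$ is empty if and only if $|\beta_i|_n = 0$. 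On the other side, a path has width $0$ precisely when it is empty or consists only of north steps, i.e. (for a $\beta$-path, which when non-empty starts with a north step) empty or flat; symmetrically a path has height $0$ precisely when it is empty or flat in the all-east sense appropriate to an $\alpha$-path.

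First I would treat the case $i>1$. For such $i$ the first two bullets of Proposition~\ref{propCamm} give $|\alpha_i|_e = |\beta_{i+1}|_e$ and $|\beta_i|_n = |\alpha_{i+1}|_n$. Chaining the equivalences above yields $\alpha_i$ empty $\iff |\alpha_i|_e = 0 \iff |\beta_{i+1}|_e = 0 \iff \beta_{i+1}$ empty or flat, and symmetrically $\beta_i$ empty $\iff |\beta_i|_n = 0 \iff |\alpha_{i+1}|_n = 0 \iff \alpha_{i+1}$ empty or flat. This settles the first item.

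For the case $i=1$ I would use instead the special ``$+1$'' relations of Proposition~\ref{propCamm}, namely $|\alpha_1|_e = |\beta_2|_e + 1$ and its symmetric analogue $|\beta_1|_n = |\alpha_2|_n + 1$. The key point, and the only genuinely non-mechanical step, is to show that $\alpha_1$ equals the east unit step if and only if $|\alpha_1|_e = 1$. One direction is immediate. For the converse, suppose $|\alpha_1|_e = 1$ but $\alpha_1$ is not the east unit step; then by the last clause of the third bullet $\alpha_1$ must both start and end with an east step, and since it carries a single east step its first and last steps coincide, forcing $\alpha_1$ to be exactly that one east step, a contradiction. Hence $\alpha_1$ is the east unit step $\iff |\alpha_1|_e = 1 \iff |\beta_2|_e = 0 \iff \beta_2$ empty or flat, and symmetrically for $\beta_1$ and $\alpha_2$.

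The main obstacle is exactly this last characterization of the unit step in the $i=1$ case: one must invoke the ``starts and ends with an east step'' clause rather than merely count east steps, since a priori a width-one $\alpha_1$ could still carry north steps. Everywhere else the argument is a direct translation through Proposition~\ref{propCamm}; the only care needed is to keep straight that width $0$ corresponds to ``empty or flat'' for the $\beta$-paths (all-north) whereas height $0$ corresponds to ``empty or flat'' for the $\alpha$-paths (all-east), so that the two symmetric halves of each item are phrased with the correct invariant.
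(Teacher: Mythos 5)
Your proof is correct and follows exactly the route the paper intends: the corollary is stated there as an unproved ``consequence of Proposition~\ref{propCamm}'', and your argument is the natural filling-in of that derivation, translating emptiness and flatness into the vanishing of the appropriate width or height invariant and reading the result off the matching relations (including the careful handling of the unit-step case for $i=1$ via the ``starts and ends with an east step'' clause). Nothing is missing.
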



Figure \ref{sama1+sama2} $(a)$ shows the decomposition of a flat polyomino, $(b)$ shows the case in which $C(P)=S$, so we have that $\beta_3$ is empty, then  $\alpha_2$ is empty, hence $\beta_1$ is a unit north step. Figure \ref{sama1+sama2} $(c)$ shows the case in which $h$ and $v$ coincide after the first change of direction and so we have that $\beta_3$ is flat, then $\alpha_2$ is empty and $\beta_1$ is a unit north step.

Now we provide another characterization of the classes of flat, up, and right polyominoes of $\mathbb{P}_k$ which directly follows from Corollary \ref{empty} and will be used for the enumeration of these objects.
\begin{proposition}\label{flatup}
Let $P$ be a polyomino in $\mathbb{P}_k$. We have:
\item{i)} $P$ is flat if and only if $\alpha_1$ and $\beta_1$ are flat and they have length greater than one. It follows from \ref{empty} that all $\alpha_i$ and $\beta_i$ are non empty paths, $i=2, \ldots , k$.
\item{ii)} $P$ is up (right) if and only if $\beta_1$ ($\alpha_1$) is flat and $\alpha_1$ ($\beta_1$) is non flat.
\end{proposition}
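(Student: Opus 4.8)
The plan is to leverage that the three families $\overline{\mathbb{P}}_k$, $\mathbb{P}_k^U$, $\mathbb{P}_k^R$ partition $\mathbb{P}_k$ (this is exactly how they are defined through the cell $C(P)$), and that the three corresponding conditions on the pair $(\alpha_1,\beta_1)$ --- both flat of length greater than one; $\beta_1$ flat and $\alpha_1$ non-flat; $\alpha_1$ flat and $\beta_1$ non-flat --- are pairwise incompatible. Hence it suffices to establish the three forward implications together with exhaustiveness of these conditions on $\mathbb{P}_k$; the converses then come for free. First I would record the elementary consequence of Proposition~\ref{propCamm} that $\alpha_1$ always begins with an east step and $\beta_1$ always begins with a north step, so that $\alpha_1$ (resp.\ $\beta_1$) fails to be flat exactly when it also contains a north (resp.\ east) step, i.e.\ exactly when it bends.

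The core of the argument is a geometric dictionary between the position of $C(P)$ and the bending of the two terminal pieces. I would use the characterization recalled before Proposition~\ref{propCamm}: for $C(P)\neq S$, the polyomino is up (resp.\ right) precisely when $v(P)$ (resp.\ $h(P)$) has one more change of direction than $h(P)$ (resp.\ $v(P)$), while $P$ is flat precisely when $h(P)$ and $v(P)$ have the same number $k$ of changes of direction and meet only at $E$. Tracking how these internal paths touch the boundary near $E$, the extra change of direction of one path forces a repeated boundary step (the phenomenon $X_i=X_{i+1}$ or $Y_i=Y_{i+1}$ noted before the decomposition), collapsing one terminal piece into a single maximal straight run: in the up case this makes $\beta_1$ a pure north run (hence flat), while $\alpha_1$, beginning east yet ending with the north ascent to $E$, must bend; the right case is symmetric. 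In the flat case no collapse occurs, both $\alpha_1$ and $\beta_1$ are genuine maximal straight runs, and $C(P)=E$ forces each to have length greater than one. Making this dictionary precise --- in particular the length condition and the degenerate boundary case $C(P)=S$ of Figure~\ref{sama1+sama2} --- is the step I expect to be the main obstacle.

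Granting the trichotomy, the ``moreover'' clause of (i) is a purely combinatorial consequence of Corollary~\ref{empty}. Since $\alpha_1$ is flat of length greater than one, it is not the east unit step, so Corollary~\ref{empty} gives that $\beta_2$ is non-empty and non-flat; symmetrically $\beta_1$ is not the north unit step, so $\alpha_2$ is non-empty and non-flat. I would then propagate by induction on $i$: the contrapositive of Corollary~\ref{empty} states that, for $i>1$, if $\alpha_i$ is non-empty then $\beta_{i+1}$ is non-empty (and non-flat), and if $\beta_i$ is non-empty then $\alpha_{i+1}$ is non-empty (and non-flat). Starting from the non-emptiness of $\alpha_2$ and $\beta_2$, a single induction yields that every $\alpha_i,\beta_i$ with $2\le i\le k$ is non-empty, completing~(i).

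Finally I would close by checking exhaustiveness and exclusivity of the three conditions on $(\alpha_1,\beta_1)$: exclusivity is immediate from the flatness pattern of $\alpha_1$, and exhaustiveness amounts to ruling out a polyomino of $\mathbb{P}_k$ with both $\alpha_1$ and $\beta_1$ non-flat, or with an endpoint piece reducing to a unit step outside the up and right classes. Using the first bullet of this plan --- $\alpha_1$ bends iff the upper path ends with a north ascent to $E$, and dually for $\beta_1$ --- these exceptional configurations are precisely the up and right polyominoes (including the cases $C(P)=S$), so the classification is complete and the three biconditionals hold simultaneously.
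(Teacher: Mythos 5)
Your plan is correct and is essentially the argument the paper intends: the paper gives no proof of this proposition, asserting only that it ``directly follows'' from the decomposition and Corollary~\ref{empty} and pointing to Figure~\ref{sama1+sama2}, and your reduction to the three forward implications (the classes $\overline{\mathbb{P}}_k$, $\mathbb{P}_k^U$, $\mathbb{P}_k^R$ partition $\mathbb{P}_k$, and the three conditions on $(\alpha_1,\beta_1)$ are pairwise incompatible), together with the induction from Corollary~\ref{empty} giving non-emptiness of $\alpha_i,\beta_i$ for $i\ge 2$, is exactly the reasoning being left to the reader. The geometric core you flag as the remaining obstacle --- that the coincidences $X_i=X_{i+1}$, $Y_i=Y_{i+1}$ occurring after $C(P)$ collapse one of $\alpha_1,\beta_1$ to a flat run, while the distinctness of all the $X_i$, $Y_i$ when $C(P)=E$ gives both pieces length greater than one --- is precisely the step the paper supports only with the figures, so your outline does not miss anything present in the paper's treatment.
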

The reader can see examples of the statement of Proposition \ref{flatup} {\em i)} in Figure \ref{sama1+sama2} $(a)$, and of
Proposition \ref{flatup} {\em ii)} in Figure \ref{sama1+sama2} $(b)$ and $(c)$.

As a consequence of Proposition \ref{propCamm}, from now on we will encode every polyomino $P\in \mathbb{P}_k$ in terms of the two sequences:
$$
{\cal A}(P)= \left( \alpha_1, \beta_2, \alpha_3, \ldots , \theta_k \right),
$$
with $ \theta=\alpha$ if $k$ is odd, otherwise $\theta=\beta$, and
$$
{\cal B}(P)= \left( \beta_1, \alpha_2, \beta_3, \ldots , \overline{\theta}_k \right) ,
$$
where $ \overline{\theta}=\alpha$ if and only if $\theta=\beta$. The dimension of ${\cal A}$ (resp. ${\cal B}$) is given by $|\alpha_1|+|\beta_2|_n+|\alpha_3|_e+\ldots $ (resp. $|\beta_1|+|\alpha_2|_e+|\beta_3|_n+\ldots $).
In particular, if $C(P)=S$ and $P$ is an up (resp. right) polyomino then ${\cal B}(P)=\left( \beta_1, \emptyset, \ldots , \emptyset \right )$, (resp. ${\cal A}(P)=\left( \alpha_1, \emptyset, \ldots , \emptyset \right )$) where $\beta_1$ (resp. $\alpha_1$) is the north (resp. east) unit step.

\section{Enumeration of the class $\poly_k$}

This section is organized as follows:
first, we furnish a method to pass from the generating function of the class $\mathbb{P}_k$ to the generating function of $\mathbb{P}_{k+1}$, $k>1$. Then, we provide the enumeration of the trivial cases, i.e. $k=0,1$, and finally apply the inductive step to determine the generating function of  $\mathbb{P}_k$. The enumeration of $\poly_k$ is readily obtained by summing all the generating functions of the classes $\mathbb{P}_s$, $s \leq k$. 
\subsection{Generating function of the class $\poly_k$}
The following theorem establishes a criterion for translating the decomposition of Proposition \ref{propCamm} into generating functions.

\begin{theorem}\label{teo}
 \item{i)} A polyomino $P$ belongs to $\mathbb{P}_2$ if and only if it is obtained from a polyomino of $\mathbb{P}_{1}$ by adding two new paths $\alpha_2$ and $\beta_2$, which cannot be both empty, where the height of $\alpha_2$ is equal to the height of $\beta_{1}$ minus one, and the width of $\beta_2$ is equal to the width of $\alpha_{1}$ minus one.
 \item{ii)} A polyomino $P$ belongs to $\mathbb{P}_k$, $k>2$, if and only if it is obtained from a polyomino of $\mathbb{P}_{k-1}$ by adding two new paths $\alpha_k$ and $\beta_k$, which cannot be both empty, where $\alpha_k$ has the same height of $\beta_{k-1}$ and $\beta_k$ has the same width of $\alpha_{k-1}$.
\end{theorem}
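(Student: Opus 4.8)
The plan is to obtain the statement as a recursive reading of the encoding in Proposition~\ref{propCamm}, whose defining constraints are \emph{nested}: every requirement bearing only on the indices $1,\ldots,k-1$ is already a requirement for membership in $\mathbb{P}_{k-1}$. Concretely, if $P\in\mathbb{P}_k$ is encoded by $(\alpha_1,\ldots,\alpha_k,\beta_1,\ldots,\beta_k)$, I would first check that the truncation $(\alpha_1,\ldots,\alpha_{k-1},\beta_1,\ldots,\beta_{k-1})$ satisfies every condition of Proposition~\ref{propCamm} for $\mathbb{P}_{k-1}$, and hence, by the unicity in that proposition, encodes a well-defined $P'\in\mathbb{P}_{k-1}$. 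Conversely, given $P'\in\mathbb{P}_{k-1}$ and a compatible pair $\alpha_k,\beta_k$, I would reassemble the whole sequence and verify that no condition for $\mathbb{P}_k$ is violated. Since truncating and re-appending are mutually inverse, this yields the asserted equivalence.

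The only conditions of $\mathbb{P}_k$ not inherited from $\mathbb{P}_{k-1}$ are those linking the new pair to the pair of index $k-1$: the width relation between $\alpha_{k-1}$ and $\beta_k$, the height relation between $\beta_{k-1}$ and $\alpha_k$, and the requirement that a non-empty $\alpha_k$ (resp.\ $\beta_k$) start with an east (resp.\ north) step. This is exactly where the statement bifurcates. For $k>2$ the index $k-1$ differs from $1$, so Proposition~\ref{propCamm} supplies the plain equalities --- $\alpha_k$ has the same height as $\beta_{k-1}$ and $\beta_k$ the same width as $\alpha_{k-1}$ --- giving part~ii). For $k=2$ the relevant index is $1$, and the exceptional ``plus one'' clauses of Proposition~\ref{propCamm} for $i=1$ take effect, so the height of $\alpha_2$ is the height of $\beta_1$ minus one and the width of $\beta_2$ is the width of $\alpha_1$ minus one, giving part~i). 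Thus the difference between the two parts is forced solely by the special role of the index $1$ in the encoding.

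The delicate point, which I expect to be the main obstacle, is the clause that $\alpha_k$ and $\beta_k$ cannot both be empty, to be shown equivalent to the convexity degree being \emph{exactly} $k$ rather than $k-1$. I would argue this via Proposition~\ref{convexitydegree}: the leftmost pair $(\alpha_k,\beta_k)$ describes the portion of $P$ nearest the source $S$, and prepending at least one non-empty such subpath forces the minimal number of changes of direction among the monotone paths from $S$ to $E$ to increase from $k-1$ to $k$; were both empty, the sequences with and without the last pair would describe the very same polyomino, of convexity degree $k-1$. To keep this airtight I would control the degenerate configurations with Corollary~\ref{empty} and Proposition~\ref{flatup}, which pin down exactly when a leftmost subpath must be empty or flat and, in particular, guarantee that for a genuine element of $\mathbb{P}_k$ the pair $(\alpha_{k-1},\beta_{k-1})$ is itself not both empty, so that the truncation really lands in $\mathbb{P}_{k-1}$ and not in a strictly smaller class. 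Combining these facts with the nesting of constraints then establishes both directions of i) and ii).
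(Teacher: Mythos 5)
Your proposal is correct and follows essentially the same route as the paper, which simply states that the theorem ``directly follows from our decomposition in Proposition~\ref{propCamm}, where the difference between the case $k=2$ and the case $k>2$ is clearly explained.'' You supply the same reading of the encoding --- truncation/extension of the path sequence, with the $k=2$ exception forced by the special $i=1$ clauses --- just with more care about the ``not both empty'' condition via Corollary~\ref{empty} and Proposition~\ref{flatup}.
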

We can see an example of the statement i) of Theorem \ref{teo} in Figure \ref{costruzioneP2}. In $(a)$ we have a polyomino $P\in \mathbb{P}_2^R$ obtained adding to $P'\in \mathbb{P}_1^R$ a path $\beta_2$ with width equal to $3$ and a path $\alpha_2$ with height equal to $2$. While, in $(b)$ we have a polyomino $P\in \mathbb{P}_2^R$ obtained adding to $P'\in \mathbb{P}_1^R$ a path $\beta_2$ with width equal to $0$, $\beta_2$ is flat, and a path $\alpha_2$ with height equal to $2$. We want to notice that in this last case $\beta_2$ could also be empty.

\begin{figure}[htbp]
  \begin{center}
    \includegraphics[width=10cm]{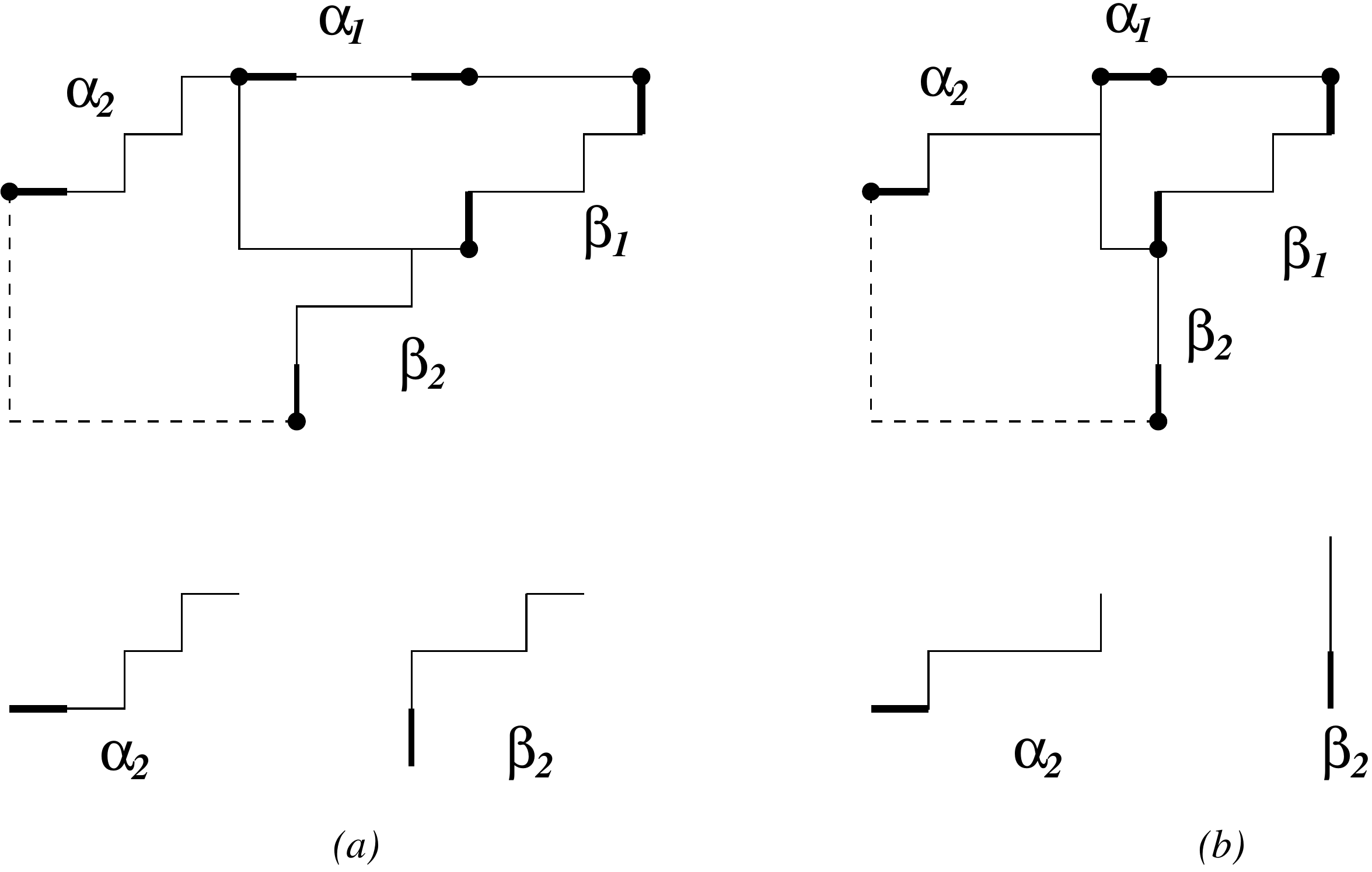}
    \caption{$(a)$ A polyomino $P\in \mathbb{P}_2^R$ in which $\alpha_1$ is flat and $\beta_1$ are flat and every other path is non empty and non flat. $(b)$ A polyomino $P\in \mathbb{P}_2^R$ in which $\alpha_1$ is equal to an east unit step.}
    \label{costruzioneP2}
  \end{center}
\end{figure}

The proof of Theorem \ref{teo} directly follows from our decomposition in Proposition \ref{propCamm}, where the difference between the case $k=2$ and the case $k>2$ is clearly explained. We would like to point out that if $P$ belongs to $\overline{\mathbb{P}}_k$, then neither $\alpha_k$ nor $\beta_k$ can be empty or flat. Following the statement of Theorem \ref{teo}, to pass from $k\geq 1$ to $k+1$ we need to introduce following generating functions:

\begin{description}
 \item {i)} the generating function of the sequence ${\cal A}(P)$. Such a function is denoted by $A_k(x,y,z)$ for up, and by $\overline{A}_k(x,y,z)$ for flat $k$-parallelogram polyominoes, respectively, and, for each function, $x+z$ keeps track of the dimensions of ${\cal A}(P)$, and
    $z$ keeps track of the width of $\theta_k$ if $k$ is odd and of the height of $\theta_k$ if $k$ is even.

\item{ii)} the generating function of the sequence  ${\cal B}(P)$. Such a function is denoted by $B_k(x,y,t)$ for up, and by $\overline{B}_k(x,y,t)$ for flat $k$-parallelogram polyominoes, respectively, and here $y+t$ keeps track of the dimensions of ${\cal B}(P)$, and the variable $t$ keeps track of the height of $\theta_k$ if $k$ is odd and of the width of $\theta_k$ if $k$ is even.
\end{description}

By Proposition \ref{propCamm}, the generating functions $Gf_k^U(x,y,z,t)$,\\
$Gf_k^R(x,y,z,t)$ and $\overline{Gf}_k(x,y,z,t)$, of the classes $\mathbb{P}_k^U$, $\mathbb{P}_k^R$, and $\overline{\mathbb{P}}_k$, respectively, are clearly obtained as follows:
\begin{eqnarray}
Gf_k^U(x,y,z,t) &=& A_k(x,y,z) \cdot B_k(x,y,t) \label{1}\\
\nonumber \\
\overline{Gf}_k(x,y,z,t) &=& \overline{A}_k(x,y,z) \cdot \overline{B}_k(x,y,t) \label{2}\\
\nonumber \\
Gf_k(x,y,z,t) &=&Gf_k^U(x,y,z,t)+Gf_k^R(y,x,t,z)+\overline{Gf}_k(x,y,z,t) \label{3}\\
\nonumber 
\end{eqnarray}
Then, setting $z=t=y=x$, we have the generating functions according to the semi-perimeter. Since $Gf_k^U(x,y,z,t)=Gf_k^R(y,x,t,z)$, for all $k$, then starting from now, we will study only the flat and the up classes.

In this work we use regular expressions to encode the possible paths of the sequences ${\cal A}(P)$ and ${\cal B}(P)$ in order to calculate the corresponding generating functions by applying standard methods, namely the so called Sch\"utzenberger methodology \cite{sch}.

\paragraph{The case $k=0$.}
The class $\mathbb{P}_0$ is simply made of horizontal and vertical bars of any length. We keep this case distinct from the others since it is not useful for the inductive step, so we simply use the variables $x$ and $y$, which keep track of the width and the height of the polyomino, respectively. The generating function is trivially equal to $$Gf_0(x,y)=xy+\frac{x^2y}{1-x}+\frac{xy^2}{1-y} \, ,$$ where the term $xy$ corresponds to the unit cell, and the other terms to the horizontal and vertical bars, respectively.


\begin{figure}[htbp]
  \begin{center}
     \includegraphics[width=8cm]{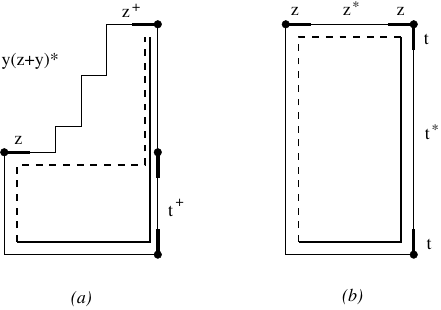}
    \caption{$(a)$ A polyomino $\in \mathbb{P}_1^U$ and $(b)$ a polyomino in $\overline{\mathbb{P}}_1$.}
    \label{upEflat1convex}
  \end{center}
\end{figure}

\paragraph{The case $k=1$.}  Following our decomposition and Figure \ref{upEflat1convex}, we easily obtain
$$A_1(x,y,z)=\frac{z^2y}{(1-z-y)(1-z)}, \qquad B_1(x,y,t)=t +\frac{t^2}{1-t} \, .$$
We point out that we have written $B_1$ as the sum of two terms because, according to Corollary \ref{empty}, we have to treat the case when $\beta_1$ is made by a north unit step separately from the other cases. To this aim, we set $\hat{B}_1(x,y,t)=\frac{t^2}{1-t}$.
Moreover, we have
$$\overline{A}_1(x,y,z)=\frac{z^2}{1-z}, \qquad \overline{B}_1(x,y,t)=\frac{t^2}{1-t} \, .$$
According to \eqref{1} and \eqref{2}, we have that
\begin{equation*}
Gf_1^U(x,y,z,t) = \frac{t y z^2}{(1 - t) (1 - z) (1 - y - z)} \qquad
\overline{Gf}_1(x,y,z,t) = \frac{t^2 z^2}{(1 - t) (1 - z)} \, .\\
\end{equation*}
Now, according to \eqref{3}, and setting all variables equal to $x$, we have the generating function of $1$-parallelogram polyominoes
$$Gf_1(x)=\frac{x^4 (2 x-3)}{(1 - x)^2 (1 - 2 x)} \, .$$

\paragraph{The case $k=2$.} Now we can use the inductive step, recalling that the computation of the case $k=2$ will be slightly different from the other cases, as explained in Theorem \ref{teo}. Using the decomposition in Figure \ref{pippok2} we can calculate the generating functions
$$A_2(x,y,z)=z\cdot A_1\left(x,y,\frac{x}{1-z}\right)=\frac{x^2yz}{(1-x-y-z+yz)(1-x-z)}$$
$$B_2(x,y,t)=\frac{y}{1-t} +t\cdot \hat{B}_1\left(x,y,\frac{y}{1-t}\right)=\frac{y-y^2}{(1-y-t)}=y+\frac{yt}{1-y-t}$$
$$\overline{A}_2(x,y,z)=z\cdot\overline{A}_1\left(x,y,\frac{x}{1-z}\right)=\frac{x^2z}{(1-z)(1-x-z)}$$ $$\overline{B}_2(x,y,t)=t\cdot\overline{B}_1\left(x,y,\frac{y}{1-t}\right)=\frac{y^2t}{(1-t)(1-y-t)} \, .$$

\begin{figure}[htbp]
  \begin{center}
    \includegraphics[width=13cm]{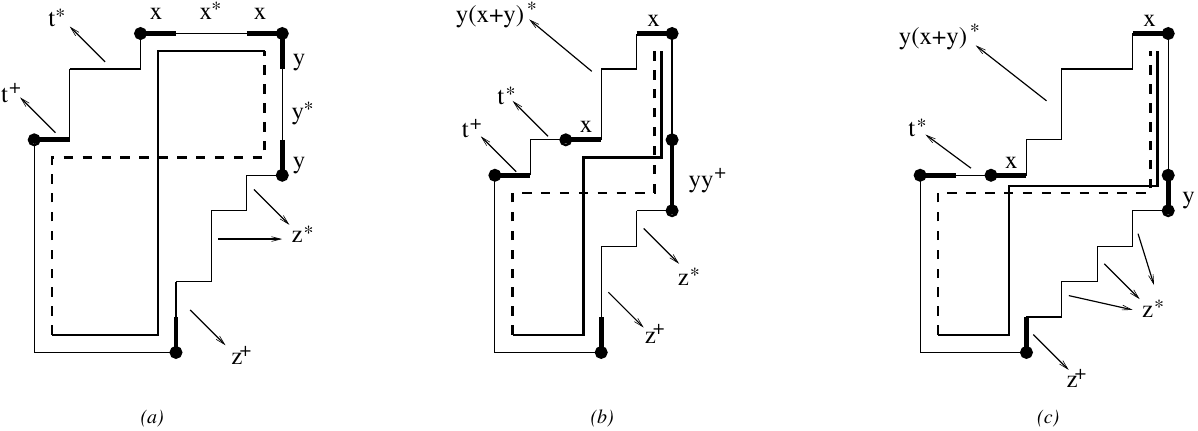}
    \caption{$(a)$ A polyomino in $\overline{\mathbb{P}}_2$, $(b)$ a polyomino in $\mathbb{P}_2^U$ in which $\beta_1$ has at least two north steps and $(c)$ a polyomino in $\mathbb{P}_2^U$ in which $\beta_1$ is equal to an unit north step.}
    \label{pippok2}
  \end{center}
\end{figure}

We observe that the performed substitutions allow us to add the contribution of the terms $\alpha_2$ and $\beta_2$ from the generating functions obtained for $k=1$. Then, using formulas \eqref{1}, \eqref{2} and \eqref{3}, and setting all variables equal to $x$, it is straightforward to obtain the generating function according to the semi-perimeter:
$$Gf_2(x)=\frac{x^5 (2 - 5x + 3x^2 - x^3)}{(1 - x)^2 (1 - 2 x)^2 (1 - 3 x + x^2)}\,\,.$$

\paragraph{The case $k>2$.} The generating functions for the case $k>2$ are obtained in a similar way. Here, for simplicity sake, we set $\hat{B}_k(x,y,t)=B_k(x,y,t)-y$; this trick will help us treat separately the case when $\beta_1$ is made by a north unit step. Then we have
\begin{eqnarray}
\overline{A}_k(x,y,z)&=&\frac{z}{1-z}\cdot\overline{A}_{k-1}\left(x,y,\frac{x}{1-z}\right)    \label{4} \\
\overline{B}_k(x,y,t)&=&\frac{t}{1-t}\cdot\overline{B}_{k-1}\left(x,y,\frac{y}{1-t}\right)  \label{5} \\
A_k(x,y,z)&=&\frac{z}{1-z}\cdot A_{k-1}\left(x,y,\frac{x}{1-z}\right)   \label{6} \\
B_k(x,y,t)&=&\frac{y}{1-t} +\frac{t}{1-t}\cdot \hat{B}_{k-1}\left(x,y,\frac{y}{1-t}\right)\,\,\, .   \label{7}
\end{eqnarray}
We remark that \eqref{4}, \eqref{5}, \eqref{6} and \eqref{7} slightly differ from the respective formulas for $k=2$, according to the statement of Theorem \ref{teo}.

The performed calculations and in particular the substitutions suggest that the above formulas can be written also using continued fractions \cite{flajContFract}, which is a less compact way, but can give to these expressions a deeper combinatorial meaning. For example, instead of \eqref{4} we can write:
$${\tiny \overline{A}_k(x,x,z)=x^{k}z\cdot\left(\frac{1}{\left.\,\,\,\,\,\,\,\,\,\,\,\,\,\,\,1-\frac{x}{1-\frac{\vdots}{1-\frac{x}{1-z}}}\right\}\mbox{{$(k-2)$-times}}}\right)^2\cdot\frac{1}{\left.\,\,\,\,\,\,\,\,\,\,\,\,\,\,\,1-\frac{x}{1-\frac{\vdots}{1-\frac{x}{1-z}}}\right\}\mbox{{$(k-1)$-times}}}}\,\, .$$

The other expressions are quite similar.

\subsection[A formula for the number of $\poly_k$]{A formula for the number of $k$-parallelogram polyominoes}
The formulas found in the previous section allow us in principle to obtain an expression for the generating function of $\mathbb{P}_k(x)$, for all $k>2$. However, the continued fractions representation suggests us a simpler way to express the generating function of the sequences $\overline{A}_k, \overline{B}_k, A_k \,\,\mbox{and}\,\, B_k$ as a quotient of polynomials, using the notion of {\em Fibonacci polynomials}.

First we need to give the following recurrence relation:
\begin{definition}
$$
\left\{
\begin{array}{lll}
F_0(x,z) = F_1(x,z) &=& 1\\
F_2(x,z) &=& 1-z\\
F_k(x,z) &=& F_{k-1}(x,z)-xF_{k-2}(x,z) \,\,\, .
\end{array}
\right.
$$
\end{definition}

\begin{remark}
 Let us observe that the use of three initial conditions instead of two is required to obtain the desired sequence $F_0,F_1,\cdots$. In particular setting only $F_0=F_1=1$ we would have $F_2=1-x$ instead of $1-z$ and we need also of the term $F_0$ because of it appears in the final expression of the generating function. 
\end{remark}
These objects are already known as {\em Fibonacci polynomials} \cite{knuth}

\begin{remark}
To avoid any confusion, let us notice that Fibonacci polynomials are perhaps more commonly known with the expression given by
$$
\left\{
\begin{array}{lll}
F_0(x) = F_1(x) &=& 1\\
F_k(x) &=& F_{k-1}(x)+xF_{k-2}(x) \,\,\, .
\end{array}
\right.
$$
\end{remark}

In the sequel, unless otherwise specified, we will denote $F_k(x,x)$ with $F_k$. Notice that $F_k(-1,-1)$ give the $kth$ Fibonacci number.

The closed Formula of $F_k$ obtained using standard methods is:
$$F_k=\frac{b(x)^{k+1}-a(x)^{k+1}}{\sqrt{1-4x}} \, .$$
where $a(x)$ and $b(x)$ are the solutions of the equation $X^2-X+x=0$, \ie $a(x)=\left(\frac{1-\sqrt{1-4x}}{2}\right)$ and $b(x)=\left(\frac{1+\sqrt{1-4x}}{2}\right)$.

These polynomials have been widely studied, and have several combinatorial properties. Below we list just a few of these properties, the ones that we will use in order to provide alternative expressions for formulas $\overline{A}_k, \overline{B}_k, A_k \,\,\mbox{and}\,\, B_k$.

We start to provide some elementary identities involving Fibonacci polynomials.
\begin{proposition}
For any $k\geq 1$ the following relations hold
$$
 \begin{array}{lcl}
  F_k^2-xF_{k-1}^2&=&F_{2k}\\ 
  \nonumber \\
  F_{k+1}-xF_{k-1}&=&\frac{F_{2k+1}}{F_k}\\
  \nonumber \\
  F_{k-1}^2&=&x^{k+1}+F_kF_{k+2}\\
  \nonumber \\
  \frac{F_k}{F_{k+1}}&=&\frac{1}{\left.\,\,\,\,\,\,\,\,\,\,\,\,\,\,\,1-\frac{x}{1-\frac{\vdots}{1-\frac{x}{1-z}}}\right\}\mbox{{\tiny $(k-1)$-times}}}\,.
 \end{array}$$
\end{proposition}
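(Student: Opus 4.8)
The plan is to derive the first three identities from the Binet-type closed form $F_k=\frac{b^{k+1}-a^{k+1}}{b-a}$ recalled above (I abbreviate $a=a(x)$, $b=b(x)$), and to obtain the last one by induction directly from the defining recurrence. Throughout I will use the three facts that $a,b$ are the roots of $X^2-X+x=0$, so that $a+b=1$, $ab=x$, and $b-a=\sqrt{1-4x}$; in particular the $\sqrt{1-4x}$ appearing in the closed form is precisely $b-a$, which is what makes the substitution clean.

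For the first identity I would substitute the closed form into $F_k^2-xF_{k-1}^2$, put everything over the common denominator $(b-a)^2$, and replace $x$ by $ab$. Expanding the numerator $(b^{k+1}-a^{k+1})^2-ab\,(b^{k}-a^{k})^2$, the mixed terms in $a^{k+1}b^{k+1}$ cancel and the remainder factors as $(b-a)(b^{2k+1}-a^{2k+1})$; dividing by $(b-a)^2$ leaves $\frac{b^{2k+1}-a^{2k+1}}{b-a}=F_{2k}$. The second identity goes the same way: $F_{k+1}-xF_{k-1}$ collapses to $a^{k+1}+b^{k+1}$, while on the right one writes $F_{2k+1}=\frac{b^{2k+2}-a^{2k+2}}{b-a}$ and factors $b^{2k+2}-a^{2k+2}=(b^{k+1}-a^{k+1})(b^{k+1}+a^{k+1})$, so that the factor $F_k=\frac{b^{k+1}-a^{k+1}}{b-a}$ cancels and $a^{k+1}+b^{k+1}$ is recovered.

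The third identity is a Cassini/Catalan-type relation, and I would attack it by the same substitution. After clearing the denominator $(b-a)^2$, the squared term and the product differ only in their cross terms, which combine into $(ab)^{m}(a-b)^2=x^{m}(b-a)^2$ for the appropriate exponent $m$; dividing by $(b-a)^2$ then produces exactly the power of $x$ on the right-hand side. This computation simultaneously fixes the precise index shift, the clean Cassini form being $F_{k+1}^2-F_kF_{k+2}=x^{k+1}$. An equally valid route for all three identities is induction on $k$ via $F_k=F_{k-1}-xF_{k-2}$, but the closed form renders the cancellations uniform and transparent.

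For the last identity I would argue by induction on $k$, reading $F_k$ here as the two-variable polynomial $F_k(x,z)$ so that the fraction terminates at $1-z$ rather than $1-x$. Dividing the recurrence $F_{k+1}=F_k-xF_{k-1}$ (valid for $k\geq 2$) by $F_k$ gives $\frac{F_{k+1}}{F_k}=1-x\,\frac{F_{k-1}}{F_k}$, hence $\frac{F_k}{F_{k+1}}=\frac{1}{1-x\,(F_{k-1}/F_k)}$. Starting from the base case $\frac{F_1}{F_2}=\frac{1}{1-z}$ (using $F_1=1$ and $F_2=1-z$), each application prepends one layer $1-\frac{x}{\,\cdot\,}$, so after $k-1$ steps $\frac{F_k}{F_{k+1}}$ is the displayed continued fraction with $x$ occurring $k-1$ times. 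I expect the only real obstacle to be bookkeeping: keeping the index shifts straight and making sure it is the two-variable initial condition $F_2=1-z$, not $F_2=1-x$, that feeds the bottom of the fraction, which is the point that reconciles the convention $F_k=F_k(x,x)$ with the $z$ appearing in the statement.
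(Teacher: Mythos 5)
Your approach coincides with the paper's: the paper proves only the first identity, by exactly the Binet-form computation you describe (substituting $F_k=\frac{b^{k+1}-a^{k+1}}{b-a}$ and using $a+b=1$, $ab=x$, $b-a=\sqrt{1-4x}$), dismisses the remaining ones as similar, and your inductive treatment of the continued-fraction identity via $F_{k+1}=F_k-xF_{k-1}$ with base case $F_1/F_2=1/(1-z)$ is the natural way to make that last claim precise. You are also right that the third identity is misprinted: it should read $F_{k+1}^2=x^{k+1}+F_kF_{k+2}$ (for $k=1$ the printed version gives $F_0^2=1$ against $x^2+F_1F_3=(1-x)^2$), and your Cassini-type computation proves the corrected form.
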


\begin{proof}
 
These identities are obtained by performing standard computation, and using the following:
$$
\left\{
\begin{array}{lll}
a(x)+b(x) &=& 1\\
b(x)-a(x) &=& \sqrt{1-4x}\\
b(x)\cdot a(x) &=& x \,\,\, .
\end{array}
\right.
$$
Thus, we only show how we get the first equality, then the other ones can be proved in a similar way. For brevity sake we write $a$ instead of $a(x)$ and $b$ instead of $b(x)$.

$$
\begin{array}{llll}
F_k^2-xF_{k-1}^2&=&\frac{(b^{k+1}-a^{k+1})^2}{(\sqrt{1-4x})^2}-x\cdot\frac{(b^{k+1}-a^{k+1})^2}{(\sqrt{1-4x})^2}&=\\\\
 &=&\frac{b^{2k+2}+a^{2k+2}-2b^{k+1}+2xb^ka^k-xb^{2k}-xa^{2k}}{1-4x}&=\\
 & & & \\
 &=&\frac{b^{2k+1}(b-a)-a^{2k+1}(b-a)}{1-4x}&=\\
 & & & \\
 &=&\frac{b-a}{\sqrt{1-4x}}\cdot\frac{b^{2k+1}-a^{2k+1}}{\sqrt{1-4x}}&=\,\,F_{2k}\,\,.\\
\end{array}
$$

\end{proof}

In order to express the functions $A_k$, $B_k$, $\overline{A}_k$, and $\overline{B}_k$ in terms of the Fibonacci polynomials we need to state the following lemma:
\begin{lemma}\label{lemmaF}
For every $k\geq 1$ $$F_k\left(x,\frac{x}{1-z}\right)=\frac{F_{k+1}(x,z)}{1-z} \, .$$
\end{lemma}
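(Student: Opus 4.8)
The plan is to prove the identity by induction on $k$, exploiting the three-term recurrence defining the polynomials $F_k$. The crucial structural observation is that the second variable enters the definition only through $F_2(x,z)=1-z$, after which the recurrence $F_k=F_{k-1}-xF_{k-2}$ (valid for $k\geq 3$) carries no explicit dependence on the second argument. This is exactly what makes a clean induction possible, but it also forces me to treat $k=1$ and $k=2$ as separate base cases, since the recurrence only takes effect from $k=3$ onward.

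First I would verify the two base cases directly. For $k=1$, the left-hand side is $F_1\!\left(x,\frac{x}{1-z}\right)=1$ because $F_1\equiv 1$ does not depend on its second argument, while the right-hand side is $\frac{F_2(x,z)}{1-z}=\frac{1-z}{1-z}=1$. For $k=2$, I would use the recurrence to compute $F_3(x,z)=F_2(x,z)-xF_1(x,z)=1-z-x$, so that the right-hand side equals $\frac{1-z-x}{1-z}$, which matches $F_2\!\left(x,\frac{x}{1-z}\right)=1-\frac{x}{1-z}=\frac{1-z-x}{1-z}$.

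For the inductive step, fix $k\geq 3$ and abbreviate $w=\frac{x}{1-z}$. Since $k\geq 3$, the recurrence gives $F_k(x,w)=F_{k-1}(x,w)-xF_{k-2}(x,w)$. Applying the inductive hypothesis to both terms (legitimate because $k-1\geq 1$ and $k-2\geq 1$) yields
\begin{equation*}
F_k(x,w)=\frac{F_k(x,z)}{1-z}-x\,\frac{F_{k-1}(x,z)}{1-z}=\frac{F_k(x,z)-xF_{k-1}(x,z)}{1-z}.
\end{equation*}
The numerator is precisely $F_{k+1}(x,z)$ by the defining recurrence (applied with index $k+1\geq 3$), so $F_k(x,w)=\frac{F_{k+1}(x,z)}{1-z}$, which completes the induction.

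I expect no serious obstacle here. The only point demanding care is bookkeeping the range of validity of the recurrence, so that one genuinely supplies the two base cases $k=1,2$ rather than a single one, and remembering that $F_0$ and $F_1$ are constant in their second variable. Everything else is a routine application of the linear recurrence, and the substitution $w=\frac{x}{1-z}$ is absorbed cleanly because the recurrence coefficients do not involve the second argument.
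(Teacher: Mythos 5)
Your proof is correct and follows essentially the same route as the paper's: induction on $k$ using the three-term recurrence, with the substitution absorbed because the recurrence coefficients do not involve the second argument. The only difference is that you explicitly verify the base case $k=2$ (needed since the recurrence and the double use of the inductive hypothesis only kick in at $k=3$), a point the paper's proof glosses over by checking only $k=1$; your version is the more careful one.
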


\begin{proof} The proof is easily obtained by induction.\\
 {\bfseries Basis}: We show that the statement holds for $k=1$.
 $$F_1\left(x,\frac{x}{1-z}\right)=1=\frac{1-z}{1-z}=\frac{F_{2}(x,z)}{1-z}\,.$$
 {\bfseries Inductive}: Assume that Lemma \ref{lemmaF} holds for $k-1$. Let us show that it holds also for $k$, \ie
 $$F_k\left(x,\frac{x}{1-z}\right)=\frac{F_{k+1}(x,z)}{1-z}\,.$$
 Using the definition of $F_k(x,z)$ the left-hand side of the above equation can be rewritten as
 $$F_{k-1}\left(x,\frac{x}{1-z}\right)-xF_{k-2}\left(x,\frac{x}{1-z}\right)\,.$$
 Now, using the induction hypothesis, we obtain:
 $$\frac{F_{k}(x,z)}{1-z}-x\frac{F_{k-1}(x,z)}{1-z}=\frac{F_{k}(x,z)-xF_{k-1}(x,z)}{1-z}=\frac{F_{k+1}(x,z)}{1-z}\,\,.$$
\end{proof}

Letting $y=x$, we can write $A_1(x,z)=\frac{xz^2}{F_2(x,z)F_3(x,z)}$. Now, iterating Formula \eqref{4}, and using Lemma \ref{lemmaF}, we obtain
$$A_k(x,z) =\frac{zx^{k+1}}{F_{k+1}(x,z)F_{k+2}(x,z)} \,\,\, .$$
Performing the same calculations on the other functions we obtain:
\begin{eqnarray*}
B_k (x,z)           &=& \frac{xF_{k}}{F_{k+1}(x,z)}\\
\overline{A}_k(x,z) &=& \overline{B}_k(x,z)=\frac{zx^k}{F_{k}(x,z)\cdot F_{k+1}(x,z)} \,\,\,\, .
\end{eqnarray*}
From these new expressions for the functions $A_k$, $B_k$, $\overline{A}_k$, and $\overline{B}_k$, by setting all variables equal to $x$, we can calculate the generating function of the class $\mathbb{P}_k$ in an easier way:
$$Gf_k(x)=2A_k(x,x)B_k(x,x) + (\overline{A}_k)^2(x,x)$$
$$Gf_k(x)=\frac{2x^{k+3}F_k}{F_{k+1}^2F_{k+2}}+\frac{x^{2k+2}}{F_{k}^2F_{k+1}}\, .$$
Then we have the following:
\begin{theorem}\label{formulaP}
The generating function of $k$-parallelogram polyominoes $\poly_k$ is given by
\begin{eqnarray*}
P_{k}(x) =\sum^{k}_{n=0} Gf_n(x)= x^2\cdot\left(\frac{F_{k+1}}{F_{k+2}}\right)^2-x^2\cdot\left(\frac{F_{k+1}}{F_{k+2}}-\frac{F_{k}}{F_{k+1}}\right)^2 \, .\label{P}
\end{eqnarray*}

\end{theorem}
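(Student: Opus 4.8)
The plan is to recognize the right-hand side as a telescoping target and to match it, term by term, against the closed forms of $Gf_n$ obtained in the previous section. Write
\[
R_n \;=\; x^2\left(\frac{F_{n+1}}{F_{n+2}}\right)^2-x^2\left(\frac{F_{n+1}}{F_{n+2}}-\frac{F_n}{F_{n+1}}\right)^2,
\]
so that the assertion $P_k(x)=\sum_{n=0}^k Gf_n(x)$ becomes $\sum_{n=0}^k Gf_n = R_k$. First I would simplify the correction term using the Cassini-type identity $F_{n+1}^2-F_nF_{n+2}=x^{n+1}$ (one of the identities collected in the proposition preceding Lemma~\ref{lemmaF}): since $\frac{F_{n+1}}{F_{n+2}}-\frac{F_n}{F_{n+1}}=\frac{F_{n+1}^2-F_nF_{n+2}}{F_{n+1}F_{n+2}}=\frac{x^{n+1}}{F_{n+1}F_{n+2}}$, the expression collapses to $R_n=\frac{x^2F_{n+1}^2}{F_{n+2}^2}-\frac{x^{2n+4}}{F_{n+1}^2F_{n+2}^2}$. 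This is the shape directly comparable with $Gf_n=\frac{2x^{n+3}F_n}{F_{n+1}^2F_{n+2}}+\frac{x^{2n+2}}{F_n^2F_{n+1}^2}$, which follows from $Gf_n=2A_n(x,x)B_n(x,x)+\overline{A}_n(x,x)^2$ together with the closed forms of $A_n,B_n,\overline{A}_n$.

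The heart of the argument is to verify $R_n-R_{n-1}=Gf_n$ for every $n\geq 1$. I would put the leading difference $\frac{x^2F_{n+1}^2}{F_{n+2}^2}-\frac{x^2F_n^2}{F_{n+1}^2}$ over the common denominator $F_{n+1}^2F_{n+2}^2$; its numerator factors as $x^2(F_{n+1}^2-F_nF_{n+2})(F_{n+1}^2+F_nF_{n+2})$, and applying the identity twice turns the first factor into $x^{n+1}$ and rewrites $F_{n+1}^2+F_nF_{n+2}=2F_nF_{n+2}+x^{n+1}$. Expanding then gives exactly $\frac{2x^{n+3}F_n}{F_{n+1}^2F_{n+2}}+\frac{x^{2n+4}}{F_{n+1}^2F_{n+2}^2}$. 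The trailing pieces of $R_n-R_{n-1}$ contribute $-\frac{x^{2n+4}}{F_{n+1}^2F_{n+2}^2}+\frac{x^{2n+2}}{F_n^2F_{n+1}^2}$, and here the first term cancels the $\frac{x^{2n+4}}{F_{n+1}^2F_{n+2}^2}$ just produced, leaving precisely $\frac{2x^{n+3}F_n}{F_{n+1}^2F_{n+2}}+\frac{x^{2n+2}}{F_n^2F_{n+1}^2}=Gf_n$.

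Finally I would close the telescope. Extending the recurrence to $F_{-1}=0$ gives $R_{-1}=x^2-x^2=0$, and a one-line check shows $R_0=\frac{x^2(1+x)}{1-x}=Gf_0$, settling the base case. Summing $Gf_n=R_n-R_{n-1}$ over $0\leq n\leq k$ then collapses to $\sum_{n=0}^k Gf_n=R_k-R_{-1}=R_k$, which is the claimed formula; as a sanity check, letting $k\to\infty$ sends $\frac{F_{k+1}}{F_{k+2}}\to a(x)/x$ and the correction term to $0$, recovering the algebraic generating function $a(x)^2=a(x)-x$ of all parallelogram polyominoes. The only real obstacle is bookkeeping: one must keep the two Fibonacci-polynomial identities straight and track the exact powers of $x$ so that the two $\frac{x^{2n+4}}{F_{n+1}^2F_{n+2}^2}$ contributions cancel exactly. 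No genuinely new idea beyond the closed forms and the Cassini relation is required.
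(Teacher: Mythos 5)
Your proposal is correct, and it supplies precisely the verification that the paper omits: the text derives the closed form $Gf_k(x)=2A_k(x,x)B_k(x,x)+\overline{A}_k(x,x)^2$ in terms of Fibonacci polynomials and then simply asserts Theorem~\ref{formulaP} without showing that the partial sums collapse to the stated expression, so your telescoping argument ($Gf_n=R_n-R_{n-1}$ with $R_0=Gf_0=\frac{x^2(1+x)}{1-x}$) is the intended route carried out in full. I checked the key step: with $F_{n+1}^2-F_nF_{n+2}=x^{n+1}$ the leading difference $\frac{x^2F_{n+1}^2}{F_{n+2}^2}-\frac{x^2F_n^2}{F_{n+1}^2}$ does factor as $\frac{x^{n+3}(2F_nF_{n+2}+x^{n+1})}{F_{n+1}^2F_{n+2}^2}$, and the $\frac{x^{2n+4}}{F_{n+1}^2F_{n+2}^2}$ terms cancel exactly as you say. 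Two remarks on your (correct) inputs versus the paper's displayed formulas: the Cassini-type identity is printed in the paper as $F_{k-1}^2=x^{k+1}+F_kF_{k+2}$, which fails already at $k=1$; the correct version, which you use, is $F_{k+1}^2=x^{k+1}+F_kF_{k+2}$. Likewise the paper's displayed $Gf_k(x)=\frac{2x^{k+3}F_k}{F_{k+1}^2F_{k+2}}+\frac{x^{2k+2}}{F_k^2F_{k+1}}$ is missing a square on the last $F_{k+1}$ (it should be $\overline{A}_k(x,x)^2=\frac{x^{2k+2}}{F_k^2F_{k+1}^2}$, as a check against $Gf_1$ confirms); your version is the right one, and the telescoping would not close with the paper's misprinted denominator. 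Your limiting sanity check at $k\to\infty$ is also consistent with the paper's subsequent corollary.
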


As an example, the generating functions of $\mathbb{P}_k$, for the first values of $k$ are:
%
%

$$
\begin{array}{ll}
P_0(x)=\frac{x^2 (1 + x)}{1-x} & P_1(x)=\frac{x^2 (1 - 2 x + 2 x^2)}{(-1 + x)^2 (1 - 2 x)}\\
\\
P_2(x)=\frac{ x^2 (1-x)(1 - 4 x + 4 x^2 + x^3)}{(1-2x)^2 (1 - 3 x + x^2)}
& P_3(x)=\frac{x^2 (1-2x) (1 - 6 x + 11 x^2 - 6 x^3 + 2 x^4)}{(1-x) (1-3x) (1 - 3 x + x^2)^2}\\
\end{array}
$$
The coefficients of $P_1$ are an instance of sequence $A000247$ \cite{Sl}, whose first few terms are:
$$0,3,10,25,56,119,246,501,1012,\cdots\,\,.$$

As one would expect we have the following corollary:
\begin{corollary}
Let $C(x)=\frac{1-\sqrt{1-4x}}{2x}$ be the generating function of Catalan numbers, we have: 
$$\lim_{k\to \infty}\,P_k(x)=C(x) \, .$$
\end{corollary}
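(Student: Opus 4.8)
The plan is to pass to the limit directly in the closed form of Theorem~\ref{formulaP}, exploiting the explicit expression $F_k=\frac{b(x)^{k+1}-a(x)^{k+1}}{\sqrt{1-4x}}$ with $a(x)=\frac{1-\sqrt{1-4x}}{2}$, $b(x)=\frac{1+\sqrt{1-4x}}{2}$, together with the relations $a+b=1$, $ab=x$, $b-a=\sqrt{1-4x}$ recorded in the excerpt. The first step is to pin down the asymptotics of the ratio $r_k:=F_{k+1}/F_{k+2}$ that drives the formula. Writing $F_k=\frac{b^{k+1}}{\sqrt{1-4x}}\bigl(1-(a/b)^{k+1}\bigr)$ and noting that, as a formal power series (equivalently, for real $x$ in a neighbourhood of $0$), the dominant root satisfies $|a/b|<1$, I would deduce $r_k\to 1/b=\frac{2}{1+\sqrt{1-4x}}$; rationalizing the denominator gives precisely $\frac{1-\sqrt{1-4x}}{2x}=C(x)$. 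The emergence of the Catalan generating function as the limiting value of the Fibonacci-polynomial ratio is the conceptual heart of the corollary.

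Second, I would show that the subtracted square in Theorem~\ref{formulaP} is negligible in the limit. Using the Simson-type identity $F_{k+1}^2-F_kF_{k+2}=x^{k+1}$ (a reindexing of the elementary Fibonacci identities listed before Lemma~\ref{lemmaF}, or a direct one-line computation from the closed form) one gets $r_k-\frac{F_k}{F_{k+1}}=\frac{F_{k+1}^2-F_kF_{k+2}}{F_{k+1}F_{k+2}}=\frac{x^{k+1}}{F_{k+1}F_{k+2}}$. Since $F_{k+1}F_{k+2}$ grows like $b^{2k+5}/(1-4x)$ while $x^{k+1}=(ab)^{k+1}$, this correction behaves like $(a/b)^{k+1}$ and hence vanishes as $k\to\infty$. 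Thus the negative term in the formula contributes nothing in the limit, and the value of $\lim_{k\to\infty}P_k(x)$ is controlled entirely by $r_k$, whose limit I have already identified with $C(x)$; substituting and simplifying then yields the stated limit $C(x)$.

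To make the conclusion transparent and to double-check the analytic computation, I would add the parallel combinatorial reading. Every parallelogram polyomino has finite convexity degree, so the classes form an increasing chain $\poly_0\subseteq\poly_1\subseteq\cdots$ exhausting the full class $\poly$ of parallelogram polyominoes; coefficient by coefficient, $P_k(x)$ therefore stabilizes to the generating function of $\poly$, which by the classical enumeration recalled from \cite{S} is counted by the Catalan numbers and is thus $C(x)$. The main obstacle I foresee is purely the analytic bookkeeping that turns ``passing to the limit'' into a rigorous statement: one must justify the domination $|a/b|<1$ and, above all, argue that the convergence $P_k\to C(x)$ holds coefficientwise, i.e.\ in the ring of formal power series, each coefficient stabilizing for $k$ large, rather than merely as an identity between the closed-form expressions. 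Once the dominance of the root $b$ and the vanishing of the term $x^{k+1}/(F_{k+1}F_{k+2})$ are secured, the limit follows.
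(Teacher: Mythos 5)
Your proposal is correct and follows essentially the same route as the paper: the paper likewise passes to the limit in the closed form of Theorem~\ref{formulaP}, rewriting $F_k=\frac{1-x^{k+1}C^{2(k+1)}(x)}{C^{k+1}(x)\sqrt{1-4x}}$ (i.e.\ using $a(x)=xC(x)$ and $b(x)=1/C(x)$, so your dominance condition $|a/b|<1$ is exactly its observation that powers of $xC^{2}(x)=C(x)-1$ vanish) to prove $\lim_{k}F_k/F_{k+1}=C(x)$ and the two companion limits, whence the subtracted square in the formula tends to $0$; your quantitative control of that term via $F_{k+1}^2-F_kF_{k+2}=x^{k+1}$ (note the paper's printed identity $F_{k-1}^2=x^{k+1}+F_kF_{k+2}$ is an evident misprint for yours) and your insistence on coefficientwise convergence, like your combinatorial cross-check, are mild sharpenings of the published argument rather than a different method. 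One caveat you inherit verbatim from the paper: substituting the limits into Theorem~\ref{formulaP} literally yields $x^2C^2(x)=x\,(C(x)-1)$, the semiperimeter generating function of all parallelogram polyominoes, so the stated equality with $C(x)$ holds only up to this normalization shift, a gloss hidden in your final ``substituting and simplifying'' just as in the paper's ``we obtain the desired proof.''
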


\begin{proof}
We have that $C(x)$ satisfies the equation $C(x)=1+xC^2(x)$, and $a(x)b(x)=x$, $a(x)=xC(x)$, so we can write
$$F_k=\frac{1-x^{k+1}C^{2(k+1)}(x)}{C^{k+1}(x)\sqrt{1-4x}} \, .$$
Now we can prove the following statements:
\begin{eqnarray}
\lim_{k\to \infty}\,\frac{F_k}{F_{k+1}} & = & C(x)\,,                             \label{lim1}\\
\lim_{k\to \infty}\,\frac{{F_k}^2}{{F_{k+1}}^2} & = & \frac{C(x)-1}{x}\,,  \label{lim2}\\
\lim_{k\to \infty}\,\frac{F_k}{F_{k+2}} & = & \frac{C(x)-1}{x}\,.                 \label{lim3}
\end{eqnarray}

Using the previous identities we can write in an alternative way the argument of Limit \ref{lim1}
$$
\lim_{k\to \infty}\,\frac{\frac{1-x^{k+1}C^{2(k+1)}(x)}{C^{k+1}(x)\sqrt{1-4x}}}{\frac{1-x^{k+2}C^{2(k+2)}(x)}{C^{k+2}(x)\sqrt{1-4x}}}=\lim_{k\to \infty}\,C(x)\cdot\frac{1-x^{k+1}C^{2(k+1)}(x)}{1-x^{k+2}C^{2(k+2)}(x)}\,\,,
$$
and so Limit \ref{lim1} holds. In a similar way we can prove also Limit \ref{lim2} and \ref{lim3}.

From Theorem \ref{formulaP}, and using the above results, we obtain the desired proof.
\end{proof}

\section[A bijective proof for the number of $\poly_k$]{A bijective proof for the number of \\$k$-parallelogram polyominoes}
In \cite{knuth} it is proved that $\,\, x\cdot \frac{F_k}{F_{k+1}}\,\,$ is the generating function of planted plane trees having height less than or equal to $k+1$. Hence, the generating function obtained for $P_k(x)$ in $(\ref{P})$ can be expressed as the difference between the generating functions of pairs of planted plane trees having height at most $k+2$, and pairs of planted plane trees having height exactly equal to $k+2$. 

Our aim is now proceed trying to provide a combinatorial explanation to this fact, by establishing a bijective correspondence between $k$-parallelogram polyominoes and planted plane trees having height less than or equal to a fixed value; first we will show how to build the planted plane tree associated  with a given parallelogram polyomino $P$ and then we will show what is the link between the convexity degree of $P$ and the corresponding tree.
We recall that a {\em planted plane tree} is a rooted tree which has been embedded in the plane so that the relative order of subtrees at each branch is part of its structure. Henceforth we shall say simply {\em tree} instead of planted plane tree. Let $T$ be a tree, the {\em height} of $T$, denoted by $|T|$, is the number of nodes on a maximal simple path starting at the root.
Figure \ref{trees_h5_sp6} depicts the seven trees having exactly $6$ nodes and height equal to $5$.

\begin{figure}[htbp]
  \begin{center}
    \includegraphics[width=13cm]{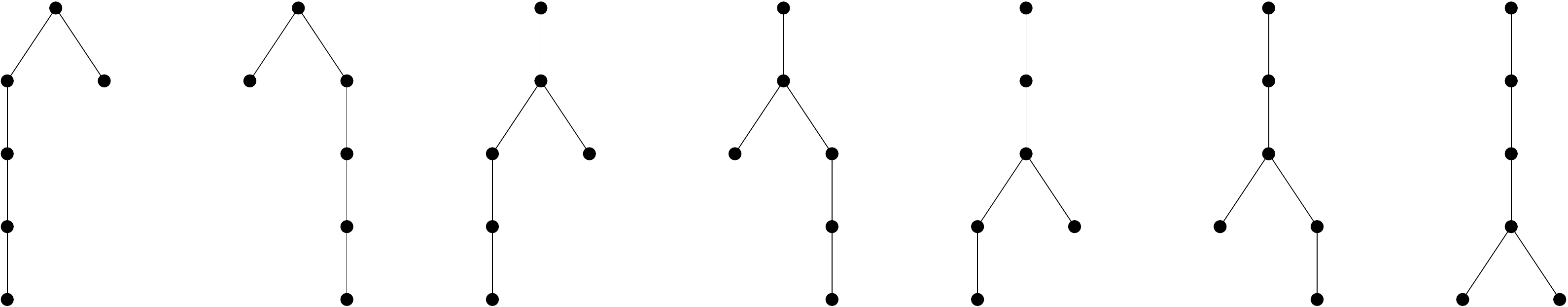}
    \caption{The seven trees with $6$ nodes and height exactly $5$.}
    \label{trees_h5_sp6}
  \end{center}
\end{figure}

\subsection{From parallelogram polyominoes to planted plane trees}
To construct the bijection that we will see in the follows we had take inspiration from \cite{ABBS}. 

Given a parallelogram polyomino $P$ we begin by labeling:
\begin{itemize}
 \item [-] each step $e$ of the upper boundary of $P$ with the integer numbers from $1$ to the width of $P$ and moving from right to left;
 \item [-] each step $n$ of the lower boundary of $P$ with marked integer numbers from $\overline{1}$ to the height of $P$ and moving from top to bottom.
\end{itemize}

The reader can see an example of this labeling process in Figure \ref{biezione} $(a)$. We want to notice that the labeling of a polyomino is uniquely determined by construction and that every label $l$ (resp. $\overline{l}$) identifies a column (resp. a row) into $P$.
\begin{definition}\label{n(l)}
 Let $P$ be a parallelogram polyomino. We denote by $e(\overline{l})$ the array of labels (except for the label $1$), which are an edge of a cell belonging to the row determined by $\overline{1}$.
 For every label $l\geq 1$ (resp. $\overline{l}\geq 2$) we take into consideration the column (resp. the row) determined by it. We denote by $n(l)$ (resp. $e(\overline{l})$) the array of labels, which correspond to an edge of a cell belonging to this column (resp. row).
\end{definition}
It is clear that each label, of the just defined array, corresponds to a step $n$ (resp. $e$) on the lower (resp. upper) boundary of $P$.
 
We can better understand Definition \ref{n(l)} seeing an example of it in Figure \ref{biezione}. For instance, here we have:
$$n(7)=(\overline{5},\overline{6})\,\,\,\,\mbox{and}\,\,\,\,e(\overline{5})=(9,10,11)\,.$$

At this point we are able to construct the corresponding tree, called $T(P)$, in the following way:
\begin{itemize}
 \item [-] we associate to any label of $P$ one node in $T(P)$, in particular the root will be the node labeled with $1$;
 \item [-] the children of the node $1$ are exactly the ones labeled with the labels in $n(1)$, ordered from left to right; more in general the children of a node with label $l$ (resp. $\overline l$) are exactly the ones labeled with the labels $n(l)$ (resp. $e(\overline l)$), ordered from left to right.
\end{itemize}
Figure \ref{biezione} $(b)$ shows an example of our correspondence.
\begin{proposition}
Let be $\mathfrak{P}_n$ and $\mathfrak{T}_n$ respectively the set of parallelogram\\ polyominoes with semi-perimeter $n$ and the set of trees with n nodes.\\
The following correspondence
 $$T:\mathfrak{P}_n \rightarrow \mathfrak{T}_n$$ is a bijection.
\end{proposition}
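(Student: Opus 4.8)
The plan is to verify the three requirements of a bijection while exploiting that the two sets are equinumerous. First I would record that a parallelogram polyomino $P$ of semi-perimeter $n$ satisfies $w(P)+h(P)=n$, so the two labellings introduce exactly $n$ symbols (the $w(P)$ unmarked labels on the upper boundary and the $h(P)$ marked labels on the lower one); hence $T(P)$ carries exactly $n$ nodes, matching the index of $\mathfrak{T}_n$. Since the number of parallelogram polyominoes of semi-perimeter $n$ is the Catalan number $C_{n-1}$ (as recalled from \cite{S}), and the number of planted plane trees with $n$ nodes is likewise $C_{n-1}$, it will be enough to show that $T$ is a well-defined injection; surjectivity then comes for free by counting.

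The technical heart is checking that the prescription genuinely produces a rooted plane tree. I would first argue that every symbol except $1$ occurs exactly once as a child. Each north step of the lower boundary is the east edge of a unique cell, so each marked label belongs to exactly one array $n(l)$; dually each east step of the upper boundary is the north edge of a unique cell, so each unmarked label $l\geq 2$ belongs to exactly one array $e(\overline{l})$, while the label $1$ is deliberately removed and serves as the root. Thus the ``parent'' assignment is a well-defined function on the $n-1$ non-root symbols, contributing $n-1$ edges. To see that the graph is acyclic (equivalently, with $n$ vertices and $n-1$ edges, a tree) I would use the monovariant $\psi$ equal to the height of the top edge of the column (resp.\ row) attached to a node, and show that $\psi$ never decreases on passing to the parent and strictly increases at every row-to-column step unless the cell involved is $E$. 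The key geometric lemma is that the only cell which is simultaneously the topmost cell of its column and the rightmost cell of its row is $E$; otherwise the upper and lower paths would share an interior corner, contradicting the fact that they meet only at $S$ and $E$. Consequently a directed cycle would force $\psi$ to be constant and would have to contain the root, which is impossible, so $T(P)$ is a tree, and it is a \emph{plane} tree because the children are ordered from left to right.

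It then remains to recover $P$ from the unlabelled plane tree $T(P)$, which gives injectivity. The node types are read off from the shape, since the children of a column-node are row-nodes and conversely; hence the parity of the depth separates the $w(P)$ column-nodes (odd depth, root included) from the $h(P)$ row-nodes. I would next show that the labels $1,\dots,w(P)$ and $\overline{1},\dots,\overline{h(P)}$ are forced by a canonical level-order traversal of the plane tree, after which the parent–child incidences rebuild the boundary of $P$: the parent of column $l$ encodes the row of its topmost cell, the parent of row $\overline{m}$ encodes the column of its rightmost cell, and the children lists record which rows (resp.\ columns) have their rightmost (resp.\ topmost) cell there — exactly the data of the two lattice paths. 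I expect this last step to be the main obstacle: one must prove that the labelling is determined by the bare plane structure, i.e.\ that the geometric left-to-right/top-to-bottom order of columns and rows coincides with the order in which the corresponding nodes are visited by the chosen traversal. Once this is secured, $P$ is uniquely reconstructed, so $T$ is injective, and the equality of cardinalities upgrades injectivity to the asserted bijection.
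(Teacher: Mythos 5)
Your proposal is correct in outline but takes a genuinely different route from the paper's proof, principally in how surjectivity is obtained. The paper proves surjectivity \emph{constructively}: given a tree $T$, it builds the polyomino $P(T)$ directly by drawing the upper and lower boundary paths (one east step for the root, then for each node as many horizontal (resp.\ vertical) steps as it has children followed by one step of the other type), and injectivity is asserted to follow from the construction. You instead invoke the external facts that $|\mathfrak{P}_n|=|\mathfrak{T}_n|=C_{n-1}$ and reduce everything to showing $T$ is a well-defined injection. Your route is logically valid but makes the result depend on two prior enumerations, so the correspondence is no longer a self-contained bijective proof that parallelogram polyominoes are Catalan-counted; the paper's explicit inverse avoids this and, moreover, is what is actually reused later in the chapter (to relate the convexity degree of $P(T)$ to the height of $T$), so constructing it is not wasted work. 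On the other hand, you are more careful than the paper on a point it glosses over entirely: that the parent assignment really yields a tree. Your monovariant argument is sound -- the parent of a row is the column of its rightmost cell (weakly raising the ``top height''), the parent of a column is the row of its topmost cell (preserving it), and the key lemma that only $E$ is simultaneously topmost in its column and rightmost in its row holds precisely because the two boundary paths of a parallelogram polyomino meet only at their endpoints; hence no directed cycle can close. Finally, the step you flag as the main obstacle (recovering the labelling, hence $P$, from the bare plane tree) is indeed the crux of injectivity and does go through: the children of a column node are consecutive rows from the top down and those of a row node are consecutive columns from the right, so the labels are forced by a level-order traversal; this is exactly the content that the paper's inverse construction handles implicitly by rebuilding the two boundary paths.
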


\begin{proof}
The injectivity follows directly from our construction. $\T$ is also surjective. 
 It is easy to see that the number of nodes of $T(P)$ is equal to the semi-perimeter of $P$. In fact the number of nodes is equal to the number of labels that is equal to the sum of steps $e$ of the upper boundary of $P$ and of steps $n$ of the lower boundary of $P$. Since we are talking about parallelogram polyominoes which are first of all convex polyominoes, such a sum corresponds exactly to the semi-perimeter of $P$.
 So, given a tree $T$ we will build the corresponding parallelogram polyomino, denoted by $P(T)$. Starting from a fixed  point of the plain we will go to construct the upper path and the lower path of $P(T)$ in two different phases:
 \begin{enumerate}
  \item we start with a step $o$ which corresponds to the root. For every node labeled with $\overline{l}$, with $\overline{l}\geq 1$, we draw as many steps $o$ as the number of its children and one step $s$.
  \item  For every node labeled with $l$, with $l\geq 1$, we draw as many steps $s$ as the number of its children and one step $o$.
 \end{enumerate}
Clearly this construction guarantees that the upper path and the lower path of $P(T)$ has the same length and that they are disjoint except at their common ending points, otherwise $T$ does not be a tree.   
\end{proof}

\begin{figure}[htbp] 
  \begin{center}
    \includegraphics[width=13cm]{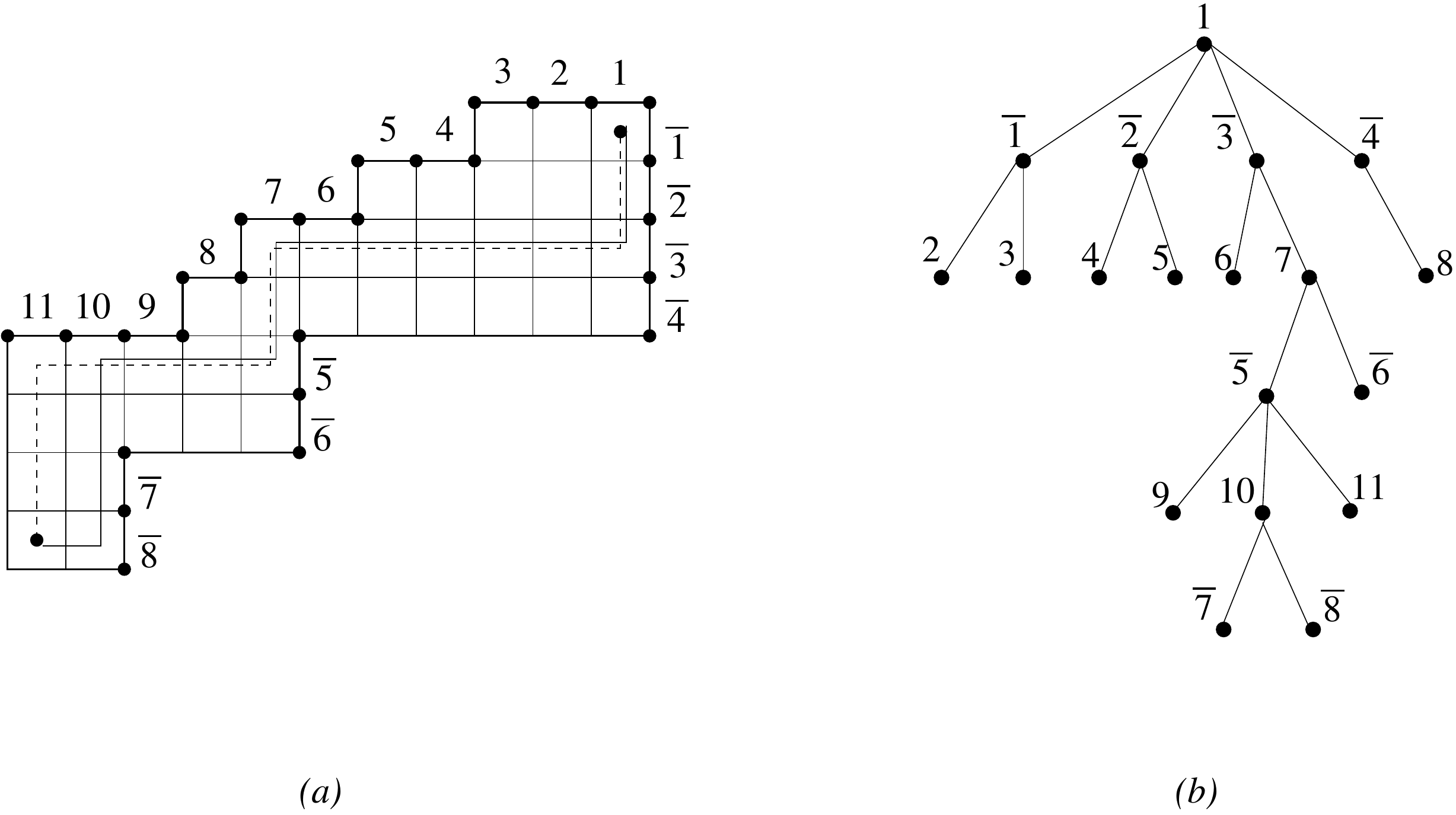}
    \caption{$(a)$ A polyomino in $\mathbb{P}_4^U$ and in $(b)$ its corresponding tree.}
    \label{biezione}
  \end{center}
\end{figure}

Moreover, we can notice that:
\begin{itemize}
 \item [-] in every tree obtained by our correspondence, the root labeled with $1$ has at least the node labeled with $\overline{1}$ as a child;
 \item [-] the nodes labeled with $l$ and $\overline{l}$ are at alternate levels;
 \item [-] the labeling is uniquely determined as in the case of parallelogram polyominoes. So, from now on, when we deal of trees, we mean labeled trees as we have just seen.
\end{itemize}

\subsection[$k$-convexity degree and the height of a tree]{The link between the $k$-convexity degree and the height of a tree}
We can point out that, by our construction, given a polyomino $P$ and its associated tree $T(P)$, the greater node labeled with $l$ (resp. $\overline{l}$) corresponds into $P$ at the cell in which the path $v$ (resp. $h$) has/makes the first change of direction. 
\begin{definition}
 Let $T$ be a tree having height equal to $i$. According to the parity of $i$ we can define two sequences of nodes.
 \begin{itemize}
  \item {\bfseries case $i$ odd:}\\
  we call $v_T$ (resp. $h_T$) the sequence of nodes of the simple path starting from the rightmost node at the height $i$ (resp. $i-1$) and ending when reaching either the node $1$ or $\overline{1}$.  
  \item {\bfseries case $i$ even:}\\
  we call $h_T$ (resp. $v_T$) the sequence of nodes of the simple path starting from the rightmost node at the height $i$ (resp. $i-1$) and ending when reaching either the node $1$ or $\overline{1}$.
 \end{itemize}
\end{definition}

For example, let $T$ be the tree in Figure \ref{biezione} $(b)$. $|T|$ is equal to $6$, so we are in the even case, and following the previous definition we are able to write $h_T=(\overline{8},10,\overline{5},7,\overline{3},1)$ and $v_T=(11,\overline{5},7,\overline{3},1)$. 

These two just defined sequences have an important property. More in details, the nodes of the sequence $h_T$ (resp. $v_T$) correspond to the cells of $P(T)$ in which $h$ (resp. $v$) makes a change of direction, hence there are exactly the $m$ (resp. $m'$) steps determined by the path $h$ (resp. $v$) when encountering the boundary of $P(T)$, that we called, in our decomposition \ref{dec}, $X_i$ or $Y_i$ depending on it is a horizontal or vertical one (see Fig. \ref{decomposition}).
So, the convexity degree of $P(T)$ is equal to the minimal number of nodes among the two paths $h_{T(P)}$ and $v_{T(P)}$ minus one.

In general, we can observe that the height of $T$ is strictly related to the number of nodes of $h_T$ and $v_T$ and by definition $h_T$ and $v_T$ can be referred to the node $1$ or $\overline{1}$. So, the height of $T$ is equal to the maximal number of nodes among the two paths plus one, and the following proposition holds.
\begin{proposition}\label{|T|}
 Let $P$ be a polyomino in $\poly_k$. The height of $T(P)$ is less than or equal to $k+3$.
\end{proposition}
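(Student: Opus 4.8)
The plan is to deduce the bound directly from the two structural facts established in the paragraph preceding the statement, applied to the node counts of the paths $h_{T(P)}$ and $v_{T(P)}$. Write $a$ for the number of nodes of $h_{T(P)}$ and $b$ for the number of nodes of $v_{T(P)}$. The two facts I intend to use are: the convexity degree of $P$ equals $\min(a,b)-1$ (which is how Proposition~\ref{convexitydegree} reads once the turning cells of $h$ and $v$ are matched with the nodes of $h_{T(P)}$ and $v_{T(P)}$), and the height of $T(P)$ equals $\max(a,b)+1$.

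First I would use the hypothesis $P\in\poly_k$: by definition the convexity degree of $P$ is at most $k$, so the first fact yields $\min(a,b)-1\le k$, \ie $\min(a,b)\le k+1$. Next I would invoke the earlier proposition asserting that the numbers of changes of direction required by $h$ and $v$ differ by at most one. Since $a$ and $b$ are exactly those two numbers increased by one, we also have $|a-b|\le 1$, hence $\max(a,b)\le\min(a,b)+1\le k+2$. Substituting into the second fact gives the height of $T(P)$ equal to $\max(a,b)+1\le k+3$, as claimed.

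The deduction itself is a short estimate, so I expect no real difficulty there. The step deserving the most care---the one I would spell out rather than leave implicit---is the transfer of information from $P$ to $T(P)$: namely that the nodes of $h_{T(P)}$ (resp.\ $v_{T(P)}$) are in bijection with the turning cells of the path $h$ (resp.\ $v$) in $P$, so that a node count in the tree coincides with a change-of-direction count in the polyomino increased by one. Once this correspondence is granted, both the identity for the convexity degree and the ``differ by at most one'' inequality transfer verbatim from the polyomino to the tree, and the bound follows.
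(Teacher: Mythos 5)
Your proposal is correct and follows essentially the same route as the paper, whose proof is simply the remark that the statement ``follows directly from our construction'': you make explicit the two facts the paper states in the paragraph preceding the proposition (convexity degree $=\min(a,b)-1$ and height $=\max(a,b)+1$, with $a,b$ the node counts of $h_{T(P)}$ and $v_{T(P)}$ corresponding to change-of-direction counts plus one) and combine them with the ``differ by at most one'' property to get $\max(a,b)\le k+2$ and hence the bound. The only remark worth making is that the paper's asserted equality for the height is really an upper bound in some cases (in the paper's own example the height equals $\max(a,b)$ rather than $\max(a,b)+1$), but since the proposition is an inequality this does not affect your conclusion.
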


The proof follows directly from our construction.

As we said before, Equation $(\ref{P})$ suggests us to take into consideration a pair of trees, so we identify every tree $T(P)$ with a pair of trees $T_1$ and $T_2$, denoted $(T_1,T_2)$, which are respectively the ones obtained taking the subtree having the node labeled with $\overline{1}$ as a root, and the remaining subtree having the node labeled with $1$ as a root. More formally
\begin{definition}\label{T1T2}
 Let be $T_1$ and $T_2$ a pair of trees, we denote with $T=(T_1,T_2)$ the tree obtained putting $T_1$ as a left subtree of $T_2$.
\end{definition}
We remark that generally the pairs $T=(T_1,T_2)$ and $T'=(T_2,T_1)$ correspond to a different tree. Figure \ref{T1T2} depicts the decomposition of the tree of Figure \ref{biezione} $(b)$.

\begin{figure}[htbp] 
  \begin{center}
    \includegraphics[width=10cm]{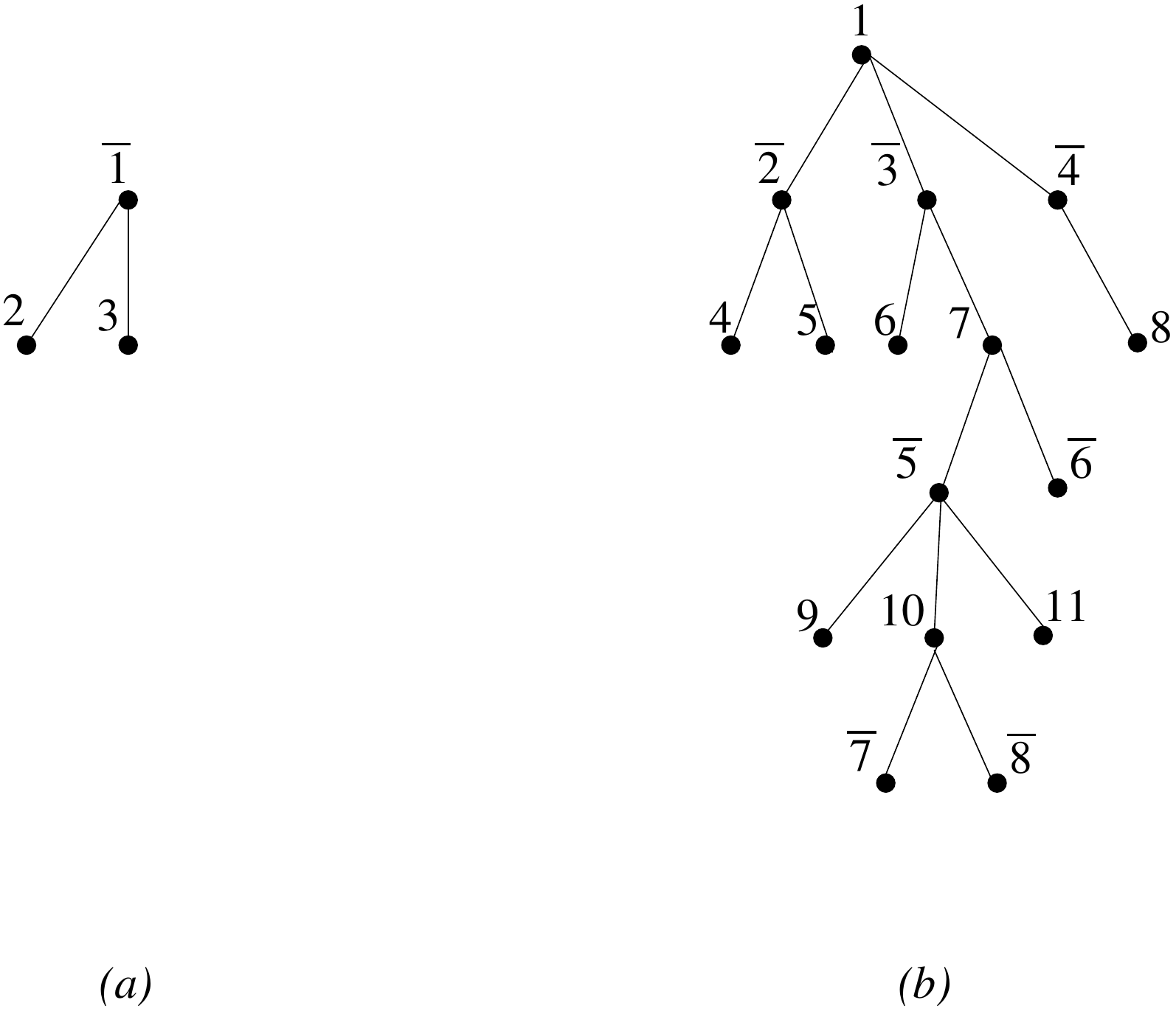}
    \caption{The pair $(T_1,T_2)$ of trees corresponding to tree depicted in Figure \ref{biezione}, in particular $T_1$ in $(a)$ and $T_2$ in $(b)$.}
    \label{T1T2}
  \end{center}
\end{figure}

Now we are ready to provide a bijective proof of the combinatorial explanation of Equation $(\ref{P})$.
\begin{proposition}
 The number of $k$-parallelogram polyominoes is equal to the number of pairs of trees having height less than or equal to $k+2$ minus the number of pairs of trees having height exactly equal to $k+2$.
\end{proposition}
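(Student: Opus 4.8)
The plan is to interpret the two terms of the formula for $P_k(x)$ in Theorem~\ref{formulaP} by means of the tree generating function of \cite{knuth}, and then to realise this interpretation concretely through the bijection $T$ and the pair decomposition of Definition~\ref{T1T2}. Set $a_k=x\,F_{k+1}/F_{k+2}$ and $b_k=x\,F_k/F_{k+1}$. By \cite{knuth} the series $b_k$ is the generating function, by number of nodes, of planted plane trees of height at most $k+1$; replacing $k$ by $k+1$ shows that $a_k$ enumerates trees of height at most $k+2$, and therefore $a_k-b_k$ enumerates trees of height exactly $k+2$. Since the formula of Theorem~\ref{formulaP} is exactly
$$
P_k(x)=a_k^2-(a_k-b_k)^2,
$$
the series $a_k^2$ enumerates ordered pairs of trees each of height at most $k+2$, while $(a_k-b_k)^2$ enumerates ordered pairs of trees each of height exactly $k+2$. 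Comparing the coefficients of $x^n$ then already gives, for every $n$, the asserted equality between the number of $k$-parallelogram polyominoes of semi-perimeter $n$ and the corresponding difference of numbers of pairs of trees.

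To exhibit this as a genuine bijection rather than a coefficient comparison, I would use the map $P\mapsto T(P)$ followed by the splitting $T(P)=(T_1,T_2)$ of Definition~\ref{T1T2}, where $T_1$ is the subtree rooted at $\overline 1$ and $T_2$ the subtree rooted at $1$. The key bookkeeping point is that this splitting partitions the node set of $T(P)$, so the number of nodes of $T_1$ plus that of $T_2$ equals the number of nodes of $T(P)$, which by the bijection $T$ is the semi-perimeter of $P$; this is precisely what makes a product such as $a_k^2$ count pairs of trees by semi-perimeter. It then remains to identify, inside this correspondence, which ordered pairs $(T_1,T_2)$ arise from the polyominoes of $\poly_k$.

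For this I would translate the convexity constraint into a height constraint on the two subtrees. By Proposition~\ref{convexitydegree} the convexity degree of $P$ is the least number of changes of direction of a path from $S$ to $E$, which under $T$ is governed by the shorter of the two distinguished node-sequences $h_T,v_T$, while the height of $T(P)$, which equals $\max(|T_2|,\,1+|T_1|)$, is governed by the longer one; together with Proposition~\ref{|T|} this should yield that $P\in\poly_k$ if and only if $|T_1|,|T_2|\le k+2$ and $T_1,T_2$ are not both of height exactly $k+2$. Granting this equivalence, $\poly_k$ is in bijection with the set-theoretic difference of the pairs counted by $a_k^2$ and by $(a_k-b_k)^2$, which is the statement. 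The main obstacle is exactly this equivalence: one must pin down the off-by-one constants relating the degree $k$ to the bounds $k+1$ and $k+2$, and verify that $T_1$ and $T_2$ faithfully record the two internal paths $h$ and $v$. This has to be checked case by case following the flat, up and right trichotomy of Proposition~\ref{flatup}, and separately in the degenerate situation $C(P)=S$, where one of the sequences $\mathcal A(P),\mathcal B(P)$ reduces to a single unit step and the alternative boundary definitions of the steps $X_i,Y_i$ are needed. The generating-function identity of Theorem~\ref{formulaP} provides an independent confirmation that the constants chosen in the height characterization are the correct ones.
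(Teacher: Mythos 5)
Your first paragraph does establish the stated numerical identity, but only by reading it off from Theorem~\ref{formulaP} together with the result of \cite{knuth}: that $x^2(F_{k+1}/F_{k+2})^2$ and $x^2(F_{k+1}/F_{k+2}-F_k/F_{k+1})^2$ enumerate the two kinds of pairs of trees is exactly the observation the paper itself makes in the sentences preceding the proposition, and it is the fact that this section sets out to explain \emph{combinatorially}. Taken as the proof of this proposition, the coefficient comparison is therefore circular in spirit: it derives the identity from the already-computed rational generating function rather than from the correspondence $P\mapsto (T_1,T_2)$.

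The bijective argument, which is the intended content, has a genuine gap at precisely the step you flag yourself. You reduce everything to the equivalence ``$P\in\poly_k$ if and only if $|T_1|\le k+2$, $|T_2|\le k+2$, and $T_1,T_2$ are not both of height exactly $k+2$'', but you only assert that Propositions~\ref{convexitydegree} and~\ref{|T|} ``should yield'' it and then proceed ``granting this equivalence''. That equivalence is the entire substance of the paper's proof: one must relate $|T_1|$ and $|T_2|$ to the numbers of nodes of the sequences $h_T$ and $v_T$ (which depends on the parity of the height and on which of the two subtrees contains the nodes with the greatest labels $l$ and $\overline{l}$), and then use that the convexity degree equals the smaller of these two numbers minus one. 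The paper carries this out in four borderline cases: $(|T_1|,|T_2|)=(k+2,k+3)$ and $(k+2,k+2)$ give $\min(k+3,k+2)-1=k+1$, hence fall outside $\poly_k$, while $(k+1,k+2)$ and $(k+2,k+1)$ give $\min(k+2,k+1)-1=k$, hence lie in $\poly_k$; the degenerate case $C(P)=S$ and the odd-$k$ case must also be handled. Without this verification the off-by-one constants in your height characterization remain unconfirmed, and appealing to Theorem~\ref{formulaP} as ``independent confirmation'' of those constants again imports the analytic result you are supposed to be reproving.
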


\begin{proof}

Let $P$ be a polyomino of $\poly_k$, with $k\geq 0$. We start with the assumption that $k$ is even (the case when $k$ is odd can be treated in a similar way). Basing on Proposition \ref{|T|}, we have that $|T(P)|\leq k+3$. Being $T=(T_1,T_2)$, for Definition \ref{T1T2}, we can deduce that
$$|T_1|\leq k+2\,\,\,\,\,\mbox{and}\,\,\,\,\,|T_2|\leq k+3\,\,.$$
Since the set $\poly_k$ contains all the polyominoes having convexity degree less than or equal to $k$, we can restrict our analysis assuming that $P$ has convexity degree exactly equal to $k$ and so
$$k+1\leq |T_1|\leq k+2\,\,\,\,\,\mbox{and}\,\,\,\,\,k+1\leq |T_2|\leq k+3\,\,.$$
By this fact and previous considerations we have to consider only the following four cases:
\begin{enumerate}
 \item {\em $|T_1|\leq k+2$ and $|T_2|\leq k+3$.}\\
 We have to take into consideration the borderline case, $|T_1|= k+2$ and $|T_2|= k+3$ and so $|T(P)|=|(T_1,T_2)|=k+3$ (even). Both the nodes with the greatest labels $l$ and $\overline{l}$ belong to $T_2$ then the number of nodes of $h_T$ is equal to $k+3$ and the number of nodes of $v_T$ is equal to $k+2$. As we said before, the convexity degree of $P$ is equal to the minimal number of nodes among $h_T$ and $v_T$ minus one. In this case this is
 $$min(k+3,k+2)-1=k+1$$
 and so $P$ belongs to $\poly_{k+1}$ and not to $\poly_k$ against the hypothesis.
 

\begin{figure}[htbp] 
  \begin{center}
    \includegraphics[width=12cm]{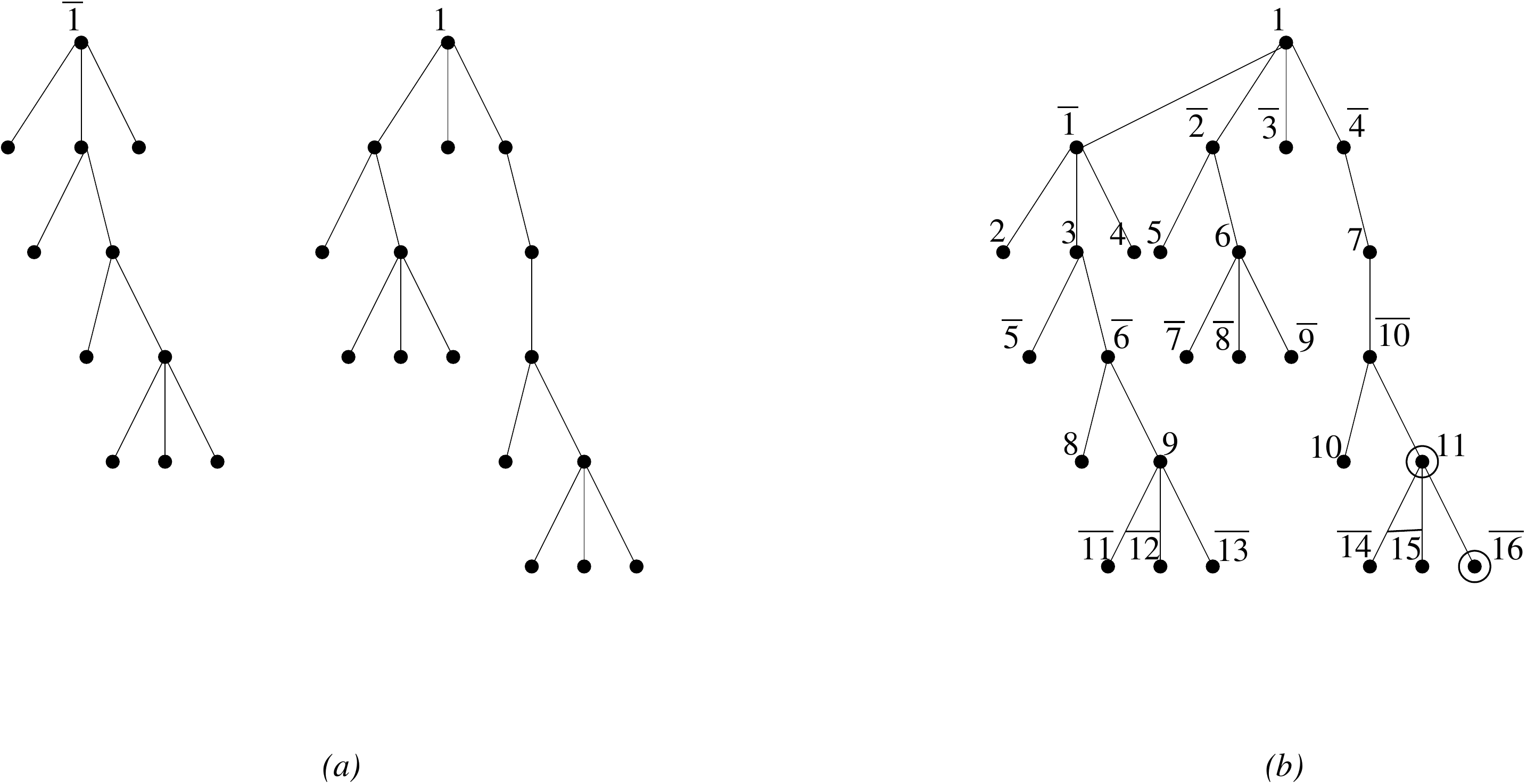}
    \caption{Case $1$. $(a)$ A pair of trees $T_1$ and $T_2$ and its corresponding tree $T=(T_1,T_2)$ in $(b)$.}
    \label{bijProof1A}
  \end{center}
\end{figure}

\begin{figure}[htbp] 
  \begin{center}
    \includegraphics[width=12cm]{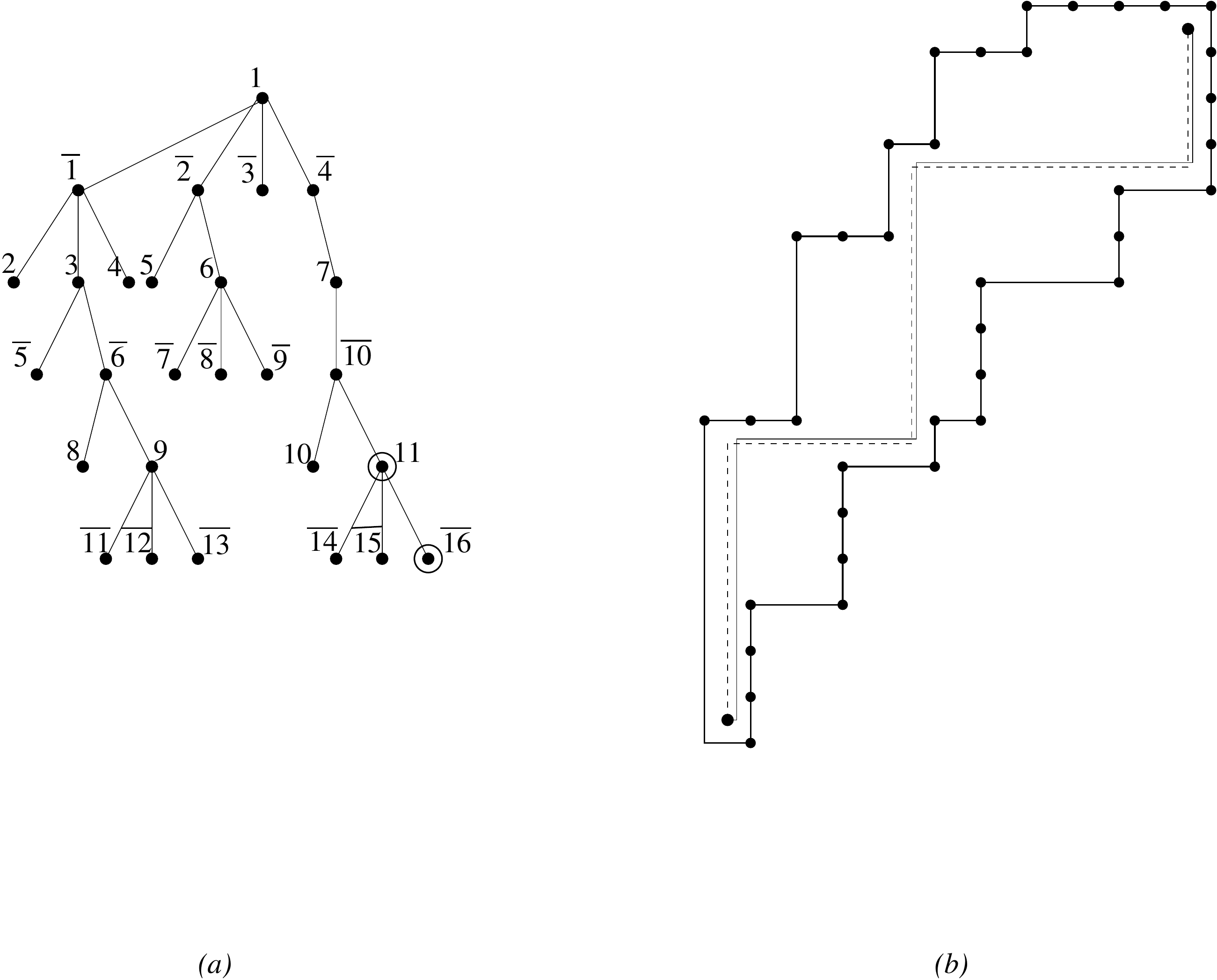}
    \caption{Case $1$. $(a)$ The tree $T=(T_1,T_2)$ and in $(b)$ its corresponding parallelogram polyomino $P(T)$.}
    \label{bijProof1B}
  \end{center}
\end{figure}

An example is shown in Figure \ref{bijProof1A}. Here $k=3$, $|T_1|=5$ and $T_2=6$ and so $|T|=|(T_1,T_2)|=6$.
$$h_T=(\overline{16},11,\overline{10},7,\overline{4},1)\,\,\,\,\mbox{and}\,\,\,\,v_T=(11,\overline{10},7,\overline{4},1)\,.$$
So the convexity degree of $P(T)$ is equal to $min(6,5)-1=4$ as we can see in Figure \ref{bijProof1B}.

 \item {\em $|T_1|\leq k+2$ and $|T_2|\leq k+2$.}\\
 Also here we take into consideration the borderline case, $|T_1|= k+2$ and $|T_2|= k+2$ and so $|T(P)|=|(T_1,T_2)|=k+3$. 
 Since the height of $T$ is even we have that the node with the greatest label $\overline{l}$ belongs to $T_1$, then the number of nodes of $h_T$ is equal to $k+3$, and the node with the greatest label $l$ belongs to $T_2$ and the number of nodes of $v_T$ is equal to $k+2$. As we said before, the convexity degree of $P$ is equal to the minimal number of nodes among $h_T$ and $v_T$ minus one. In this case this is
 $$min(k+3,k+2)-1=k+1$$
 and so $P$ belongs to $\poly_{k+1}$ and not to $\poly_k$ against the hypothesis.
 The reader can see an example of case $2.$ in Figure \ref{bijProof2A} and in Figure \ref{bijProof2B}.

\begin{figure}[htbp] 
  \begin{center}
    \includegraphics[width=12cm]{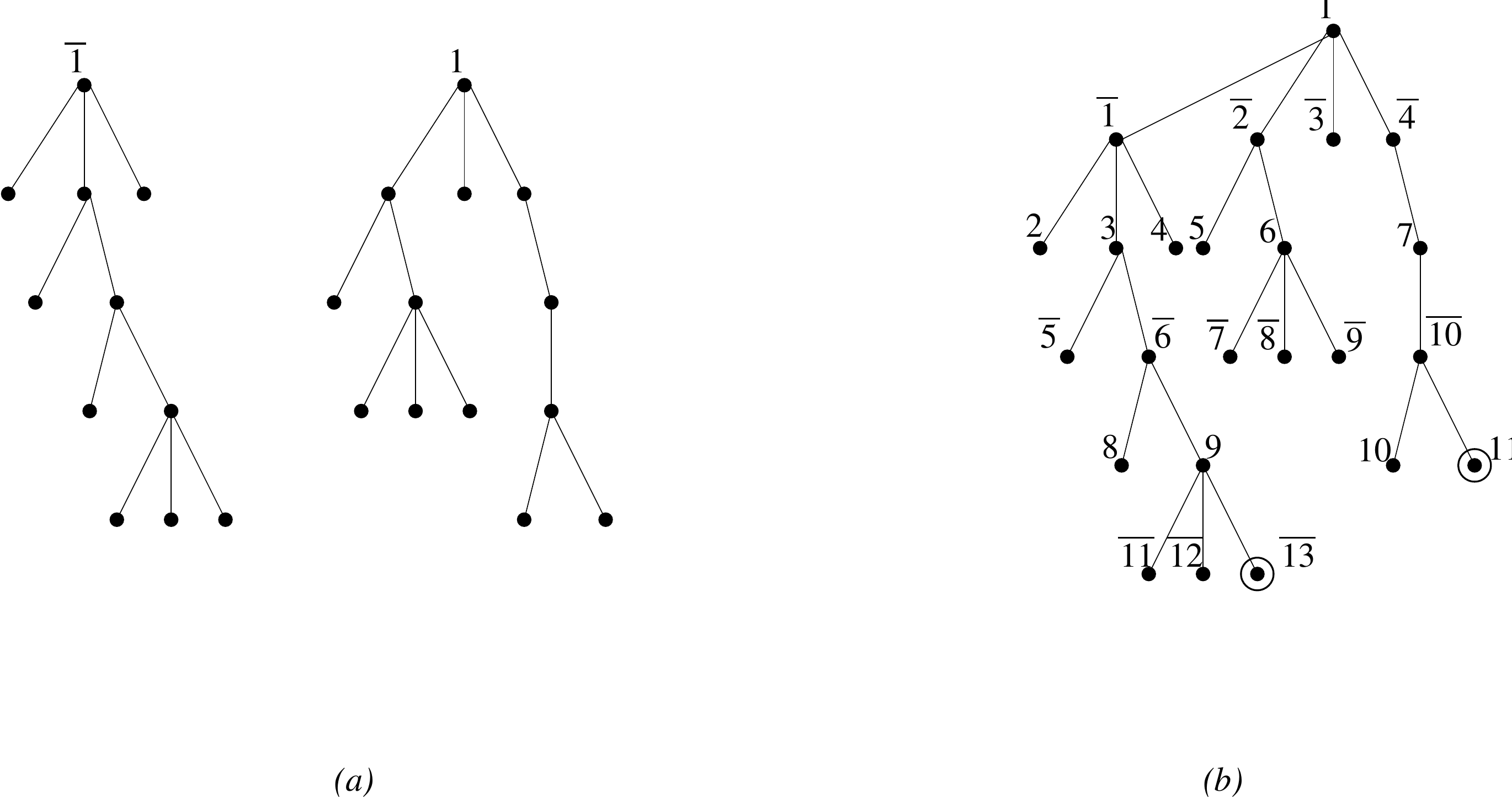}
    \caption{Case $2$. $(a)$ A pair of trees $T_1$ and $T_2$ and its corresponding tree $T=(T_1,T_2)$ in $(b)$.}
    \label{bijProof2A}
  \end{center}
\end{figure}

\begin{figure}[htbp] 
  \begin{center}
    \includegraphics[width=12cm]{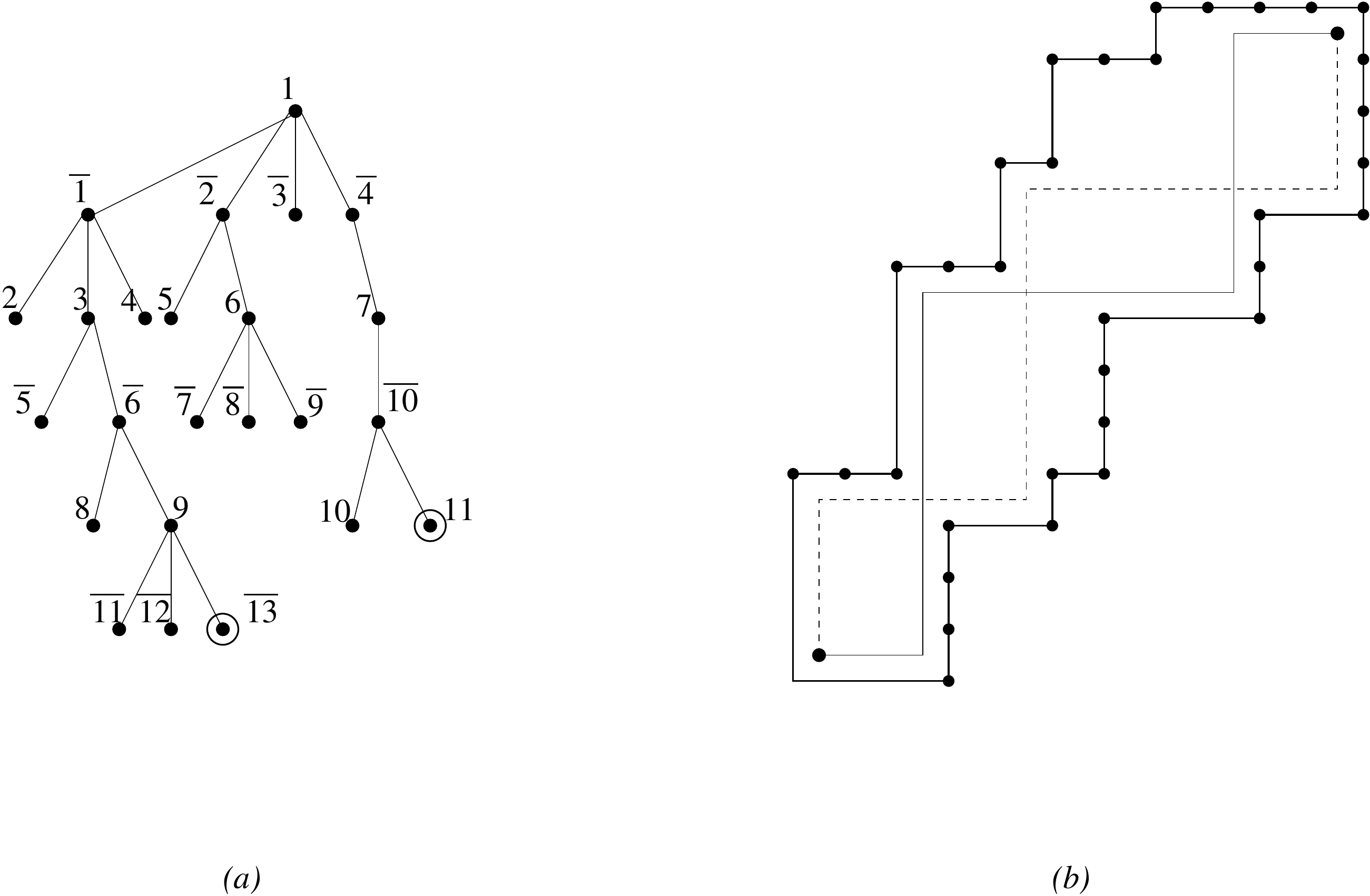}
    \caption{Case $2$. $(a)$ The tree $T=(T_1,T_2)$ and in $(b)$ its corresponding parallelogram polyomino $P(T)$.}
    \label{bijProof2B}
  \end{center}
\end{figure}
 
 \item {\em $|T_1|\leq k+1$ and $|T_2|\leq k+2$.}\\
As in the previous cases we analyze the borderline situation, $|T_1|= k+1$ and $|T_2|= k+2$ and so $|T|=|(T_1,T_2)|=k+2$.

Both the nodes with the greatest labels $l$ and $\overline{l}$ belong to $T_1$, it is possible to check such a situation using  for instance Figure\ref{bijProof3A}. According to the parity of $|T(P)|$ (odd), we have that the number of nodes of $v_T$ is equal to $k+2$ and the number of nodes of $h_T$ is equal to $k+1$. The convexity degree of $P$ is equal to the minimal number of nodes among $h_T$ and $v_T$ minus one. In this case is
 $$min(k+1,k+2)-1=k\,\,,$$
so $P$ belongs to $\poly_{k}$, see Figure \ref{bijProof3B}.
 

\begin{figure}[htbp] 
  \begin{center}
    \includegraphics[width=12cm]{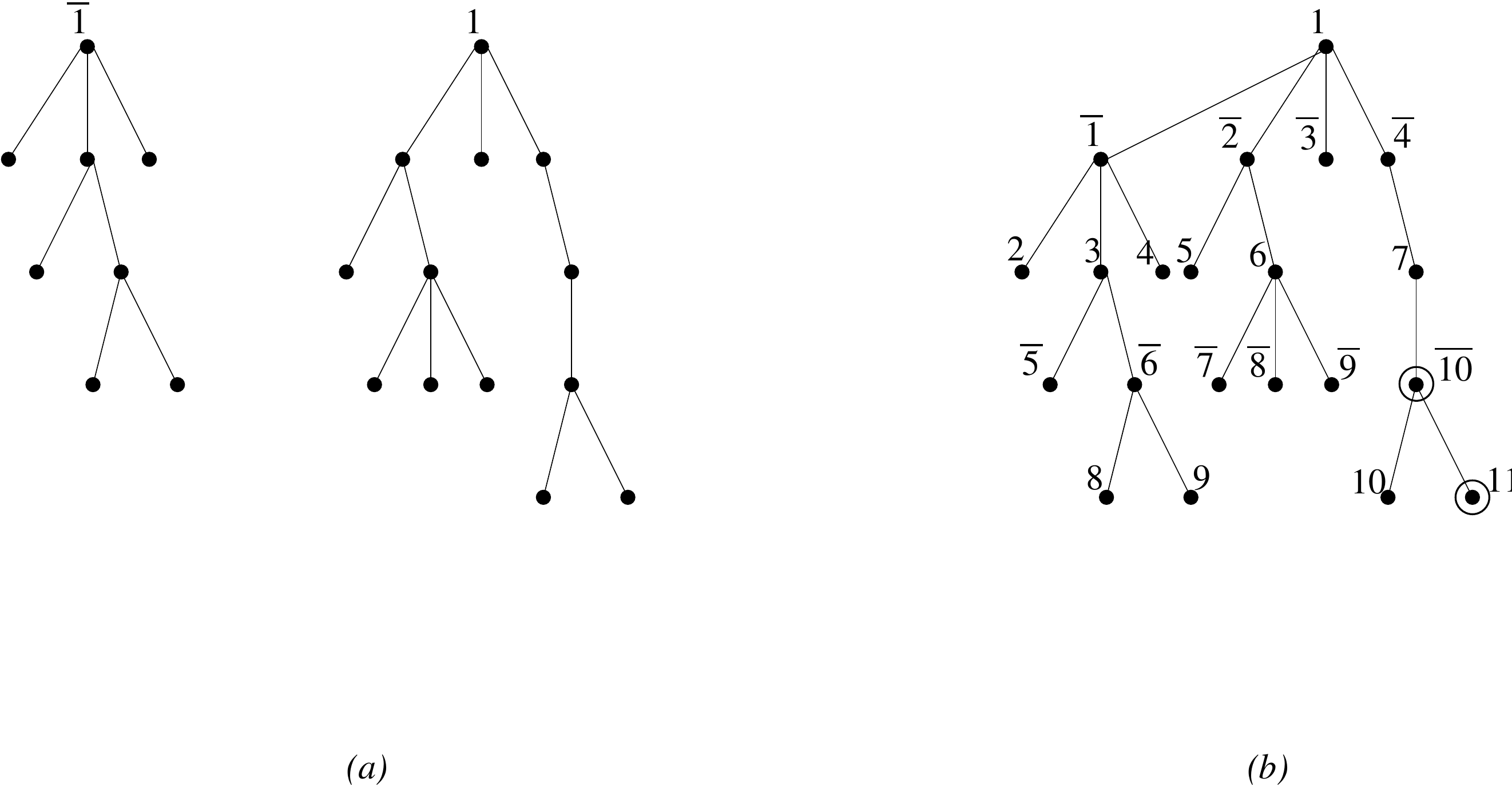}
    \caption{Case $3$. $(a)$ A pair of trees $T_1$ and $T_2$ and its corresponding tree $T=(T_1,T_2)$ in $(b)$.}
    \label{bijProof3A}
  \end{center}
\end{figure}

\begin{figure}[htbp] 
  \begin{center}
    \includegraphics[width=12cm]{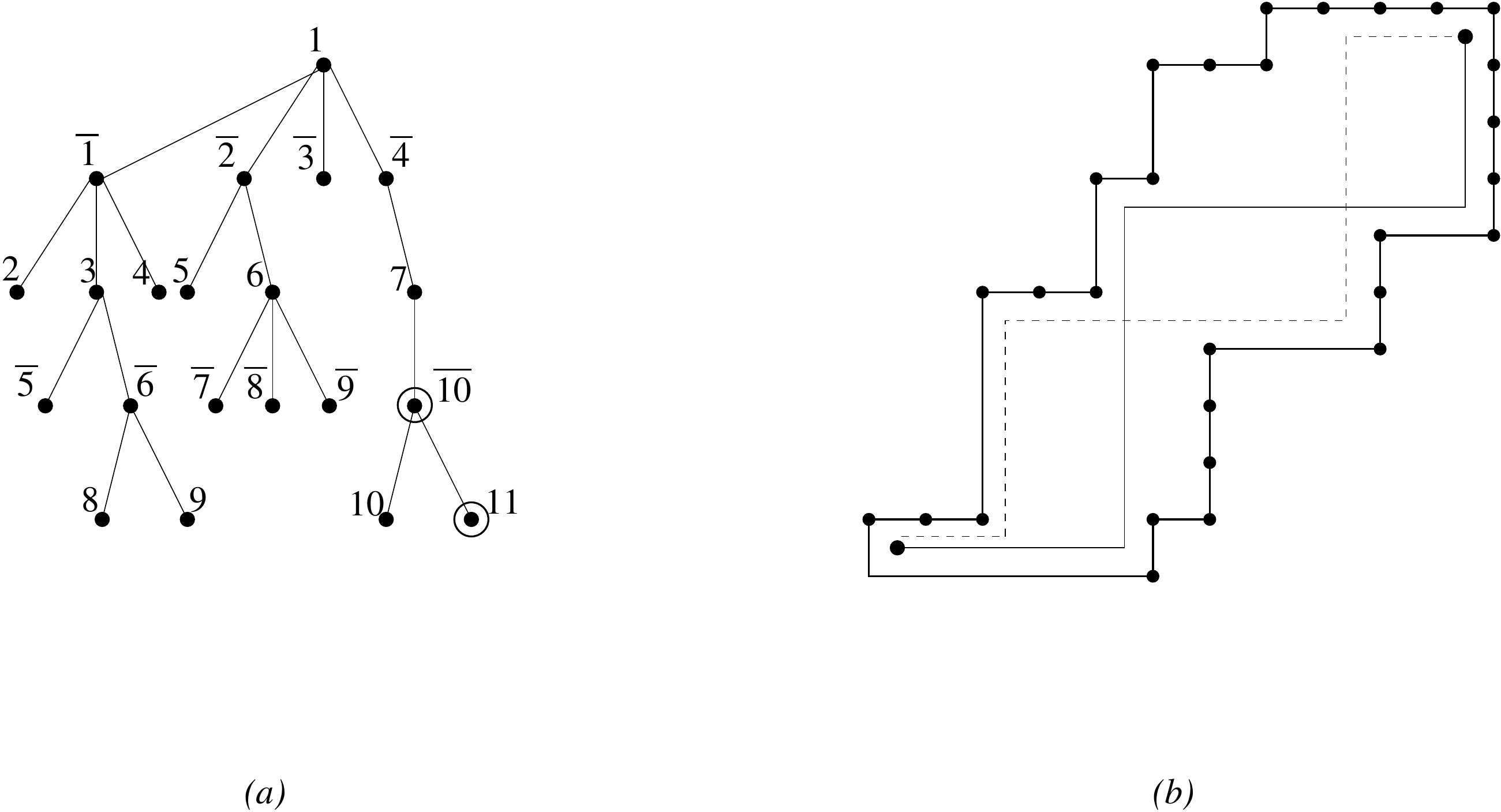}
    \caption{Case $3$. $(a)$ The tree $T=(T_1,T_2)$ and in $(b)$ its corresponding parallelogram polyomino $P(T)$.}
    \label{bijProof3B}
  \end{center}
\end{figure}
 \item {\em $|T_1|\leq k+2$ and $|T_2|\leq k+1$.}\\
 Also here we take into consideration the borderline case, $|T_1|= k+2$ and $|T_2|= k+1$ and so $|T(P)|=|(T_1,T_2)|=k+3$. 
 We have that both the nodes with the greatest label $\overline{l}$  and the greatest label $l$ belong to $T_1$, and since the height of $T$ is even the number of nodes of $h_T$ is equal to $k+2$, and the number of nodes of $v_T$ is equal to $k+1$. As before, the convexity degree of $P$ is equal to the minimal number of nodes among $h_T$ and $v_T$ minus one. In this case it is
 $$min(k+2,k+1)-1=k$$
 so, as in the previous case, $P$ belongs to $\poly_{k}$, see Figure \ref{bijProof4B}.


\begin{figure}[htbp] 
  \begin{center}
    \includegraphics[width=12cm]{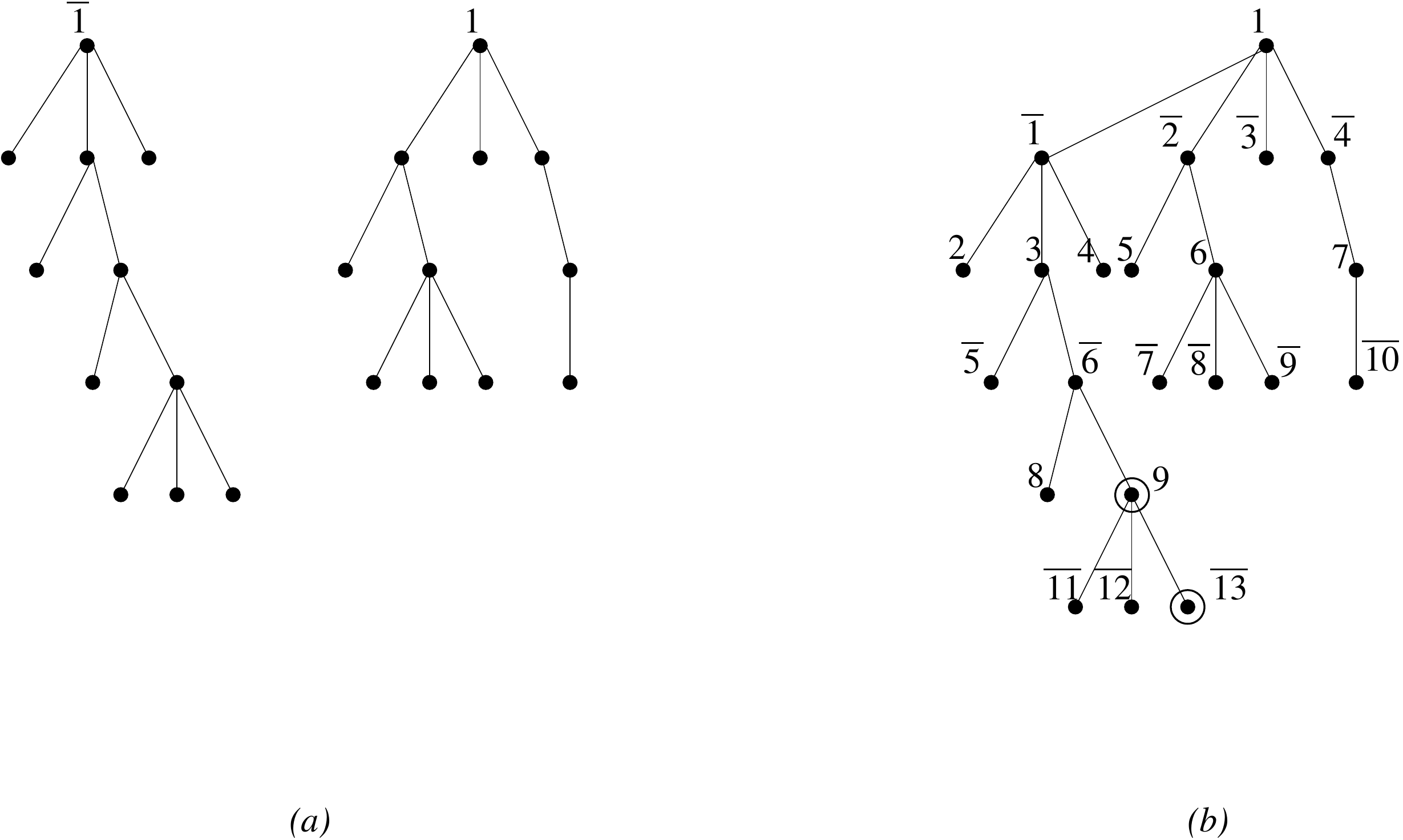}
    \caption{Case $4$. $(a)$ A pair of trees $T_1$ and $T_2$ and its corresponding tree $T=(T_1,T_2)$ in $(b)$.}
    \label{bijProof4A}
  \end{center}
\end{figure}

\begin{figure}[htbp] 
  \begin{center}
    \includegraphics[width=12cm]{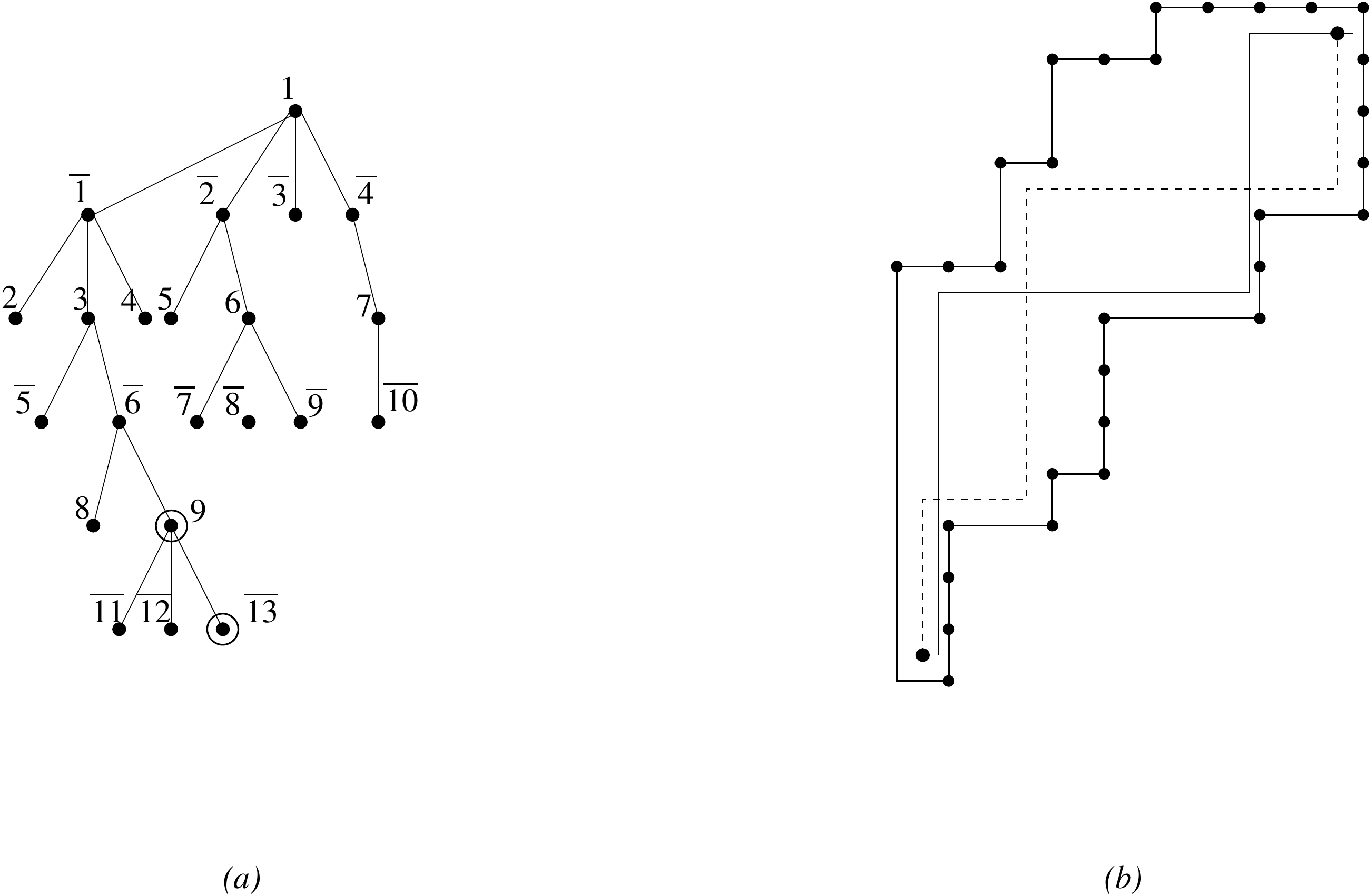}
    \caption{Case $4$. $(a)$ The tree $T=(T_1,T_2)$ and in $(b)$ its corresponding parallelogram polyomino $P(T)$.}
    \label{bijProof4B}
  \end{center}
\end{figure}
\end{enumerate}

We have therefore shown that in cases $3.$ and $4.$ $P$ is a $k$-parallelogram polyomino,
while in case $1.$ and $2.$ $P$ is a parallelogram polyomino which is exactly $(k+1)$-parallelogram polyomino, that is absurd for hypothesis. So, we have that the number of polyominoes in $P_{k}(x)$ will be given by considering
only the cases $3.$ and $4.$, then we have the thesis.

\end{proof}

Let us now see how we can deduce some properties of $P(T)$ given a tree $T=(T_1,T_2)$. Let us consider $v_T=(v_1,\cdots,v_j)$ and $h_T=(h_1,\cdots,h_j')$, where $j$ and $j'$ can be equal or differ at most by one. We want to remark that $v_1$ and $h_1$ are equal to the greatest label of type $l$ and $\overline{l}$ respectively. Basing on the value of $j$ and $j'$ we can say that:
\begin{itemize}
 \item [-] $j=j'$ then $P(T)$ is a {\em flat} $k$-parallelogram polyomino;
 \item [-] $j\neq j'$ and $j$ odd (resp. $j'$ is even) then $P(T)$ is an up $k$-parallelogram polyomino;
 \item [-] $j\neq j'$ and $j$ even (resp. $j'$ is odd) then $P(T)$ is a right $k$-parallelogram polyomino.
\end{itemize}

 Supposing that we are in the second or in the third case, in particular that $j=j'+1$ (resp. $j'=j+1$). There exists an index $c$, $2\leq c\leq j$ (resp. $2\leq c\leq j'$), such that starting respectively from the $c$-th and the $(c-1)$-th element of $v_T$ and $h_T$ (resp. of $h_T$ and $v_T$), the sequences coincide.
 In particular, the node with the label $v_T(c-1)$ (resp. $h_T(c-1)$) corresponds in $P(T)$ to the cell $C$, defined previously in Section \ref{cellC}.

 For example if we consider that $T$ is the tree in Figure \ref{biezione} $(b)$, we have that
 $$v_T=(11,\overline{5},7,\overline{3},1)\,\,\,\,\mbox{and}\,\,\,\,h_T=(\overline{8},10,\overline{5},7,\overline{3},1)$$
 then $j=5$ and $j'=6$. Starting from the second and the third element of $v_T$ and $h_T$ respectively, the sequences coincide. As a consequence of that, the node with the label $h_T(2)=10$ corresponds in $P(T)$ to the cell $C$, as we can see in Figure \ref{biezione} $(a)$.
Therefore, we are in the case when $j$ is even then we can say that $P(T)$ is an up $k$-parallelogram polyomino. 

\section{Further work}

We have extended some of the results in some recent researches that we have obtained for the class of $k$-parallelogram polyominoes to another remarkable subclass of convex polyominoes, the $k$-convex polyominoes which are also {\em directed polyominoes}, called for brevity {\em $k$-directed polyominoes} and denoted by $\mathbb{D}_k$.

More in details, we were able to apply our decomposition, explained in Section \ref{dec}, to the set of $k$-directed polyominoes. This is principally due to the fact that we have found an analogous of Proposition \ref{convexitydegree} that holds also for this new considered class. In fact, also in this case, to find out the convexity degree of a directed convex
polyomino $P$ it is sufficient to check the changes of direction required to any
path running from the {\em source} $S$ to the ``furthest cells'' of $P$.
Then, giving a $k$-directed polyomino $P$, we can provide a definition of two paths $h(P)$ and $v(P)$, which is analogous to that of Definition \ref{def:hEv}. These two paths - as we can check in Figure \ref{fig2} - identify some vertical/horizontal steps on the boundary of $P$. These steps are called, analogously to the case of $k$-parallelogram polyominoes (see \ref{dec}), $X_i$ or $Y_i$ depending on it is a horizontal or vertical one. Furthermore, we can apply the same decomposition technique, which is graphically shown in Figure \ref{fig2}. 

\begin{figure}[htbp]
\begin{center}
\includegraphics[width=12cm]{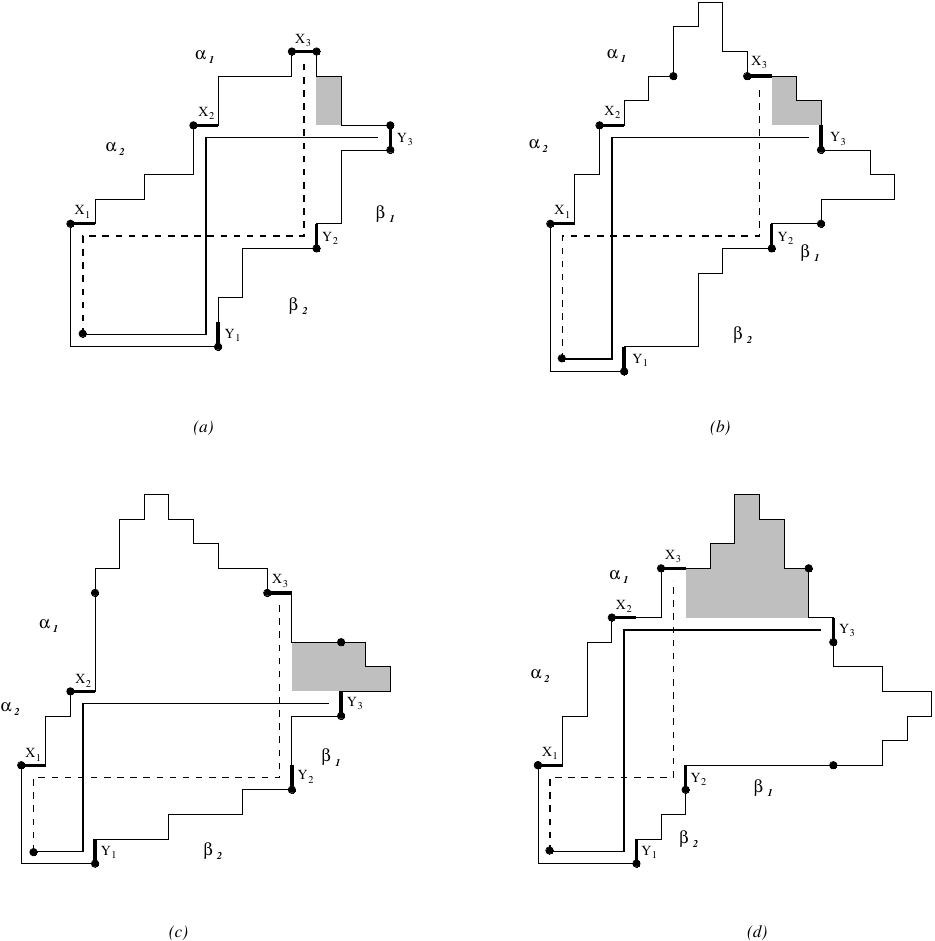}
\caption{$(a)$ A polyomino $P\in U_3$.
$(b)$ A polyomino $P\in V_3$. $(c)$ A
polyomino $P\in W_3$ in which the uppermost cells of $P$ are on the left of $X_3$, and the rightmost cells of $P$ are above $Y_3$. $(d)$ A
polyomino $P\in W_3$ in which the uppermost cells of $P$ are on the right of $X_3$, and the rightmost cells of $P$ are below $Y_3$.}
\label{fig2}
\end{center}
\end{figure}

\begin{remark}
Let $P$ be a $k$-directed polyomino. We can prove that the cells of $P$ which require the maximal number of changes of direction to be reached are the ones on the right of the step $X_k$ and over the step $Y_k$. These cells are the ones shaded in Figure \ref{fig2}. 
\end{remark}

The next step is to give a classification of the polyominoes of $\mathbb{D}_k$ based on
the position of the steps $X_k$ and $Y_k$. A polyomino $P$ belongs to the class:
\begin{description}
\item{-} $U_k$ if at least one of the uppermost cells of $P$ is on the right of
$X_k$, and at lest one of the rightmost cells of $P$ is above $Y_k$, see Figure \ref{fig2} $(a)$ ;
\item{-} $V_k$ if the uppermost cells of $P$ are on the left of $X_k$ (except the
one containing $X_k$ itself), and the rightmost cells of $P$ are below $Y_k$ (except
the one containing $Y_k$ itself), see Figure \ref{fig2} $(b)$;   
\item{-} $W_k$ otherwise, see Figure \ref{fig2} $(c)$ and $(d)$.
\end{description}

The three classes have to be enumerated separately, then the generating function of
$\mathbb{D}_k$ can be obtained by summing the three generating functions.

Unfortunately, unlike the case of $k$-parallelogram polyominoes, each of these classes have to be split in several subclasses, in order to take care of all possible configurations which can occur. It follows that the obtained formulas have a rather complex expression, and in particular, we have not been able to express them in terms of the Fibonacci polynomials, as it was for parallelogram polyominoes.

Another interesting problem, that we have begin to investigate, is to determine the asymptotic behavior of the class of $k$-parallelogram polyominoes. Starting from our expression of the generating function, given in Theorem \ref{formulaP}, and basing on some results from \cite{flajolet}, we believe it is possible to obtain a general solution for all $k$. 
\chapter{Permutation and polyomino classes}\label{chap:cap4}

\section{Introduction}

The concept of a {\em pattern} within a {\em combinatorial structure} is undoubtedly one of the most investigated notions in combinatorics. It has been deeply studied for permutations, starting first with~\cite{Kn}. Analogous definitions
were provided in the context of many other structures, such as set partitions~\cite{Go,Kl,Sa},
words~\cite{Bj,Bu}, trees~\cite{DPTW,R}, and paths~\cite{ferrari}, see Section~\ref{sec:permutations}. 

In the following we will recall some important definitions that will be useful to better understand  the work described in this chapter. 


\section{Permutation classes and polyomino classes}
\label{sec:def_classes}

\subsection{Permutation patterns and permutation classes}
%

%

The relation of containment $\sympattern$ is a partial order relation on the set $\sym$ of all permutations. 
Moreover, properties of the poset $(\sym,\sympattern)$ have been described in the literature~\cite{poset} 
and we recall some of the most well-known here: 
$(\sym,\sympattern)$ is a well founded poset (\emph{i.e.} it does not contains infinite descending chains), 
but it is not well ordered, since it contains infinite antichains (\emph{i.e.}infinite sets of pairwise incomparable elements); 
moreover, it is a graded poset (the rank function being the size of the permutations). 

\begin{definition}
A \emph{permutation class} (sometimes called \emph{pattern class} or \emph{class} for short) is a set of permutations \C that is downward closed for $\sympattern$: 
for all $\sigma \in \C$, if $\pi \sympattern \sigma$, then $\pi \in \C$.
\label{def:permutation_class}
\end{definition}

For any set $\B$ of permutations, denoting $\Avperm(\B)$ the set of all permutations that avoid every pattern in $\B$, 
we clearly have that $\Avperm(\B)$ is a permutation class.
The converse statement is also true. Namely:

\begin{proposition}
For every permutation class \C, there is a unique antichain $\B$ such that $\C = \Avperm(\B)$. 
The set $\B$ consists of all minimal permutations (in the sense of $\sympattern$) that do not belong to \C. 
\label{prop:perm_basis}
\end{proposition}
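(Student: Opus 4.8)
The plan is to take $\B$ to be the set of minimal elements of the complement $\sym \setminus \C$ with respect to $\sympattern$, exactly as the statement suggests, and then to verify the three required properties in turn: that $\B$ is an antichain, that $\Avperm(\B) = \C$, and that $\B$ is the only antichain with this property.

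First I would check that $\B$ is an antichain, which is immediate from the definition of minimal element: if two elements $\sigma, \tau \in \B$ were comparable, say $\sigma \sympattern \tau$ with $\sigma \neq \tau$, then since $\sigma \in \sym \setminus \C$ the element $\tau$ would fail to be minimal in $\sym \setminus \C$, a contradiction. Next, for the equality $\Avperm(\B) = \C$ I would argue both inclusions. For $\C \subseteq \Avperm(\B)$, take $\pi \in \C$; if $\pi$ contained some $\beta \in \B$, then downward closure of $\C$ would force $\beta \in \C$, contradicting $\beta \in \sym \setminus \C$. For the reverse inclusion $\Avperm(\B) \subseteq \C$ I would argue by contraposition: if $\pi \notin \C$, then $\pi \in \sym \setminus \C$, and here the key structural input enters. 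Since $(\sym, \sympattern)$ is well founded (it contains no infinite descending chains, as recalled above), the nonempty set $\{\gamma \in \sym\setminus\C : \gamma \sympattern \pi\}$ has a minimal element $\beta$, which is then automatically minimal in all of $\sym\setminus\C$; hence $\beta \in \B$ and $\beta \sympattern \pi$, so $\pi \notin \Avperm(\B)$.

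Finally, for uniqueness I would show that any antichain $\B'$ with $\Avperm(\B') = \C$ must coincide with $\B$. Each $\beta' \in \B'$ lies in $\sym \setminus \C$, since it contains itself and so is not in $\Avperm(\B') = \C$, and it is minimal there: any strictly smaller element of $\sym \setminus \C$ would contain some member of $\B'$, which by the antichain property would have to be $\beta'$ itself, contradicting antisymmetry of $\sympattern$. This gives $\B' \subseteq \B$. Conversely, each $\beta \in \B$ is not in $\C = \Avperm(\B')$, hence contains some $\beta' \in \B'$; but $\beta' \in \sym\setminus\C$ and $\beta$ is minimal there, forcing $\beta' = \beta$, so $\B \subseteq \B'$. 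Alternatively, uniqueness can simply be invoked from Proposition~\ref{unicityBase}.

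The step I expect to be the main obstacle is the reverse inclusion $\Avperm(\B) \subseteq \C$, as it is the only one that genuinely uses a nontrivial property of the poset, namely well-foundedness, to guarantee that every permutation outside \C sits above some minimal obstruction in $\sym\setminus\C$. All the remaining steps are formal consequences of downward closure of \C, the antichain condition, and the antisymmetry of $\sympattern$.
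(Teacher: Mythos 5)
Your proof is correct and follows essentially the same route as the paper, which establishes this result in the general setting of well-founded posets (Proposition~\ref{prop:basis_of_downward_closed_subposet}): take $\B$ to be the minimal elements of $\sym\setminus\C$, use well-foundedness for the inclusion $\Avperm(\B)\subseteq\C$, and minimality for the antichain property. If anything, your uniqueness argument via the double inclusion $\B'\subseteq\B$ and $\B\subseteq\B'$ is spelled out more completely than the paper's one-line assertion that distinct antichains yield distinct avoidance sets.
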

The reader can find more details about this proposition in Section~\ref{sec:patternAv}.

In the usual terminology, $\B$ is called the \emph{basis} of \C. 
Here, we shall rather call $\B$ the \emph{permutation-basis} (or \emph{$p$-basis} for short), to distinguish from other kinds of bases that we introduce later.

Notice that because $(\sym,\sympattern)$ contains infinite antichains, the basis of a permutation class may be infinite, see for instance, the permutations introduced in~\cite{pinPerm}, called {\em pin permutations}. 

Actually, Proposition~\ref{prop:perm_basis} does not hold only for permutation classes, but for all well-founded posets, 
which will be important for our purpose. First of all we want to recall the notion of well-founded poset:

\begin{definition}
A poset $(\mathfrak{X},\preccurlyeq)$ is called well-founded, if $\mathfrak{X}$ has no infinite descending chain $\{a_0,a_1,\cdots,a_n,\cdots\}$ with $a_0>a_1>\cdots>a_n>\cdots\,\,.$
 \label{def:well_founded_poset}
\end{definition}

\begin{proposition}
For any well-founded poset $(\mathfrak{X},\preccurlyeq)$, 
for any subset $\C$ of $\mathfrak{X}$ that is downward-closed for $\preccurlyeq$, 
there exists a unique antichain $\B$ of $\mathfrak{X}$ such that 
$\C = \Av_{\mathfrak{X}}(\B) = \{ x \in \mathfrak{X} : $ for all $b \in \B, b \preccurlyeq x$ does not hold$\}$. 
The set $\B$ consists of all minimal elements of $\mathfrak{X}$ (in the sense of $\preccurlyeq$) that do not belong to \C. 
\label{prop:basis_of_downward_closed_subposet}
\end{proposition}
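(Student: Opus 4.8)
The plan is to define $\B$ explicitly as the set of $\preccurlyeq$-minimal elements of the complement $\mathfrak{X}\setminus\C$, exactly as the statement prescribes, and then to verify in turn that this $\B$ is an antichain, that $\Av_{\mathfrak{X}}(\B)=\C$, and that no other antichain does the job. The argument is the natural generalization of the permutation case recalled in Proposition~\ref{prop:perm_basis} and Proposition~\ref{unicityBase}; the only genuinely new ingredient is the role of well-foundedness from Definition~\ref{def:well_founded_poset}.

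First I would check that $\B$ is an antichain: if two elements $b,b'\in\B$ were comparable, say $b\prec b'$, then $b'$ could not be minimal in $\mathfrak{X}\setminus\C$, a contradiction. Next, for the inclusion $\C\subseteq\Av_{\mathfrak{X}}(\B)$, I would argue by contradiction. If some $x\in\C$ failed to avoid $\B$, there would be $b\in\B\subseteq\mathfrak{X}\setminus\C$ with $b\preccurlyeq x$; but then the downward-closure of $\C$ would force $b\in\C$, contradicting $b\notin\C$.

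The reverse inclusion $\Av_{\mathfrak{X}}(\B)\subseteq\C$ is where the hypothesis is used, and this is the step I expect to be the main obstacle. Given $x\notin\C$, I must produce some $b\in\B$ with $b\preccurlyeq x$. Here I would consider the nonempty set $D=\{y\in\mathfrak{X}\setminus\C : y\preccurlyeq x\}$ and show it has a $\preccurlyeq$-minimal element: otherwise one could extract an infinite strictly descending chain starting at $x$, contradicting well-foundedness. A short extra check shows that a minimal element $b$ of $D$ is in fact minimal in all of $\mathfrak{X}\setminus\C$, since any strictly smaller element of $\mathfrak{X}\setminus\C$ would again lie in $D$; hence $b\in\B$ and $b\preccurlyeq x$, so $x\notin\Av_{\mathfrak{X}}(\B)$. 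The delicate point is precisely this descending-chain argument, which is exactly where the absence of infinite descending chains is essential and which tacitly relies on a choice principle to build the chain.

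Finally, for uniqueness I would take any antichain $\B'$ with $\Av_{\mathfrak{X}}(\B')=\C$ and show $\B'=\B$. Every element $b'\in\B'$ lies in $\mathfrak{X}\setminus\C$, since $b'\preccurlyeq b'$ prevents $b'$ from avoiding $\B'$; and using that $\B'$ is an antichain one shows each such $b'$ is minimal in $\mathfrak{X}\setminus\C$, giving $\B'\subseteq\B$. Conversely, each $b\in\B$ lies outside $\C=\Av_{\mathfrak{X}}(\B')$, so some $b'\in\B'\subseteq\B$ satisfies $b'\preccurlyeq b$; since $\B$ is an antichain this forces $b'=b$, giving $\B\subseteq\B'$. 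Together these inclusions yield $\B=\B'$ and complete the proof.
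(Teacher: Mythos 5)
Your proposal is correct and follows essentially the same route as the paper: define $\B$ as the minimal elements of $\mathfrak{X}\setminus\C$, use downward-closure of $\C$ for one inclusion and well-foundedness to extract a minimal element of $\{y\in\mathfrak{X}\setminus\C : y\preccurlyeq x\}$ for the other, then check uniqueness. You merely spell out two steps the paper leaves implicit (the descending-chain argument and the double-inclusion proof of uniqueness), which is a welcome addition rather than a deviation.
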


\begin{proof}
Let $\C$ be a subset of $\mathfrak{X}$ that is downward closed for $\preccurlyeq$. 
The complement $\mathfrak{X} \setminus \C$ of $\C$ with respect to $\mathfrak{X}$ is upward closed for $\preccurlyeq$. 
Let us define $\B$ to be the set of minimal elements of $\mathfrak{X} \setminus \C$: 
$\B = \{b \in \mathfrak{X} \setminus \C  \mid \forall x \in \mathfrak{X} \setminus \C, \text{ if } x \preccurlyeq b \text{ then }x=b\}$. 
This is equivalent to characterizing $\B$ as the set of minimal elements of $\mathfrak{X}$ (in the sense of $\preccurlyeq$) that do not belong to \C. 
Because $\mathfrak{X}$ is well-founded, we have that $x \in \mathfrak{X} \setminus \C$ if and only if $\exists b \in \B$ such that $b \preccurlyeq x$. 
By contraposition, we immediately get that $\C = \Av_{\mathfrak{X}}(\B)$. 
In addition, by minimality, the elements of $\B$ are pairwise incomparable, so that $\B$ is indeed an antichain. 

To further ensure uniqueness, it is enough to notice that for two different antichains $\B$ and $\B'$ 
the sets $\C = \Av_{\mathfrak{X}}(\B)$ and $\C' = \Av_{\mathfrak{X}}(\B')$ are also different.
\end{proof}

Permutation classes have been extensively studied from the seventies until now, see Section~\ref{sec:permutations}. Nowadays, the research on permutation classes is being developed into several directions. One of them is to define notions of patterns analogous to Definition~\ref{def:permutation_pattern} (see Section~\ref{sec:patternAv}) in other combinatorial objects, 
and to find out which of the nice properties of permutation classes, or of the order $\sympattern$, or of the associated poset $(\sym,\sympattern)$, \ldots extend to a more general setting. 
The work presented here goes into this direction, and is specifically interested in matrix patterns in polyominoes and in permutations. 

\subsection{Permutation matrices and the submatrix order}
\label{subsec:submatrix}

Permutations are in (obvious) bijection with permutation matrices, 
\ie binary matrices with exactly one entry $1$ in each row and in each column. 
To any permutation $\sigma$ of $\sym_n$, we may associate a permutation matrix $M_{\sigma}$ of dimension $n$ 
by setting $M_{\sigma}(i,j) = 1$ if $i = \sigma(j)$, and $0$ otherwise. 
Throughout this work we adopt the convention that rows of matrices are numbered from bottom to top, 
so that the $1$ in $M_{\sigma}$ are at the same positions as the dots in the diagram of $\sigma$ -- see an example on Figure~\ref{fig:perm-matrix_diagram}.

\begin{figure}[htd]
\begin{center}
\includegraphics[width=9cm]{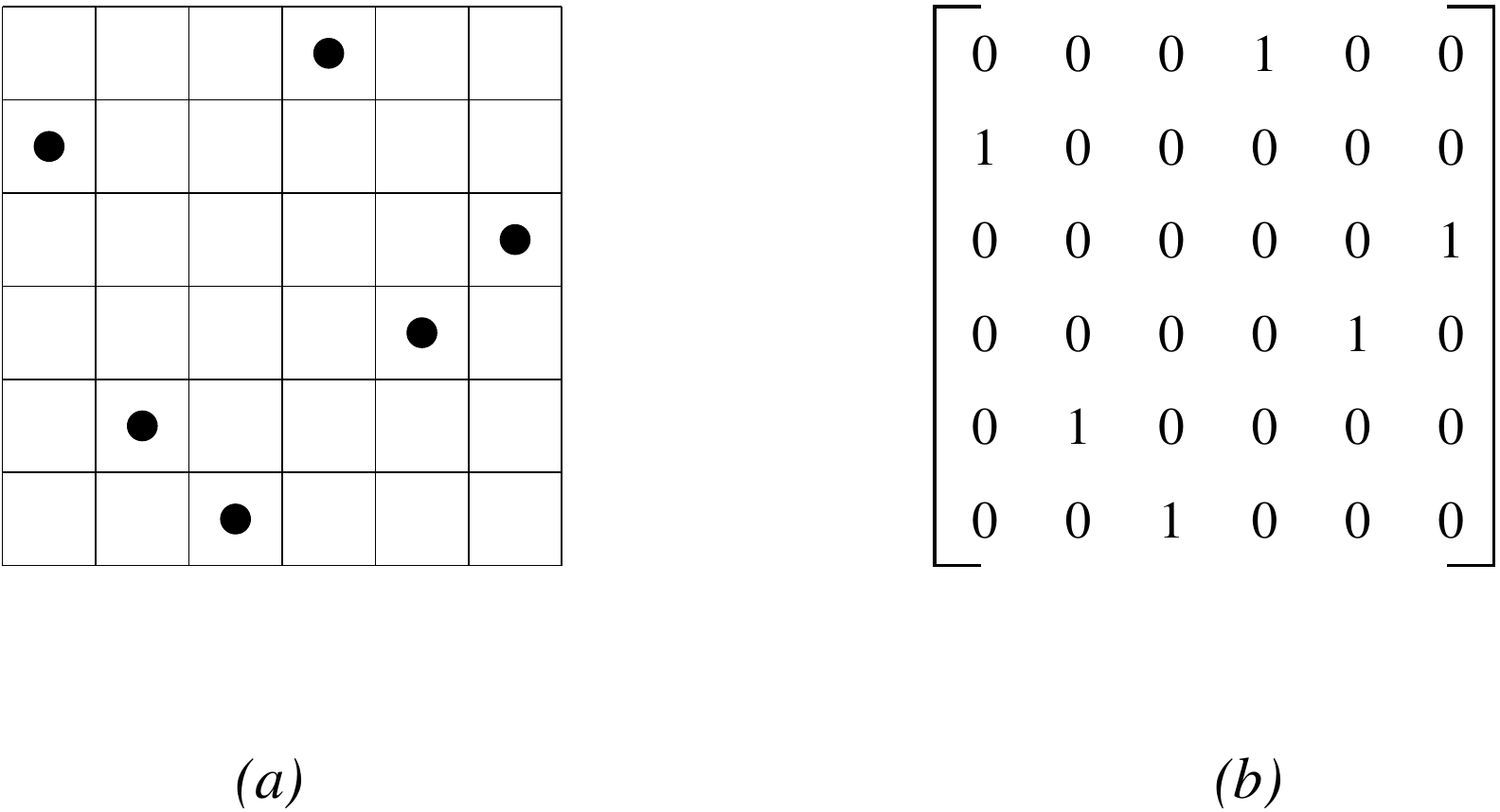}
\caption{$(a)$ Graphical representation (or diagram) of the permutation $\sigma=521634$. $(b)$ The permutation matrix corresponding to $\sigma$.}
\label{fig:perm-matrix_diagram}
\end{center}
\end{figure}

Let \matr be the class of binary matrices (\emph{i.e.} with entries in $\{0,1\}$). 
We denote by $\preccurlyeq$ the usual submatrix order on \matr, 
\emph{i.e.} $M' \preccurlyeq M$ if $M'$ may be obtained from $M$ by deleting any collection of rows and/or columns. 

Of course, whenever $\pi \sympattern \sigma$, we have that $M_{\pi}$ is a submatrix of $M_{\sigma}$. 
Notice however that not all submatrices of $M_{\sigma}$ are permutation matrices, 
and we will discuss in Subsection~\ref{subsec:matrix_patterns} some consequences of this simple remark in the study of permutation classes. 

Another simple fact that follows from identifying permutations with the corresponding permutation matrices 
is that we may rephrase the definition of permutation classes as follows: 
A set \C of permutations is a class if and only if, for every $\sigma \in \C$, every submatrix of $\sigma$ which is a permutation is in \C.
This does not say much by itself, 
but it allows to define analogues of permutation classes for other combinatorial objects that are naturally represented by matrices, like polyominoes. 

\subsection{Polyominoes and polyomino classes}\label{sec:papc}


A polyomino $P$ may be represented by a binary matrix $M$ whose dimensions are those of the minimal bounding rectangle of $P$: 
drawing $P$ in the positive quarter plane, in the unique way that $P$ has contacts with both axes, 
an entry $(i,j)$ of $M$ is equal to $1$ if the unit square $[j-1,j]\times[i-1,i]$ of $\mathbb{Z} \times\mathbb{Z}$
is a cell of $P$, $0$ otherwise (see Figure~\ref{polMatr}). 
Notice that, according to this definition, in a matrix representing a polyomino the first (resp. the last) row (resp. column) should contain at least a $1$.

\begin{figure}[htd]
\begin{center}
\includegraphics[width=9cm]{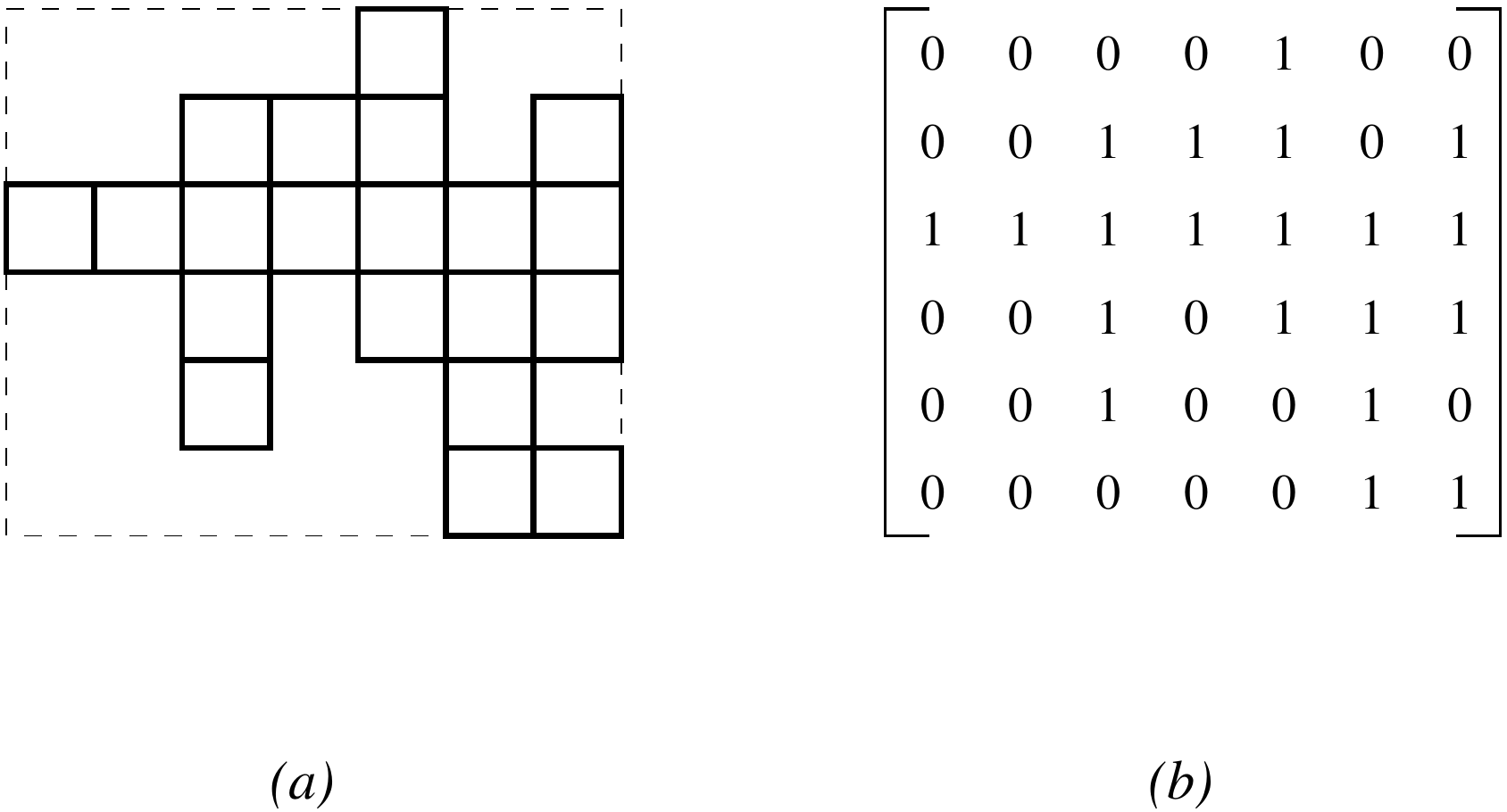}
\caption{A polyomino and its representation as a binary matrix.}
\label{polMatr}
\end{center}
\end{figure}

Let us denote by \poly the set of polyominoes, viewed as binary matrices as explained above. 
We can consider the restriction of the submatrix order $\preccurlyeq$ on \poly. 
This defines the poset $(\poly,\polypattern)$ and the pattern order between polyominoes: 
a polyomino $P$ is a \emph{pattern} of a polyomino $Q$ (which we denote $P \polypattern Q$) 
when the binary matrix representing $P$ is a submatrix of that representing $Q$. 

We point out that the order $\polypattern$ has already been studied in~\cite{CR} under the name of {\em subpicture order}. 
The main point of focus of~\cite{CR} is the family of {\em $L$-convex polyominoes} defined by the same authors in~\cite{lconv2}. 
But~\cite{CR} also proves that $\polypattern$ is not a partial well-order, since $(\poly,\polypattern)$ contains infinite antichains. 
Remark also that $(\poly,\polypattern)$ is a graded poset (the rank function being the semi-perimeter of the bounding box of the polyominoes). 

This implies in particular that $(\poly,\polypattern)$ is well-founded. 

Notice that these properties are shared with the poset $(\sym,\sympattern)$ of permutations. 
This allows to introduce a natural analogue of permutation classes for polyominoes:

\begin{definition}
A \emph{polyomino class} is a set of polyominoes \C that is downward closed for $\polypattern$: 
for all polyominoes $P$ and $Q$, if $P \in \C$ and $Q \polypattern P$, then $Q \in \C$.
\label{def:polyomino_class}
\end{definition}

The reader can exercise in finding simple examples of polyomino classes, such as, for instance: 
the family of polyominoes having at most three columns, the family of polyominoes having a rectangular shape, or the whole family of polyominoes. 
Some of the most famous families of polyominoes are indeed polyomino classes, 
like the {\em convex polyominoes} and the $L$-convex polyominoes. 
This will be investigated in more details in Section~\ref{sec:known_classes_poly}.
However, there are also well-known families of polyominoes which are not polyomino classes, like:
the family of polyominoes having a square shape, the family of polyominoes having exactly three columns, 
or the family of polyominoes {\em with no holes} (i.e. polyominoes whose boundary is a simple path). 
Figure~\ref{fig:buco} shows that a polyomino in this class may contain a polyomino with a hole.

\begin{figure}[ht]
\begin{center}
\includegraphics[width=7cm]{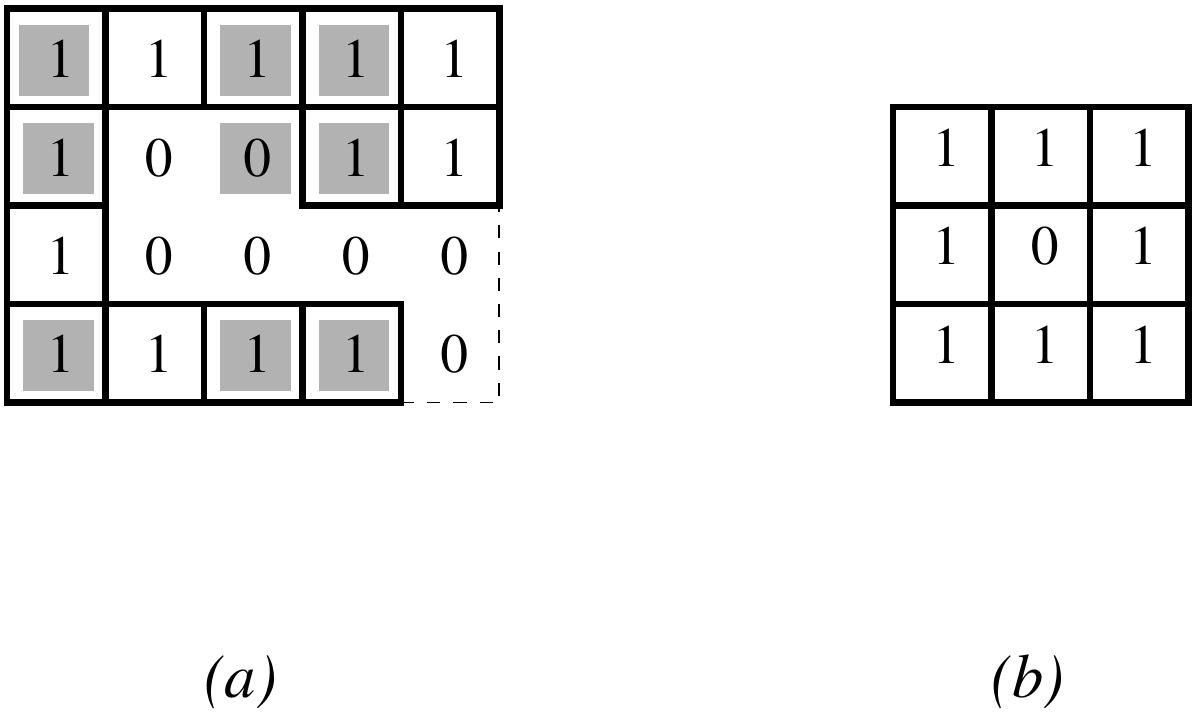}
\caption{$(a)$ A polyomino $P$ with no holes; $(b)$ A polyomino $P' \polypattern P$ containing a hole.}
\label{fig:buco}
\end{center}
\end{figure}

\smallskip

Similarly to the case of permutations, for any set $\B$ of polyominoes, 
let us denote by $\Avp(\B)$ the set of all polyominoes that do not contain any element of $\B$ as a pattern. 
Every such set $\Avp(\B)$ of polyominoes defined by pattern avoidance is a polyomino class. 
Conversely, like for permutation classes, every polyomino class may be characterized in this way. 

\begin{proposition}
For every polyomino class \C, there is a unique antichain \B of polyomminoes such that  $\C=\Avp(\B)$. 
The set \B consists of all minimal polyominoes (in the sense of \polypattern) that do not belong to \C.
\label{prop:polyomino_basis}
\end{proposition}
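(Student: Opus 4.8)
The plan is to derive this statement as a direct specialization of the abstract result already established in Proposition~\ref{prop:basis_of_downward_closed_subposet}, which asserts precisely the existence and uniqueness of a defining antichain for every downward-closed subset of an \emph{arbitrary} well-founded poset. Thus no genuinely new argument is needed: the whole task reduces to checking that the two hypotheses of that proposition are satisfied by the specific poset $(\poly,\polypattern)$, and then reading off the conclusion.

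First I would recall that $(\poly,\polypattern)$ is a graded poset, the rank of a polyomino being the semi-perimeter of its bounding box. In particular it is well-founded in the sense of Definition~\ref{def:well_founded_poset}: any strictly descending chain $P_0 \,{\color{black}{\succ}}\, P_1 \,{\color{black}{\succ}}\, \cdots$ would force the ranks to form a strictly decreasing sequence of nonnegative integers, which is impossible. Second, by Definition~\ref{def:polyomino_class}, a polyomino class \C is \emph{by construction} a subset of \poly that is downward closed for $\polypattern$. Hence \C is exactly the kind of subset to which Proposition~\ref{prop:basis_of_downward_closed_subposet} applies, with $\mathfrak{X}=\poly$ and the submatrix order $\polypattern$ in the role of $\preccurlyeq$.

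Applying that proposition then yields at once a unique antichain \B of polyominoes with $\C=\Avp(\B)$, together with the characterization of \B as the set of minimal elements of the complement $\poly\setminus\C$, that is, the minimal polyominoes (in the sense of $\polypattern$) not belonging to \C. Because this is nothing more than a specialization of an already-proven general fact, there is essentially no obstacle to overcome here; the only point deserving care is to make the well-foundedness hypothesis explicit, which is immediate from the graded structure of $(\poly,\polypattern)$ noted above. As an alternative, one could of course transcribe the short proof of Proposition~\ref{prop:basis_of_downward_closed_subposet} verbatim into the polyomino setting, defining \B as the minimal elements of the upward-closed set $\poly\setminus\C$ and obtaining $\C=\Avp(\B)$ by contraposition together with uniqueness from the fact that distinct antichains determine distinct avoidance classes; but invoking the general statement is cleaner and avoids repetition.
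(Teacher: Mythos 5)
Your proposal is correct and follows exactly the paper's own route: the paper proves this proposition by citing Proposition~\ref{prop:basis_of_downward_closed_subposet} together with the well-foundedness of $(\poly,\polypattern)$, which is precisely your argument (with the well-foundedness check via gradedness made explicit). Nothing is missing.
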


\begin{proof}
Follows immediately from Proposition~\ref{prop:basis_of_downward_closed_subposet} and the fact that $(\poly,\polypattern)$ is a well-founded poset. 
\end{proof}

As in the case of permutations we call \B the \emph{polyomino-basis} (or \emph{$p$-basis} for short), to distinguish from other kinds of bases.

Recall that $(\poly,\polypattern)$ contains infinite antichains~\cite{CR}, so there exist polyomino classes with infinite $p$-basis. 
We will show an example of a polyomino class with an infinite $p$-basis in Proposition~\ref{prop:infinite_basis}. 
However, we are not aware of \emph{natural} polyomino classes whose $p$-basis is infinite. 

\section{Characterizing classes with excluded submatrices}
\label{sec:excluded_submatrices}

\subsection{Submatrices as excluded patterns} 
\label{subsec:matrix_patterns}

We have noticed in Subsection~\ref{subsec:submatrix} that not all submatrices of permutation matrices are themselves permutation matrices.  
More precisely: 

\begin{remark}
The submatrices of permutation matrices
are exactly those that contain at most one $1$ in each row and each column. 
We will call such matrices \emph{quasi-permutation matrices} in the rest of this section. 
\label{rem:submatrices_of_perms}
\end{remark}

For polyominoes, it also holds that not all submatrices of polyominoes are themselves polyominoes, 
but the situation is very different from that of permutations:

\begin{remark}
Every binary matrix is a submatrix of some polyomino. 
\label{rem:submatrices_of_pol}
\end{remark}

Indeed, for every binary matrix $M$, 
it is always possible to add rows and columns of $1$ to $M$ 
in such a way that all $1$ entries of the resulting matrix are connected. 

\smallskip

From Remarks~\ref{rem:submatrices_of_perms} and~\ref{rem:submatrices_of_pol}, 
it makes sense to examine sets of permutations (resp. polyominoes) 
that avoid submatrices that are not themselves permutations (resp. polyominoes).

\begin{definition}
For any set $\M$ of quasi-permutation matrices (resp. of binary matrices), 
let us denote by $\Avperm(\M)$ (resp. $\Avp(\M)$) the set of all permutations (resp. polyominoes) 
that do not contain any submatrix in $\M$. 
\label{def:submatrix_avoidance}
\end{definition}

In Definition~\ref{def:submatrix_avoidance}, for the case of permutations, 
we may as well consider sets $\M$ containing arbitrary binary matrices. 
But from Remark~\ref{rem:submatrices_of_perms}, excluding a matrix $M$ which is not a quasi-permutation matrix 
is not actually introducing any restriction: no permutation contains $M$ as a submatrix. 
Therefore, in our work, when considering $\Avperm(\M)$, we will always take $\M$ to be a set of quasi-permutation matrices. 
Figure~\ref{motivo} illustrates Definition~\ref{def:submatrix_avoidance} in the polyomino case. 

\begin{figure}[ht]
\begin{center}
\includegraphics[width=12cm]{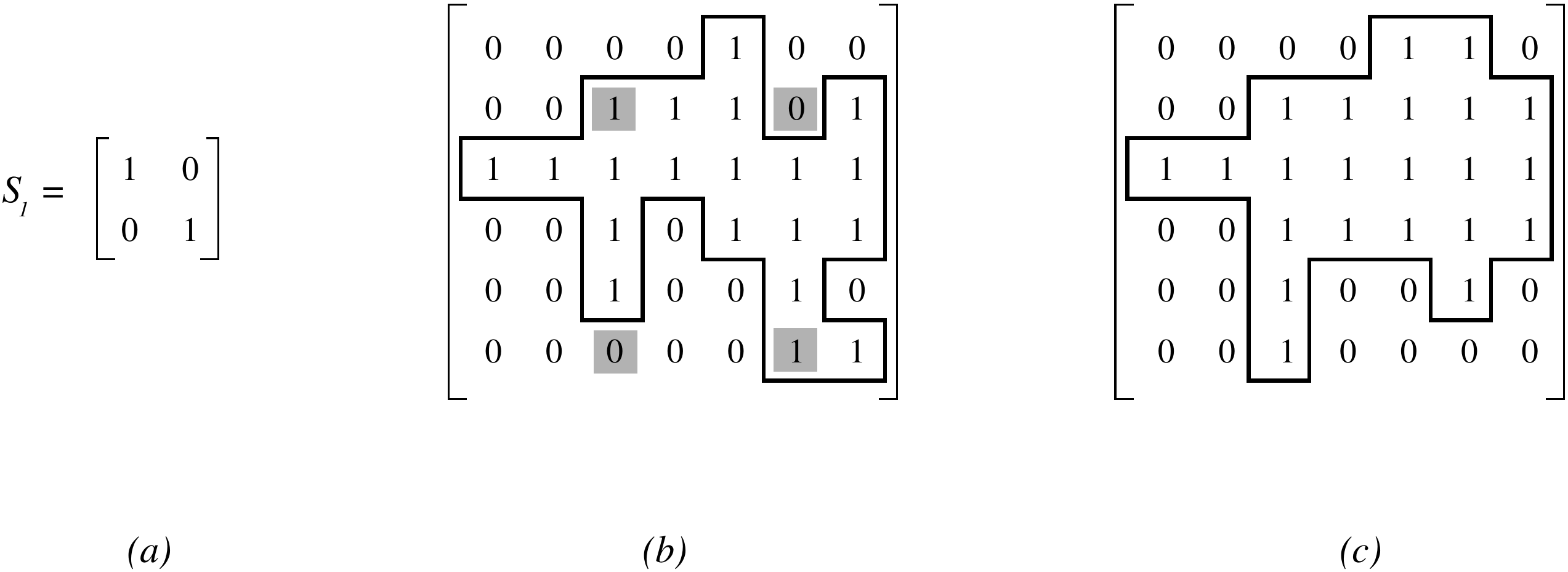}
\caption{$(a)$ a matrix $S_1$; $(b)$ a polyomino that contains $S_1$ as a submatrix, hence does not belong to $\Avp(S_1)$; 
$(c)$ a polyomino that does not contain $S_1$, \emph{i.e.} that belongs to $\Avp(S_1)$.}
\label{motivo}
\end{center}
\end{figure}

The followings facts, although immediate to prove, will be useful in our work:

\begin{remark}
When $\M$ contains only permutations (resp. polyominoes), 
these definitions of $\Avperm(\M)$ and $\Avp(\M)$ coincide with the ones given in Section~\ref{sec:def_classes}. 
\end{remark}

\begin{remark}
Denoting $\Avm(\M)$ the set of binary matrices that do not have any submatrix in $\M$, we have 
\[
\Avperm(\M)=\Avm(\M)\cap\sym \,\,\,\mbox{and}\,\,\, \Avp(\M)=\Avm(\M)\cap\poly \text{.}
\]
\label{rem:intersection_with_permutations_or_polyominoes}
\end{remark}

\begin{remark}
Sets of the form $\Avperm(\M)$  are downward closed for $\sympattern$, \ie are permutation classes. 
Similarly, the sets $\Avp(\M)$ are polyomino classes. 
\label{rem:submatrix_avoidance_implies_class}
\end{remark}

We believe it is quite natural to characterize some permutation or polyomino classes 
by avoidance of submatrices, and will provide several examples in Sections~\ref{sec:known_classes_perm} and~\ref{sec:known_classes_poly}. 
In the present section, we investigate further 
the description of permutation and polyomino classes by avoidance of matrices, 
and in particular how canonical and concise such a description can be.

\paragraph*{Remark on a different notion of containment/avoidance of binary matrices in permutation matrices.} ~ \\
To avoid any confusion, let us notice that 
another definition of containment of a binary matrix in a permutation matrix 
(different from the submatrix containment) 
has been around in the permutation patterns literature.  
It has in particular been used in the Marcus-Tardos proof 
of the Stanley-Wilf conjecture~\cite{marcTard}, 
and reads as follows:
A binary matrix $P = (p_{i,j})$ is contained in a permutation matrix $M$ 
if $M$ contains a submatrix $Q= (q_{i,j})$ of the same dimension as $P$ such that $q_{i,j} =1$ as soon as $p_{i,j} =1$. 

This notion of containment of binary matrices in permutations is different, but related to the classical submatrix containment. 
Indeed, $P$ being contained in $M$ in the Marcus-Tardos sense means that 
$M$ contains a submatrix that is either $P$ or some $P'$ obtained from $P$ by replacing some $0$ entries in $P$ by $1$. 
Actually, from Remark~\ref{rem:submatrices_of_perms}, 
this statement can be restricted \emph{w.l.o.g.} to matrices $P'$ obtained from $P$ 
by replacing some \emph{uncovered} $0$ entries in $P$ by $1$. 
By \emph{uncovered} $0$ entry, we mean a $0$ entry which does not have any entry $1$ in the same row nor in the same column.

Specifically, the set of permutations that avoid all the binary matrices in the set $\B$ in the Marcus-Tardos sense
is a permutation class, which may be described by the set of excluded submatrices 
$\B'$, where $\B' = \{P'$ obtained from $P \in \B$ by replacement of some (uncovered) $0$ entries by $1 \}$.
In this work, we view the Marcus-Tardos definition of avoidance of a matrix as a shortcut 
to mean avoidance in the submatrix sense of a set of matrices, 
and from now on, we focus on the (usual) notion of submatrix avoidance.

\subsection{Matrix bases of permutation and polyomino classes}

We have seen in Propositions~\ref{prop:perm_basis} and~\ref{prop:polyomino_basis} that 
for each permutation (resp. polyomino) class \C, 
the set of excluded permutation (resp. polyomino) patterns that characterizes \C is uniquely determined. 
In view of Proposition~\ref{prop:basis_of_downward_closed_subposet}, 
it is also not hard to associate with every permutation (resp. polyomino) class $\C$
a set $\M$ of matrices such that $\C = \Avperm(\M)$ (resp. $\C = \Avp(\M)$). 
Given a class $\C$, we can define such a set $\M$ in a canonical way (see Definition~\ref{def:canonical_m-basis}). 
However, we shall see in the following that for some permutation (resp. polyomino) classes $\C$, 
there exist \emph{several} antichains $\M'$ such that $\C = \Avperm(\M')$ (resp. $\C = \Avp(\M')$). 

\begin{definition}
Let \C be a class of permutations (resp. polyominoes). 
Denote by $\C^+$ the set of matrices that appear as a submatrix of some element of \C, \emph{i.e.} 
\[
\C^+ = \{ M \in \matr \mid \exists P \in \C, \text{ such that }M \preccurlyeq P\} \text{.}
\]
Denote by \M the set of all minimal matrices in the sense of $\preccurlyeq$ that do not belong to $\C^+$. 
\M is called the \emph{canonical matrix-basis} (or \emph{canonical $m$-basis} for short) of \C. 
\label{def:canonical_m-basis}
\end{definition}

Of course, the canonical $m$-basis of a class $\C$ is uniquely defined, and is always an antichain for $\preccurlyeq$. 
Moreover, Proposition~\ref{prop:canonical_m-basis} shows that it indeed provides a description of \C by avoidance of submatrices. 

\begin{proposition}
Let \C be a class of permutations (resp. polyominoes), 
and denote by $\M$ its canonical $m$-basis. 
We have $\C = \Avperm(\M)$ (resp. $\C = \Avp(\M)$).
\label{prop:canonical_m-basis}
\end{proposition}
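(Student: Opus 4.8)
The plan is to reduce the statement to the general Proposition~\ref{prop:basis_of_downward_closed_subposet}, applied to the full matrix poset $(\matr,\preccurlyeq)$, and then to intersect with $\poly$ (resp. $\sym$). I will carry out the polyomino case; the permutation case is identical, replacing $\poly$ by $\sym$, $\polypattern$ by $\sympattern$, and $\Avp$ by $\Avperm$ throughout.

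First I would observe that $\C^+$ is downward closed in $(\matr,\preccurlyeq)$: if $M\in\C^+$, say $M\preccurlyeq P$ for some $P\in\C$, and $M'\preccurlyeq M$, then $M'\preccurlyeq P$ by transitivity of $\preccurlyeq$, so $M'\in\C^+$. Next I would recall that $(\matr,\preccurlyeq)$ is well-founded, since a binary matrix admits only finitely many submatrices and hence no infinite strictly descending chain. By Definition~\ref{def:canonical_m-basis}, the canonical $m$-basis $\M$ is exactly the set of minimal elements of $\matr\setminus\C^+$. Therefore Proposition~\ref{prop:basis_of_downward_closed_subposet}, applied to the well-founded poset $(\matr,\preccurlyeq)$ and its downward-closed subset $\C^+$, yields at once
\[
\C^+=\Avm(\M).
\]

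The remaining work is to establish $\C=\C^+\cap\poly$, after which the conclusion is immediate from Remark~\ref{rem:intersection_with_permutations_or_polyominoes}, which gives $\Avp(\M)=\Avm(\M)\cap\poly=\C^+\cap\poly$. For the inclusion $\C\subseteq\C^+\cap\poly$, every $P\in\C$ is a polyomino and satisfies $P\preccurlyeq P$, hence $P\in\C^+$. For the reverse inclusion, I would take $Q\in\C^+\cap\poly$; being in $\C^+$, there is some $P\in\C$ with $Q\preccurlyeq P$. Since $Q$ is itself a polyomino, the submatrix relation $Q\preccurlyeq P$ is exactly the pattern relation $Q\polypattern P$, and as $\C$ is a polyomino class, \emph{i.e.} downward closed for $\polypattern$, we conclude $Q\in\C$.

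The argument is essentially bookkeeping once the correct ambient poset is identified, so I do not expect a genuine obstacle. The one point requiring care is the interface between the two orders: one must use that the restriction of the submatrix order $\preccurlyeq$ to $\poly$ coincides with $\polypattern$ (and, in the permutation case, that a permutation submatrix of a permutation matrix corresponds precisely to $\sympattern$, as noted in Subsection~\ref{subsec:submatrix}), so that downward-closedness of $\C$ can legitimately be invoked from a submatrix relation living in $\matr$. This is exactly what makes $\C^+\cap\poly$ collapse back to $\C$ rather than to a strictly larger set.
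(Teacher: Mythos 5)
Your proof is correct and follows essentially the same route as the paper's: apply Proposition~\ref{prop:basis_of_downward_closed_subposet} in the ambient poset $(\matr,\preccurlyeq)$ to get $\C^+=\Avm(\M)$, then intersect with $\poly$ (resp.\ $\sym$) via Remark~\ref{rem:intersection_with_permutations_or_polyominoes}. You merely fill in details the paper leaves implicit (well-foundedness of $(\matr,\preccurlyeq)$, downward-closedness of $\C^+$, and the verification that $\C=\C^+\cap\poly$), which is a reasonable amount of added care rather than a different argument.
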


\begin{proof}
Working in the poset $(\matr, \preccurlyeq)$, Proposition~\ref{prop:basis_of_downward_closed_subposet} 
ensures that $\C^+ = \Avm(\M)$. 
And since $\C = \C^+ \cap \sym$ (resp. $\C = \C^+ \cap \poly$), 
Remark~\ref{rem:intersection_with_permutations_or_polyominoes} yields the conclusion.
\end{proof}

\begin{example}
\label{ex:1-12-21_canonical_m-basis}
For the (trivial) class of permutations $\T=\{1,12,21\}$, we have
\begin{align*}
\T^{+} = & \bigg\{
\left[ \begin{array}{c}
0 \end{array} \right], 
\left[ \begin{array}{c}
1 \end{array} \right], 
\left[ \begin{array}{cc}
1 & 0  \end{array} \right], 
\left[ \begin{array}{cc}
0 & 1 \end{array} \right],
\left[ \begin{array}{c}
1 \\ 
0 \end{array} \right], 
\left[ \begin{array}{c}
0 \\ 
1 \end{array} \right], 
\left[ \begin{array}{cc}
1 & 0 \\ 
0 & 1 \end{array} \right], 
\left[ \begin{array}{cc}
0 & 1 \\ 
1 & 0 \end{array} \right] 
\bigg\}
\end{align*}
and the canonical $m$-basis of $\T$ is 
$\left\{
  \left[\begin{array}{cc}
          0 & 0
         \end{array}
  \right],
  \left[\begin{array}{c}
          0\\
          0
         \end{array}
  \right]
\right\}
$.
\end{example}

\begin{example}
\label{ex:av321_231_312_canonical_m-basis}
Let $\A$ be the permutation class $\Avperm(321,231,312)$. The canonical $m$-basis of $\A$ is $\{Q_1,Q_2\}$, with
\[
 Q_1 = \left[\begin{array}{cc}
          1 & 0\\
          0 & 0\\
          0 & 1
         \end{array}
 \right] \text{ and } Q_2 = \left[\begin{array}{ccc}
          1 & 0 & 0\\
          0 & 0 & 1
         \end{array}
 \right]\text{.}
\]
Indeed, it can be readily checked that $Q_1$ and $Q_2$ do not belong to $\A^{+}$ and are minimal for this property. 
Conversely if $\M \notin \A^{+}$ then $M$ contains one of the permutation matrices of $321$, $231$ and $312$, 
and hence contains $Q_1$ or $Q_2$ (and actually contains both of them). 
\end{example}

\begin{example}
\label{ex:vertical_bars_canonical_m-basis}
Let $\V$ be the class of polyominoes made of exactly one column (\ie vertical bars). The canonical $m$-basis of $\V$ is
$
\{\left[\begin{array}{c}
          0
         \end{array}
 \right],\left[\begin{array}{cc}
          1 & 1
         \end{array}
 \right]\}
$.
\end{example}

\begin{example}
\label{ex:rectangles_canonical_m-basis}
Let $\R$ be the class of polyominoes of rectangular shape. 
The canonical $m$-basis of $\R$ consists only of the matrix $\left[\begin{array}{c}
         0
         \end{array}
  \right]$.
\end{example}

There is one important difference between $p$-basis and canonical $m$-basis. 
Every antichain of permutations (resp. polyominoes) is the $p$-basis of a class. 
On the contrary, every antichain $\M$ of binary matrices describes a permutation (resp. polyomino) class $\Avperm(\M)$ (resp. $\Avp(\M)$), 
but not every such antichain is the canonical $m$-basis of the corresponding permutation (resp. polyomino) class -- 
see Examples~\ref{ex:1-12-21_m-basis} to~\ref{ex:rectangles_m-basis} below. 
Imposing the avoidance of matrices taken in an antichain being however a natural way of describing permutation and polyomino classes, 
let us define the following weaker notion of basis. 

\begin{definition}
Let \C be a class of permutations (resp. polyominoes). 
Every antichain $\M$ of matrices such that $\C=\Avperm(\M)$ (resp. $\Avp(\M)$) 
is called a \emph{matrix-basis} (or \emph{$m$-basis}) of $\C$. 
\label{def:m-basis}
\end{definition}

Examples~\ref{ex:1-12-21_m-basis} to~\ref{ex:rectangles_m-basis}
show several examples of $m$-bases of permutation and polyomino classes 
which are different from the canonical $m$-basis. 

\begin{example}
\label{ex:1-12-21_m-basis}
Consider the set $\M$ consisting of the following four matrices: 
$$M_1 = \left[ \begin{array}{cc}
1 & 0 \\ 
0 & 0 \end{array} \right], \, \, \, M_2 = \left[ \begin{array}{cc}
0 & 1 \\ 
0 & 0 \end{array} \right], \, \, \,  M_3 = \left[ \begin{array}{cc}
0 & 0 \\ 
1 & 0 \end{array} \right], \, \, \, M_4 = \left[ \begin{array}{cc}
0 & 0 \\ 
0 & 1 \end{array} \right] \, . $$
We may check that every permutation of size $3$ contains a matrix pattern $M \in \M$,
and that it actually contains each of these four $M_i$. 
Moreover, $\M$ is an antichain, and so is obviously each set $\{M_i\}$. 
Therefore, $\T = \Avperm(\M) =\Avperm(M_i)$, for each $1\leq i \leq 4$, 
eventhough these antichains characterizing $\T$ are not the canonical $m$-basis of $\T$ 
(see Example~\ref{ex:1-12-21_canonical_m-basis}). 
\end{example}

\begin{example}
\label{ex:av321_231_312_m-basis}
As explained in Example~\ref{ex:av321_231_312_canonical_m-basis}, 
$\A= \Avperm(Q_1)=\Avperm(Q_2)$ eventhough the canonical $m$-basis of 
$\A$ is $\{Q_1,Q_2\}$.
\end{example}

\begin{example}
\label{ex:vertical_bars_m-basis}
Recall from Example~\ref{ex:vertical_bars_canonical_m-basis} 
that the canonical $m$-basis of the class \V of vertical bars is $
\{\left[\begin{array}{c}
          0
         \end{array}
 \right],\left[\begin{array}{cc}
          1 & 1
         \end{array}
 \right]\}
$. But, we also have $\Avp\left(\left[\begin{array}{cc}
          1 & 1
         \end{array}
 \right]\right) = \V$.
\end{example}

\begin{example}
\label{ex:rectangles_m-basis}
Consider the sets 
$$\M_1=\left\{\left[ 
  \begin{array}{cc}
  1 & 0\end{array} 
\right],
\left[ 
  \begin{array}{cc}
  0 & 1\end{array} 
\right],
\left[
  \begin{array}{cc}
  0 & 0\end{array} 
\right],
\left[
  \begin{array}{c}
  0\\ 
  0\end{array}
\right],
\left[
  \begin{array}{c}
  1\\ 
  0\end{array} 
\right],
\left[
  \begin{array}{c}
  0\\ 
  1\end{array} 
\right]\right\}$$
and 
$$\M_2=\left\{\left[ 
  \begin{array}{cc}
  1 & 0\end{array} 
\right],
\left[
  \begin{array}{cc}
  0 & 1\end{array} 
\right],
\left[
  \begin{array}{c}
  1\\ 
  0\end{array} 
\right],
\left[
  \begin{array}{c}
  0\\ 
  1\end{array} 
\right]\right\}$$ 
We may easily check that $\M_1$ and $\M_2$ are antichains, 
and that their avoidance characterize the rectangular polyominoes of Example~\ref{ex:rectangles_canonical_m-basis}: 
$\R=\Avp(\M_1)=\Avp(\M_2)$.
\end{example}

Examples~\ref{ex:1-12-21_m-basis},~\ref{ex:av321_231_312_m-basis} and~\ref{ex:vertical_bars_m-basis} 
show in addition that the canonical $m$-basis is not always the more concise way 
of describing a class of permutations or of polyominoes 
by avoidance of submatrices. This motivates the following definition:

\begin{definition}
Let \C be a class of permutations (resp. polyominoes). 
A \emph{minimal $m$-basis} of $\C$ is an $m$-basis of $\C$ satisfying the following additional conditions: 
\begin{itemize}
 \item[$(1.)$] \M is a minimal subset subject to $\C=\Avperm(\M)$ (resp. $\Avp(\M)$), \\
 \emph{i.e.} for every strict subset $\M'$ of $\M$, $\C \neq \Avperm(\M')$ (resp. $\Avp(\M')$);
 \item[$(2.)$] for every submatrix $M'$ of some matrix $M \in \M$, we have 
\begin{itemize}
  \item[$i.$] $M'=M$ or
  \item[$ii.$] with $\M'=\M\setminus\{M\}\cup\{M'\}$, $\C \neq \Avperm(\M')$ (resp. $\Avp(\M')$).
 \end{itemize}
\end{itemize}
\label{def:minimal_m-basis}
\end{definition}

Condition~$(1.)$ ensures minimality in the sense of inclusion, 
while Condition~$(2.)$ ensures that it is not possible to replace a matrix of the minimal $m$-basis 
by another one of smaller dimensions. 
For future reference, let us notice that with the notations of Definition~\ref{def:minimal_m-basis}, 
the statement $\C \neq \Avperm(\M')$ (resp. $\Avp(\M')$) in Condition~$2.ii.$ is equivalent to 
$\C \varsubsetneq \Avperm(\M')$ (resp. $\Avp(\M')$), since the other inclusion always holds. 

To illustrate the relevance of Condition~$(2.)$, consider for instance
the $m$-basis $\{M_1\}$ of $\T$ (see Example~\ref{ex:1-12-21_m-basis}), with 
$$M_1 = \left[ \begin{array}{cc}
1 & 0 \\ 
0 & 0 \end{array} \right]\,\,.$$
Of course it is minimal in the sense of inclusion, however noticing that 
$$\T = \Avperm\left(\left[ \begin{array}{cc} 0 & 0 \end{array} \right]\right) = \Avperm\left(\left[ \begin{array}{c}
 0 \\ 
 0 \end{array} \right]\right)\,\,,$$
 with these excluded submatrices being submatrices of $M_1$, 
it makes sense \emph{not} to consider $\{M_1\}$ as a \emph{minimal} $m$-basis. 
This is exactly the point of Condition~$(2.)$.
\noindent
Actually, $\left\{\left[ \begin{array}{cc} 0 & 0 \end{array} \right]\right\}$ and 
$\left\{\left[ \begin{array}{c}
 0 \\ 
 0 \end{array} \right] \right\}$ both satify Conditions~$(1.)$ and~$(2.)$, \ie
are minimal $m$-basis of $\T$. 

This also illustrates the somewhat undesirable property that a class may have several minimal $m$-bases. 
This is not only true for the trivial class $\T$, but also for instance for $\A$: 
the $m$-bases $\{Q_1\}$ and $\{Q_2\}$ of $\A$ (see Example~\ref{ex:av321_231_312_m-basis}) 
are minimal $m$-bases of $\A$. 
We can see in the following some examples of polyomino classes in which the minimal $m$-basis is not unique.
\begin{example}[Injections]
\label{inj}
Let ${\cal I}$ be the class of {\em injections}, i.e. polyominoes having at most a zero entry for each row and column such as, for instance

 \begin{center}
\includegraphics[width=7cm]{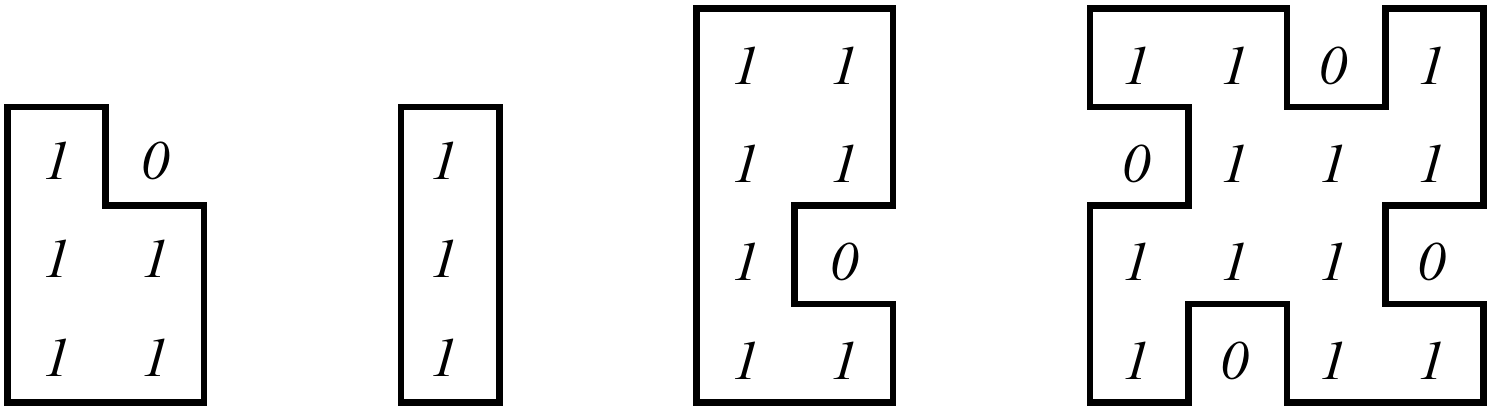}
\label{es2b}
\end{center}

The set ${\cal I}$ is clearly a polyomino class, and its $p$-basis is given by the minimal polyominoes which are not injections, i.e. the twelve polyominoes on the top of Fig.~\ref{fig:inj}. An $m$-basis of $\cal I$ is clearly given by set $${\cal M}=\left\{ \left[\begin{array}{cc}
         0 &0
         \end{array}
  \right], \, \left[\begin{array}{c}
         0 \\
         0
         \end{array}
  \right] \right\} \, .$$
Moreover, consider the sets
$$\M_1=\left\{\left[
  \begin{array}{ccc}
  0 &1 & 0\end{array}
\right],
\left[
  \begin{array}{ccc}
  1 &0 &0\end{array}
\right],
\left[
  \begin{array}{ccc}
  0 & 0 &1\end{array}
\right],
\left[
  \begin{array}{c}
  0\\
  1\\
  0\end{array}
\right],
\left[
  \begin{array}{c}
  0\\
  0\\
  1\end{array}
\right],
\left[
  \begin{array}{c}
  1\\
  0\\
  0\end{array}
\right]\right\}$$
and
$$\M_2=\left\{\left[
  \begin{array}{ccc}
  0 &1 & 0\end{array}
\right],
\left[
  \begin{array}{ccc}
  1 &0 &0\end{array}
\right],
\left[
  \begin{array}{ccc}
  0 & 0 &1\end{array}
\right],
\left[
  \begin{array}{c}
  0\\
  1\\
  0\end{array}
\right],
\left[
  \begin{array}{c}
  0\\
  0\\
  1\end{array}
\right],
\left[
  \begin{array}{c}
  1\\
  0\\
  0\end{array}
\right],
\left[
  \begin{array}{c}
  0\\
  0\\
  0\end{array}
\right]\right\}$$
We may easily check that $\M_1$ and $\M_2$ are antichains (see Fig.~\ref{fig:inj}),
and that their avoidance characterizes injections:
${\cal I}=\Avp(\M_1)=\Avp(\M_2)$. So, also $\M_1$ and $\M_2$ are $m$-bases.
\end{example}

\begin{figure}[ht]
\begin{center}
\includegraphics[width=13cm]{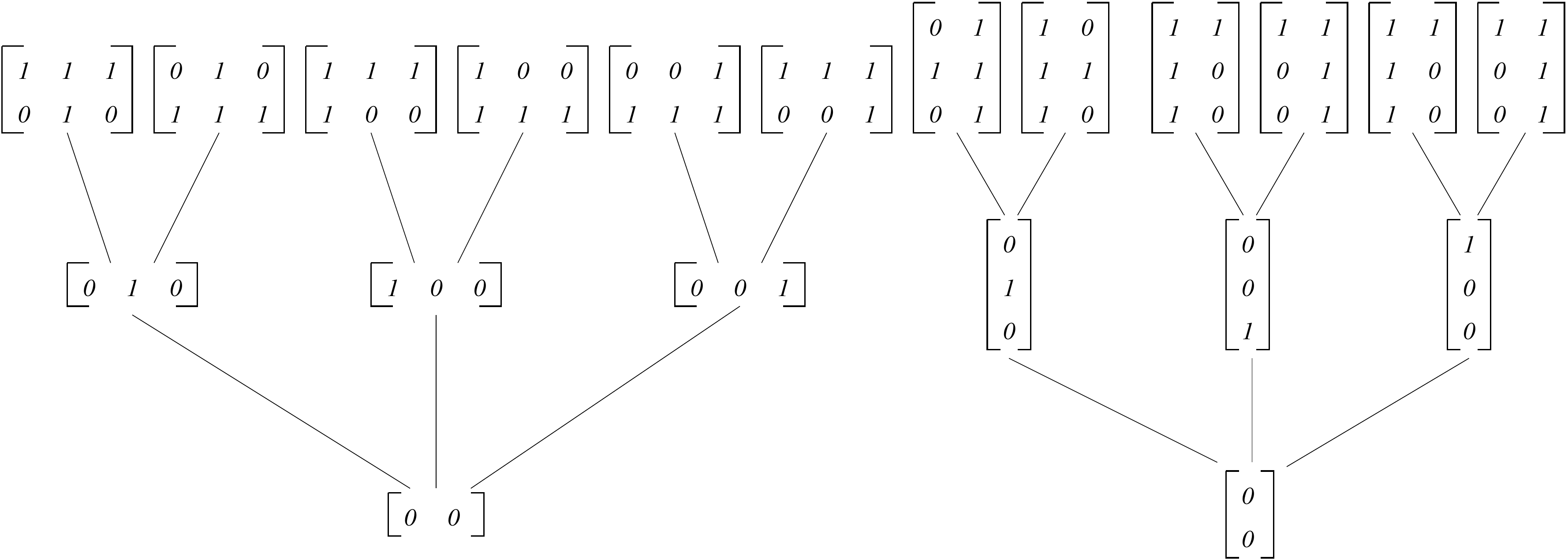}
\caption{The $p$-basis and some $m$-bases of $\cal I$.}
\label{fig:inj}
\end{center}
\end{figure}

However, the minimal $m$-bases of a class are relatively constrained:  

\begin{proposition}
Let \C be a class of permutations (resp. polyominoes) and let $\M$ be its canonical $m$-basis. 
The minimal $m$-bases of $\C$ are the subsets $\B$ of $\M$ that are minimal (for inclusion) under the condition $\C=\Avperm(\B)$ (resp. $\Avp(\B)$). 
\label{prop:minimal_m-basis}
\end{proposition}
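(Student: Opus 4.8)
The plan is to prove the two inclusions between the family of minimal $m$-bases of $\C$ and the family of $\preccurlyeq$-minimal subsets of the canonical $m$-basis $\M$, relying throughout on two elementary facts (I write $\Av$ for $\Avperm$, resp.\ $\Avp$, throughout, and everything applies verbatim to both cases). First, by the proof of Proposition~\ref{prop:canonical_m-basis} we have $\C^+ = \Avm(\M)$, so $\M$ is exactly the set of $\preccurlyeq$-minimal elements of $\matr \setminus \C^+$; since $(\matr,\preccurlyeq)$ is well-founded, for every matrix $M'$ lying outside $\C^+$ there is some $N \in \M$ with $N \preccurlyeq M'$. Second, avoidance is antitone and $\C = \Av(\M)$, so for any $\B \subseteq \M$ we automatically get $\C \subseteq \Av(\B)$; hence $\C = \Av(\B)$ is equivalent to $\Av(\B) \subseteq \C$. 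I would also record a key lemma: any $m$-basis $\M'$ of $\C$ is contained in $\matr \setminus \C^+$. Indeed, if some $M' \in \M'$ satisfied $M' \in \C^+$, a witness $P \in \C$ with $M' \preccurlyeq P$ would contain $M'$ and hence fail to avoid $\M'$, contradicting $\C = \Av(\M')$.

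For the forward inclusion, I would start from a minimal $m$-basis $\B$ (Definition~\ref{def:minimal_m-basis}) and first prove $\B \subseteq \M$. Pick $M_0 \in \B$; by the lemma $M_0 \notin \C^+$, so well-foundedness yields $N \in \M$ with $N \preccurlyeq M_0$. If $N \neq M_0$, form $\B' = \B \setminus \{M_0\} \cup \{N\}$ and show $\C = \Av(\B')$: the containment $\Av(\{N\}) \subseteq \Av(\{M_0\})$ (because $N \preccurlyeq M_0$) gives $\Av(\B') \subseteq \Av(\B) = \C$, while the fact that every element of $\B'$ lies outside $\C^+$ (the elements of $\B$ by the lemma, and $N$ since $N \in \M$) gives $\C \subseteq \Av(\B')$. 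This contradicts Condition~$(2.)$ applied to the proper submatrix $N$ of $M_0$. Hence $N = M_0 \in \M$, so $\B \subseteq \M$. Condition~$(1.)$ then says precisely that no proper subset $\B'$ of $\B$ satisfies $\C = \Av(\B')$, i.e.\ that $\B$ is minimal for inclusion subject to $\C = \Av(\B)$, completing this direction.

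For the converse, I would take $\B \subseteq \M$ minimal for inclusion under $\C = \Av(\B)$ and verify the three requirements. It is an $m$-basis because it is a subset of the antichain $\M$ (hence itself an antichain) with $\C = \Av(\B)$. Condition~$(1.)$ is exactly the assumed minimality, noting that every proper subset of $\B$ is again a subset of $\M$. For Condition~$(2.)$, fix $M \in \B \subseteq \M$ and a proper submatrix $M' \prec M$; since $M$ is $\preccurlyeq$-minimal in $\matr \setminus \C^+$, necessarily $M' \in \C^+$, so some $P \in \C$ has $M' \preccurlyeq P$. As $P \in \C = \Av(\B)$, it avoids $\B \setminus \{M\}$ but contains $M'$, so $P \notin \Av(\B \setminus \{M\} \cup \{M'\})$ while $P \in \C$; thus $\C \neq \Av(\B \setminus \{M\} \cup \{M'\})$, which is exactly Condition~$(2.)$.

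The step I expect to be the main obstacle is the passage $\B \subseteq \M$ in the forward direction: it is the only place where all three ingredients must be combined at once, namely the antitone behaviour of avoidance (to get $\Av(\B') \subseteq \C$), the characterisation $\C^+ = \Avm(\M)$ together with well-foundedness (to produce $N$ and to get $\C \subseteq \Av(\B')$), and Condition~$(2.)$ of minimality (to derive the contradiction). Everything else is bookkeeping with the submatrix order. Since all the arguments are order-theoretic and invoke only Propositions~\ref{prop:basis_of_downward_closed_subposet} and~\ref{prop:canonical_m-basis} and the well-foundedness of $(\matr,\preccurlyeq)$, the proof applies uniformly to the permutation case (with $\Avperm$ and $\sym$) and to the polyomino case (with $\Avp$ and $\poly$).
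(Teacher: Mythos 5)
Your proof is correct and follows essentially the same route as the paper's: both directions rest on the same three facts (every element of an $m$-basis lies outside $\C^+$, $\C^+ = \Avm(\M)$ so every matrix outside $\C^+$ has an element of $\M$ below it, and the elements of $\M$ are $\preccurlyeq$-minimal outside $\C^+$), and the case analysis is identical. The only cosmetic difference is that in showing $\B\subseteq\M$ you establish $\C=\Av(\B')$ directly from the observation that all elements of $\B'$ lie outside $\C^+$, whereas the paper assumes $\C\neq\Av(\B')$ and extracts a witness polyomino/permutation to reach the same contradiction.
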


\begin{proof}
For simplicity of the notations, let us forget the indices and write $\Av(\B)$ instead of $\Avperm(\B)$ (resp. $\Avp(\B)$).

Consider a subset $\B$ of $\M$ that is minimal for inclusion under the condition $\C=\Av(\B)$, 
and let us prove that $\B$ is a minimal $m$-basis of $\C$. 
$\B$ is clearly an $m$-basis of $\C$ satifying Condition~$(1.)$. 
Assume that $\B$ does not satisfy Condition~$(2.)$: there is some $M \in \B$ and some proper submatrix $M'$ of $M$, 
such that $\C=\Av(\B')$ for $\B'=\B\setminus\{M\}\cup\{M'\}$. 
By definition of the canonical $m$-basis, $M' \in \C^+$ (or $M$ would not be minimal for $\preccurlyeq$), 
so there exists a permutation (resp. polyomino) $P \in \C$ such that $M' \preccurlyeq P$. 
But then $P \notin \Av(\B') = \C$ bringing the contradiction that ensures that $\B$ satisfies Condition~$(2.)$. 

Conversely, consider a minimal $m$-basis $\B$ of $\C$ and a matrix $M \in \B$, 
and let us prove that $M$ belong to $\M$. 
Because of Condition~$(1.)$, this is enough to conclude the proof. 
First, notice that $M \notin \C^+$. 
Indeed, otherwise there would exist a permutation (resp. polyomino) $P \in \C$ such that $M \preccurlyeq P$, 
and we would also have $P \notin \Av(\B) = \C$, a contradiction. 
By definition, $\C^+ = \Avm(\M)$, so there exists $M' \in \M$ such that $M' \preccurlyeq M$. 
Since $\B$ is a minimal $m$-basis we either have $M=M'$, which proves that $M \in \M$, 
or we have $\Av(\B') \varsubsetneq \C$ for $\B'=\B\setminus\{M\}\cup\{M'\}$, 
in which case we derive a contradiction as follows. 
If $\Av(\B') \varsubsetneq \C$, then there is some permutation (resp. polyomino) $P \in \C$ 
which has a submatrix in $\B'$. It cannot be some submatrix in $\B\setminus\{M\}$, because $\C = \Av(\B)$. 
So $M' \preccurlyeq P$, which is a contradiction to $P \in \C = \Av(\M)$.
\end{proof}

\begin{example}
On our running examples, Proposition~\ref{prop:minimal_m-basis} ensures that both $\T$ and $\A$ each have two minimal $m$-basis, 
namely $\left\{\left[ \begin{array}{cc} 0 & 0 \end{array} \right]\right\}$ and 
$\left\{\left[ \begin{array}{c}
 0 \\ 
 0 \end{array} \right] \right\}$, 
and $\{Q_1\}$ and $\{Q_2\}$ respectively. 
The polyomino class $\V$ (resp. $\R$) has however a unique minimal $m$-basis: 
$\left\{\left[\begin{array}{cc}
          1 & 1
         \end{array}
 \right]\right\}$ (resp. $\left\{\left[\begin{array}{c}
         0
         \end{array}
  \right]\right\}$).
\end{example}

A natural question is then to ask for a characterization of the permutation (resp. polyomino) classes 
which have a unique minimal $m$-basis. 
In this direction we give the following remark.
\begin{remark}
Given a class of permutations (resp. polyominoes) \C, if the $p$-basis of \C is a minimal $m$-basis of \C, then the condition of Proposition~\ref{prop:minimal_m-basis} trivially holds and so the $p$-basis is the unique minimal $m$-basis of \C. This happens, for instance, in the case of the class \V of vertical bars (see Example \ref{ex:vertical_bars_m-basis}) or in the case of parallelogram polyominoes (in Section~\ref{sec:known_classes_poly}).
\end{remark}

\section{Relations between the $p$-basis and the $m$-bases}
\label{sec:from_one_basis_to_another}

\subsection{From an $m$-basis to the $p$-basis}

A permutation (resp. polyomino) class being now equipped with several notions of basis, 
we investigate how to describe one basis from another, and focus here on describing the $p$-basis from any $m$-basis. 

\begin{proposition}
Let \C be a permutation (resp. polyomino) class, and let \M be an $m$-basis of  $\C$.
Then the $p$-basis of \C consists of all permutations (resp. polyominoes) 
that contain a submatrix in \M, 
and that are minimal (w.r.t. $\sympattern$ resp. $\polypattern$) for this property. 
\label{prop:description_p-basis}
\end{proposition}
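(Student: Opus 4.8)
The plan is to reduce the statement to the already-established characterization of the $p$-basis as the set of minimal elements lying \emph{outside} the class, and then to translate ``being outside \C'' into ``containing a submatrix in \M'' by invoking the defining property of an $m$-basis. I would treat the permutation and polyomino cases in parallel, writing (as in the proof of Proposition~\ref{prop:minimal_m-basis}) $\Av$ for $\Avperm$ (resp. $\Avp$) and $\preccurlyeq$ for $\sympattern$ (resp. $\polypattern$), since the argument is verbatim the same in both settings.

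First I would recall, from Proposition~\ref{prop:perm_basis} (resp. Proposition~\ref{prop:polyomino_basis}), that the $p$-basis of \C is exactly the set of permutations (resp. polyominoes) that are minimal, with respect to $\preccurlyeq$, among those that do not belong to \C. Thus it suffices to describe the complement of \C inside the permutations (resp. polyominoes).

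The key step is then a one-line equivalence. Since \M is an $m$-basis, Definition~\ref{def:m-basis} gives $\C = \Av(\M)$, which by definition of submatrix avoidance means that a permutation (resp. polyomino) $P$ lies in \C if and only if $P$ contains no submatrix belonging to \M. Taking contrapositives, $P \notin \C$ if and only if $P$ contains some submatrix in \M. Substituting this characterization of the complement of \C into the description of the $p$-basis recalled above yields precisely the claim: the $p$-basis consists of all permutations (resp. polyominoes) that contain a submatrix in \M and that are minimal for $\preccurlyeq$ with respect to this property.

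The only point demanding care — and the nearest thing to an obstacle in an otherwise immediate argument — is to keep the two order relations distinct. The matrices of \M need not themselves be permutation matrices (resp. polyomino matrices), so the containment ``$P$ contains a submatrix in \M'' is measured in the ambient submatrix order $\preccurlyeq$ on \matr, whereas the minimality quantifier ranges only over permutations (resp. polyominoes) and is taken with respect to $\sympattern$ (resp. $\polypattern$). I would make explicit that these two uses are compatible: once restricted to permutations (resp. polyominoes), the property ``contains a submatrix in \M'' is literally the property of not belonging to \C, so the elements that are minimal for this property coincide with those that are minimal outside \C, that is, with the $p$-basis.
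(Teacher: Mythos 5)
Your proof is correct and follows essentially the same route as the paper's: reduce to the characterization of the $p$-basis as the minimal elements outside \C (Propositions~\ref{prop:perm_basis} and~\ref{prop:polyomino_basis}) and then use the defining property of an $m$-basis to identify ``not in \C'' with ``contains a submatrix in \M''. Your extra remark about distinguishing the ambient submatrix order from $\sympattern$ (resp.\ $\polypattern$) is a sensible clarification but does not change the argument.
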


\begin{proof}
By Proposition~\ref{prop:perm_basis} (resp.~\ref{prop:polyomino_basis}),
the $p$-basis of \C is the set of minimal permutations (resp. polyominoes) that do not belong to $\C$ 
and are minimal w.r.t. $\sympattern$ (resp. $\polypattern$) for the property. 
The conclusion then follows by definition of $\M$ being an $m$-basis of $\C$: 
permutations (resp. polyominoes) not belonging to $\C$ are exactly those that contain a submatrix in \M. 
\end{proof}

\begin{example}
Figures~\ref{fig:basis321_231_312} and~\ref{fig:basisRectangularClass} 
give the $p$-basis of the classes $\A$ and $\R$ of Examples~\ref{ex:av321_231_312_canonical_m-basis} and~\ref{ex:rectangles_canonical_m-basis}, 
and illustrate its relation to their canonical $m$-basis.
\label{ex:relation_p-_and_m-basis}
\end{example}

\begin{figure}[ht]
\begin{center}
\includegraphics[width=12cm]{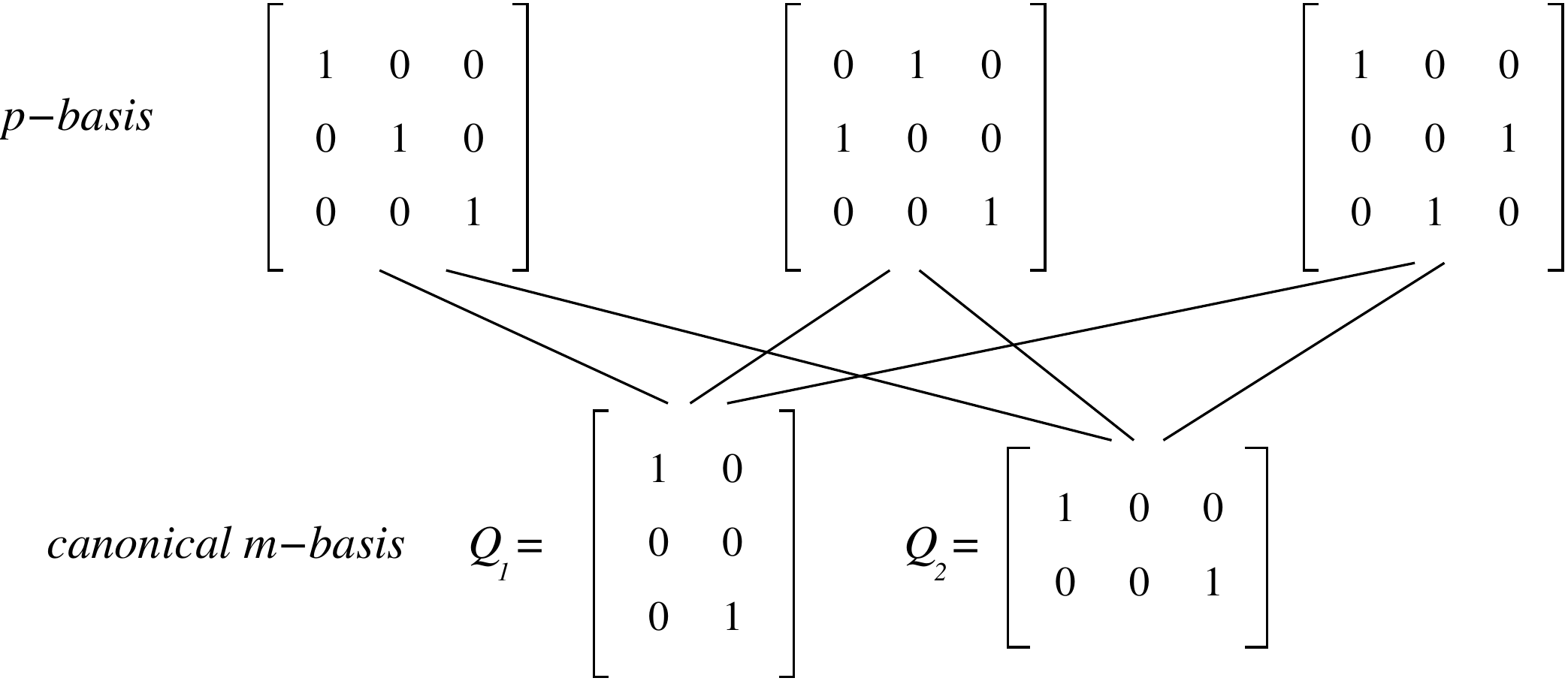}
\caption{The $p$-basis and the canonical $m$-basis of $\A = \Avperm(321,231,312)$.}
\label{fig:basis321_231_312}
\end{center}
\end{figure}

\begin{figure}[ht]
\begin{center}
\includegraphics[width=11cm]{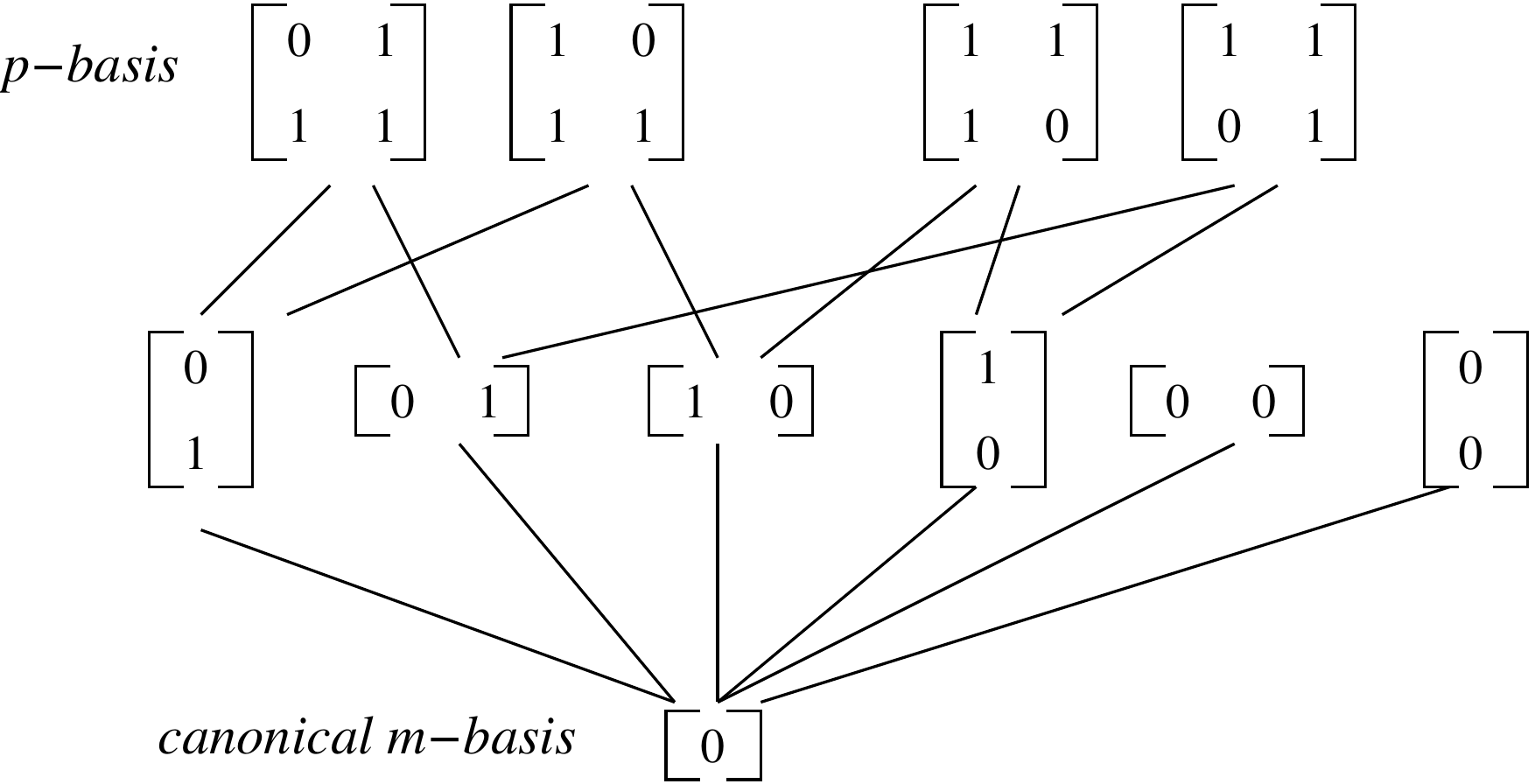}
\caption{The $p$-basis and the canonical $m$-basis of the class $\R$ of polyominoes having rectangular shape.}
\label{fig:basisRectangularClass}
\end{center}
\end{figure}

\medskip

In the case of permutation classes, 
Proposition~\ref{prop:description_p-basis} allows to compute the $p$-basis of any class \C, given a $m$-basis of \C. 
Indeed, the minimal permutations (in the sense of $\sympattern$) that contain a given matrix pattern $M$ are easily described:

\begin{proposition}
Let $M$ be a quasi-permutation matrix. 
The minimal permutations that contain $M$ are exactly those that may be obtained from $M$ 
by insertions of rows (resp. columns) with exactly one entry $1$, 
which should moreover fall into a column (resp. row) of $0$ of $M$. 

In particular, if $M$ has $k$ rows, $y$ of which are rows of $0$, 
and $\ell$ columns, $x$ of which are columns of $0$, 
then minimal permutations containing $M$ have size $k+x = \ell +y$. 
\label{prop:minimal_perm_containing_M}
\end{proposition}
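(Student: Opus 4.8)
The plan is to identify permutations with their permutation matrices, so that $\pi \sympattern \sigma$ is equivalent to $M_\pi \preccurlyeq M_\sigma$, and then to characterise the minimal $\sigma$ with $M \preccurlyeq M_\sigma$ through a size bound combined with an exchange argument. First I record the bookkeeping. Since $M$ is a quasi-permutation matrix, each of its $k-y$ non-zero rows carries exactly one $1$, so $M$ has $r = k-y$ ones; counting the same ones by columns gives $r = \ell - x$. Hence $k + x = \ell + y$, which already produces the claimed common value for the size. Throughout, an embedding of $M$ into a permutation matrix $M_\sigma$ of size $n$ is a choice of $k$ kept rows $R$ and $\ell$ kept columns $C$ whose intersection pattern is $M$; the $n-k$ rows outside $R$ and the $n-\ell$ columns outside $C$ are the deleted lines.

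For the first inclusion I would verify that the construction yields a minimal permutation. Inserting rows (resp.\ columns) carrying a single $1$ that falls into a column (resp.\ row) of $0$ of $M$ turns every $0$-column and $0$-row of $M$ into a line with exactly one $1$, while leaving the $r$ original ones untouched; the result is therefore a genuine permutation matrix, and deleting the inserted lines recovers $M$, so it contains $M$. Its size is $r + x + y = k + x$. To see that this is minimal, I establish the matching lower bound: in any embedding, each of the $y$ rows of $0$ of $M$ is a kept row whose unique $1$ in $\sigma$ must lie in a deleted column, and distinct such rows use distinct columns, so $n - \ell \ge y$; symmetrically $n - k \ge x$. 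Thus every permutation containing $M$ has size $n \ge k + x$, and a permutation produced by the construction, having exactly this size, admits no strictly smaller pattern still containing $M$, hence is minimal.

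For the converse I would show that every minimal $\sigma$ containing $M$ arises in this way. Fix such a $\sigma$ and an embedding $(R,C)$. The key is an exchange argument: if some deleted row had its $1$ in a deleted column, then removing that row together with the column containing its unique $1$ would produce a permutation $\tau \sympattern \sigma$ of size $n-1$ for which $(R,C)$ is still a valid embedding, contradicting minimality. Hence every deleted row has its $1$ in a kept column; and that column must be a $0$-column of $M$, since otherwise some kept row would already carry a $1$ in it, violating the permutation property. This defines an injection from deleted rows into the $x$ columns of $0$ of $M$, so $n - k \le x$; combined with the lower bound this forces $n = k+x$ and upgrades the injection to a bijection. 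The symmetric statement identifies the deleted columns bijectively with the $y$ rows of $0$ of $M$. Consequently $\sigma$ is exactly $M$ with $x$ inserted rows (single $1$ in distinct $0$-columns) and $y$ inserted columns (single $1$ in distinct $0$-rows), precisely the announced form.

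The main obstacle is the converse direction, and specifically the exchange argument: one must argue that minimality forbids any deleted entry from sitting outside the kept lines, and then convert the resulting injections into bijections by invoking the size bound. Once this structural fact is in place, matching it against the explicit construction and reconciling the two size formulas is routine counting.
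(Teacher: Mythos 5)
Your proof is correct. The paper states Proposition~\ref{prop:minimal_perm_containing_M} without any proof, treating it as evident, so there is no argument of the author's to compare against; your two-sided argument --- the counting lower bound $n \ge k+x$ extracted from the zero rows and zero columns of $M$ in any embedding, combined with the exchange step showing that in a minimal $\sigma$ no deleted row may carry its $1$ in a deleted column (and hence that the deleted lines biject onto the zero lines of $M$) --- is a complete and valid justification of exactly what the paper asserts, including the size formula $k+x=\ell+y$.
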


It follows from Proposition~\ref{prop:minimal_perm_containing_M} 
that the $p$-basis of a permutation class \C can be easily computed from an $m$-basis of \C.
Also, Proposition~\ref{prop:minimal_perm_containing_M} implies that:

\begin{corollary}
If a permutation class has a finite $m$-basis 
(\ie is described by the avoidance of a finite number of submatrices) 
then it has a finite $p$-basis.
\label{cor:finiteness_of_m-basis_for_permutations}
\end{corollary}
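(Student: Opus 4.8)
The plan is to derive a uniform size bound on the elements of the $p$-basis directly from the two preceding propositions; once every permutation in the $p$-basis is known to have size at most some constant $N$ depending only on the finite $m$-basis, finiteness is immediate because there are only finitely many permutations of each given size. Concretely, write the finite $m$-basis as $\M=\{M_1,\dots,M_r\}$, and for each $i$ let $k_i$ be the number of rows of $M_i$ and $x_i$ the number of all-zero columns of $M_i$. By Proposition~\ref{prop:minimal_perm_containing_M}, every minimal permutation containing $M_i$ has size exactly $k_i+x_i$; set $N=\max_{1\le i\le r}(k_i+x_i)$, which is finite since $\M$ is finite.

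The core step is to show that every $\sigma$ in the $p$-basis of \C has size at most $N$. By Proposition~\ref{prop:description_p-basis}, such a $\sigma$ contains a submatrix in $\M$ and is minimal (for $\sympattern$) with this property; fix an index $i$ with $M_i\preccurlyeq\sigma$. Among all permutations $\tau$ with $\tau\sympattern\sigma$ that still contain $M_i$, I would choose one that is minimal for $\sympattern$; such a $\tau$ exists because $\sigma$ itself lies in this set and $(\sym,\sympattern)$ is well-founded. The key observation is that this $\tau$ is in fact a \emph{globally} minimal permutation containing $M_i$: if some permutation $\tau'\sympattern\tau$ with $\tau'\ne\tau$ contained $M_i$, then by transitivity $\tau'\sympattern\sigma$, so $\tau'$ would lie in the same set and be strictly smaller than $\tau$, contradicting the minimality of $\tau$. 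Hence by Proposition~\ref{prop:minimal_perm_containing_M} we get $|\tau|=k_i+x_i\le N$.

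It then remains to identify $\tau$ with $\sigma$. Since $\tau$ contains $M_i\in\M$, it contains a submatrix in $\M$, and $\tau\sympattern\sigma$. But $\sigma$ was chosen minimal for $\sympattern$ among permutations containing a submatrix in $\M$, so $\tau=\sigma$, giving $|\sigma|=k_i+x_i\le N$. This establishes the size bound, and since the set of permutations of size at most $N$ is finite, so is the $p$-basis, as claimed.

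The step I expect to be the main (if modest) obstacle is the interplay between the two distinct minimality conditions in the second paragraph: passing from ``$\tau$ minimal among patterns of $\sigma$ that contain $M_i$'' to ``$\tau$ minimal among \emph{all} permutations that contain $M_i$'', which is exactly what licenses the application of Proposition~\ref{prop:minimal_perm_containing_M}. Everything else is bookkeeping: the finiteness of $N$ uses only that $\M$ is finite, and the final collapse $\tau=\sigma$ is a direct appeal to the minimality built into the characterization of the $p$-basis in Proposition~\ref{prop:description_p-basis}.
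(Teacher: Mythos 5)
Your argument is correct and follows exactly the route the paper intends (the paper leaves the corollary as an immediate consequence of Propositions~\ref{prop:description_p-basis} and~\ref{prop:minimal_perm_containing_M}): every element of the $p$-basis is a minimal permutation containing some $M_i$, hence has size $k_i+x_i\le N$, and there are finitely many permutations of bounded size. The detour through the auxiliary $\tau$ is sound but unnecessary, since the minimality of $\sigma$ for ``contains a submatrix in $\M$'' already implies directly that $\sigma$ is a minimal permutation containing $M_i$.
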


\medskip

The situation is more complex if we consider polyomino classes. 
The description of the polyominoes containing a given submatrix 
is not as straightforward as in Proposition~\ref{prop:minimal_perm_containing_M}, 
and the analogue of Corollary~\ref{cor:finiteness_of_m-basis_for_permutations} does not hold for polyomino classes. 

\begin{proposition}
The polyomino class $\C= \Avp(M)$ defined by the avoidance of 
\[
M=
\left[\begin{array}{cccc}
1 & 0 & 0 & 1\\
1 & 1 & 0 & 1
\end{array}\right]
\]
has an infinite $p$-basis.
\label{prop:infinite_basis}
\end{proposition}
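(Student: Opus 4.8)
The plan is to invoke Proposition~\ref{prop:polyomino_basis}: the $p$-basis of $\C=\Avp(M)$ is exactly the antichain of minimal polyominoes (for $\polypattern$) that do not lie in $\C$, i.e. the minimal polyominoes whose matrix contains $M$ as a submatrix. Hence it suffices to exhibit infinitely many pairwise distinct such minimal obstructions: any such family lies inside the unique $p$-basis, which is therefore infinite. So I would reduce the statement to constructing an infinite family $\{P_n\}_{n\ge 1}$ of polyominoes, each containing $M$ and each minimal for this property.

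The seed of the family is the $3\times 4$ polyomino
\[
P_1=\begin{bmatrix}1&1&1&1\\ 1&0&0&1\\ 1&1&0&1\end{bmatrix},
\]
(rows numbered from bottom to top), whose two bottom rows realise $M$ on columns $1,2,3,4$, and whose full top row is the only link between the left block $\{(1,1),(1,2),(2,1)\}$ and the right block $\{(1,4),(2,4)\}$. One checks $P_1$ is minimal directly: since it has exactly $4$ columns, removing any column drops below the $4$ columns that an occurrence of $M$ needs, while deleting a row either disconnects $P_1$ (the full top row) or leaves a connected polyomino whose surviving rows can no longer display the profile $1,1,0,1$ underneath $1,0,0,1$. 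The design principle I would then exploit is that the extra cells of a \emph{larger} obstruction must all be forced by connectivity: I replace the straight top bridge by a rigid, staircase-type connector, so that $P_n$ is a thin path-/loop-like polyomino in which every interior row and column is a cut (recall that deleting an interior line of a staircase merges two cells lying in distinct lines, hence disconnects), while a single occurrence of $M$ stays anchored at the bottom extreme. Lengthening this rigid connector produces pairwise distinct $P_n$ of unbounded size.

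With such a family in hand, the verification splits in two. Containment of $M$ is immediate, by pointing at the two anchored rows and the four anchored columns. The heart is minimality: I would take an arbitrary proper polyomino pattern $Q\polypattern P_n$ with $Q\neq P_n$ — that is, a connected submatrix obtained by deleting a nonempty set of rows and/or columns — and show $Q$ avoids $M$. The argument is a case analysis driven by rigidity: deleting any interior line disconnects $P_n$ (so it does not yield a polyomino $Q$ at all), whereas the only admissible deletions act at the extremes, where they destroy the unique occurrence of $M$ because the profile $1,1,0,1 / 1,0,0,1$ cannot be recovered from the remaining lines. Since every proper pattern therefore avoids $M$, each $P_n$ is minimal and hence belongs to the $p$-basis; as the $P_n$ are pairwise distinct, the $p$-basis is infinite.

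The main obstacle is exactly this minimality step, and for a subtle reason: $\polypattern$ quantifies over \emph{all} connected submatrices, not merely over one-line deletions, so I cannot argue line by line. I must simultaneously (i) classify which subsets of rows and columns leave the object connected — this is where the staircase rigidity of the connector is essential, since it is what prevents distant lines from being erased without breaking the polyomino — and (ii) control every occurrence of $M$ in $P_n$ at once, proving it is essentially unique and anchored, so that every connectivity-preserving deletion meets it. This interplay between the polyomino connectivity constraint and the combinatorics of submatrix occurrences is the delicate point; it is the same phenomenon that makes the permutation result Corollary~\ref{cor:finiteness_of_m-basis_for_permutations} fail for polyominoes, and designing the connector so that both (i) and (ii) hold for every $n$ is the real content of the proof.
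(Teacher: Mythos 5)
Your reduction and overall strategy coincide with the paper's own proof: Proposition~\ref{prop:polyomino_basis} reduces the claim to exhibiting infinitely many pairwise distinct polyominoes that contain $M$ and are minimal (for $\polypattern$) with this property, and your proposed verification of minimality is exactly the dichotomy the paper uses --- every deletion of rows or columns either disconnects the polyomino or destroys all occurrences of $M$, which are anchored at the two bottommost rows, the two leftmost columns and the rightmost column. Your seed $P_1$ is indeed a correct minimal obstruction.

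The gap is that the infinite family is never actually defined: you describe a ``staircase-type connector'' and then concede that designing it so that (i) every interior deletion disconnects and (ii) the occurrence of $M$ stays anchored ``is the real content of the proof.'' This deferral is not harmless, because the most obvious way to grow $P_1$ --- stretching the rectangular frame by repeating the row $(1,0,0,1)$ --- fails: deleting one of the repeated interior rows leaves a connected polyomino that still contains $M$, so those larger polyominoes are not minimal and contribute nothing to the $p$-basis. The staircase rigidity you gesture at is therefore genuinely needed, not a convenience. The paper resolves this by displaying a concrete sequence (Figure~\ref{fig:infinite}) in which the bridge between the left block and the rightmost column is a true staircase, so that every interior row and column is a cut line, and then performs the ``comprehensive verification'' that occurrences of $M$ are anchored as stated. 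To complete your argument you must write down such a generic $P_n$ explicitly and carry out that case analysis; until then the proof establishes only that the $p$-basis contains at least one nontrivial element, not that it is infinite.
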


\begin{proof}
It is enough to exhibit an infinite sequence of polyominoes containing $M$, 
and that are minimal (for $\polypattern$) for this property. 
By minimality of its elements, such a sequence is necessarily an antichain, 
and it forms an infinite subset of the $p$-basis of $\C$. 
The first few terms of such a sequence are depicted in Figure~\ref{fig:infinite}, 
and the definition of the generic term of this sequence should be clear from the figure. 
We check by comprehensive verification that 
every polyomino $P$ of this sequence contains $M$, 
and additionaly that occurrences of $M$ in $P$ always involve 
the two bottommost rows of $P$, its two leftmost columns, and its rightmost column. 
Moreover, comprehensive verification also shows that 
every polyomino $P$ of this sequence is minimal for the condition $M \polypattern P$, 
\ie that every polyomino $P'$ occurring in such a $P$ as a proper submatrix avoids $M$. 
Indeed, the removal of rows or columns from such a polyomino $P$ either disconnects it or removes all the occurrences of $M$. 
\end{proof}

\begin{figure}[ht]
\begin{center}
\includegraphics[width=13cm]{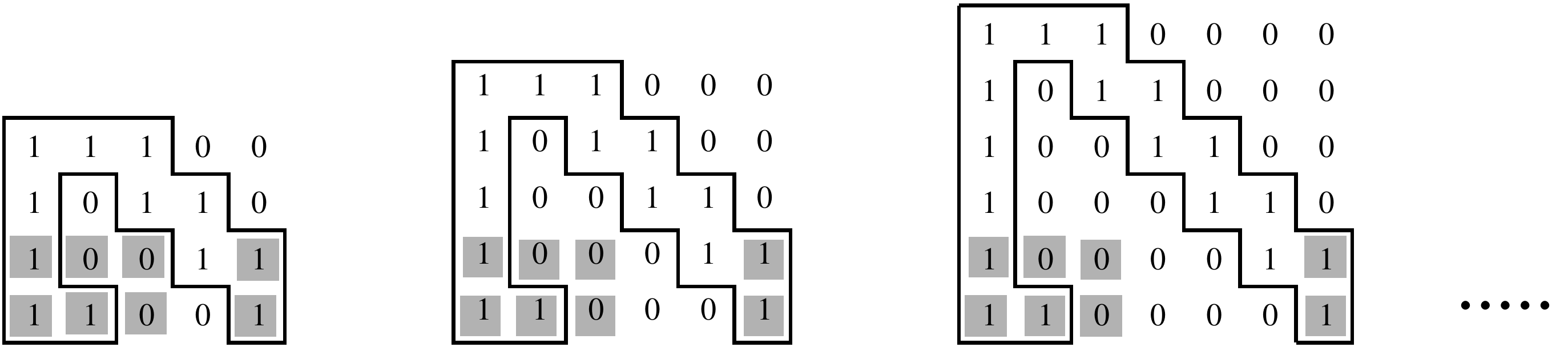}
\caption{An infinite antichain of polyominoes belonging to the $p$-basis of $\Avp(M)$.}
\label{fig:infinite}
\end{center}
\end{figure}

\subsection{Robust polyomino classes}


\begin{definition}
A class is \emph{robust} when all $m$-bases contain the $p$-basis.
\label{def:robust_class}
\end{definition}
For instance, the class  ${\cal I}$ of injections, considered in Example~\ref{inj}, is not robust, since there are $m$-bases disjoint from the $p$-basis.

The $p$-basis of a robust class has remarkable properties. 

\begin{proposition}\label{prop:minimalBasisRobust}
Let $\cal C$ be a robust class, and let $\cal P$ be the $p$-basis. Then, $\cal P$ is the unique $m$-basis \M which satisfies:
\begin{itemize}
 \item[$(1.)$] \M is a minimal subset subject to $\C=\Avp(\M)$, 
 \emph{i.e.} for every strict subset $\M'$ of $\M$, $\C \neq \Avp(\M')$;
 \item[$(2.)$] for every submatrix $M'$ of some matrix $M \in \M$, we have
 $M'=M$ or $\C \neq \Avp(\M')$, with $\M'=\M\setminus\{M\}\cup\{M'\}$.
\end{itemize}
\end{proposition}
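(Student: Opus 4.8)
The plan is to establish two things: first, that the $p$-basis $\cal P$ itself satisfies conditions $(1.)$ and $(2.)$, so that it is a minimal $m$-basis of \C; and second, that it is the only $m$-basis enjoying both properties. Both halves will rest on feeding robustness into the characterization of minimal $m$-bases given by Proposition~\ref{prop:minimal_m-basis}, which lets me bypass any direct submatrix-replacement verification of Condition~$(2.)$.

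The first preliminary step is to record that $\cal P$ is in fact an $m$-basis. Indeed, $\cal P$ is an antichain of polyominoes for $\polypattern$, hence an antichain of binary matrices for $\preccurlyeq$ (the latter order restricting to the former on \poly), and by Proposition~\ref{prop:polyomino_basis} we have $\C = \Avp(\cal P)$; since $\cal P$ consists of polyominoes, the two readings of $\Avp(\cal P)$ coincide. With this in hand I would locate $\cal P$ inside the canonical $m$-basis: the canonical $m$-basis \M is an $m$-basis by Proposition~\ref{prop:canonical_m-basis}, so robustness (Definition~\ref{def:robust_class}) forces $\cal P \subseteq \M$.

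To prove that $\cal P$ satisfies $(1.)$ and $(2.)$, by Proposition~\ref{prop:minimal_m-basis} it is enough to check that $\cal P$ is a subset of \M which is minimal for inclusion under the constraint $\C = \Avp(\cdot)$. The inclusion $\cal P \subseteq \M$ is already in place. For minimality, suppose some $\cal P' \subsetneq \cal P$ still satisfied $\C = \Avp(\cal P')$; being a subset of an antichain, $\cal P'$ would be an antichain, hence an $m$-basis, and robustness would yield $\cal P \subseteq \cal P'$, contradicting $\cal P' \subsetneq \cal P$. Thus no proper subset of $\cal P$ is an $m$-basis, so $\cal P$ is a minimal $m$-basis, i.e.\ it satisfies $(1.)$ and $(2.)$.

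For uniqueness, let $\M'$ be any $m$-basis satisfying $(1.)$ and $(2.)$. Since $\M'$ is in particular an $m$-basis, robustness gives $\cal P \subseteq \M'$. As $\cal P$ is itself an $m$-basis with $\C = \Avp(\cal P)$, Condition~$(1.)$ applied to $\M'$ rules out $\cal P$ being a \emph{strict} subset of $\M'$, whence $\M' = \cal P$. I expect the only genuinely delicate point to be this very first step, namely verifying cleanly that $\cal P$ qualifies as an $m$-basis (antichain for $\preccurlyeq$, and coincidence of the two avoidance notions); once that is secured, robustness does all the substantive work in both directions, and Condition~$(2.)$ is obtained for free through Proposition~\ref{prop:minimal_m-basis} rather than by a direct argument.
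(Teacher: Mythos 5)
Your proof is correct, but it reaches Condition~$(2.)$ by a genuinely different route than the paper. The paper verifies $(2.)$ directly: it supposes a proper submatrix replacement ${\cal P}'={\cal P}\setminus\{M\}\cup\{M'\}$ still characterizes $\C$, declares ${\cal P}'$ an $m$-basis, and invokes robustness to force ${\cal P}\subseteq{\cal P}'$ and hence $M=M'$; uniqueness is then argued separately via Proposition~\ref{prop:description_p-basis}. You instead place ${\cal P}$ inside the canonical $m$-basis (using robustness applied to that particular $m$-basis), prove minimality for inclusion (again via robustness, applied to proper subsets of ${\cal P}$, which are antichains and hence legitimate $m$-bases), and let Proposition~\ref{prop:minimal_m-basis} deliver $(2.)$ wholesale. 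Your detour is longer but actually tighter on one point the paper glosses over: in the paper's direct argument it is not immediate that the replacement set ${\cal P}'$ is an antichain ($M'$ could be comparable to some other element of ${\cal P}$), so applying the definition of robustness --- which quantifies over $m$-bases, i.e.\ antichains --- to ${\cal P}'$ needs a word of justification that your route does not require, since the proof of Proposition~\ref{prop:minimal_m-basis} handles $(2.)$ through the canonical $m$-basis rather than through robustness. Your uniqueness argument is essentially the paper's (robustness gives ${\cal P}\subseteq\M'$, and minimality forces equality), just stated more carefully via Condition~$(1.)$. One small remark: your minimality step does not actually need robustness --- since ${\cal P}$ is an antichain of polyominoes, any $P\in{\cal P}\setminus{\cal P}'$ lies in $\Avp({\cal P}')$ but not in $\C$, so Condition~$(1.)$ holds for the $p$-basis of \emph{every} polyomino class; robustness is only essential for $(2.)$ and for uniqueness.
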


\begin{proof}
 Let ${\cal P}$ be a set of polyominoes and let $\C=\Avp({\cal P})$ be a robust class. Condition $(1.)$ follows directly by Proposition~\ref{prop:description_p-basis}.  We can now proceed to prove the condition $(2.)$. 
 Let us suppose that there exists a proper submatrix $M'$ of some matrix $M \in \cal P$ such that $\C = \Avp(\cal P')$, with ${\cal P'}={\cal P}\setminus\{M\}\cup\{M'\}$. So we have that ${\cal P'} \polypattern {\cal P}$ and $\cal P'$ is an $m$-basis of $\C$. Since $\C$ is a robust class we have that ${\cal P} \polypattern {\cal P'}$ and then $\cal P = \cal P'$, in particular $M = M'$.
 
 Suppose that there exists another minimal $m$-basis $\M\neq{\cal P}$. By Proposition~\ref{prop:description_p-basis} on the $p$-basis, every pattern of $\cal M$ is contained in some pattern of $\cal P$ thus $\cal P$ contains $\cal M$. But $\C$ is a robust class, then $\cal M$ has to contain $\cal P$ and so $\cal P =\cal M$.
\end{proof}

\begin{remark}
 We notice that if $\cal P$ is the $p$-basis of a robust class $\C$ as a consequence of Proposition~\ref{prop:minimalBasisRobust}, $\cal P$ is also the minimal $m$-basis of $\C$.
\end{remark}


\begin{example}
\label{es:es5}
Let be $\C=\Avp(P,P')$, where $P, P'$ are depicted in Figure~\ref{fig:figEs5}. The class $\C$ is not robust, in fact there is an $m$-basis $M$ disjoint from the $p$-basis: 

\begin{figure}[htd]
\begin{center}
\includegraphics[width=7.5cm]{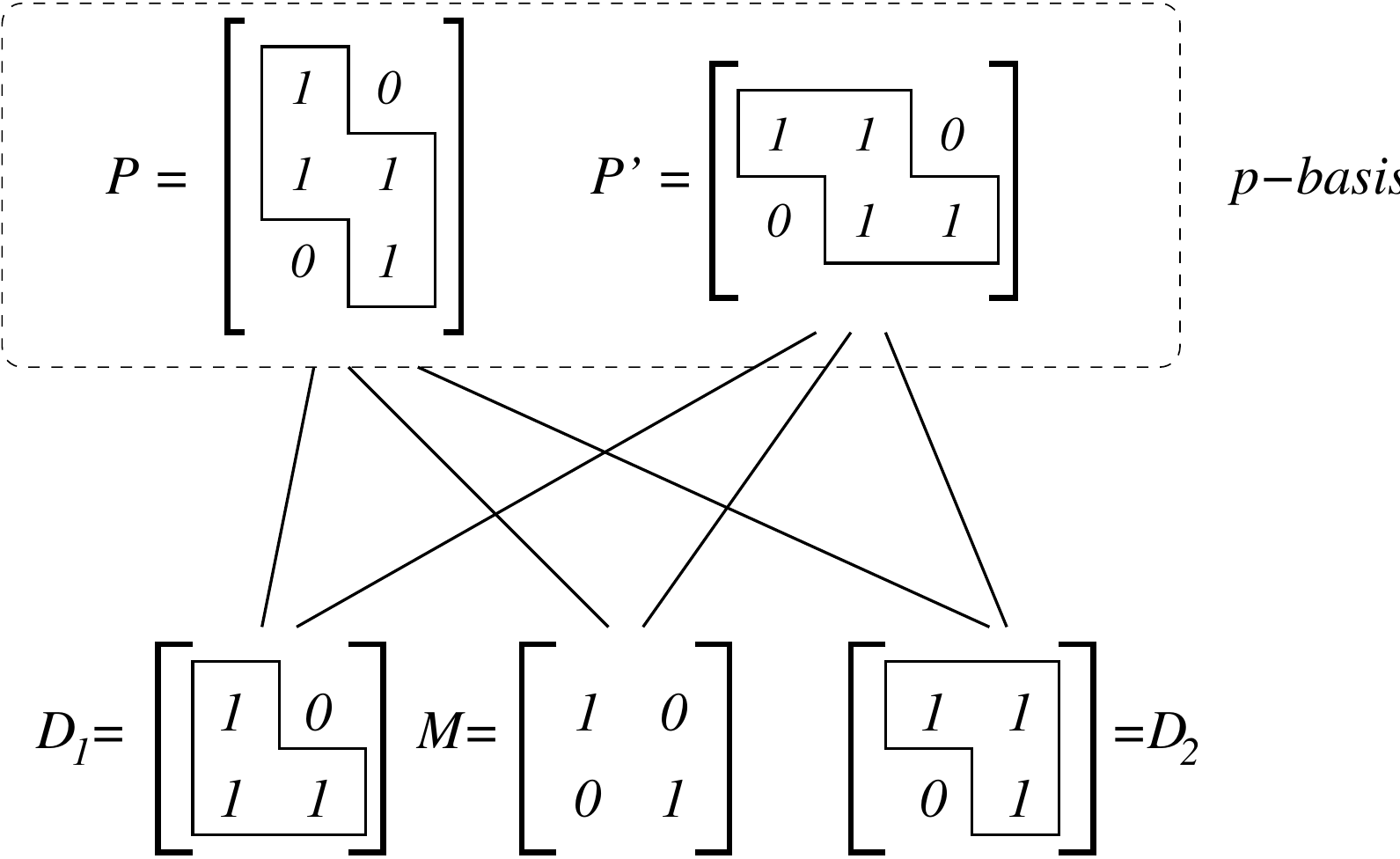}
\caption{A non robust class.}
\label{fig:figEs5}
\end{center}
\end{figure}

In practice, $P$ and $P'$ are precisely the minimal polyominoes which contain $M$ as a pattern, then by Proposition~\ref{prop:description_p-basis}, $\Avp(P,P')=\Avp(M)$.
We also notice that the meet of $P$ and $P'$ in the poset of polyominoes,  denoted $P\wedge P'$, is $\{ M, D_1, D_2 \}$, but the reader can check that $\Avp(D_1,D_2)\varsubsetneq \C$.


\end{example}

In this section, we try to establish some criteria to test the robustness of a class of polyominoes. First, we prove that it is easy to test robustness of a class whose basis is made of just one element:

\begin{proposition}\label{rb}
Let $M$ be a pattern. Then, $\Avp(M)$ is robust if and only if  $M$ is a polyomino.
\end{proposition}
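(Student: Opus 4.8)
The plan is to prove the two implications separately; the forward implication (if $M\notin\poly$ then $\Avp(M)$ is not robust) is immediate, while the converse is the substantial part.

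\textbf{Forward direction.} Here I would use the tautological $m$-basis $\{M\}$. Since $M$ is a pattern, $\{M\}$ is trivially an antichain and $\Avp(\{M\})=\Avp(M)$, so $\{M\}$ is an $m$-basis of $\C=\Avp(M)$ in the sense of Definition~\ref{def:m-basis}. By Remark~\ref{rem:submatrices_of_pol}, $M$ is a submatrix of some polyomino, so $\C$ is a proper subclass and its $p$-basis is nonempty; moreover the $p$-basis consists of polyominoes (Proposition~\ref{prop:polyomino_basis}). If $M\notin\poly$, then $M$ cannot lie in the $p$-basis, hence the $m$-basis $\{M\}$ does not contain the $p$-basis, and $\C$ fails to be robust by Definition~\ref{def:robust_class}.

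\textbf{Backward direction, setup.} Assume now $M\in\poly$. First I would identify the $p$-basis of $\C=\Avp(M)$: a polyomino fails to lie in $\C$ exactly when it contains $M$, and since $M\preccurlyeq Q$ for every polyomino $Q\supseteq M$, the matrix $M$ is the unique minimal polyomino containing $M$. Hence the $p$-basis is $\{M\}$, and robustness amounts to showing $M\in\M$ for every $m$-basis $\M$ of $\C$. Fix such an $\M$. Since $M\notin\C$, some $N\in\M$ satisfies $N\preccurlyeq M$. Combining $\C=\Avp(\M)\subseteq\Avp(N)$ with $\Avp(N)\subseteq\Avp(M)=\C$ (the latter because $N\preccurlyeq M$) yields $\Avp(N)=\C$. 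Thus it suffices to prove the following \emph{key claim}: if $M\in\poly$ and $N\preccurlyeq M$ with $N\neq M$, then some polyomino $P$ satisfies $N\preccurlyeq P$ and $M\not\preccurlyeq P$; indeed such a $P$ lies in $\C\setminus\Avp(N)$, contradicting $\Avp(N)=\C$ and forcing $N=M$, i.e. $M\in\M$.

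\textbf{Backward direction, the key claim.} To prove the claim I would exploit that $(\poly,\polypattern)$ is graded by the semi-perimeter $s(P)=\#\mathrm{rows}(P)+\#\mathrm{cols}(P)$. It suffices to exhibit a polyomino $P\neq M$ with $N\preccurlyeq P$ and $s(P)\le s(M)$: if in addition $M\preccurlyeq P$, then $s(M)\le s(P)\le s(M)$, so $P$ and $M$ have the same dimensions and $M\preccurlyeq P$ forces $P=M$, a contradiction; hence $M\not\preccurlyeq P$. A polyomino of semi-perimeter $\le s(M)$ containing $N$ certainly exists (namely $M$ itself, as $N\preccurlyeq M$), so the real task is to produce one distinct from $M$. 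Using the transpose symmetry (both $M^{T}\in\poly$ and $N^{T}\preccurlyeq M^{T}$), I would reduce to the cases where $N$ is obtained from $M$ by deleting a single row, a single column, or one of each, and in each case perform a local surgery along the deleted line of $M$: re-attach the cells of $N$ into a connected, cut-point-free configuration of semi-perimeter at most $s(M)$ whose arrangement differs from that of $M$.

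\textbf{Main obstacle.} I expect the key claim in its boundary cases to be the hard part, where $N$ shares almost all of its rows and columns with $M$: there the candidate $P$ is pinned to essentially the dimensions of $M$, so one must realize $N$ inside a genuinely different polyomino while simultaneously restoring the connectivity that deleting a row or column of $M$ may have destroyed and avoiding the accidental re-creation of $M$. The clean resolution I would aim for is to locate, on the boundary of $M$ adjacent to the deleted line, a cell or notch that can be reflected or shifted to produce a distinct polyomino still witnessing every entry of $N$; everything else—the forward direction, the reduction to the key claim, and the grading argument—is routine.
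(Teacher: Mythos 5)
Your architecture for the proof is correct and, up to the final step, is actually tighter than the paper's own sketch: the forward direction is the same one-line argument the paper uses; and in the backward direction your identification of the $p$-basis of $\Avp(M)$ as $\{M\}$, the reduction of robustness to ``$M$ belongs to every $m$-basis'', the observation that any $m$-basis must contain some $N\preccurlyeq M$ with $\Avp(N)=\Avp(M)$, and the grading trick (any polyomino $P\neq M$ containing $N$ and of semi-perimeter at most that of $M$ automatically avoids $M$) are all sound. The paper reaches the same conclusion by taking $P$ minimal in the $p$-basis of $\Avp(N)$ instead of invoking the rank function; the two devices are interchangeable.

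The genuine gap is the key claim itself, which you explicitly leave unresolved, and the specific strategy you sketch for it would fail. Take $M=\left[\begin{array}{cc}1&0\\1&1\\0&1\end{array}\right]$ and let $N=\left[\begin{array}{cc}1&0\\0&1\end{array}\right]$ be obtained by deleting the middle row. A short case check shows that the \emph{only} polyomino with $3$ rows and $2$ columns containing $N$ as a submatrix is $M$ itself, so no ``local surgery along the deleted line'' --- no reflected or shifted cell adjacent to that row --- can yield the required $P$. One is forced to abandon row insertion entirely and reconnect the two cells of $N$ \emph{horizontally}, e.g. $P=\left[\begin{array}{ccc}1&1&0\\0&1&1\end{array}\right]$, which has the same semi-perimeter as $M$ and does not contain $M$. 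This global ``reconnect the other way'' observation is exactly what the paper's proof runs on: when a proper submatrix $N$ of $M$ fails to be a polyomino because its cells are disconnected, the components can always be joined into a polyomino in at least two inequivalent ways (by inserting rows or by inserting columns), so among the minimal polyominoes containing $N$ at least one differs from $M$. (The degenerate cases --- $N$ itself a polyomino, or $N$ connected but with an empty bounding row or column --- are easy and slot into your framework directly.) Without this idea, the boundary cases you flag as ``the hard part'' are genuinely unproved, and the surgery you propose is provably insufficient in the example above.
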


\noindent {\em (Sketch of proof.)}

\noindent ($\Rightarrow$) If $M$ is not a polyomino, then it has a $p$-basis different from $M$. 

\smallskip

\noindent ($\Leftarrow$) Let be $M'$ a matrix that is not a polyomino such that $M'\polypattern M$.
Since $M'$ is not a polyomino then it contains at least two disconnected elements $B$ and $C$, and there are at least two possible ways to connect $B$ and $C$ (by rows or by columns). So, there exists at least another polyomino $P\neq M$ such that $M'\polypattern P$ and $P$ belongs to the $p$-bases of $\Avp(M')$. Thus, $\Avp(M')$ is properly included in $\Avp(M)$.

\smallskip

Thus, according to Proposition~\ref{rb}, the class  $\Avp\left(\begin{array}{cc}
          1 & 1\\
          0 & 1
         \end{array}\right)$ is robust. Now, it would be interesting to extend the previous result to a generic set of polyominoes, i.e. find sufficient and necessary conditions such that, given set of polyominoes $\cal P$, the class $\Avp({\cal P})$ is robust. 

\begin{proposition}
Let be $P_1, P_2$ two polyominoes and let be $\C=\Avp(P_1,P_2)$.
If for every element $\overline{P}$ in $P_1\wedge P_2$ we have that:
    \begin{description}
     \item $(b_1)$ $\overline{P}$ is a polyomino, or 
     \item $(b_2)$ every chain from $\overline{P}$ to $P_1$ (resp. from $\overline{P}$ to $P_2$) contains at least a polyomino $P'$ (resp. $P''$), different from $P_1$ (resp. $P_2$), such that $\overline{P} \polypattern P' \polypattern P_1$ (resp. $\overline{P} \polypattern P'' \polypattern P_2$),
    \end{description}
    then $\C$ is robust.
    \label{prop:condition_rob}
\end{proposition}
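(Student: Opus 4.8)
The plan is to show directly that every $m$-basis $\M$ of $\C = \Avp(P_1,P_2)$ contains the $p$-basis of $\C$, which is exactly the meaning of robustness. First I would record that (unless $P_1$ and $P_2$ are comparable, in which case $\C$ is generated by a single polyomino and robustness follows from Proposition~\ref{rb}) the pair $\{P_1,P_2\}$ is an antichain of polyominoes, so by Proposition~\ref{prop:polyomino_basis} it \emph{is} the $p$-basis of $\C$. Fixing an arbitrary $m$-basis $\M$, it then suffices to prove $P_1 \in \M$ and $P_2 \in \M$; by the symmetric roles of $P_1$ and $P_2$ I would only argue for $P_1$. Since $P_1 \notin \C$, there is some $M \in \M$ with $M \polypattern P_1$, and the whole point is to force $M = P_1$.

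The first key step is a minimality lemma: if $M \in \M$ and $M \polypattern P_1$, then $P_1$ is a minimal polyomino (for $\polypattern$) among all polyominoes containing $M$. Indeed, for any polyomino $Q$ with $M \polypattern Q \polypattern P_1$, the polyomino $Q$ contains $M \in \M$, hence $Q \notin \C = \Avp(\M)$, so $Q$ contains $P_1$ or $P_2$; the first option together with $Q \polypattern P_1$ gives $Q = P_1$ by antisymmetry, while the second would give $P_2 \polypattern P_1$, contradicting that $\{P_1,P_2\}$ is an antichain. This lemma immediately settles the easy case: if $M$ is itself a polyomino, then $M$ is a polyomino containing $M$ and below $P_1$, so minimality forces $M = P_1$ and thus $P_1 \in \M$.

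It remains to exclude the possibility that $M$ is not a polyomino. Here I would use, as in the sketch of Proposition~\ref{rb}, that a non-polyomino binary matrix admits at least two distinct minimal polyomino completions: $P_1$ is one of them, and there is another minimal completion $R' \neq P_1$. Since $R'$ contains $M \in \M$ we again have $R' \notin \C$, so $R'$ contains $P_1$ or $P_2$; containing $P_1$ would force $R' = P_1$ by minimality of $R'$ over $M$, so necessarily $P_2 \polypattern R'$. The goal is now to deduce that $M$ is a \emph{common} submatrix of $P_1$ and $P_2$, i.e.\ that $M \polypattern \overline{P}$ for some $\overline{P} \in P_1 \wedge P_2$. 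Once this is established, the hypotheses apply to $\overline{P}$: if $(b_1)$ holds then $\overline{P}$ is a polyomino with $M \polypattern \overline{P} \polypattern P_1$ and $\overline{P} \neq P_1$ (otherwise $P_1 \polypattern P_2$), contradicting the minimality lemma; and if $(b_2)$ holds then some polyomino $P'$ satisfies $\overline{P} \polypattern P' \polypattern P_1$ with $P' \neq P_1$, and again $M \polypattern P'$ exhibits a proper sub-polyomino of $P_1$ containing $M$, contradicting the lemma. Either way $M$ cannot be a non-polyomino, so $M = P_1$.

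The hard part will be the reduction inside the last paragraph: showing that the excluded matrix $M$ sitting strictly below the $p$-basis element $P_1$ must actually lie below a meet element of $P_1 \wedge P_2$, i.e.\ be a common pattern of both $P_1$ and $P_2$. The delicate point is precisely the configuration $M \not\polypattern P_2$ with $P_2$ reached only as a proper sub-polyomino of the alternative completion $R'$; I expect this to be ruled out by combining the minimality of the completions $P_1, R'$ over $M$ with the fact that $\{P_1,P_2\}$ is an antichain (a failure of this reduction would produce $P_2 \polypattern P_1$, as one already sees on the small L-tromino completions of a diagonal matrix). This is exactly the ``shortcut'' that makes a class non-robust in Example~\ref{es:es5}, and the role of conditions $(b_1)$ and $(b_2)$ is to forbid it by guaranteeing an intermediate polyomino between every meet element and $P_1$ (resp.\ $P_2$). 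Having forced $M = P_1$, the same argument applied to $P_2$ gives $P_2 \in \M$, so $\M$ contains the $p$-basis and $\C$ is robust.
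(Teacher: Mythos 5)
Your setup is sound, and your minimality lemma, the case where $M$ is itself a polyomino, and the treatment of the case where $M$ lies below some $\overline{P}\in P_1\wedge P_2$ are all correct --- indeed more careful than the paper's own two-sentence argument, which follows exactly the same outline. But the step you flag as ``the hard part'' is a genuine gap, not a technicality that will fall out of the antichain condition. Your argument only establishes that some minimal polyomino completion $R'$ of $M$ other than $P_1$ satisfies $P_2 \polypattern R'$. By the very minimality of $R'$ over $M$ this dichotomizes: either $R'=P_2$, in which case $M\polypattern P_2$ as you want, or $P_2$ is a \emph{proper} sub-polyomino of $R'$, in which case minimality forces $M\not\polypattern P_2$ --- so your own lemma pushes this case away from, not towards, the conclusion that $M$ is a common pattern. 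And nothing in the hypotheses constrains an excluded matrix $M$ that is a proper submatrix of $P_1$ but not of $P_2$: conditions $(b_1)$ and $(b_2)$ quantify only over elements of $P_1\wedge P_2$, hence only over common patterns, and such an $M$ sits below no element of the meet. Since minimal polyomino completions of a non-polyomino matrix can be arbitrarily large (Proposition~\ref{prop:infinite_basis}), one cannot dismiss out of hand the scenario in which every minimal completion of $M$ other than $P_1$ properly contains $P_2$ while $M\not\polypattern P_2$; in that scenario \M would be a legitimate $m$-basis missing $P_1$ without the hypotheses ever being invoked.

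To be fair, the paper's proof is itself a sketch that silently assumes any witness of non-robustness is a common pattern of $P_1$ and $P_2$ and never confronts this case either; you have essentially reconstructed that sketch and honestly located its missing step. To close the gap you would need an additional lemma of the form: if $M$ is a proper submatrix of $P_1$, is not a polyomino, and every polyomino containing $M$ contains $P_1$ or $P_2$, then $M\polypattern P_2$ --- or else a direct argument disposing of non-common excluded matrices. As written, the proposal is not a complete proof.
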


\begin{proof}

Clearly, if $P_1\wedge P_2$ contains only polyominoes, then \C is robust. On the other side, if $P_1\wedge P_2$ contains some patterns which are not polyominoes, then, for each of them, we have to check condition $(b_2)$. In fact, supposing that every chain from $\overline{P}$ to $P_1$ (resp. from $\overline{P}$ to $P_2$) contains at least a polyomino $P'$ (resp. $P''$), different from $P_1$ (resp. $P_2$), if \C was not robust, $P'$ (resp. $P''$) should belong to the $p$-basis instead of $P_1$ (resp. $P_2$).

\begin{figure}[htbp]
 \begin{center}
  \includegraphics[width=10cm]{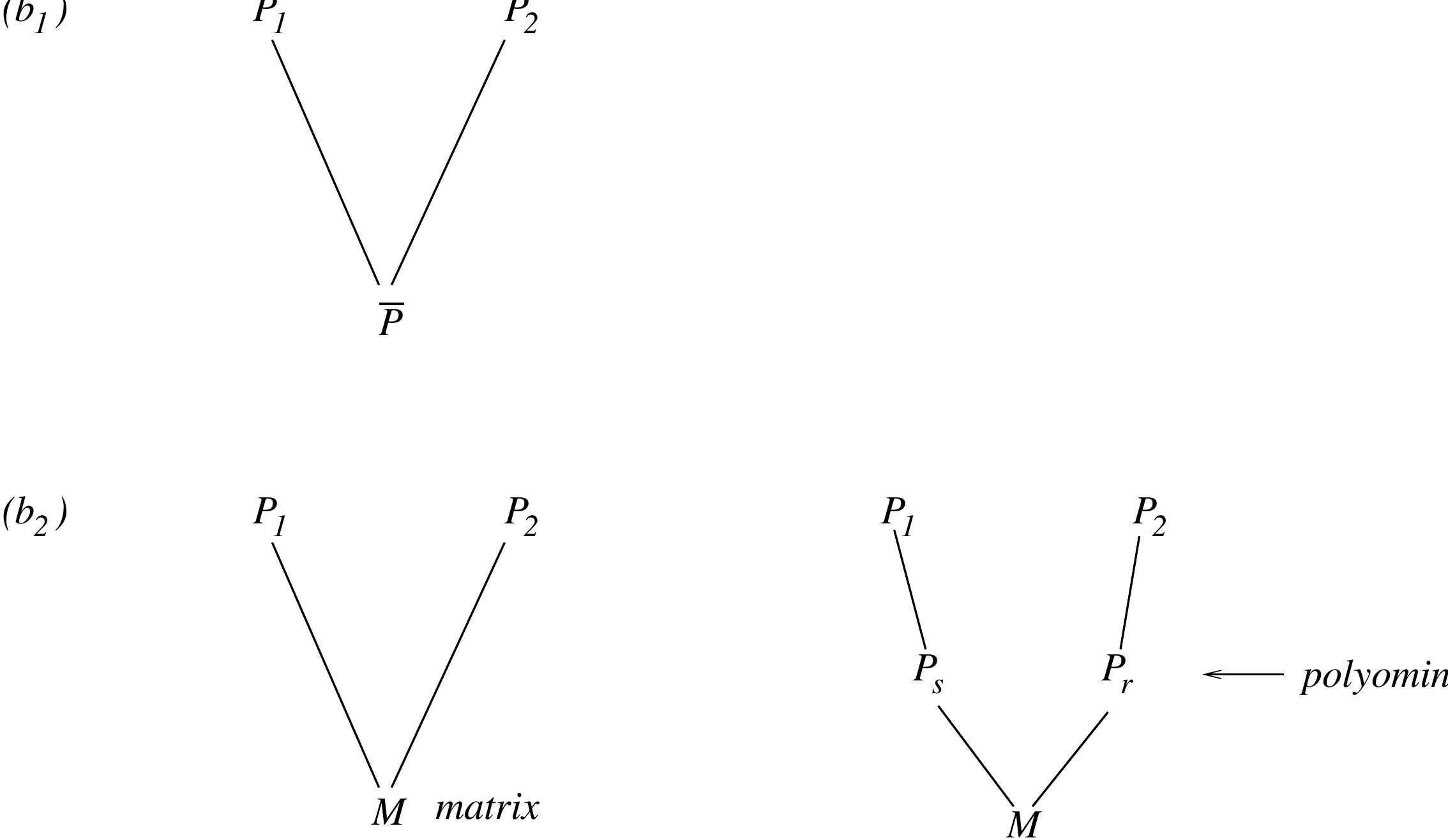}
  \caption{A graphical representation of the conditions of Proposition~\ref{prop:condition_rob}.}
\label{fig:figLemma1}
 \end{center}
\end{figure}
\end{proof}

\begin{example}\label{es:es6}

Let us consider the class $\C=\Avp(P_1,P_2)$, where $P_1$ and $P_2$ are the polyominoes depicted in Figure~\ref{fig:Es6}.

 \begin{figure}[htbp]
  \begin{center}
   \includegraphics[width=12cm]{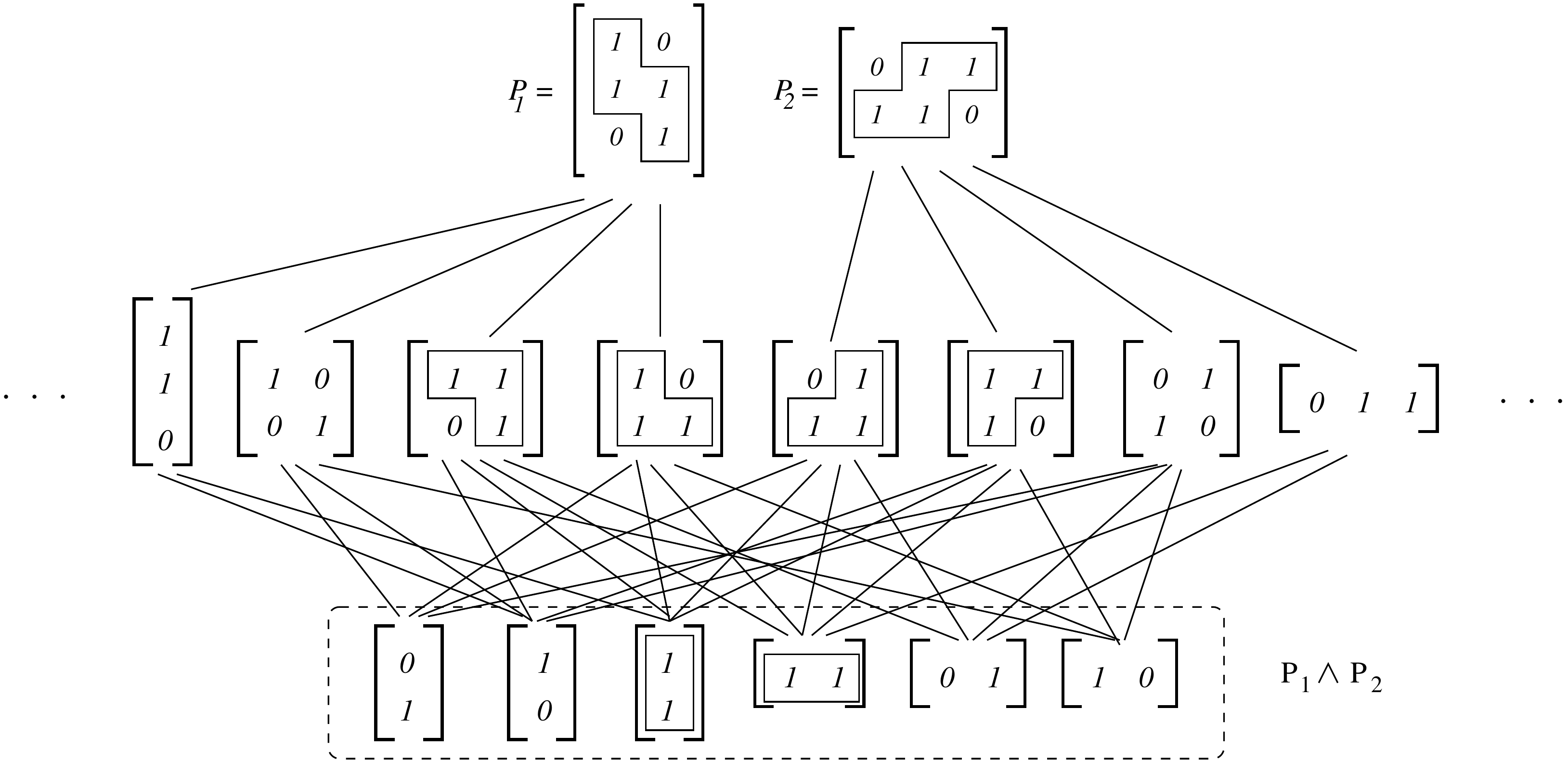}
  \caption{A robust class.}  
  \label{fig:Es6}
  \end{center}
 \end{figure}

Here, as shown in the picture, $P_1\wedge P_2$ contains six elements, and four of them are not polyominoes. However, one can check that, for each item $\overline{P}$ of these four matrices, there is a polyomino in the chain from $\overline{P}$ to $P_1$ (resp. from $\overline{P}$ to $P_2$). Thus, by Proposition~\ref{prop:condition_rob}, the class $\C$ is robust. 
\end{example}

However, the statement of Proposition~\ref{prop:condition_rob} cannot be inverted, as we can see in the following example.
 
 \begin{example}[Parallelogram polyominoes]\label{paral}
We can easily prove that parallelogram polyominoes, see Section~\ref{sec:polyFamilies} for its definition, can be represented by the avoidance of the submatrices:
$$
  M_1= \left[\begin{array}{cc}
          1 & 0\\
          1 & 1
         \end{array}
  \right], \, \, 
  M_2 = \left[\begin{array}{cc}
          1 & 1\\
          0 & 1
         \end{array}
  \right]\,\,.$$
These two patterns form a $p$-basis for the class $\cal P$ of parallelogram polyominoes. Clearly 
$$M_1 \wedge M_2 = 
\left\{\left[
  \begin{array}{cc}
  1 &1 \end{array}
\right], \, \, 
\left[
  \begin{array}{c}
  1 \\1 \end{array}
\right], \, \, 
\left[
  \begin{array}{c}
  0 \end{array}
\right]  
\right\} $$
If $\cal P$  was  not robust, then $M=\left[
  \begin{array}{c}
  0 \end{array}
\right]$ should belong to an $m$-basis of $\cal P$; precisely, we should have $\Avp(M)=\cal P$. But this is not true, since clearly $\Avp(M)$ is the class of rectangles. Thus, $\cal P$ is robust.
Observe that the set $\{ M_1, M_2, \left [ \, 1 \, 0 \, 1 \, \right ] \}$ forms an $m$-basis of the class, but it is not minimal w.r.t. set inclusion. 

\end{example}

%
%
%
\section{Some classes of permutations defined by submatrix avoidance}
\label{sec:known_classes_perm}

Many notions of pattern avoidance in permutations have been considered in the literature. 
The one of submatrix avoidance that we have considered is yet another one. 
In this section, we start by explaining how it relates to other notions of pattern avoidance. 
Then, using this approach to pattern avoidance, we give simpler proofs of the enumeration of some permutation classes. 
Finally, we show how these proofs can be brought to a more general level, 
to prove Wilf-equivalences of many permutation classes. 

\subsection[Generalized permutation patterns]{Submatrix avoidance and generalized permutation patterns}

A first generalization of pattern avoidance in permutations 
introduces \emph{adjacency constraints} among the elements of a permutation that should form an occurrence of a (otherwise classical) pattern. 
Such patterns with adjacency constraints are known as \emph{vincular} and \emph{bivincular} patterns, see Section \ref{sec:genPatt}, and a generalization with additional border constraints has recently be introduced  by~\cite{U}.

In some sense, the avoidance of submatrices in permutation is a dual notion 
to the avoidance of vincular and bivincular patterns. 
Indeed, in an occurrence of some vincular (resp. bivincular) pattern in a permutation $\sigma$, 
we impose that some elements of $\sigma$ must be adjacent (resp. have consecutive values). 
Whereas if there is a column (resp. row) of $0$ in a quasi-permutation matrix $M$, then 
an occurrence of $M$ in $\sigma$ is 
an occurrence of the largest permutation contained in $M$ in $\sigma$ 
where some elements of $\sigma$ are not allowed to be adjacent
(resp. to have consecutive values). 

For instance, 
\begin{itemize}
 \item $\Avperm \left(\left[ \begin{array}{cccc}
0 & 0 & 1 & 0 \\ 
1 & 0 & 0 & 0 \\
0 & 0 & 0 & 1 \end{array} \right]\right)$ denotes the set of all permutations such that 
in any occurrence of $231$ the elements mapped by the $2$ and the $3$ are at adjacent positions; 
 \item $\Avperm \left(\left[ \begin{array}{cccc}
0 & 0 & 1 & 0 \\ 
1 & 0 & 0 & 0 \\ 
0 & 0 & 0 & 0 \\
0 & 0 & 0 & 1 \end{array} \right]\right)$ denotes the set of all permutations such that 
in any occurrence of $231$ the elements mapped by the $2$ and the $3$ are at adjacent positions,
and the elements mapped by the $1$ and the $2$ have consecutive values;
 \item $\Avperm \left(\left[ \begin{array}{ccc}
0 & 0 & 0 \\ 
0 & 1 & 0 \\ 
1 & 0 & 0 \\
0 & 0 & 1 \end{array} \right]\right)$ denotes the set of all permutations such that 
in any occurrence of $231$ the element mapped by the $3$ is the maximum of the permutation. 
\end{itemize}

As noticed in Remark~\ref{rem:submatrix_avoidance_implies_class}, 
sets of permutations defined by avoidance of submatrices are permutation classes. 
On the contrary, sets of permutations defined by avoidance of vincular or bivincular patterns are not downward closed for in general. 
This shows that, 
even though they are very useful for characterizing important families of permutations, like Baxter permutations~\cite{GBaxter}, 
the adjacency constraints introduced vincular and bivincular pattern 
do not fit really well in the study of permutation \emph{classes}. 
The above discussion suggests that, with this regard, it is more convienent to introduce instead 
\emph{non-adjacency constraints} in permutation patterns, which corresponds to rows and columns of $0$ in quasi-permutation matrices. 

\medskip

Let us consider now another generalization of permutation patterns, the \emph{Mesh patterns}. It has itself been generalized in several way, in particular by \'Ulfarsson who introduced in~\cite{U} the notion of marked mesh patterns. 
It is very easy to see that the avoidance of a quasi-permutation matrix with no uncovered $0$ entries 
can be expressed as the avoidance of a special form of marked-mesh pattern. 
Indeed, a row (resp. $k$ consecutive rows) of $0$ in a quasi-permutation matrix with no uncovered $0$ entries 
corresponds to a mark, spanning the whole pattern horizontally, 
indicating the presence of at least one (resp. at least $k$) element(s). 
The same holds for columns and vertical marks. 
Figure~\ref{fig:marked-mesh_pattern} shows an example of a quasi-permutation matrix with no uncovered $0$ entries 
with the corresponding marked-mesh pattern. 

\begin{figure}[ht]
  \begin{center}
   \includegraphics[scale=0.6]{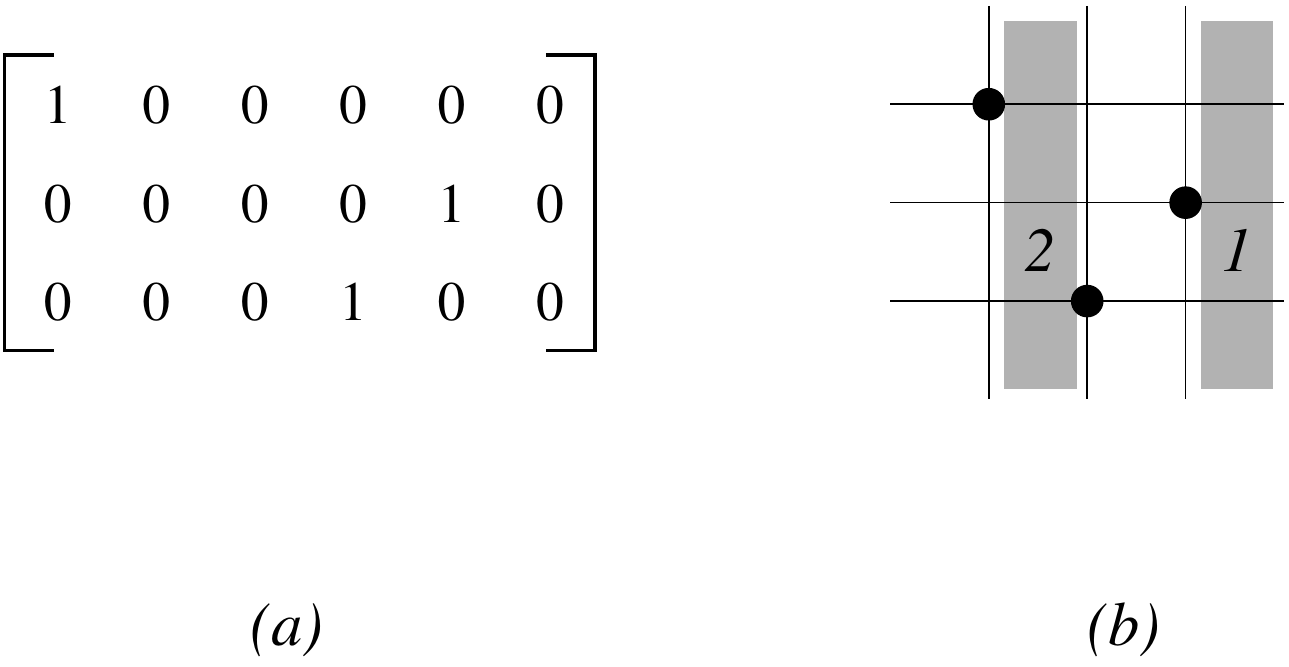}
   \caption{A quasi-permutation matrix with no uncovered $0$ entries 
    and the corresponding marked-mesh pattern.}
\label{fig:marked-mesh_pattern}
\end{center}
\end{figure}


\subsection{A different look at some known classes of permutations}

Several permutation classes avoiding three patterns of size $3$ or four patterns of size $4$ that have been studied in the literature (and are referenced in Guibert's catalogue~\cite[Appendix A]{GuibertThese}) 
are easier to describe with the avoidance of just one submatrix, as we explain in the following. 
For some of these classes, the description by submatrix avoidance also allows to provide a simple proof of their enumeration. 

In this paragraph, we do not consider classes that are equal up to symmetry (reverse, inverse, complement, and their compositions). 
But the same results of course apply (up to symmetry) to every symmetry of each class considered. 
 
\paragraph*{The class $\F = \Avperm(123,132,213)$.}
This class was studied in the Simion-Schmidt article~\cite{SimionSchmidt}, 
about the systematic enumeration of permutations avoiding patterns of size $3$. 
An alternative description of $\F$ is 
\[
\F = \Avperm(M_F)  \text{ where } M_F = \left[ \begin{array}{ccc}
0 & 0 & 1 \\ 
1 & 0 & 0 \end{array} \right]\text{.}
\]
It follows immediately since $123,132$ and $213$ are exactly the permutations which cover $M_F$ 
in the sense of Propositions~\ref{prop:description_p-basis} and~\ref{prop:minimal_perm_containing_M}. 

\cite{SimionSchmidt} shows that $\F$ is enumerated by the Fibonacci numbers. 
Of course, it is possible to use the description of $\F$ by the avoidance of $M_F$ 
to see that every permutation $\sigma \in \F$ decomposes as $\sigma = s_1 \cdot s_2 \cdots s_k$, 
where the sequences $s_i$ are either $x$ or $x (x+1)$ for some integer $x$, 
and are such that for $i<j$ the elements of $s_i$ are larger that those of $s_j$. 
And from this description, an easy induction shows that $\F$ is enumerated by the Fibonacci numbers. 
However, this is just rephrasing the original proof of~\cite{SimionSchmidt}. 

\paragraph*{The class $\G = \Avperm(123,132,231)$.}
This class is also studied in~\cite{SimionSchmidt}, where it is shown that there are $n$ permutations of size $n$ in $\G$. 
The enumeration is obtained by a simple inductive argument, 
which relies on a recursive description of the permutations in $\G$. 

For the same reasons as in the case of $\F$, $\G$ is alternatively described by 
\[
\G = \Avperm(M_G)  \text{ where } M_G = \left[ \begin{array}{ccc}
0 & 1 & 0 \\ 
1 & 0 & 0 \end{array} \right]\text{.}
\]
Any occurrence of a pattern $12$ in a permutation $\sigma$ can be extended to an occurrence of $M_G$, 
as long as it does not involve the last element of $\sigma$. 
So from this characterization, it follows that the permutations of $\G$ are all decreasing sequences followed by one element. 
This describes the permutations of $\G$ non-recursively, and give immediate access to the enumeration of $\G$. 

\paragraph*{The classes $\H = \Avperm(1234,1243,1423,4123)$, $\J = \Avperm(1324,1342,1432,4132)$ and $\K = \Avperm(2134,2143,2413,4213)$.}

These three classes have been studied in~\cite[Section 4.2]{GuibertThese}, 
where it is proved that they are enumerated by the central binomial coefficients. 
The proof first gives a generating tree for these classes, 
and then the enumeration is derived analytically from the corresponding rewriting system. 
In particular, this proof does not provide a description of the permutations in $\H$, $\J$ and $\K$ 
which could be used to give a \emph{combinatorial} proof of their enumeration. 
Excluded submatrices can be used for that purpose.

As before, because the $p$-basis of $\H$, $\J$ and $\K$ are exactly the permutations which cover the matrices $M_H$, $M_J$ and $M_K$ given below, we have 

\[
\begin{array}{lllll}
\H & = & \Avperm(M_H)  \text{ where } M_H & = & \left[ \begin{array}{ccc}
0 & 0 & 0 \\ 
0 & 0 & 1 \\ 
0 & 1 & 0 \\ 
1 & 0 & 0 \end{array} \right]\text{, }\\
 & & & &\\
\J & = & \Avperm(M_J)  \text{ where } 
M_J & = & \left[ \begin{array}{ccc}
0 & 0 & 0 \\ 
0 & 1 & 0 \\ 
0 & 0 & 1 \\ 
1 & 0 & 0 \end{array} \right]\text{,}\\
 & & & &\\
\K & = & \Avperm(M_K)  \text{ where } M_K & = & \left[ \begin{array}{ccc}
0 & 0 & 0  \\ 
0 & 0 & 1  \\ 
1 & 0 & 0  \\ 
0 & 1 & 0 \end{array} \right]\text{.}
\end{array}
\]

Similarly to the case of $\G$, any occurrence of a pattern $123$ (resp. $132$, resp. $213$)
in a permutation $\sigma$ can be extended to an occurrence of $M_H$ (resp. $M_J$, resp. $M_K$), 
as long as it does not involve the maximal element of $\sigma$. 
Conversely, if a permutation $\sigma$ contains $M_H$ (resp. $M_J$, resp. $M_K$), 
then there is an occurrence of $123$ (resp. $132$, resp. $213$) in $\sigma$ that does not involves its maximum. 
Consequently, the permutations of $\H$ (resp. $\J$, resp. $\K$) are exactly those of avoiding $123$ (resp. $132$, resp. $213$) 
to which a maximal element has been added. 
This provides a very simple description of the permutations of $\H$, $\J$ and $\K$. 
Moreover, recalling that for any permutation $\pi \in \sym_3$ $\Avperm(\pi)$ is enumerated by the Catalan numbers, 
it implies that the number of permutations of size $n$ in $\H$ (resp. $\J$, resp. $\K$) in $n\times C_{n-1} = {{2n-2}\choose {n-1}}$. 

\subsection[Propagating enumeration results]{Propagating enumeration results and \\ Wilf-equivalences with submatrices}

The similarities that we observed between the cases of the classes $\G$, $\H$, $\J$ and $\K$ are not a coincidence. 
Indeed, they can all be encapsulated in following proposition, which simply pushes the same idea forward to a general setting. 

\begin{proposition}
Let $\tau$ be a permutation. 
Let $M_{\tau,top}$ (resp. $M_{\tau,bottom}$) be the quasi-permutation matrix obtained by adding 
a row of $0$ entries above (resp. below) the permutation matrix of $\tau$. 
Similarly, let $M_{\tau,right}$ (resp. $M_{\tau,left}$) be the quasi-permutation matrix obtained by adding 
a column of $0$ entries on the right (resp. left) of the permutation matrix of $\tau$. 
The permutations of $\Avperm(M_{\tau,top})$ (resp. $\Avperm(M_{\tau,bottom})$, resp. $\Avperm(M_{\tau,right})$, resp. $\Avperm(M_{\tau,left})$) 
are exactly the permutations avoiding $\tau$ to which a maximal (resp. minimal, resp. last, resp. first) element has been added. 
\label{prop:perm+0_avoidance}
\end{proposition}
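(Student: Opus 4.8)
The plan is to prove the statement for $M_{\tau,top}$ in full detail and to obtain the three remaining statements from it by the symmetries of permutations. Throughout, write $\tau\in\sym_k$ and recall that rows are numbered from bottom to top, so that adjoining a row of $0$ above $M_\tau$ means adding a row lying above all the $1$-entries of $M_\tau$.

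First I would unfold the meaning of containment. That $\sigma\in\sym_n$ contains $M_{\tau,top}$ means, by definition of $\preccurlyeq$, that one can select $k+1$ rows and $k$ columns of $M_\sigma$ whose intersection equals $M_{\tau,top}$: the bottom $k$ selected rows together with the $k$ selected columns must reproduce the permutation matrix $M_\tau$, while the topmost selected row must be entirely $0$ on the selected columns. Translating this into the language of patterns, I would show it is equivalent to the existence of an occurrence of $\tau$ in $\sigma$, at positions $c_1<\dots<c_k$, together with a value $v$ strictly greater than all of $\sigma_{c_1},\dots,\sigma_{c_k}$ whose position $\sigma^{-1}(v)$ does not belong to $\{c_1,\dots,c_k\}$ --- this value being exactly what the all-zero top row encodes.

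The heart of the argument, and the only delicate point, is the following equivalence. Let $\pi\in\sym_{n-1}$ be the permutation obtained from $\sigma$ by deleting its maximal entry $n$. I claim that $\sigma$ contains $M_{\tau,top}$ if and only if $\pi$ contains $\tau$. For the first direction, an occurrence of $\tau$ in $\pi$ lifts to one in $\sigma$ that avoids the position of $n$, and then $n$ itself plays the role of the witnessing value $v$. For the converse, the crucial observation is that in any occurrence of $M_{\tau,top}$ the witnessing value $v$ is \emph{strictly} larger than every value of the embedded copy of $\tau$; since $v\le n$, that copy of $\tau$ uses only values $<n$, hence does not involve the entry $n$ at all, and therefore survives the deletion of $n$. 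This strictness is precisely the obstacle to watch: it is what forbids the maximum from participating in the witnessing copy of $\tau$, so that removing the maximum cannot destroy it.

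Finally I would note that every $\sigma\in\sym_n$ arises from its own max-deletion $\pi$ by reinserting a maximal element, so that the set of permutations obtained from a $\tau$-avoider by adding a maximal element is exactly $\{\sigma:\pi \text{ avoids } \tau\}$; by the equivalence above this set equals $\Avperm(M_{\tau,top})$, proving the first case. The bottom, right, and left cases then follow by applying respectively the complement, the inverse, and the reverse of the inverse. Each of these is an order-preserving bijection of $\sym$ that acts on permutation matrices by reversing rows, by transposing, and by transposing-then-reversing-columns, thereby sending $M_{\tau,top}$ to a matrix of the form $M_{\tau',bottom}$, $M_{\tau',right}$, and $M_{\tau',left}$ for a suitable $\tau'$, and simultaneously turning ``adding a maximal element'' into adding a minimal, a last, and a first element, respectively.
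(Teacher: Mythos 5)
Your proof is correct and takes essentially the same route as the paper's: both reduce the claim to the equivalence that $\sigma$ contains $M_{\tau,top}$ if and only if the permutation obtained by deleting the maximum of $\sigma$ contains $\tau$, and both dispatch the remaining three cases by symmetry. Your version merely makes explicit two points the paper leaves implicit, namely the unfolding of submatrix containment into ``an occurrence of $\tau$ plus a strictly larger value outside it'' and the observation that this strictness prevents the maximum from lying inside the embedded copy of $\tau$.
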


\begin{proof}
We prove the case of $M_{\tau,top}$ only, the other cases being identical up to symmetry. 

Consider a permutation $\sigma \in \Avperm(M_{\tau,top})$, and denote by $\sigma'$ the permutation obtained deleting the maximum of $\sigma$. 
Assuming that $\sigma'$ contains $\tau$, then $\sigma$ would contain $M_{\tau,top}$, which contradicts $\sigma \in \Avperm(M_{\tau,top})$; hence $\sigma' \in \Avperm(\tau)$. 

Conversely, consider a permutation $\sigma' \in \Avperm(\tau)$, and a permutation $\sigma$ obtained by adding a maximal element to $\sigma'$. 
Assume that $\sigma$ contains $M_{\tau,top}$, and consider an occurrence of $M_{\tau,top}$ in $\sigma$. 
Regardless of whether or not this occurrence involves the maximum of $\sigma$, 
it yields an occurrence of $\tau$ in $\sigma'$, and hence a contradition. 
Therefore, $\sigma \in \Avperm(M_{\tau,top})$. 
\end{proof}

Proposition~\ref{prop:perm+0_avoidance} has two very nice consequences in terms of enumeration. 
When the enumeration of the class is known, 
it allows to deduce the enumeration of four other permutation classes.
Similarly, for any pair of Wilf-equivalent classes (\emph{i.e.} of permutation classes having the same enumeration), 
it produces four other pairs of Wilf-equivalent classes.

\begin{corollary}
Let $\C$ be a permutation class whose $p$-basis is $\B$. 
Let $\M = \{M_{\tau,top} \mid \tau \in \B\}$. 
Denote by $c_n$ the number of permutation of size $n$ in $\C$. 
The permutation class $\Avperm(\M)$ is enumerated by the sequence $(n \cdot c_{n-1})_n$. 
The same holds replacing $M_{\tau,top}$ with $M_{\tau,bottom}$, $M_{\tau,right}$ or $M_{\tau,left}$. 
\end{corollary}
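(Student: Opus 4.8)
The plan is to reduce the statement to the single-pattern characterization of Proposition~\ref{prop:perm+0_avoidance}, and then to extract the factor $n$ from an elementary bijection. Throughout I treat the case of $M_{\tau,top}$, the three others being symmetric.

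First I would record that, by Definition~\ref{def:submatrix_avoidance}, avoidance of the set $\M$ means simultaneous avoidance of each of its elements, so that $\Avperm(\M) = \bigcap_{\tau \in \B} \Avperm(M_{\tau,top})$. Next, for a permutation $\sigma$ of size $n \geq 1$, let $\sigma'$ denote the permutation of size $n-1$ obtained by deleting the (unique) entry of maximal value. Because this maximum occupies a unique position, $\sigma$ is recovered from $\sigma'$ by reinserting a maximal element at that position; hence the statement ``$\sigma$ is obtained from a $\tau$-avoider by adding a maximal element'' is equivalent to ``$\sigma'$ avoids $\tau$''. In this form Proposition~\ref{prop:perm+0_avoidance} reads, for each fixed $\tau$, as the equivalence $\sigma \in \Avperm(M_{\tau,top}) \Leftrightarrow \sigma' \in \Avperm(\tau)$.

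Intersecting these equivalences over all $\tau \in \B$ and using $\C = \Avperm(\B)$, I obtain the clean characterization $\sigma \in \Avperm(\M) \Leftrightarrow \sigma' \in \C$. This merging of the individual single-pattern statements is the only genuinely new logical step, and it is immediate. The enumeration then follows from a bijection: send a permutation $\sigma$ of size $n$ to the pair $(\sigma', p)$, where $\sigma'$ is $\sigma$ with its maximum deleted and $p \in \{1, \ldots, n\}$ is the position that the maximum occupied. This is a bijection from the permutations of size $n$ onto the product of the permutations of size $n-1$ with $\{1, \ldots, n\}$, its inverse being insertion of a maximal entry at position $p$. By the characterization just obtained it restricts to a bijection between $\Avperm(\M) \cap \sym_n$ and $(\C \cap \sym_{n-1}) \times \{1, \ldots, n\}$, whence the number of size-$n$ permutations of $\Avperm(\M)$ equals $n \cdot c_{n-1}$.

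Finally I would treat the remaining three cases under the appropriate symmetry: for $M_{\tau,bottom}$, $M_{\tau,right}$, and $M_{\tau,left}$ one deletes instead the minimal value, the last entry, and the first entry respectively, obtaining the analogous characterization from Proposition~\ref{prop:perm+0_avoidance} together with the analogous bijection onto $(\C \cap \sym_{n-1}) \times \{1, \ldots, n\}$; in each case the factor $n$ arises from the $n$ possible positions (or values) of the inserted element. The only point demanding care is the uniqueness underlying each deletion — of the maximal value, the minimal value, the last position, and the first position — which is exactly what makes Proposition~\ref{prop:perm+0_avoidance} yield a true equivalence and makes each insertion map a bijection; beyond this bookkeeping there is no substantial obstacle.
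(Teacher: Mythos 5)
Your proof is correct and follows exactly the route the paper intends: the corollary is stated as an immediate consequence of Proposition~\ref{prop:perm+0_avoidance}, obtained by intersecting the single-pattern equivalences over $\tau\in\B$ and counting the $n$ possible positions of the inserted maximal element. Your write-up simply makes explicit the bijection $\sigma\mapsto(\sigma',p)$ that the paper leaves implicit; there is no gap.
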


\begin{corollary}\label{cor:WE}
Let $\C_1$ and $\C_2$ be two Wilf-equivalent permutation classes whose $p$-basis are respectively $\B_1$ and $\B_2$.
Let $\M_1 = \{M_{\tau,top} \mid \tau \in \B_1\}$ and $\M_2 = \{M_{\tau,top} \mid \tau \in \B_2\}$. 
The permutation classes $\Avperm(\M_1)$ and $\Avperm(\M_2)$ are also Wilf-equivalent. 
The same holds replacing $M_{\tau,top}$ with $M_{\tau,bottom}$, $M_{\tau,right}$ or $M_{\tau,left}$. 
\end{corollary}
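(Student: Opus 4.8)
The plan is to obtain Corollary~\ref{cor:WE} as an immediate consequence of the enumeration corollary stated just above it, whose mechanism I would first make explicit. That corollary rests on the set-version of Proposition~\ref{prop:perm+0_avoidance}. Writing $\M = \{M_{\tau,top} \mid \tau \in \B\}$ for a basis $\B$ with associated class $\C = \Avperm(\B)$, and using $\Avperm(\M) = \bigcap_{\tau \in \B} \Avperm(M_{\tau,top})$, I would apply Proposition~\ref{prop:perm+0_avoidance} to each $\tau \in \B$ separately. This shows that a permutation $\sigma$ belongs to $\Avperm(\M)$ if and only if the permutation $\sigma'$ obtained by deleting the maximum of $\sigma$ avoids every $\tau \in \B$, \emph{i.e.} $\sigma' \in \C$: indeed, if $\sigma'$ contained some $\tau \in \B$, then using the maximum of $\sigma$ as the extra higher element (its column being disjoint from that occurrence, hence supplying the required all-zero top row) yields an occurrence of $M_{\tau,top}$; conversely the bottom rows of any occurrence of some $M_{\tau,top}$ in $\sigma$ form an occurrence of $\tau$ which, since it cannot involve the maximum, lies entirely in $\sigma'$.

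I would then convert this characterization into a size-preserving count. The map sending a pair $(\sigma', i)$ with $\sigma' \in \C \cap \sym_{n-1}$ and $i \in \{1,\dots,n\}$ to the permutation of $\sym_n$ obtained by inserting a maximal element at position $i$ of $\sigma'$ is a bijection onto $\Avperm(\M) \cap \sym_n$; its inverse deletes the (unique) maximum and records its position. Hence $|\Avperm(\M) \cap \sym_n| = n \cdot |\C \cap \sym_{n-1}|$, which is exactly the statement of the preceding corollary.

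Applying this to the two bases gives $|\Avperm(\M_1) \cap \sym_n| = n \cdot c^{(1)}_{n-1}$ and $|\Avperm(\M_2) \cap \sym_n| = n \cdot c^{(2)}_{n-1}$, where $c^{(j)}_m = |\C_j \cap \sym_m|$. Since $\C_1$ and $\C_2$ are Wilf-equivalent we have $c^{(1)}_m = c^{(2)}_m$ for all $m$, so $|\Avperm(\M_1) \cap \sym_n| = |\Avperm(\M_2) \cap \sym_n|$ for every $n$; that is the desired Wilf-equivalence. The three remaining cases follow \emph{verbatim} after replacing the maximum by the minimum, the last, or the first element (and $M_{\tau,top}$ by $M_{\tau,bottom}$, $M_{\tau,right}$, $M_{\tau,left}$), exactly as in the symmetric cases of Proposition~\ref{prop:perm+0_avoidance}. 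I do not anticipate a real obstacle: the only step needing care is the set-version of Proposition~\ref{prop:perm+0_avoidance}, and since it is just the intersection over $\tau \in \B$ of the single-pattern statement, the whole argument stays short.
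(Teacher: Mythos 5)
Your proposal is correct and follows exactly the route the paper intends: it combines Proposition~\ref{prop:perm+0_avoidance} (applied patternwise and intersected over the basis) with the resulting count $n\cdot c_{n-1}$ from the preceding corollary, and then transports equality of the sequences $c^{(1)}$ and $c^{(2)}$. The paper leaves these steps implicit, so your write-up simply supplies the details of the same argument, and the symmetry reduction for the other three cases matches the paper's treatment as well.
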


\section{Some classes of polyominoes defined by submatrix avoidance}
\label{sec:known_classes_poly}
In this section, we show that several families of polyominoes studied in the literature can be characterized in terms of submatrix avoidance. For more details on these families of polyominoes we address the reader to Section~\ref{sec:polyFamilies}. We also use submatrix avoidance to introduce new classes of polyominoes. 

\subsection[Characterization of known classes of polyminoes]{Characterizing known classes of polyminoes by submatrix avoidance}

There are plenty of examples of polyomino classes that can be described by the avoidance of submatrices. 
We provide a few of them here, however without giving detailed definitions of these classes.  
Figure~\ref{fig:polyominoes} shows examples of polyominoes belonging to the classes that we study. 


\begin{figure}[ht]
\begin{center}
\includegraphics[width=11cm]{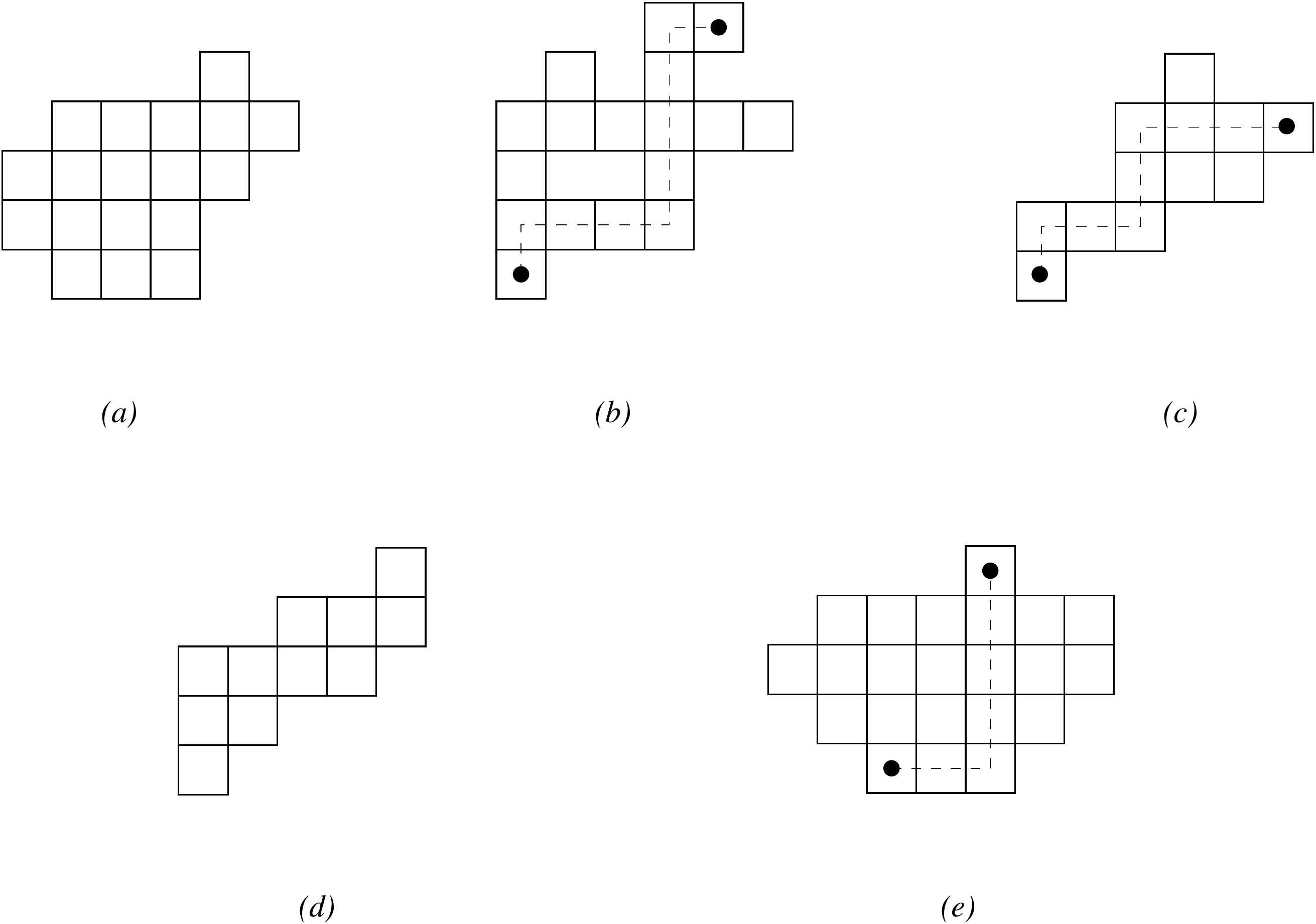}
\caption{$(a)$ A convex polyomino; $(b)$ A directed polyomino; $(b)$ A directed-convex polyomino; $(b)$ A parallelogram polyomino; $(b)$ An $L$-convex polyomino.}
\label{fig:polyominoes}
\end{center}
\end{figure}

\paragraph{Convex polyominoes.} These are defined by imposing one of the simplest geometrical constraints: the connectivity of rows/columns.

Figure~\ref{fig:polyominoes}$(a)$ shows a convex polyomino. 
The convexity constraint can be easily expressed in terms of excluded submatrices since its definition already relies on some specific configurations that the cells of each row and column of a polyomino have to avoid: 

\begin{proposition}
\label{prop:m-basis_convex}
Convex polyominoes can be represented by the avoidance of the two submatrices 
$H = {\footnotesize \left[ \begin{array}{ccc}
1 & 0 & 1 \end{array} \right]}$ and $V = {\footnotesize\left[ \begin{array}{c}
1 \\
0 \\
1 \end{array} \right]}$.
\end{proposition}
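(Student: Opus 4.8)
The plan is to reduce the statement to the two defining one-dimensional convexity conditions and then to observe that each of them is captured exactly by the avoidance of one of the two patterns. Recall that, by definition, a polyomino is convex precisely when it is both row-convex and column-convex, and that row-convexity (resp. column-convexity) means that the set of cells in each row (resp. column) forms a contiguous interval. Translated into the binary matrix $M$ representing a polyomino $P$, this says that in every row (resp. column) of $M$ the entries equal to $1$ occupy consecutive positions, with no $0$ lying strictly between two $1$'s.

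First I would prove the key equivalence: a polyomino $P$ avoids $H = [1\ 0\ 1]$ as a submatrix if and only if $P$ is row-convex. For one direction, suppose $P$ is \emph{not} row-convex; then there is a row $i$ of $M$ and column indices $a < b < c$ with $M(i,a) = M(i,c) = 1$ and $M(i,b) = 0$. Restricting $M$ to row $i$ and to columns $a, b, c$ yields exactly the matrix $H$, so $P$ contains $H$. Conversely, if $P$ contains $H$, the three selected columns, read in the occurring row, exhibit a $0$ strictly between two $1$'s, so that row is not an interval and $P$ is not row-convex. Here I must be careful that the submatrix order $\polypattern$ selects columns in increasing index order but not necessarily adjacent ones; this is harmless, since the occurrence still witnesses a genuine $0$ placed between two genuine $1$'s within the full row. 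The symmetric argument, applied to rows of $M$ and to the pattern $V = \left[\begin{array}{c}1\\0\\1\end{array}\right]$, gives that $P$ avoids $V$ if and only if $P$ is column-convex.

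Finally I would assemble the two equivalences. Since $\Avp$ is taken inside the set $\poly$ of (already connected) polyominoes, every element of $\Avp(\{H,V\})$ is a polyomino, and such an element avoids both $H$ and $V$ exactly when it is simultaneously row-convex and column-convex, that is, convex. Hence $\Avp(\{H,V\})$ is precisely the class of convex polyominoes, which is the claim.

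I do not expect a serious obstacle here: the only point requiring genuine care is the non-adjacency of the columns (resp. rows) selected by the submatrix order, which one checks does not weaken the equivalence, since an occurrence of $H$ or $V$ always pins down an actual $0$ between two actual $1$'s in a single full line. If anything, the delicate part is purely definitional, namely confirming that ``the intersection with a horizontal/vertical line is convex'' is correctly rendered as ``the $1$-entries of each row/column of $M$ form a contiguous block''; once this translation is fixed, the pattern-avoidance characterization is immediate.
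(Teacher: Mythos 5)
Your proof is correct and follows exactly the argument the paper intends (the paper only sketches it, remarking that avoidance of $H$ captures horizontal convexity and avoidance of $V$ captures vertical convexity): an occurrence of $H$ (resp.\ $V$) in a single row (resp.\ column) is precisely a witness of a $0$ strictly between two $1$'s, i.e.\ of the failure of row- (resp.\ column-) convexity. Your explicit handling of the non-adjacency of the selected columns/rows is a worthwhile detail that the paper leaves implicit.
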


More precisely, the avoidance of the matrix $H$ (resp. $V$)
indicates the $h$-convexity (resp. $v$-convexity). 
Because removing columns (resp. rows) preserves the $h$-convexity (resp. $v$-convexity), 
$\{H,V\}$ is the canonical $m$-basis, and hence by Proposition~\ref{prop:minimal_m-basis} the unique minimal $m$-basis, of the class of convex polyominoes. 
From Proposition~\ref{prop:description_p-basis}, it is also easy to determine the $p$-basis of this class: 
it is the set of four polyominoes $\{H_1,H_2,V_1,V_2\}$ depicted in Figure~\ref{fig:convex_p-basis}.

\begin{figure}[ht]
\begin{center}
\includegraphics[width=13cm]{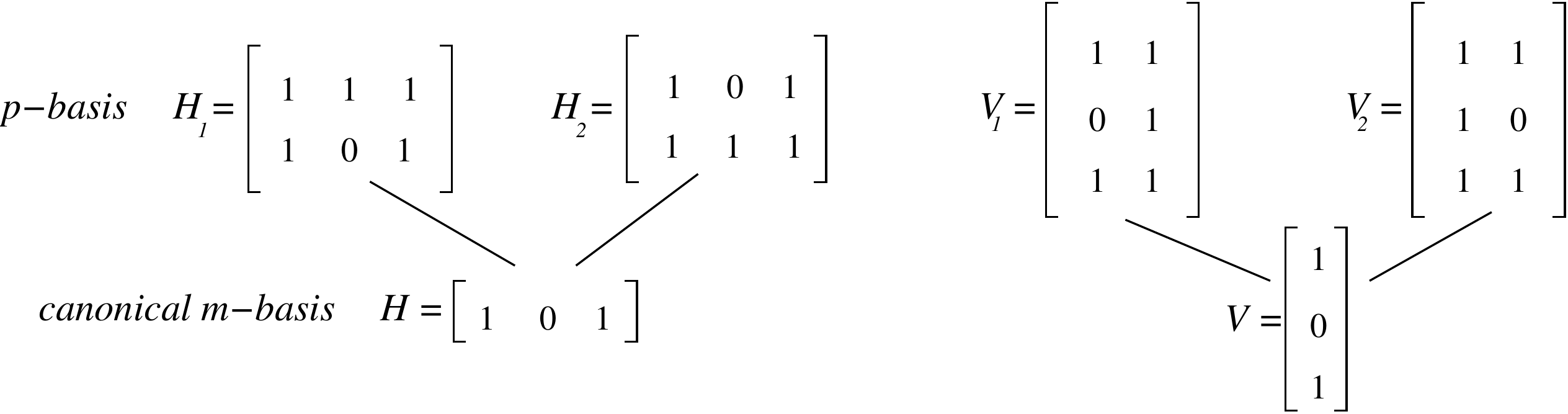}
\caption{The $p$-basis and canonical $m$-basis of the class of convex polyominoes.}
\label{fig:convex_p-basis}
\end{center}
\end{figure}

We advise the reader that, in the rest of the section, for each polyomino class, we will provide only a matrix description of the basis. Indeed, in all these examples the $p$-basis can easily be obtained from the given $m$-basis with Proposition~\ref{prop:description_p-basis}, like in the case of convex polyominoes. 

\paragraph{Directed-convex polyominoes.}\label{par:dirConvex}

Directed-convex polyominoes are defined using the notion of internal path to a polyomino. An {\em (internal) path} of a polyomino is a sequence of distinct cells $(c_1,\dots,c_n)$ of the polyomino such that every two consecutive cells in this sequence are edge-connected; according to the respective positions of the cells $c_i$ and $c_{i+1}$, we say that the pair $(c_i,c_{i+1})$ forms a {\em north, south, east or west step} in the path, respectively.


Figure~\ref{fig:polyominoes}$(b)$ depicts a directed polyomino. 
The reader can check 
that the set of directed polyominoes is not a polyomino class. 
However, the set of \emph{directed-convex} polyominoes 
(\ie of polyominoes that are both directed and convex -- see Figure~\ref{fig:polyominoes}$(c)$) 
is a class:

\begin{proposition}
The class ${\cal D}$ of directed-convex polyominoes is characterized by the avoidance of the submatrices
$H= {\footnotesize 
  \left[\begin{array}{ccc}
          1 & 0 & 1
         \end{array}
  \right]}$, 
$V={\footnotesize \left[\begin{array}{c}
          1\\
          0\\
          1
         \end{array}
  \right]}$
and $  D={\footnotesize \left[\begin{array}{cc}
          1 & 1\\
          0 & 1
         \end{array}
  \right]}$.
\label{prop:m-basis_directed_convex}
\end{proposition}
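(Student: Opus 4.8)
The plan is to combine the characterization of convex polyominoes by the avoidance of $\{H,V\}$ (Proposition~\ref{prop:m-basis_convex}) with a direct analysis of the single extra pattern $D$. Since a directed-convex polyomino is by definition convex, it avoids $H$ and $V$, so the whole statement reduces to proving, \emph{within the class of convex polyominoes}, the equivalence
\[
D \preccurlyeq P \quad\Longleftrightarrow\quad P \text{ is not directed.}
\]
Recalling the convention that rows are numbered from bottom to top and that $P$ is drawn touching both axes, "directed-convex" means exactly that the region $C$ is empty, i.e.\ that the lower-left cell of the bounding box -- the entry $(1,1)$ of the representing matrix -- belongs to $P$ (see Figure~\ref{fig:directed}). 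Thus the real content is the equivalence $D\preccurlyeq P \Leftrightarrow (1,1)\notin P$ for convex $P$, and once this is established we obtain ${\cal D}=\Avp(\{H,V,D\})$ immediately.

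First I would set up the column encoding of a convex polyomino: write $[b_j,t_j]$ for the (by column-convexity, contiguous) set of rows occupied by column $j$, for $j=1,\dots,w$. Two facts from connectivity are used repeatedly: the non-empty columns form a contiguous block, and any two consecutive columns $j,j+1$ \emph{overlap}, meaning $[b_j,t_j]\cap[b_{j+1},t_{j+1}]\neq\emptyset$ (otherwise $P$ would be disconnected across that vertical line). I would then translate the pattern $D$: an occurrence on rows $i<i'$ and columns $j<j'$ forces, by column-convexity, $b_j>i$ (column $j$ misses row $i$ but meets the higher row $i'$) and $b_{j'}\le i$ (column $j'$ meets both rows), and the two columns meet at row $i'$ and so overlap. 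Hence $D\preccurlyeq P$ implies the bottom profile $(b_j)$ has a strict descent $b_j>b_{j'}$ with $j<j'$. Conversely, any strict descent yields, via connectivity, \emph{consecutive} overlapping columns $k,k+1$ with $b_k>b_{k+1}$, and these exhibit $D$ directly (take the row $b_{k+1}$ and a common row $\ge b_k$). So, for convex $P$, we have $D\preccurlyeq P$ if and only if $(b_j)$ is not non-decreasing.

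It remains to prove the profile-level statement: for a convex polyomino $(b_j)$ is non-decreasing if and only if $b_1=1$ (equivalently $(1,1)\in P$). One direction is trivial, since the bottom row of the bounding box is occupied, so $\min_j b_j=1$, whence non-decreasing forces $b_1=1$. For the converse -- the heart of the argument -- I would assume $b_1=1$ and suppose, toward a contradiction, that $k$ is the least index with $b_{k+1}<b_k$; minimality gives $1=b_1\le\dots\le b_k$ and $k\ge2$. Setting $r=b_{k+1}$, column $k$ is absent at row $r$ while column $k+1$ is present. Letting $p$ be the least index with $t_p\ge r$ (which exists and satisfies $p\le k$, since $t_k\ge b_k>r$), the overlap inequalities between consecutive columns show that column $p$ is also present at row $r$, and that $p<k$ (indeed $b_k>r$ excludes $p=k$). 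Then row $r$ meets columns $p$ and $k+1$ but misses the intermediate column $k$, contradicting row-convexity. This establishes the equivalence, and hence the theorem.

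The step I expect to be the main obstacle is precisely this last one: converting a descent in the bottom profile into a violation of row-convexity. The naive attempt -- inspecting column $1$ at row $r$ -- fails because column $1$ may terminate below $r$; the remedy is to replace column $1$ by the carefully chosen column $p$ whose presence at row $r$ is forced by the overlap of consecutive columns. Everything else is routine: the reduction through Proposition~\ref{prop:m-basis_convex}, the dictionary between $D$ and the descent condition, and the bookkeeping with $\Avp$. Finally, the corresponding $p$-basis (the four polyominoes covering $H$, $V$, $D$) can be read off from Proposition~\ref{prop:description_p-basis} exactly as in the case of convex polyominoes.
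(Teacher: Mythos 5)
Your proof is correct, but it follows a genuinely different route from the one in the paper. The paper argues directly from the path-based definition of directedness: for one inclusion it takes a cell not reachable from the source by north/east steps, considers a minimal internal path containing a south step, and derives either a violation of $h$-convexity or an occurrence of $D$; for the other it takes an occurrence of $D$ and shows, by a case analysis on the position of the source relative to the two distinguished cells, that some path must use a south or west step. You instead pass through the static characterization ``directed convex $\Leftrightarrow$ the region $C$ is empty $\Leftrightarrow$ the lower-left corner of the bounding box is occupied'' (which the paper asserts in Chapter~\ref{chap:chapter1}, so you are entitled to it, though strictly speaking the equivalence of the two definitions is itself left unproved there), and then work entirely with the column intervals $[b_j,t_j]$: $D$-avoidance is shown equivalent to monotonicity of the bottom profile $(b_j)$, which in turn is equivalent to $b_1=1$. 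Your treatment of the delicate step --- turning a descent $b_k>b_{k+1}$ into a row-convexity violation by locating the auxiliary column $p$ via the overlap of consecutive columns --- is sound, including the verification that $p<k$ and that column $p$ meets row $r=b_{k+1}$. What your approach buys is a proof with no path bookkeeping and, as a byproduct, the standard monotone-profile description of directed-convex polyominoes; what the paper's approach buys is that it never needs the corner characterization and works verbatim from the reachability definition. Both are complete proofs of the proposition.
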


\begin{proof}
Let us prove that ${\cal D}=\Avp (H,V,D)$.

First, let $P\in \Avp (H,V,D)$. $P$ is a convex polyomino and avoids the submatrix $D$. Let $s$ be its source, necessarily the leftmost cell at the lower ordinate. Let us proceed by contradiction assuming that $P$ is not directed, \ie there exists a cell $c$ of $P$ such that all the paths from $s$ to $c$ contain either a south step or a west step. Consider one of these paths $p$ with minimal length (defined as the number of cells), say $l$, and having at least a south step (if $p$ has at least a west step, a similar reasoning holds). If $s$ lies at the same ordinate as $c$ or below it, then the presence of a south step in $p$ implies that there exist two cells $c_i$ and $c_j$ of $P$, with $1\leq i < j \leq l$, where $p$ crosses the ordinate of $c$ with a north and a south step, respectively. Since $p$ is minimal, then the row of cells containing $c_i$ and $c_j$ is not connected, contradicting that $P$ is $h$-convex. Otherwise, $s$ lies above $c$, and there exists a point $c_i \not= s$ with $1< i <l$, having the same ordinate as $s$, and such that it forms with the cell $c_{i+1}$ a south step. So, the four cells $s$, $c_i$, $c_{i+1}$ and the empty cell below $s$ form the pattern $D$, giving the desired contradiction.

Conversely, let $P$ be a directed convex polyomino. We will proceed by contradiction assuming that $P$ contains the pattern $D$. Let $c_1$ and $c_2$ be the cells of $P$ that correspond to the upper left and to the lower right cells of $D$, respectively. Two cases have to be considered: if the source cell $s$ of $P$ lies above $c_2$, then each path leading from $s$ to $c_2$ has to contain at least one south step, which contradicts the fact that $P$ is directed. On the other hand, if $s$ lies in the same row of $c_2$ or below it, then each path leading from $s$ to $c_1$ either runs entirely on the left of $c_1$, so that $P$ contains the pattern $H$, which is against the $h$-convexity of $P$, or it contains a west step, which again contradicts the fact that $P$ is directed.
\end{proof}

\paragraph{Parallelogram polyominoes.}
Another widely studied family of polyominoes --that can also be defined using a notion of path, this time of {\em boundary path}-- is that of {\em parallelogram polyominoes}.


Parallelogram polyominoes (see Figure~\ref{fig:polyominoes}$(d)$) are a polyomino class. Since the proof of this fact resembles that of Proposition~\ref{prop:m-basis_directed_convex}, it will be left to the reader.

\begin{proposition}\label{prop:parallelogram}
Parallelogram polyominoes are characterized by the avoidance of the submatrices:
$ {\footnotesize
   \left[\begin{array}{cc}
          1 & 0\\
          1 & 1
         \end{array}
  \right]}$ and ${\footnotesize
  \left[\begin{array}{cc}
          1 & 1\\
          0 & 1
         \end{array}
  \right]}$. 
This is also the $p$-basis of the class of parallelogram polyominoes. 
\label{prop:m-basis_parall}
\end{proposition}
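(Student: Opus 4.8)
The plan is to prove the set equality $\cal P = \Avp(M_1,M_2)$, where $\cal P$ denotes the class of parallelogram polyominoes and
$$M_1 = \left[\begin{array}{cc} 1 & 0\\ 1 & 1\end{array}\right], \quad M_2 = \left[\begin{array}{cc} 1 & 1\\ 0 & 1\end{array}\right],$$
by a double inclusion, and then to identify $\{M_1,M_2\}$ as the $p$-basis. Throughout I describe a column-convex polyomino by the interval $[b_j,t_j]$ of filled rows in its $j$-th column (rows numbered from bottom to top, as in the paper's convention), recalling that a parallelogram polyomino is exactly a column-convex polyomino in which, reading columns left to right, both the bottoms $b_j$ and the tops $t_j$ are weakly increasing.

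\textbf{Easy inclusion} $\cal P \subseteq \Avp(M_1,M_2)$. Let $P\in\cal P$ and suppose, for contradiction, that $P$ contains $M_1$: there are columns $c_1<c_2$ and rows $r_1<r_2$ with $(r_1,c_1),(r_1,c_2),(r_2,c_1)$ filled and $(r_2,c_2)$ empty. From $(r_2,c_1)$ filled we get $t_{c_1}\ge r_2$, hence $t_{c_2}\ge t_{c_1}\ge r_2$ by monotonicity of the tops; from $(r_1,c_2)$ filled we get $b_{c_2}\le r_1<r_2$. Column-convexity of $c_2$ then forces $r_2\in[b_{c_2},t_{c_2}]$ to be filled, contradicting $(r_2,c_2)$ empty. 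An occurrence of $M_2$ is excluded symmetrically, using the monotonicity of the bottoms. Hence $P\in\Avp(M_1,M_2)$.

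\textbf{Hard inclusion} $\Avp(M_1,M_2)\subseteq \cal P$. Let $P\in\Avp(M_1,M_2)$; being a polyomino it is connected. The core step is to show that $P$ is convex, i.e. that $P$ avoids the patterns $H=\left[\begin{array}{ccc}1&0&1\end{array}\right]$ and $V=\left[\begin{array}{c}1\\0\\1\end{array}\right]$ of Proposition~\ref{prop:m-basis_directed_convex}. I would argue by contradiction: a gap in a column (an occurrence of $V$) must, by connectivity, be bypassed through cells of an adjacent column, and pairing the gap with a suitable filled cell on the far side of the detour yields an occurrence of $M_1$ or of $M_2$, according to whether the detour runs on the high or the low side of the gap; the case of $H$ is the $90^\circ$ analogue. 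This passage from ``connected with a reentrant gap'' to a concrete $2\times 2$ occurrence is the main obstacle, and is exactly the technical heart that already appears in the proof of Proposition~\ref{prop:m-basis_directed_convex}; I expect it to reduce to a short case analysis on the position of the bypassing cells, which I would model closely on that proof. Once convexity is available, the rest is structural. Since $P$ is convex and avoids $M_2=D$, Proposition~\ref{prop:m-basis_directed_convex} gives $P\in\Avp(H,V,D)=\cal D$, so $P$ is directed-convex from its bottom-left source; for a convex polyomino this is equivalent to the bottoms $b_j$ being weakly increasing. Now let $\rho$ be the $180^\circ$ rotation of the plane, an order-preserving involution of $(\poly,\polypattern)$ that fixes $H$ and $V$, preserves convexity, exchanges $M_1$ and $M_2$, and maps $\cal P$ to itself. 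Since $P$ avoids $M_1$, the polyomino $\rho(P)$ is convex and avoids $M_2$, hence is directed-convex, so the bottoms of $\rho(P)$ — equivalently the tops $t_j$ of $P$ — are weakly increasing. A convex polyomino whose bottoms and tops are both weakly increasing is precisely a parallelogram polyomino, so $P\in\cal P$. This establishes $\cal P=\Avp(M_1,M_2)$.

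\textbf{The $p$-basis.} Both $M_1$ and $M_2$ are connected (hence are polyominoes) and neither is a parallelogram polyomino, since each contains itself as a pattern and therefore does not lie in $\Avp(M_1,M_2)=\cal P$. Moreover every proper submatrix of a $2\times 2$ matrix has at most one row or at most one column, so cannot contain any $2\times 2$ pattern; thus $M_1$ and $M_2$ are minimal for $\polypattern$ among polyominoes outside $\cal P$. Applying Proposition~\ref{prop:description_p-basis} to the $m$-basis $\{M_1,M_2\}$, the $p$-basis of $\cal P$ consists of the $\polypattern$-minimal polyominoes that contain $M_1$ or $M_2$; as $M_1$ and $M_2$ are themselves polyominoes, these minimal polyominoes are $M_1$ and $M_2$. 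Hence $\{M_1,M_2\}$ is simultaneously an $m$-basis and the $p$-basis of the class of parallelogram polyominoes, as claimed.
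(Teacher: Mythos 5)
The paper itself offers no proof of this proposition (it is ``left to the reader''), so there is no official argument to compare against; judged on its own, your architecture --- double inclusion, deduction of directedness from Proposition~\ref{prop:m-basis_directed_convex} once convexity is known, the $180^\circ$ rotation to control the tops, and the identification of the $p$-basis via Proposition~\ref{prop:description_p-basis} --- is sound, and the easy inclusion and the $p$-basis paragraph are correct as written. The one genuine gap is exactly the step you flag as the technical heart: that a connected polyomino avoiding $M_1$ and $M_2$ is automatically convex. Your plan to ``model this closely'' on the proof of Proposition~\ref{prop:m-basis_directed_convex} will not work as stated, because that proof never derives convexity from anything --- there $H$ and $V$ are excluded by hypothesis and convexity comes for free. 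Here convexity must be extracted from the two $2\times2$ patterns together with connectivity, which is genuinely new content, and your one-line description (``according to whether the detour runs on the high or the low side of the gap'') does not match how the cases actually fall out: a single bypassing cell at the level of the gap need not by itself produce an occurrence of $M_1$ or $M_2$.

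The claim is nevertheless true, and the missing case analysis is short. Suppose column $c$ is not convex: $(r_1,c)$ and $(r_3,c)$ are filled and $(r,c)$ is empty for $r_1<r<r_3$. A path in $P$ from $(r_1,c)$ to $(r_3,c)$ must contain a north step from row $r_1$ to row $r_1+1$ inside a single column $d$, and $d\neq c$ since $(r_1+1,c)$ is empty. If $d<c$, then rows $r_1,r_1+1$ and columns $d,c$ form an occurrence of $M_1$ (left column filled at both rows, right column filled below and empty above). If $d>c$, look at $(r_3,d)$: if it is empty, rows $r_1,r_3$ and columns $c,d$ form an occurrence of $M_1$; if it is filled, rows $r_1+1,r_3$ and columns $c,d$ form an occurrence of $M_2$, since $(r_3,c)$, $(r_3,d)$, $(r_1+1,d)$ are filled while $(r_1+1,c)$ is empty. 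Either way $P\notin\Avp(M_1,M_2)$, so every column is convex, and row-convexity follows from the transpose symmetry of $M_1$ and $M_2$ that you already observed. With this inserted, your proof is complete.
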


\paragraph{$L$-convex polyominoes.}
Parallelogram polyominoes and directed-convex polyominoes form subclasses of the class of convex polyominoes 
that are both defined in terms of paths. 
The relationship between these two notions is closer than it may appear. 

Let us consider the $1$-convex polyominoes are more commonly called $L$-convex polyominoes, (see Figure~\ref{fig:polyominoes}$(e)$).

Here, we study how the constraint of being $k$-convex can be represented in terms of submatrix avoidance.
In order to reach this goal, let us present some basic definitions and properties from the field of {\em discrete tomography} \cite{ryser}. Given a binary matrix, the vector of its {\em horizontal} (resp. {\em vertical}) {\em projections} is the vector of the row (resp. column) sums of its elements. 
In 1963 Ryser~\cite{ryser} established a fundamental result which, using our notation, can be reformulated as follows:

\begin{theorem}\label{th:ryser}
A binary matrix is uniquely determined by its horizontal and vertical projections if and only if it does not contain $S_1= {\footnotesize \left[\begin{array}{cc}
          1 & 0\\
          0 & 1
         \end{array}
  \right]}$ and $S_2 = {\footnotesize \left[\begin{array}{cc}
          0 & 1\\
          1 & 0
         \end{array}
  \right]}$ as submatrices.
\end{theorem}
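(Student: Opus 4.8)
The plan is to prove the statement through the classical notion of a \emph{switch} (also called an interchange). A switch selects two rows $i \neq i'$ and two columns $j \neq j'$ such that the induced $2\times 2$ submatrix equals $S_1$ (resp. $S_2$), and replaces it by $S_2$ (resp. $S_1$), leaving every other entry untouched. The first thing I would record is the elementary but decisive observation that a switch preserves both projection vectors: within the two affected rows it only moves a $1$ from one column to the other (and conversely in the second row), so every row sum and every column sum is unchanged.

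For the forward implication I would argue by contrapositive. Suppose $M$ contains $S_1$ or $S_2$ as a submatrix on some rows $i,i'$ and columns $j,j'$. Applying the corresponding switch at these four positions yields a binary matrix $M' \neq M$ (the four entries are genuinely altered) with, by the observation above, the same horizontal and vertical projections as $M$. Hence $M$ is not determined by its projections, which is exactly the contrapositive of ``uniquely determined $\Rightarrow$ avoids $S_1$ and $S_2$''.

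The converse is the substantial direction, and it rests on Ryser's interchange theorem: any two binary matrices sharing the same horizontal and vertical projections can be transformed into one another by a finite sequence of switches. Granting this, if $M$ avoids both $S_1$ and $S_2$ then no switch can be applied to $M$ at all, so $M$ is the only matrix in its switch-equivalence class; since the theorem states that this class contains \emph{every} matrix with the same projections, $M$ is uniquely determined by them.

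Thus the crux is to prove the interchange theorem, and this is where I expect the real work to lie. My approach would be induction on the Hamming distance $d(M,M') = \#\{(i,j) : M(i,j) \neq M'(i,j)\}$ between two matrices with identical projections. The difference $M - M'$ has entries in $\{-1,0,1\}$ and vanishing row and column sums; starting from any $+1$ entry I would follow an alternating trail---stepping along a row to a $-1$ entry (possible since the row of $M-M'$ sums to $0$), then along a column to a $+1$ entry (possible since the column sums to $0$), and so on. Finiteness forces the trail to close into an alternating cycle, and a shortest such cycle must have length $4$, since any longer one would admit a chord producing a shorter alternating cycle. A length-$4$ cycle is precisely an occurrence of $S_1$ or $S_2$ in $M$ (with the opposite pattern in $M'$) on which a switch strictly decreases $d(M,M')$, closing the induction. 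The delicate point, which I would treat most carefully, is justifying that a shortest alternating cycle has length exactly $4$ and that the resulting switch creates no new discrepancies elsewhere; everything else is bookkeeping.
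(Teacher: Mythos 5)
First, a point of reference: the paper does not actually prove Theorem~\ref{th:ryser} --- it is presented as a reformulation of a classical result and justified only by the citation~\cite{ryser}. So your proposal is not being measured against an argument in the text; it is an attempt to supply a self-contained one, and your overall strategy is the standard one. The forward implication (a switch at an occurrence of $S_1$ or $S_2$ preserves both projection vectors and produces a different matrix, so uniqueness fails) is complete and correct, and the reduction of the converse to Ryser's interchange theorem is also correct.

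The gap is in your sketch of the interchange theorem, specifically the claim that a shortest alternating cycle in $D=M-M'$ must have length $4$ because any longer cycle admits a chord. This is false. Take $M=I_3$ and $M'=\left[\begin{smallmatrix}0&1&0\\0&0&1\\1&0&0\end{smallmatrix}\right]$: both have all row and column sums equal to $1$, and $D$ has exactly six nonzero entries forming a single alternating $6$-cycle; every candidate chord position carries a $0$ of $D$, so no chord splits it into shorter alternating cycles, and no length-$4$ alternating cycle exists in $D$. The correct inductive step is weaker: from a $+1$ entry $(i_1,j_1)$ of $D$ walk along row $i_1$ to a $-1$ entry $(i_1,j_2)$ and along column $j_2$ to a $+1$ entry $(i_2,j_2)$; in $M$ these three positions read $1,0,1$, so if $M(i_2,j_1)=0$ the four positions form $S_1$ or $S_2$ \emph{in $M$}, and the switch repairs three disagreements while creating at most one, decreasing the Hamming distance by at least $2$ (not necessarily $4$). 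In the example above this is exactly what happens: two successive switches, not one, carry $I_3$ to $M'$. When $M(i_2,j_1)=1$ one must continue the walk and argue that a switchable corner of this kind eventually appears; that is the real content of Ryser's interchange lemma and is precisely the delicate point you flagged, but the four-cycle/chord shortcut you propose for it does not go through. With the inductive step restated as ``some switch on $M$ decreases the number of disagreements by at least two,'' the rest of your argument stands.
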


Now we show that the set $\cal L$ of $L$-convex polyominoes is a polyomino class. 

\begin{proposition}\label{prop:lconvex}
$L$-convex polyominoes are characterized by the avoidance of the submatrices $H,V,S_1$ and $S_2$. 
In other words, ${\cal L}=\Avp(H,V,S_1,S_2)$.
\label{prop:m-basis_lconvex}
\end{proposition}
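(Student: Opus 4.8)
The plan is to prove the two inclusions ${\cal L} \subseteq \Avp(H,V,S_1,S_2)$ and $\Avp(H,V,S_1,S_2) \subseteq {\cal L}$ directly, using Proposition~\ref{prop:m-basis_convex} (avoidance of $H$ and $V$ is exactly convexity) together with an elementary analysis of L-shaped monotone paths. The observation I would isolate first is that a monotone path with \emph{at most one} change of direction joining two cells $A$ and $B$ that differ in both coordinates must be an ``L'': it is a straight run followed by a single turn. Consequently, if $A=(r_1,c_1)$ and $B=(r_2,c_2)$ with $r_1\neq r_2$ and $c_1\neq c_2$, such a path must pass through one of the two \emph{corner cells} $(r_1,c_2)$ or $(r_2,c_1)$ of the smallest rectangle containing $A$ and $B$, these being the only possible turning positions. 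This simple fact is the bridge between the geometric condition of $L$-convexity and the combinatorial patterns $S_1$, $S_2$, whose filled entries lie on one diagonal of such a rectangle and whose $0$ entries lie precisely on the corner cells.

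For the inclusion ${\cal L}\subseteq \Avp(H,V,S_1,S_2)$, I would start from an $L$-convex polyomino $P$. Being convex, $P$ avoids $H$ and $V$ by Proposition~\ref{prop:m-basis_convex}, so it remains to exclude $S_1$ and $S_2$. Suppose for contradiction that $P$ contains $S_1$ (or $S_2$) on rows $r_1<r_2$ and columns $c_1<c_2$. Then the two filled entries of the pattern are diagonally opposite cells $A,B$ of $P$, while the two empty entries are exactly the corner cells $(r_1,c_2)$ and $(r_2,c_1)$. By the observation above no L-path can join $A$ and $B$, hence every monotone path between them has at least two changes of direction, contradicting $L$-convexity. (Alternatively, this direction can be obtained by combining Ryser's Theorem~\ref{th:ryser} with the known fact that $L$-convex polyominoes are determined by their horizontal and vertical projections, but the direct argument is self-contained.)

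For the converse $\Avp(H,V,S_1,S_2)\subseteq {\cal L}$, I take a polyomino $P$ avoiding all four matrices; it is convex, and I must exhibit, for every pair of cells $A,B$, a monotone path with at most one turn. If $A$ and $B$ share a row (resp.\ column), then $h$-convexity (resp.\ $v$-convexity), that is the avoidance of $H$ (resp.\ $V$), fills the whole segment between them and yields a turn-free path. Otherwise write $A=(r_1,c_1)$, $B=(r_2,c_2)$ with $r_1<r_2$ and $c_1\neq c_2$. If neither corner cell $(r_1,c_2)$ nor $(r_2,c_1)$ belonged to $P$, then the four entries on rows $r_1,r_2$ and columns $c_1,c_2$ would realise $S_1$ or $S_2$, contradicting the hypothesis; hence at least one corner, say $(r_1,c_2)$, is a cell of $P$. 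Since $A$ and $(r_1,c_2)$ lie in $P$, $h$-convexity fills the horizontal segment of row $r_1$ between columns $c_1$ and $c_2$; since $(r_1,c_2)$ and $B$ lie in $P$, $v$-convexity fills the vertical segment of column $c_2$ between rows $r_1$ and $r_2$. Concatenating these produces an L-path from $A$ to $B$ with a single change of direction; the other corner yields the symmetric L-path. Thus every pair of cells is joined by a path with at most one turn, so $P$ is $L$-convex.

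The routine part is the case bookkeeping: the same-row and same-column cases, and the two diagonal orientations distinguished by $S_1$ versus $S_2$. The step I expect to require the most care — more a subtlety than a genuine obstacle — is that exhibiting a \emph{filled} corner cell is not in itself enough: one must invoke the $h$- and $v$-convexity of $P$ to guarantee that the two straight runs forming the L are \emph{entirely} contained in $P$. The other delicate point is keeping track of the bottom-to-top row convention when translating $S_1$ and $S_2$ into the two diagonal configurations; since both anti-diagonal patterns are forbidden simultaneously, I can phrase the argument so that it never depends on which of $S_1$, $S_2$ arises, which removes this difficulty entirely.
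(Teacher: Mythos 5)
Your proof is correct, and it diverges from the paper's in an interesting way on one of the two inclusions. For $\Avp(H,V,S_1,S_2)\subseteq{\cal L}$ you and the paper are essentially doing the same thing: both arguments come down to the observation that if both corner cells of the rectangle spanned by $A$ and $B$ were missing, the corresponding $2\times 2$ submatrix would be $S_1$ or $S_2$, so one corner is present and $h$-/$v$-convexity fills in the two legs of the $L$; the paper merely packages this as a contradiction argument that traces the maximal horizontal and vertical runs out of $c_1$ until they leave the polyomino. For the inclusion ${\cal L}\subseteq\Avp(S_1,S_2)$, however, the paper does \emph{not} argue directly: it invokes the result of~\cite{CFRR} that an $L$-convex polyomino is uniquely determined by its horizontal and vertical projections, and then applies Ryser's Theorem~\ref{th:ryser} to conclude that $S_1$ and $S_2$ cannot occur. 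Your argument --- an occurrence of $S_1$ or $S_2$ exhibits two cells of $P$ in distinct rows and columns whose both corner cells are empty, and any monotone path with a single turn must pass through one of those corners --- is self-contained, elementary, and makes visible exactly why $S_1,S_2$ encode the $L$-property, at the cost of not highlighting the (pretty) connection to discrete tomography that the paper's citation-based route emphasizes. You correctly flag the one genuinely delicate point, namely that a filled corner alone is not enough and convexity must be invoked to fill the two straight runs; your handling of it is right.
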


\begin{proof}
First, let $P$ be a convex polyomino that avoids the submatrices $S_1$ and $S_2$. Let us proceed by contradiction assuming that $P$ is not $L$-convex. It means that there exists a pair of cells $c_1$ and $c_2$ of $P$ such that all the paths from $c_1$ to $c_2$ are not $L$-paths, \ie they have at least two changes of directions. Suppose that $c_2$ lies below $c_1$, on its right (if not, similar reasonings hold). 
Consider the path from $c_1$ that goes always right (resp. down) until it encounters a cell $c'$ (resp. $c''$) which does not belong to $P$. This happens before reaching the abscissa (resp. ordinate) of $c_2$, otherwise by convexity there would be a path from $c_1$ to $c_2$ with one change of direction. By convexity, all the cells on the right of $c'$ (resp. below $c''$) do not belong to $P$, so the four cells having the same abscissas and ordinates of $c_1$ and $c_2$ form the pattern $S_1$, giving the desired contradiction.

Conversely, let $P$ be an $L$-convex polyomino. 
By convexity, $P$ does not contain the submatrices $H$ and $V$. Moreover, in \cite{CFRR} it is proved that an $L$-convex polyomino is uniquely determined by its horizontal and vertical projections, so by Theorem~\ref{th:ryser} it cannot contain $S_1$ or $S_2$.  
\end{proof}

\subsection{Some families of polyominoes that are not classes}
Unfortunately not all the families of polyominoes are a polyomino class. Unlike $L$-convex polyominoes, 
$2$-convex polyominoes do not form a polyomino class. Indeed, 
 the $2$-convex polyomino in Figure~\ref{fig:2pc3}$(a)$ contains the
$3$-but-not-$2$-convex polyomino $(b)$ as a submatrix. 
Similarly, the set of $k$-convex polyominoes is not a polyomino class, for $k \geq 2$.

\begin{figure}[htp]
\begin{center}
\includegraphics[width=7cm]{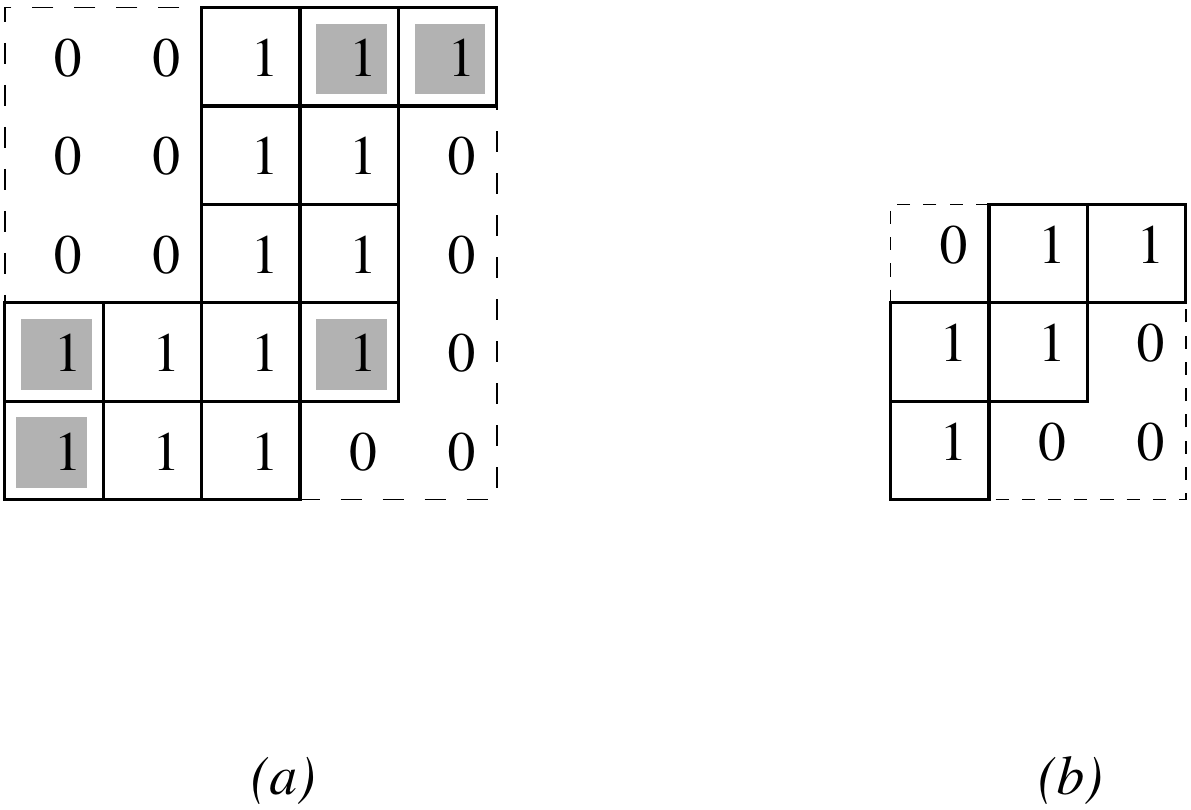}
\caption{$(a)$ a $2$-convex polyomino $P$; $(b)$ a submatrix of $P$
that is not a $2$-convex polyomino.} \label{fig:2pc3}
\end{center}
\end{figure}

In practice, this means that $2$-convex polyominoes cannot be described in terms of pattern avoidance. 
In order to be able to represent $2$-convex polyominoes we extend the notion of pattern avoidance, introducing the {\em generalized pattern avoidance}. Our extension consists in imposing the adjacency of two columns or rows by introducing special symbols, i.e. vertical/horizontal lines: with $A$ being a pattern, a vertical line between two columns of $A$, $c_i$ and $c_{i+1}$ (a horizontal line between two rows $r_i$
and $r_{i+1}$), will read that $c_i$ and $c_{i+1}$ (respectively $r_i$ and $r_{i+1}$) must be adjacent. When the vertical (resp. horizontal) line is external, it means that the adjacent column (resp. row) of the pattern must touch the minimal bounding rectangle of the polyomino. Moreover, we will use the $*$ symbol to denote $0$ or $1$ indifferently.

\begin{proposition}\label{2c}
The class of $2$-convex polyominoes can be described by the avoidance of  the following generalized patterns:

\begin{center}
\includegraphics[width=12cm]{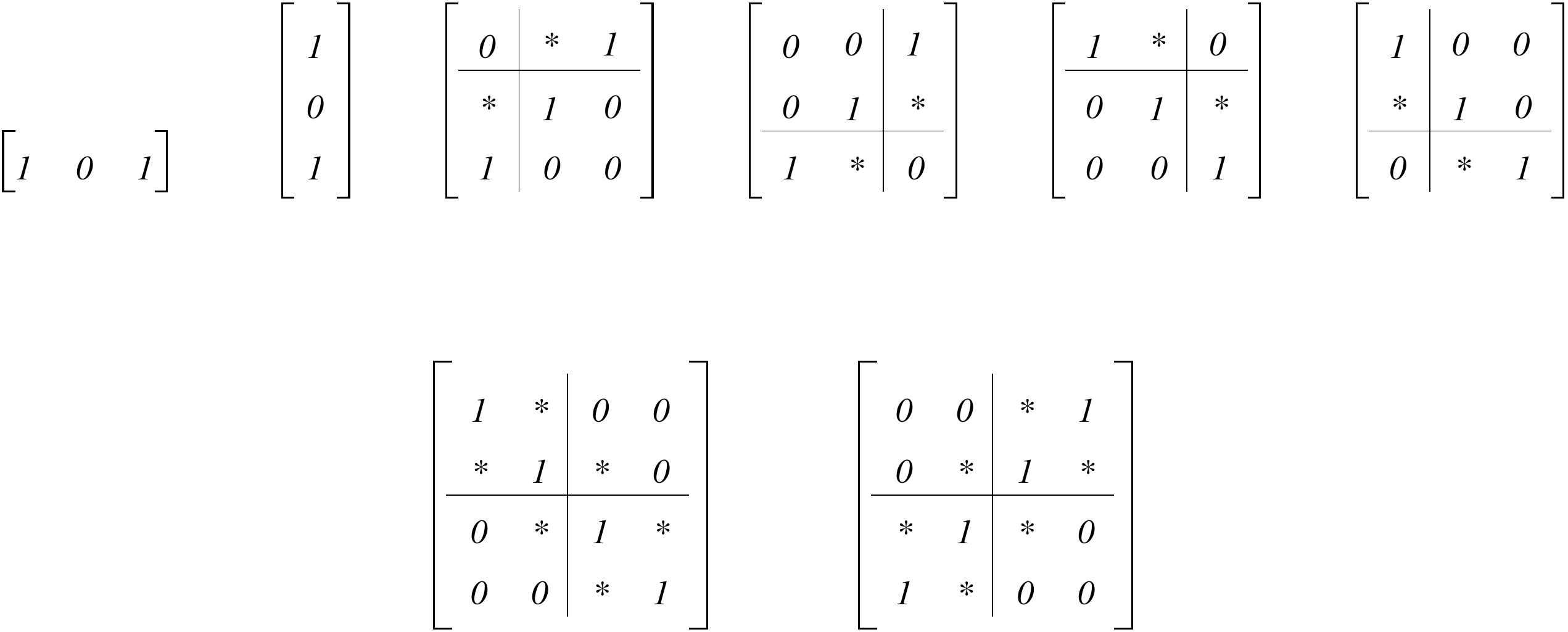}
\end{center}

\end{proposition}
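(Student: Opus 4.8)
The plan is to establish the equality by two inclusions, $\mathcal{Z} \subseteq \{P : P \text{ avoids the listed patterns}\}$ and its converse, where $\mathcal{Z}$ denotes the set of $2$-convex polyominoes, following exactly the scheme already used for directed-convex and $L$-convex polyominoes (Propositions~\ref{prop:m-basis_directed_convex} and~\ref{prop:m-basis_lconvex}). Since every $2$-convex polyomino is in particular convex, and convexity is characterized by the avoidance of $H$ and $V$ (Proposition~\ref{prop:m-basis_convex}), I would first reduce to convex polyominoes, so that the whole task becomes: characterize, \emph{among convex polyominoes}, those pairs of cells that cannot be joined by a monotone path with at most two changes of direction. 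Inside a convex polyomino every row and every column is an interval, which tightly constrains the shape of the boundary between any two cells and is what makes a finite list of forbidden local configurations possible.

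The heart of the argument is a local-to-global statement analogous to Proposition~\ref{convexitydegree}: a convex polyomino fails to be $2$-convex precisely when some pair of cells $c_1,c_2$ is such that every monotone path joining them makes at least three changes of direction. Using convexity, such a pair forces a ``staircase with three turns'' arrangement of occupied and empty cells along the boundary running from $c_1$ to $c_2$. The forbidden generalized patterns are exactly the minimal such arrangements, and I would read them off according to the relative position of $c_1$ and $c_2$ (up-left/down-right versus up-right/down-left, and the symmetric variants), which is why several patterns are needed rather than one.

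The key point, and the reason ordinary submatrix avoidance does not suffice (recall Figure~\ref{fig:2pc3}), is the role of the adjacency constraints. A three-turn obstruction is \emph{genuine} only when the relevant rows and columns are consecutive, and, where external lines occur, actually touch the minimal bounding rectangle; otherwise an intermediate row or column, or a cell lying outside the bounding box, could supply a shortcut reducing the number of turns to two, so the configuration would not be an obstruction at all. Thus for $\mathcal{Z} \subseteq \Avp(\cdot)$ I would argue by contradiction: if a $2$-convex $P$ contained one of the generalized patterns, the internal/external adjacency conditions would guarantee that the two witnessing cells really do require three turns, contradicting $2$-convexity; the $*$ wildcards are harmless because the obstruction does not depend on the content of those entries. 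Conversely, for $\Avp(\cdot) \subseteq \mathcal{Z}$, I would take a convex $P$ of convexity degree at least $3$, select a worst pair $c_1,c_2$, and extract from the boundary staircase between them the minimal sub-configuration \emph{together with its forced adjacencies}, thereby exhibiting an occurrence of one of the listed generalized patterns.

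The main obstacle I expect is the case analysis proving the equivalence ``three forced turns $\Longleftrightarrow$ occurrence of a listed pattern''. One must verify that the adjacency and border lines are \emph{exactly} as strong as needed, that is, simultaneously necessary (so that no genuinely $2$-convex polyomino is excluded) and sufficient (so that every $3$-convex behaviour is captured), and that the finite list exhausts all relative positions of an obstructing pair. This is where the generalized-pattern machinery must be handled delicately, since an over-strong adjacency would fail to forbid some $3$-convex polyomino, while an over-weak one would wrongly forbid some $2$-convex polyomino.
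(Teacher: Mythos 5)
Your plan follows essentially the same route as the paper's proof: both directions are argued by contradiction, the forward inclusion by showing that an occurrence of a listed generalized pattern (with its adjacency/border constraints) forces a pair of cells needing three changes of direction, and the reverse inclusion by taking an obstructing pair in a $3$-convex polyomino, tracking the two canonical maximal-side monotone paths between them, and reading off an occurrence of one of the patterns from the forced turns. The paper likewise reduces the case analysis to representative patterns ($Z_1$, $Z_2$) and representative path configurations, so your identification of the case analysis as the main remaining work is accurate rather than a gap.
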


Before to providing the proof of Proposition~\ref{2c} we need to recall some facts which will be useful.
\begin{remark}\label{rem:2c}
 In a $2$-convex polyomino, due to the convexity constraints,we have that for each two cells, there is a monotone path connecting them, which uses only two types of steps among $n, s, e, w$, see Section~\ref{subsec:kconv}. More precisely, after the first change of direction the two types of steps are determined.
 
 Another important fact is that, given two cells of a polyomino $c_1$ and $c_2$, the minimal number of changes of direction to go from $c_1$ to $c_2$ can be obtained studying two paths, the ones starting  with a vertical/horizontal step, in which every side has maximal length.
 
\end{remark}

\begin{proof}
Let $\cal M$ be the set of generalized patterns of Proposition~\ref{2c} and let $P$ be a polyomino. 

\noindent ($\Rightarrow$) If $P$ is a $2$-convex polyomino then $P$ avoids $\cal M$.

Let us assume that this is not true, then $P$ is a $2$-convex polyomino but it contains one of the patterns of $\cal M$. Clearly $P$ avoids the two patterns $H$ and $V$ otherwise it would not be a convex polyomino. For simplicity sake, we can consider only two patterns of $\cal M$, for instance 
$$Z_1=\left[\begin{tabular}[]{c|cc}
0 & * & 1\\
\hline
* & 1 & 0\\
1 & 0 & 0
\end{tabular}\right]
\,\,\,\,\mbox{and}\,\,\,\,
Z_2=\left[\begin{tabular}[]{cc|cc}
0 & 0 & * & 1\\
0 & * & 1 & *\\
\hline
* & 1 & * & 0\\
1 & * & 0 & 0
\end{tabular}\right]
\,\,,$$
since the remaining patterns are just the rotations of the previous ones.

If $P$ contains $Z_1$ then it has to contain a submatrix $P'$ of this type

$$\begin{array}{ccccccc}
   0 & * & . & . & . & . & 1 \\
   * & 1 & . & . & . & . & 0 \\
   . & . & . & . & . & . & . \\
   . & . & . & . & . & . & . \\
   . & . & . & . & . & . & . \\
   1 & 0 & . & . & . & . & 0 
  \end{array}
\,\,\,,$$
where the $0, 1, *$ are the elements of $Z_1$ and the dots can be replaced by $0, 1$ indifferently, clearly in agreement with the convexity and polyomino constraints.

Among all the polyominoes which can be obtained from $P'$, the one having the minimal convexity degree is the one, called $\overline{P'}$, where we have replaced any dot with a $1$ entry. So, given 
$$\overline{P'}=\begin{array}{ccccccc}
   0 & 1 & 1 & 1 & 1 & 1 & 1 \\
   1 & 1 & 1 & 1 & 1 & 1 & 0 \\
   1 & 1 & 1 & 1 & 1 & 1 & 0 \\
   1 & 1 & 1 & 1 & 1 & 1 & 0 \\
   1 & 1 & 1 & 1 & 1 & 1 & 0 \\
   1 & 0 & 0 & 0 & 0 & 0 & 0 
  \end{array}
\,\,\,,$$
it is easy to verify that the minimal number of changes of direction requested to run from the leftmost lower cell of $\overline{P'}$ to the rightmost bottom cell of $\overline{P'}$ is three, so $\overline{P'}$ is a $3$-convex polyomino, then we reach our goal.

Similarly, If $P$ contains $Z_2$ then it has to contain a submatrix $P'$ of this type

$$\begin{array}{cccccccccc}
   0 & . & . & . & \underline{0}| & * & . & . & . & 1 \\
   . & . & . & . & . & . & . & . & . & . \\
   . & . & . & . & . & . & . & . & . & . \\
   \underline{0}| & . & . & . & * & 1 & . & . & * \\
   * & . & . & . & 1 & * & . & . & . & \underline{0}| \\
   . & . & . & . & . & . & . & . & . & .  \\
   . & . & . & . & . & . & . & . & . & . \\
   1 & . & . & . & * & \underline{0}| & . & . & . & 0 \\
  \end{array}
\,\,\,,$$
where the vertical/horizontal lines have been drawn to mean that in this position there is a change of direction.
Also in this case we can consider the polyomino $\overline{P'}$, in which we have replaced any dot with a $1$ entry. It is easy to verify that the minimal number of changes of direction requested to run from the leftmost lower cell of $\overline{P'}$ to the rightmost bottom cell of $\overline{P'}$ is three, so $\overline{P'}$ and as consequence $P$ are a $3$-convex polyominoes against the hypothesis.

\smallskip

\noindent ($\Leftarrow$) If $P$ avoids $\cal M$ then $P$ is a $2$-convex polyomino.

Let us assume that, on the contrary, $P$ avoids $\cal M$ but it is a $3$-convex polyomino, \ie there exist two cells of $P$, $c_1$ and $c_2$, such that any paths from $c_1$ to $c_2$ requires at least three changes of direction.

Let us take into consideration the two paths, defined in Remark~\ref{rem:2c}, which use only steps of type $n$ and $e$ to prove that $P$ contains at least one of the patterns of $\cal M$. We have to analyze the following situations:
 \begin{itemize}
  \item [-] the two paths running from $c_1$ to $c_2$ are distinct, see Figure~\ref{fig:possiblePath}~$(a)$;
  \item [-] one of the paths running from $c_1$ to $c_2$ does not exist, see Figure~\ref{fig:possiblePath}~$(b)$;
  \item [-] the two paths running from $c_1$ to $c_2$ coincide after the first change of direction, see Figure~\ref{fig:possiblePath}~$(c)$;
  \item [-] the two paths running from $c_1$ to $c_2$ coincide after the second change of direction, see Figure~\ref{fig:possiblePath}~$(d)$.
 \end{itemize}

\begin{figure}[htp]
\begin{center}
\includegraphics[width=14cm]{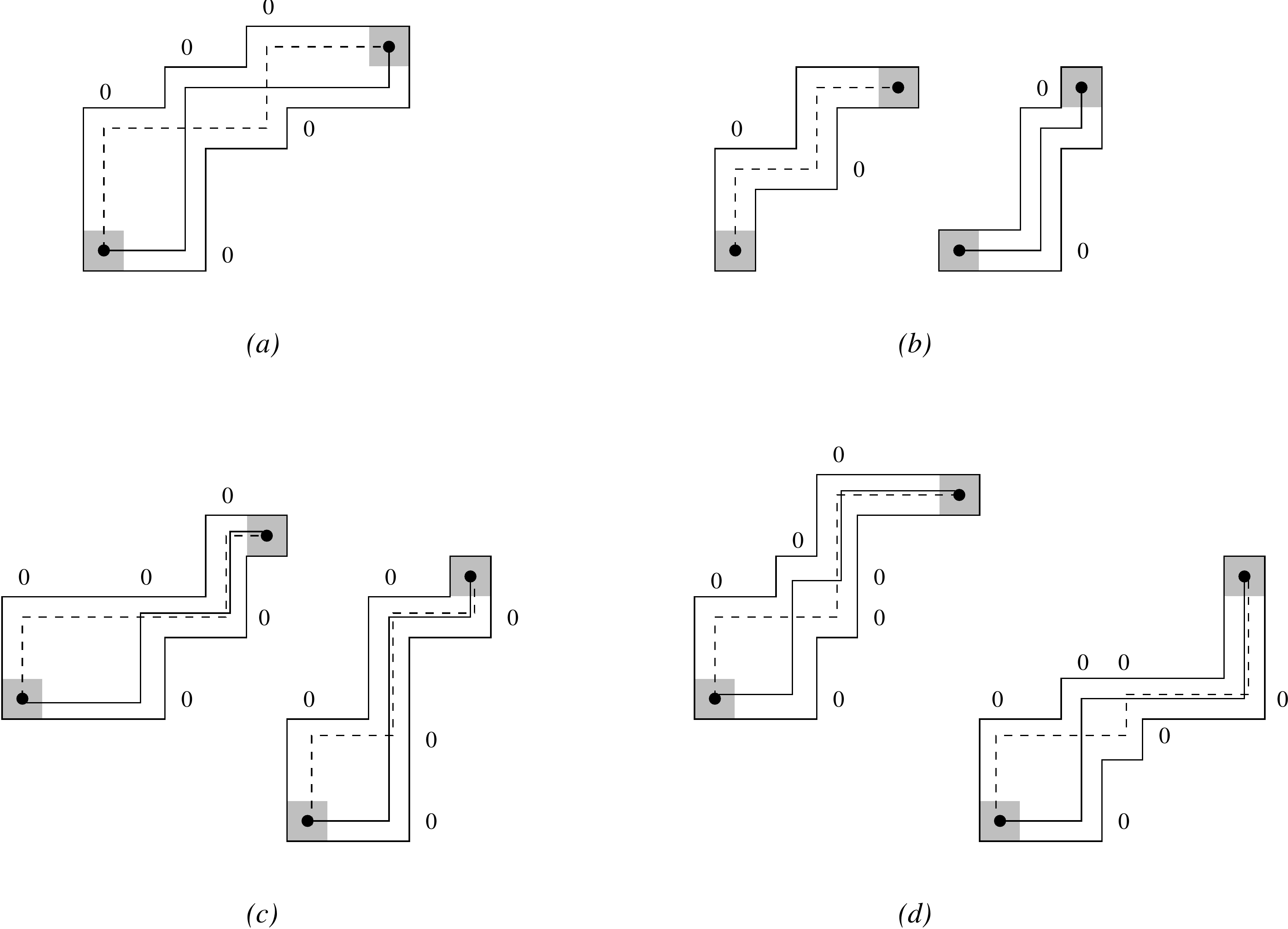}
\caption{The possible monotone paths connecting the cells $c_1$ and $c_2$, those cells are the ones greyed. $(a)$ the two paths are distinct; $(b)$  one of the paths does not exist; $(c)$ the two paths running from $c_1$ to $c_2$ coincide after the first change of direction;$(d)$ the two paths running from $c_1$ to $c_2$ coincide after the second change of direction.} \label{fig:possiblePath}
\end{center}
\end{figure}
Here, we will consider only the first situation (which is the most general), because in all the other cases we can use an analogous reasoning.

So, we have that the two cells $c_1$ and $c_2$ are connected by two distinct paths, see Figure~\ref{fig:possiblePath}~$(a)$, then $P$ have to contain a submatrix $P'$ of the following type

$$\begin{array}{ccccc|ccccc}
   0 & . & . & . & 0 & 1 & . & . & . & 1 \\
   . & . & . & . & . & . & . & . & . & . \\
   . & . & . & . & . & . & . & . & . & . \\
   0 & . & . & . & 1 & 1 & . & . & . & 1 \\
   \hline
   1 & . & . & . & 1 & 1 & . & . & . & 0 \\
   . & . & . & . & . & . & . & . & . & .  \\
   . & . & . & . & . & . & . & . & . & . \\
   1 & . & . & . & 1 & 0 & . & . & . & 0 \\
  \end{array}
\,\,\,,$$
where the horizontal and vertical lines, which indicate the adjacency constraints, have been placed to impose the changes of direction.

It is easy to see that the submatrix obtained from $P'$ deleting all the rows and columns containing dots, is one among the various that we can obtain replacing appropriately the symbol $*$ in the pattern $Z_2$. So, $P$ contains $Z_2$ against the hypothesis.

\end{proof}
We can point out that the pattern $Z_1$, and its rotation, can be obtained from the pattern $Z_2$ (or its rotation) replacing appropriately the $*$ entries, but we need to consider them in order to exclude the polyominoes, of dimension $n\times m$, with one among $n$ and $m$ less than $4$, that are $3$-convex.

Let us just observe, referring to Fig.~\ref{fig:2pcpp_d}, that the pattern~$(c)$ is not contained in the $2$-convex polyomino~$(a)$, but it is contained in the $3$-convex polyomino~$(d)$. 

\begin{figure}[htp]
\begin{center}
\includegraphics[width=12cm]{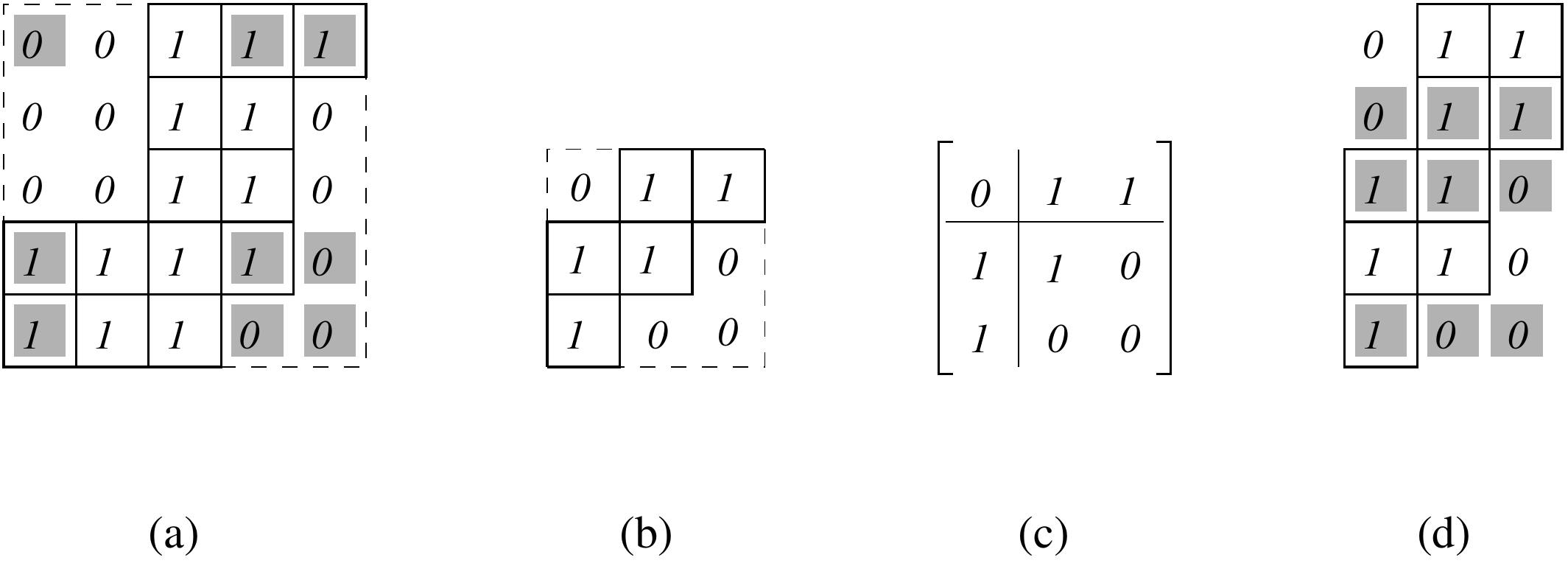}
\caption{$(a)$ a $2$-convex polyomino $P$; $(b)$ a pattern of $P$ that is not a $2$-convex polyomino; $(c)$ a generalized pattern, which is not contained in $(a)$, but is contained in the $3$-convex polyomino (non $2$-convex) $(d)$.}
\label{fig:2pcpp_d}
\end{center}
\end{figure}

It is possible generalize the previous result and give a characterization of the class of $k$-convex polyominoes, with $k> 2$, using generalized patterns.

As we observed in~\ref{par:dirConvex} is not possible to describe the set of directed polyominoes in terms of submatrix avoidance, but also in this case the introduction of generalized patterns will be useful.

\begin{proposition}\label{2c}
The class of directed polyominoes can be represented as the class of polyominoes avoiding the following patterns

\begin{center}
\includegraphics[width=4cm]{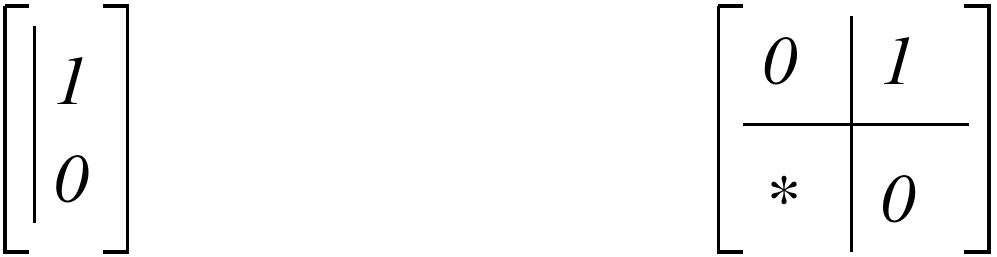}
\end{center}

\end{proposition}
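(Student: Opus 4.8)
The plan is to reduce directedness to a single local condition on cells, and then to read the forbidden local configuration off as an occurrence of one of the generalized patterns of the figure. The starting point is the observation that, with the bottom-to-top matrix convention, a monotone path made of $n$ and $e$ steps can only \emph{enter} a cell $c$ through a north step coming from the cell immediately below $c$, or through an east step coming from the cell immediately to the left of $c$ (the source $S$, i.e. the leftmost cell of the bottom row, being the unique cell reached with no step). First I would isolate this as a lemma: among the cells of $P$ that are \emph{not} reachable from $S$, pick one, say $c$, minimizing the sum of its coordinates; then both the south neighbour and the west neighbour of $c$ must be absent from $P$, since otherwise one of them would be a reachable cell of smaller coordinate sum, forcing $c$ to be reachable. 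Conversely any cell $c\neq S$ whose south and west neighbours are both absent is trivially unreachable. Noting that $S$ itself is the leftmost cell of the lowest row, hence has both neighbours outside $P$, this yields the clean characterization
\[
P \text{ is directed} \iff S \text{ is the unique cell of } P \text{ whose south and west neighbours are both absent from } P.
\]

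With this lemma in hand, I would prove the statement by contraposition in both directions, exactly in the spirit of the proof of Proposition~\ref{prop:m-basis_directed_convex} and of the $2$-convex proposition. For the direction ``avoids the patterns $\Rightarrow$ directed'', I would argue that each generalized pattern of the figure forces the presence of a cell $c$ that is distinct from $S$ and whose cells immediately below and immediately to the left are both absent: the vertical and horizontal adjacency lines guarantee that the two $0$'s sit \emph{exactly} south and west of the distinguished $1=c$, while the external lines (touching the minimal bounding rectangle) take care of the border situations. By the lemma, such a $c$ is unreachable, so $P$ is not directed; contrapositively, a directed polyomino avoids all the patterns.

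For the converse direction ``directed $\Rightarrow$ avoids the patterns'', I would start from a non-directed $P$, produce via the lemma a cell $c\neq S$ with empty south and empty west neighbours, and then distinguish according to whether $c$ lies strictly inside, in the bottom row, or in the leftmost column. In the strictly interior case the emptiness of the two neighbours is witnessed by a $0$ immediately below and a $0$ immediately to the left of $c$, giving an occurrence of the interior pattern with its two adjacency lines; in the border cases one of these emptinesses is instead realized by the corresponding side of the bounding rectangle, matched by an external line. Following the paper's convention I would treat the most general (interior) configuration in full and state that the remaining ones are handled analogously.

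The hard part will be the bookkeeping in this last direction: one must check that \emph{every} bad cell, including those touching the bottom row or the leftmost column, is captured by one of the listed patterns, and — crucially — that no occurrence of a pattern can be accidentally centred at the source (which also has both neighbours absent but must not be forbidden). The adjacency and external lines are precisely designed to separate the source from the genuinely obstructing cells, so the delicate point is verifying this separation case by case; this is the step I would write out most carefully, the reachability lemma itself being the routine part.
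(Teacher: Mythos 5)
Your proposal is correct, and it takes a genuinely different route from the paper, which here offers only a one-sentence proof deferring to the arguments for directed-convex polyominoes (Proposition~\ref{prop:m-basis_directed_convex}) and for $2$-convex polyominoes. Those arguments are path-based contradictions that lean on convexity (e.g.\ deriving a violation of $h$-convexity from a south step on a minimal path), and convexity is precisely what is \emph{not} available for general directed polyominoes; your ``minimal unreachable cell'' lemma is the right replacement. The lemma itself is sound: if $c$ is an unreachable cell of minimal coordinate sum, each of its south/west neighbours, were it in $P$, would be either reachable (making $c$ reachable) or unreachable with smaller coordinate sum, so both are absent; conversely a monotone $n/e$ path from $S$ must enter any cell $c\neq S$ through one of these two neighbours. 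This converts directedness into the purely local statement that $S$ is the only cell of $P$ with both the south and the west neighbour absent, after which the proposition reduces to bookkeeping against the figure. On the one delicate point you flag — that the border patterns might accidentally match $S$ — it is worth noting that the worry dissolves: the bottom-row pattern (a $0$ immediately left of a $1$ touching the lower edge of the bounding rectangle) can only match $S$ when $S$ is not in the leftmost column, but in that case the leftmost column contains a cell unreachable by $n/e$ steps from $S$, so $P$ is not directed anyway and forbidding it is correct; symmetrically for the leftmost-column pattern. What your approach buys is a self-contained, convexity-free proof with an explicit invariant; what the paper's (implicit) approach buys is brevity and uniformity with its other characterizations, at the cost of not actually adapting the convexity-dependent steps.
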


\begin{proof}
 Basing on the definition of directed polyomino and recalling the proof of Proposition~\ref{prop:m-basis_directed_convex}, we can reach our goal using reasonings analogous to Proposition~\ref{2c}.
\end{proof}

We would like to point out that there are families of polyominoes which cannot be described, even using generalized pattern avoidance. For instance, the reader can easily check that one of these families is that of polyominoes having a square shape.

\subsection{Defining new polyomino classes by submatrix avoidance}
In addition to characterizing known classes, the approach of submatrix avoidance may be used to define new classes of polyominoes, the main question being then to give a combinatorial/geometrical characterization of these classes.  We present some examples of such classes, with simple characterizations and interesting combinatorial properties. These examples illustrate that the submatrix avoidance approach in the study of families of polyominoes is promising.

\paragraph{$L$-polyominoes.} Proposition~\ref{prop:lconvex}
states that $L$-convex polyominoes can be characterized
by the avoidance of four matrices: $H$ and $V$, which impose the 
convexity constraint; and $S_1$ and $S_2$, which account for the
$L$-property, or equivalently (by Theorem~\ref{th:ryser}) indicate 
the uniqueness of the polyomino w.r.t its horizontal and vertical
projections. So, it is quite natural to study the
class $\Avp(S_1,S_2)$, which we call the class of {\em $L$-polyominoes}. 
From Theorem~\ref{th:ryser}, it follows that: 

\begin{proposition}
Every $L$-polyomino is uniquely determined by its horizontal and vertical projections.
\end{proposition}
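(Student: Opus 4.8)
The plan is to derive the statement as an immediate corollary of Ryser's theorem (Theorem~\ref{th:ryser}), which has already been recalled above. Recall that an \emph{$L$-polyomino} is by definition an element of $\Avp(S_1,S_2)$, that is, a polyomino whose binary-matrix representation (fixed canonically as in Section~\ref{sec:papc}, drawing $P$ in the positive quadrant touching both axes) contains neither $S_1$ nor $S_2$ as a submatrix. The first thing I would do is simply make this observation explicit: an $L$-polyomino is, in particular, a binary matrix avoiding both $S_1$ and $S_2$.

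Next I would invoke the ``if'' direction of Theorem~\ref{th:ryser}. That theorem asserts that a binary matrix is uniquely determined by its horizontal and vertical projections precisely when it avoids $S_1$ and $S_2$ as submatrices. Applying it to the matrix representing an $L$-polyomino $P$ gives that $P$ is the unique binary matrix sharing its horizontal and vertical projection vectors.

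To finish, I would note that uniqueness among \emph{all} binary matrices is strictly stronger than uniqueness among polyominoes: if $P$ is the only binary matrix with a prescribed pair of projection vectors, then \emph{a fortiori} it is the only polyomino, and in particular the only $L$-polyomino, with those projections. This yields the claim.

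There is essentially no obstacle here, since the entire content is supplied by Theorem~\ref{th:ryser} and the class $\Avp(S_1,S_2)$ was defined exactly to match that theorem's hypothesis. The only point worth a single clarifying remark is that one must use the fixed canonical matrix representation of a polyomino from Section~\ref{sec:papc}, so that ``the horizontal and vertical projections of $P$'' is well defined; once this convention is in place the argument is immediate.
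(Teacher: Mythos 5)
Your proposal is correct and matches the paper's own (implicit) argument exactly: the paper simply states that the proposition ``follows from Theorem~\ref{th:ryser},'' since $L$-polyominoes are by definition the polyominoes avoiding $S_1$ and $S_2$, which is precisely the hypothesis of Ryser's theorem. Your added remark that uniqueness among all binary matrices is \emph{a fortiori} uniqueness among polyominoes is a reasonable clarification, but it does not change the route.
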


From a geometrical point of view, the $L$-polyominoes can be
characterized using the concept of (geometrical) inclusion between rows (resp.
columns) of a polyomino. 
For any polyomino $P$ with $n$ columns, and any rows $r_1=(r_{1;1} \ldots r_{1;n})$, $r_2=(r_{2;1} \ldots r_{2;n})$ of the matrix representing $P$, we say that $r_1$ is {\em geometrically included} in $r_2$ (denoted $r_1 \leqslant r_2$) if, for all $1 \leq i \leq n$ we have that $r_{1;i}=1$ implies $r_{2;i}=1$. Geometric inclusion of columns is defined analogously. 
Two rows (resp. columns) $r_1, r_2$ (resp. $c_1, c_2$) of a polyomino $P$ are said to be {\em comparable} if $r_1\leqslant r_2$ or $r_2\leqslant r_1$ (resp. $c_1\leqslant c_2$ or $c_2\leqslant c_1$). These definitions are illustrated in Figure~\ref{bicentered}. 

The avoidance of $S_1$ and $S_2$ has an immediate interpretation in geometric terms, proving that: 

\begin{proposition}
The class of $L$-polyominoes coincides with the set of the
polyominoes where every pair of rows (resp. columns) are comparable.
\end{proposition}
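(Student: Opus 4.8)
The plan is to reduce the geometric statement to the combinatorial meaning of an occurrence of $S_1$ or $S_2$, exploiting the fact that each of these two matrices records precisely a ``crossing'' between two rows (or two columns). Recall that, by definition, the $L$-polyominoes are exactly the polyominoes in $\Avp(S_1,S_2)$, so it suffices to prove that the binary matrix $M$ representing a polyomino avoids both $S_1$ and $S_2$ if and only if every pair of its rows is comparable for $\leqslant$; the analogous statement for columns will then follow immediately.

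First I would translate incomparability into the language of submatrices. Two rows $r=(r_1,\ldots,r_n)$ and $r'=(r'_1,\ldots,r'_n)$ of $M$ are incomparable precisely when neither support is contained in the other, that is, when there exist column indices $j$ and $j'$ with $r_j=1$, $r'_j=0$ and $r_{j'}=0$, $r'_{j'}=1$. Assuming w.l.o.g. that $j<j'$ and reading the $2\times 2$ submatrix of $M$ on these two columns and on the two rows under consideration, one obtains $S_1$ when $r$ lies above $r'$ in $M$ and $S_2$ when $r$ lies below $r'$. Hence any pair of incomparable rows always produces an occurrence of $S_1$ or of $S_2$.

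Conversely, I would observe that every occurrence of $S_1$ or $S_2$ exhibits two rows and two columns on which the chosen rows disagree in opposite directions, so the two rows involved are necessarily incomparable. Combining the two implications gives the key equivalence: $M$ contains $S_1$ or $S_2$ as a submatrix if and only if $M$ has two incomparable rows. Taking the contrapositive yields exactly the desired equivalence between membership in $\Avp(S_1,S_2)$ and pairwise comparability of the rows.

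Finally, for the ``resp. columns'' part of the statement I would use that the set $\{S_1,S_2\}$ is stable under transposition, since $S_1^{\mathsf{T}}=S_1$ and $S_2^{\mathsf{T}}=S_2$. Therefore $M$ avoids $S_1,S_2$ if and only if $M^{\mathsf{T}}$ does, and applying the row argument to $M^{\mathsf{T}}$ shows that avoidance is also equivalent to pairwise comparability of the columns of $M$. The one point requiring care --- and the step I would treat most attentively --- is the bookkeeping of orientation (which row is ``above'', and whether $j<j'$ or $j>j'$), to make sure that every incomparable configuration is captured by one of the two patterns and that no case is missed; this is the only genuine obstacle, but it reduces to checking the four sign combinations.
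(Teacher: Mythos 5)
Your argument is correct and is exactly the "immediate interpretation" the paper alludes to: it gives no written proof, simply noting that avoidance of $S_1$ and $S_2$ translates directly into pairwise comparability of rows and columns. Your translation of incomparability into an occurrence of $S_1$ or $S_2$ (and back), together with the observation that both matrices are fixed by transposition, is precisely that intended reasoning, carried out carefully.
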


\begin{figure}[htd]
\begin{center}
\includegraphics[width=9cm]{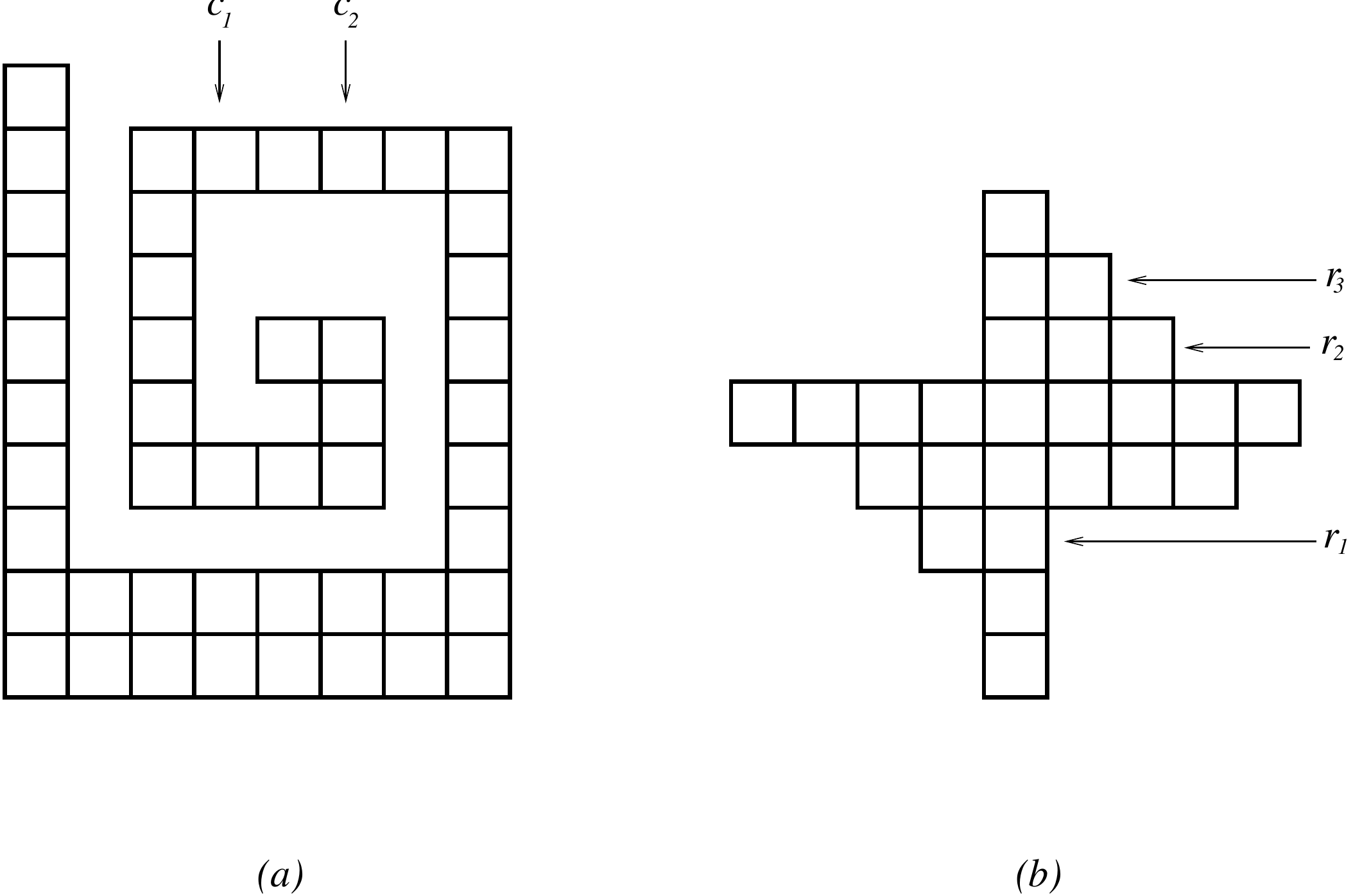}
\caption{$(a)$ a $L$-polyomino, where the reader can check that
every pair of  rows and columns are comparable; for instance,
$c_1\leqslant c_2$; $(b)$ an example of a polyomino which is not an
$L$-polyomino, where the row $r_1$ is not comparable both with  rows
$r_2$ and $r_3$.} \label{bicentered}
\end{center}
\end{figure}

We leave open the problem of studying further the class of
$L$-polyominoes, in particular from an enumerative point of view 
(enumeration w.r.t. the area or the semi-perimeter).

\paragraph*{ The class $\Avp(H', V') $ with $H' = {\footnotesize \left[\begin{array}{c}
          0 \\
          1 \\
          0 
         \end{array}
  \right] }$ and $V' = {\footnotesize\, \left[\begin{array}{ccc}
          0  & 1 & 0 
         \end{array}
  \right] }$.} ~ \\
  
By analogy\footnote{which essentially consists in exchanging $0$ and $1$ in the excluded submatrices} with the class of convex polyominoes (characterized by the avoidance of $H$ and $V$), we may consider the class 
${\cal C}'$ of polyominoes avoiding the two submatrices $H'$ and $V'$ defined above. 
In some sense these objects can be viewed as a dual class to convex polyominoes. Figure \ref{fig:010}$(a)$ shows a polyomino in ${\cal C}'$.

\begin{figure}[htd]
\begin{center}
\includegraphics[width=10cm]{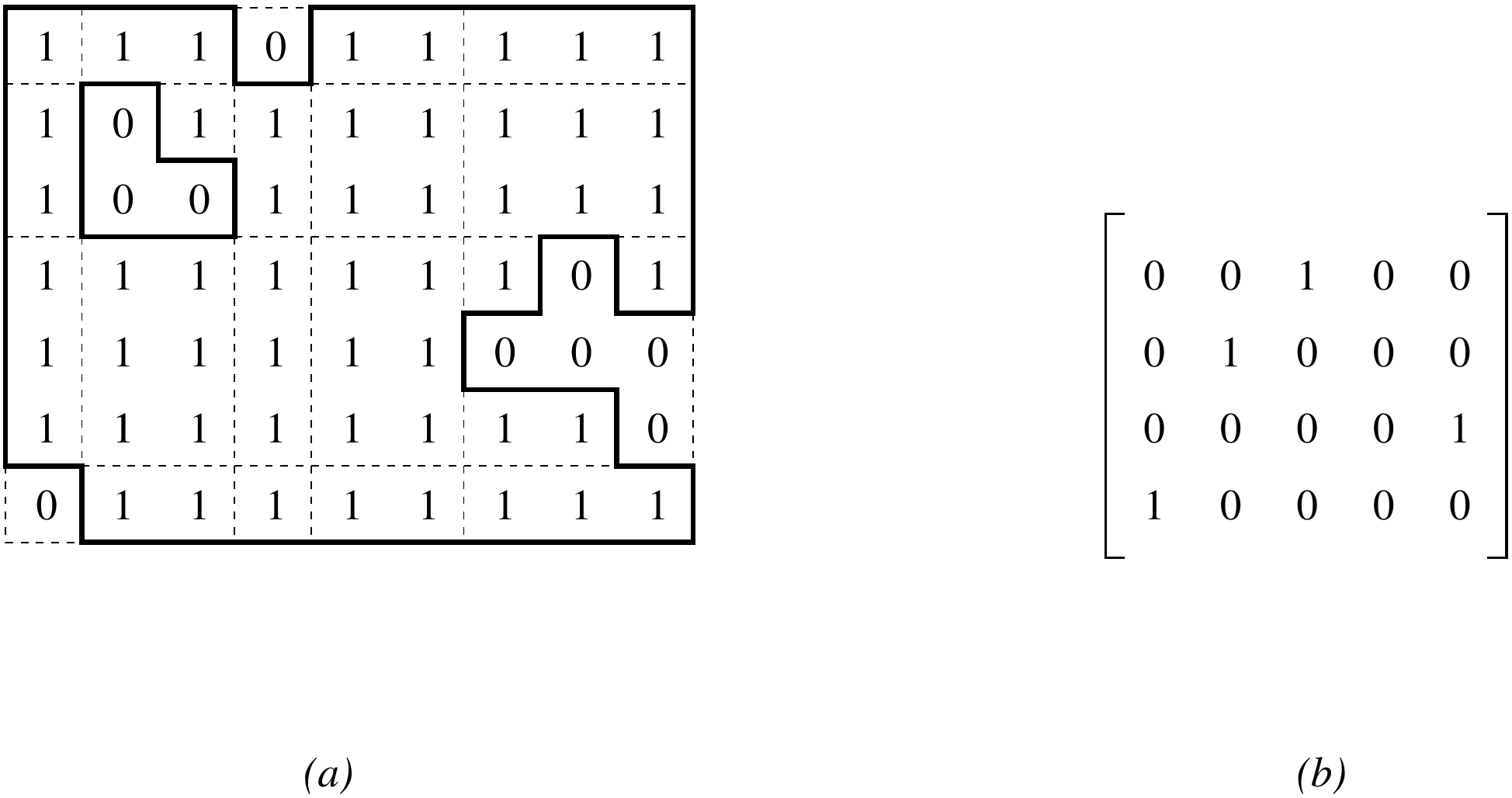}
\caption{$(a)$ a polyomino in ${\cal C}'$ and its decomposition; $(b)$ the corresponding quasi-permutation matrix.}
\label{fig:010}
\end{center}
\end{figure}

The avoidance of $H'$ and $V'$ has a straightforward geometric interpretation, giving immediately that: 

\begin{proposition}
A polyomino $P$ belongs to ${\cal C}'$ if and only if every connected set of cells of maximal length in a row (resp. column) has a contact with the minimal bounding rectangle of $P$. 
\label{prop:contact}
\end{proposition}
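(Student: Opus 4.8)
The plan is to show that the two excluded patterns $V'$ and $H'$ govern the horizontal and vertical runs of cells independently, and that each translates \emph{exactly} into the announced contact condition. Throughout, write $M$ for the binary matrix of $P$, with rows indexed $1,\dots,m$ from bottom to top and columns indexed $1,\dots,n$ from left to right, so that the left, right, bottom and top sides of the minimal bounding rectangle correspond respectively to column $1$, column $n$, row $1$ and row $m$. Here ``a maximal run of cells in a row has contact with the m.b.r.'' is read as touching the left or right side (meeting column $1$ or column $n$), and dually a maximal run in a column has contact when it meets the bottom or top side (row $1$ or row $m$).

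First I would establish the single-pattern statement: \emph{$P$ avoids $V'$ if and only if, in every row, each maximal block of consecutive $1$'s meets column $1$ or column $n$.} For the direct implication I argue by contraposition: if some row $i$ contains a maximal block of $1$'s occupying columns $[a,b]$ with $a>1$ and $b<n$, then by maximality $M(i,a-1)=0$ and $M(i,b+1)=0$, so the three columns $a-1,a,b+1$ restricted to row $i$ read $0,1,0$, an occurrence of $V'$. Conversely, an occurrence of $V'$ provides a row $i$ and columns $j_1<j_2<j_3$ with $M(i,j_1)=0$, $M(i,j_2)=1$, $M(i,j_3)=0$; the maximal block of $1$'s through column $j_2$ then starts at some $a>j_1\ge 1$ and ends at some $b<j_3\le n$, so it touches neither side. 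The observation making this work is that ``a $1$ having a $0$ somewhere to its left \emph{and} a $0$ somewhere to its right in the same row'' is equivalent to ``the maximal block containing that $1$ reaches neither column $1$ nor column $n$'', since maximality forces a $0$ immediately outside the block whenever it stops short of a side.

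The statement for $H'$ comes for free by transposition: applying the previous equivalence to $M^{t}$ (which swaps $H'$ and $V'$, rows and columns, and the relevant pairs of sides) gives that \emph{$P$ avoids $H'$ if and only if, in every column, each maximal block of consecutive $1$'s meets row $1$ or row $m$}. Finally I would combine the two: since ${\cal C}'=\Avp(H',V')$ consists exactly of the polyominoes avoiding both $H'$ and $V'$, a polyomino lies in ${\cal C}'$ iff every maximal horizontal block has contact with the left/right side \emph{and} every maximal vertical block has contact with the top/bottom side, which is precisely the stated condition.

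I do not expect a genuine obstacle in the argument itself: it is a direct translation between a $1\times 3$ (resp.\ $3\times 1$) forbidden pattern and the ``flanked on both sides'' property of a maximal block. The only point requiring care is fixing the correct reading of ``contact with the minimal bounding rectangle'' — left/right for row-blocks and top/bottom for column-blocks, rather than contact with an arbitrary side; under the latter reading the equivalence fails (for instance the cross pentomino meets the weaker condition yet contains both $H'$ and $V'$), so the interpretation must be pinned down before the translation, and the boundary cases $a=1$, $b=n$ (and their column analogues) must be checked exactly as above.
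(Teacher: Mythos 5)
Your proof is correct and is precisely the direct translation the paper has in mind: the paper in fact gives no proof at all, introducing the proposition with the phrase that the avoidance of $H'$ and $V'$ ``has a straightforward geometric interpretation, giving immediately that \ldots'', so your argument simply supplies the omitted details (maximality of a block forces a $0$ immediately outside it whenever it stops short of a side, and conversely any $0,1,0$ occurrence in a row or column traps a block away from both relevant sides). Your closing caveat is also well taken -- as literally stated the proposition is ambiguous, and under the reading where a run lying in the top or bottom row counts as ``having contact'' the equivalence fails (your cross-pentomino example), so the left/right-for-row-blocks, top/bottom-for-column-blocks reading you fix is the one under which the statement is true.
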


The avoidance of $H'$ and $V'$ also ensures that in a polyomino of ${\cal C}'$ every connected set of $0$s has the shape of a convex polyomino, which we call -- by abuse of notation -- a \emph{convex $0$-polyomino} (contained) in $P$. Each of these convex  $0$-polyominoes has a minimal bounding rectangle, which  individuates an horizontal (resp. vertical) strip of cells in $P$, where no other convex $0$-polyomino of $P$ can be found. 
Therefore every polyomino $P$ of ${\cal C}'$ can be uniquely decomposed  in regions of two types: rectangles all made of $1$s (of type $A$) or rectangles bounding a convex $0$-polyomino (of type $B$). Then, we can map $P$ onto a quasi-permutation matrix as follows: each rectangle of type $A$ is mapped onto a $0$, and each rectangle of type $B$ is mapped onto a $1$. See an example in Figure~\ref{fig:010}$(a)$. 

Although this representation is clearly non unique, we believe it may be used for the enumeration of ${\cal C}'$. 
For a start, it provides a simple lower bound on the number of polyominoes in ${\cal C}'$ whose bounding rectangle is a square. 

\begin{proposition}\label{prop:stanley_wilf} 
Let $c'_n$ be the number of polyominoes in ${\cal C}'$ whose bounding rectangle is an $n\times n$ square. 
For $n\geq 2$, $c'_n \geq \lfloor \frac{n}{2} \rfloor \, ! \, .$
\end{proposition}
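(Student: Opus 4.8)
The plan is to prove the bound by exhibiting an explicit injection from the symmetric group $\sym_m$, with $m=\lfloor n/2\rfloor$, into the set of polyominoes of ${\cal C}'$ whose minimal bounding rectangle is an $n\times n$ square; this immediately yields $c'_n\geq m!=\lfloor n/2\rfloor!$. The cases $n\in\{2,3\}$ are trivial, since there $m=1$ and the full $n\times n$ square alone already witnesses $c'_n\geq 1=1!$, so I would assume $n\geq 4$, where the inequality $\lfloor n/2\rfloor\leq n-2$ guarantees enough interior rows and columns to carry out the construction.

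Next I would describe the construction. For each $\sigma\in\sym_m$ let $P_\sigma$ be the binary matrix obtained from the full $n\times n$ matrix of $1$'s by changing to $0$ exactly the $m$ interior entries
\[
(i+1,\sigma(i)+1),\qquad i=1,\dots,m .
\]
Since $\sigma$ is a permutation and $m+1\leq n-1$, these $m$ ``holes'' lie strictly inside the square and no two of them share a row or a column. In particular the first and last rows and the first and last columns of $P_\sigma$ consist entirely of $1$'s, so the minimal bounding rectangle of $P_\sigma$ is exactly the $n\times n$ square.

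The verification then splits into two independent checks. First, $P_\sigma\in{\cal C}'=\Avp(H',V')$: because each row and each column of $P_\sigma$ contains \emph{at most one} $0$, no row can realize the pattern $V'=[\,0\ 1\ 0\,]$ and no column can realize $H'$, both of which require two $0$'s separated by a $1$; equivalently, every maximal run of cells in a row (resp. column) reaches the left/right (resp. top/bottom) side of the bounding rectangle, which is exactly the geometric criterion of Proposition~\ref{prop:contact}. Second, $P_\sigma$ is a genuine polyomino: the boundary frame of $1$'s is edge-connected, and any interior $1$-cell reaches this frame by sliding along its own row (which contains at most one hole) to column $1$ or column $n$, so $P_\sigma$ is connected. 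Finally, distinct permutations yield distinct $0$-sets, hence distinct matrices of the same dimension, hence distinct polyominoes, so $\sigma\mapsto P_\sigma$ is injective and the bound follows.

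The only genuinely delicate point, and the step I expect to require the most care, is the absence of cut points, since a permutation may place two holes in diagonally adjacent cells, producing locally a corner contact of two $1$-cells; the key observation is that such a contact is always bypassed by the surrounding frame (every $1$-cell is joined to the frame through a chain of edge-adjacencies), so removing a single lattice vertex never disconnects $P_\sigma$. I would also remark that the construction is deliberately wasteful — tightly packing one hole per interior line in fact gives $c'_n\geq(n-2)!$ — but the weaker bound $\lfloor n/2\rfloor!$ already suffices to show that ${\cal C}'$ grows faster than any exponential bound $c^{n}$, which is the point of the statement.
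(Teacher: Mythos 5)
Your proof is correct, and it reaches the bound by a genuinely different construction from the paper's. The paper also injects $\sym_m$ into ${\cal C}'$, but it does so by inflating the permutation matrix of $\pi\in\sym_m$: every entry is replaced by a $2\times 2$ block, an all-ones block for each $0$ entry and a block with three $0$ entries for each $1$ entry (with special blocks for the $1$ in the leftmost column and the $1$ in the topmost row, so that the bounding box remains a full $2m\times 2m$ square and connectivity is preserved). You instead start from the solid $n\times n$ square and punch one single-cell hole per permutation entry at non-attacking interior positions. Both routes produce $m!$ distinct members of ${\cal C}'$, but yours is easier to verify: with at most one $0$ per row and per column, the avoidance of $V'=[\,0\ 1\ 0\,]$ and of $H'$ is immediate, whereas the paper's block construction requires a little case checking; moreover the paper's map as written only produces even side lengths $n=2m$ and leaves the odd case to an implicit padding, while your version treats all $n\geq 2$ uniformly. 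Your observation that one could pack a hole into each of the $n-2$ interior rows and columns, improving the bound to $(n-2)!$, is also correct and goes beyond what the paper states. The one delicate point you flag --- diagonally adjacent holes creating a corner contact --- is handled correctly: that contact point is not a cut point, since the two cells meeting there remain edge-connected through the all-ones frame.
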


\begin{proof}
The statement directly follows from a mapping from permutations of size  $m \geq 1$ to polyominoes in ${\cal C}'$ whose bounding rectangle is an $2m\times 2m$ square, and defined as follows.  
From a permutation $\pi$, we replace every entry of its permutation matrix by a $2\times 2$ matrix according to the following rules. \\
Every $0$ entry is mapped onto a $2 \times 2$ matrix of type $A$. \\
The $1$ entry in the leftmost column is mapped onto ${\footnotesize \left[\begin{array}{cc}
          0 & 1\\
          0 & 0 
         \end{array}\right]}$. \\
The $1$ entry in the topmost row (if different) is mapped onto ${\footnotesize \left[\begin{array}{cc}
          0 & 0\\
          1 & 0
         \end{array}\right]}$. \\
Every other $1$ entry is mapped onto ${\footnotesize \left[\begin{array}{cc}
          1 & 0\\
          0 & 0 
         \end{array}\right]}$. 
         
This mapping (illustrated in Figure~\ref{fig:inv}) guarantees that the set of cells obtained is connected (hence is a polyomino), and avoids the submatrices $H'$ and $V'$, concluding the proof.
\end{proof}

\begin{figure}[htd]
\begin{center}
\includegraphics[width=9cm]{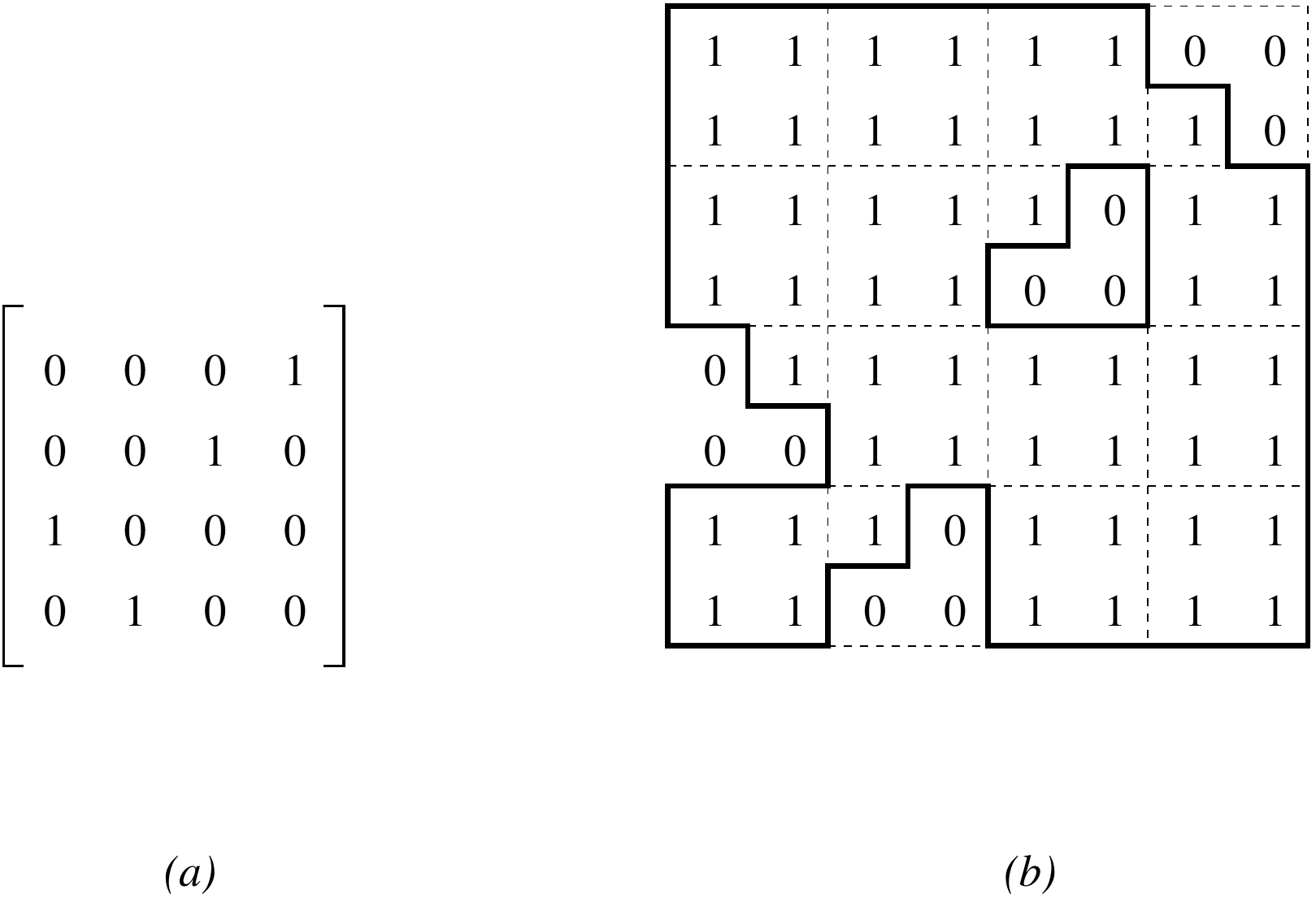}
\caption{$(a)$ a permutation matrix of dimension $4$; $(b)$ the corresponding polyomino of dimension $8$ in ${\cal C}'$.}
\label{fig:inv}
\end{center}
\end{figure}

\paragraph{Polyominoes avoiding rectangles.} Let $O_{m,n}$ be set of rectangles -- binary pictures  with all the entries equal to $1$ -- of dimension $m \times n$ (see Figure \ref{serp}~$(a)$). With $n=m=2$ these objects (also called {\em snake-like polyominoes}) have a simple geometrical characterization.

\begin{figure}[htd]
\begin{center}
\includegraphics[width=8cm]{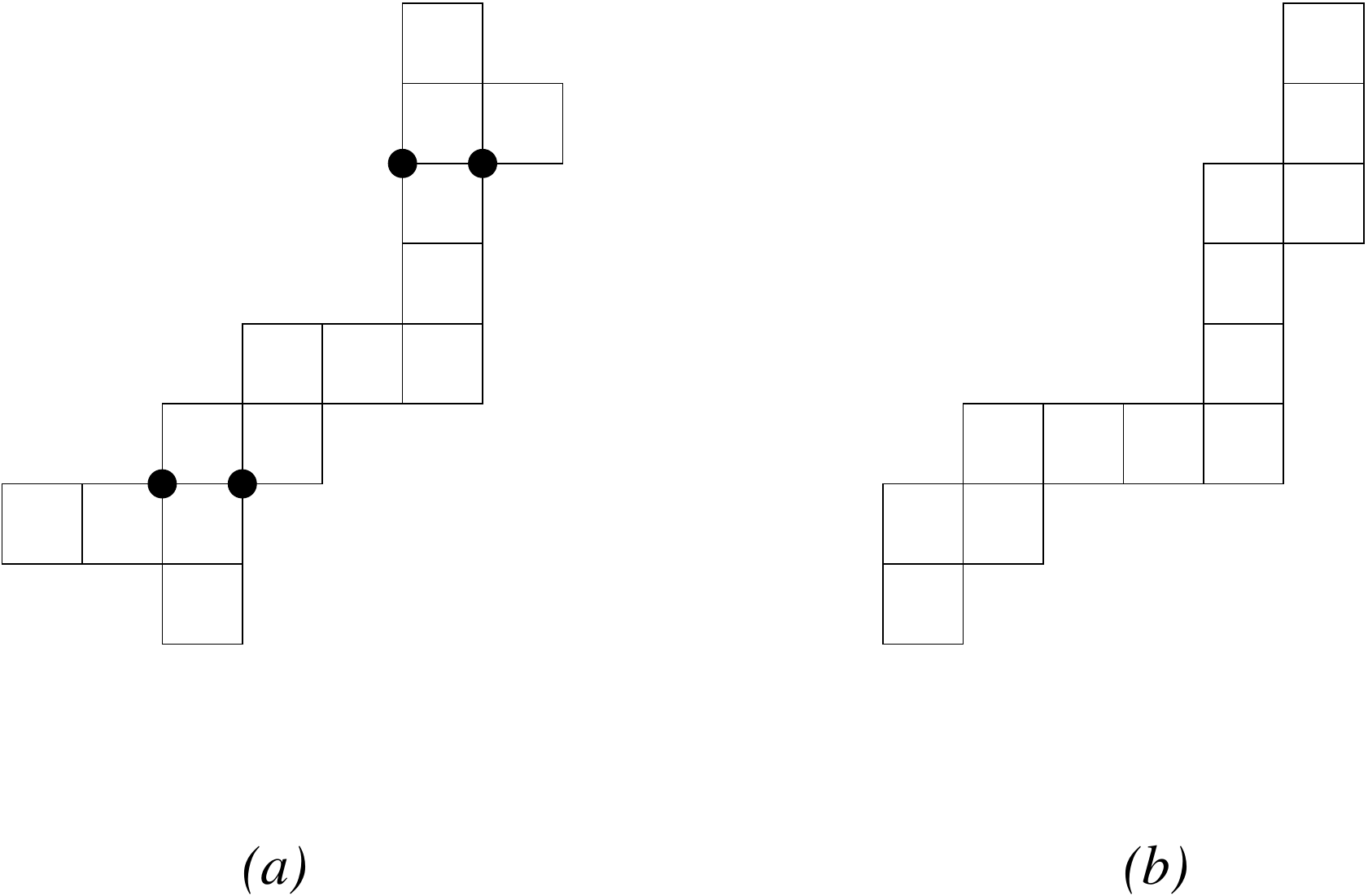}
\caption{$(a)$ a snake-like polyomino; $(b)$ a snake.}
\label{serp}
\end{center}
\end{figure}

\begin{proposition}
Every snake-like polyomino can be uniquely decomposed into three parts: a
unimodal staircase polyomino oriented with respect to two axis-parallel directions $d_1$ and $d_2$ and
two (possibly empty) $L$-shaped polyominoes placed at the extremities of the staircase.
These two $L$-shaped polyominoes have to be oriented with respect
to $d_1, d_2$. 
\end{proposition}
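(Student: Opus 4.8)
The plan is to convert the submatrix condition defining snake-like polyominoes into a rigid description of their rows and columns, and then to read the three-part decomposition directly off this description. Recall that a snake-like polyomino avoids the full $2\times 2$ block $O_{2,2}$. The first step is a \emph{thinness lemma}: since each row and each column of such a thin polyomino is an interval (here I would invoke the row- and column-convexity underlying these objects, i.e.\ the avoidance of $H$ and $V$ from Proposition~\ref{prop:m-basis_convex}), and since two consecutive rows whose filled intervals overlapped in two or more columns would contain a copy of $O_{2,2}$, any two consecutive rows share \emph{at most} one column; connectivity together with column-convexity forces them to share \emph{at least} one, hence \emph{exactly} one. The same holds for consecutive columns. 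Thus the polyomino is a chain of horizontal (equivalently vertical) bars glued along single pivot cells.

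From the thinness lemma I would encode the polyomino by recording, for each pair of consecutive bars, whether the upper bar lies to the left or to the right of the shared pivot; together with the bar lengths this data reconstructs the polyomino uniquely. I then read the two axis-parallel directions $d_1,d_2$ off the global drift of the chain, and isolate the \emph{maximal central run} of consecutive bars along which this left/right (and up/down) datum is constant, that is, along which the chain advances monotonically with respect to $d_1$ and $d_2$. This central run is precisely the unimodal staircase, and the key point to check is that inside it the progression cannot reverse: a reversal would, by the thinness lemma, force either a branching cell (contradicting the one-pivot structure) or a second cell of overlap between two bars (contradicting the avoidance of $O_{2,2}$).

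Next I would treat the two extremities. The cells lying beyond the last monotone step at each end can, by thinness and convexity, extend only in the single remaining admissible direction before meeting the bounding rectangle; I would show that the residual cells there form (at most) one horizontal bar meeting one vertical bar at a single corner, that is, an \emph{$L$-shaped polyomino}, necessarily oriented with respect to $d_1,d_2$ and possibly empty. Since a snake-like polyomino has exactly two extremities, this produces the two required $L$-caps. Uniqueness of the decomposition then follows by observing that $d_1$ and $d_2$ are intrinsic (they are determined by the global shape), that the staircase is maximal by construction, and hence that the two cut points separating it from the caps are forced; the degenerate situations (an empty cap, a polyomino that is a single bar, or one that is itself an $L$ or a pure staircase) are absorbed by allowing the parts to be empty.

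The main obstacle will be the staircase step: making the phrase \emph{unimodal staircase oriented with respect to $d_1,d_2$} precise and simultaneously proving that the central part genuinely has this monotone structure and that its junction with each $L$-cap is unambiguous. The delicate case analysis lives here — one must enumerate the four local orientations of a pivot/shift and use the interplay of convexity and $O_{2,2}$-avoidance to rule out any configuration at a junction that would admit two distinct legal cuts, which is exactly what guarantees both existence and uniqueness of the decomposition.
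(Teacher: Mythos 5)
The paper offers no proof of this proposition at all --- it is one of the characterizations ``omitted for brevity'' at the end of Section~\ref{sec:known_classes_poly} --- so there is nothing of the authors' to compare your argument against; I can only judge the sketch on its own terms. Your overall plan (a thinness lemma turning the polyomino into a chain of bars glued at single pivot cells, then a monotone core with two terminal $L$-caps) is the natural route, but it has a genuine gap at its very first step: you ``invoke'' row- and column-convexity as if snake-like polyominoes avoided $H$ and $V$ by definition or by Proposition~\ref{prop:m-basis_convex}. They do not: the class is $\Avp(O_{2,2})$ and nothing more, and Proposition~\ref{prop:m-basis_convex} only describes the class of convex polyominoes. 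Convexity does hold here, but it is a consequence of $O_{2,2}$-avoidance plus connectivity that must be proved. One way: if row $i$ had cells in columns $j_1<j_3$ but none in some column $j_2$ between them, take a shortest internal path from $(i,j_1)$ to $(i,j_3)$ and a maximal excursion of this path off row $i$ that meets column $j_2$; the excursion stays on one side of row $i$ and leaves and re-enters it by vertical steps in columns $c_1$ and $c_2$, so the four cells $(i,c_1)$, $(i,c_2)$ and their neighbours in the adjacent row all belong to $P$. If $c_1\neq c_2$ this is a copy of $O_{2,2}$; if $c_1=c_2$, self-avoidance forces the excursion to be the single cell in column $j_2=c_1$, whence $(i,j_2)\in P$, a contradiction. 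Without this step your thinness lemma has no starting point. Note also that once convexity is in hand, ``consecutive rows share at most one column'' upgrades to ``\emph{any} two rows share at most one column'' (a double overlap between distant rows propagates, by column-convexity, down to a consecutive pair), and it is this global form --- not the local one --- that prevents the chain of bars from folding back on itself.

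The second, larger issue is that the heart of the statement --- that all non-monotone behaviour is confined to the two extremities, that what sits there is exactly one horizontal and one vertical bar meeting in a corner (an $L$ oriented compatibly with $d_1,d_2$), and that the two cut points separating the caps from the staircase are forced, so that the decomposition is unique --- is announced rather than carried out; you flag it yourself as ``the main obstacle'' where ``the delicate case analysis lives''. As it stands the proposal is a credible plan rather than a proof: the convexity lemma above and the explicit case analysis at the turning points (including the degenerate shapes: a single bar, a pure $L$, a pure staircase) are what must still be supplied.
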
 

We have studied the classes $\Avp(O_{m,n})$, for other values of $m,n$, obtaining similar characterizations which here are omitted for brevity.

\paragraph{Snakes.}
Let us consider the family of {\em snake-shaped} polyominoes (briefly, {\em snakes}) -- as that shown in Fig.~\ref{serp}~$(b)$:

\begin{proposition}
The family of snakes is a polyomino class, which can be described by the avoidance of the following polyomino patterns:

\begin{center}
\includegraphics[width=12cm]{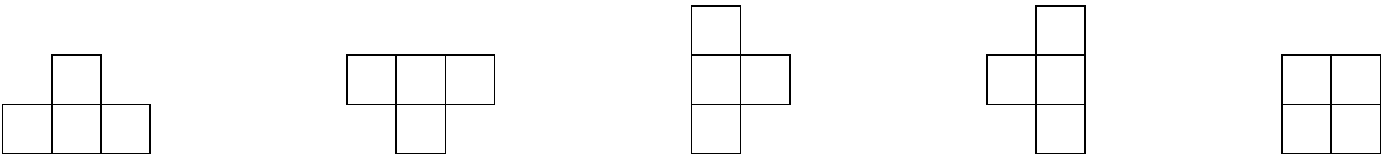}
\end{center}
\end{proposition}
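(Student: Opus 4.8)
The plan is to prove both assertions simultaneously by exhibiting the family $\mathcal{S}$ of snakes as $\mathcal{S}=\Avp(\mathcal{M})$, where $\mathcal{M}$ is the displayed set of polyomino patterns. Once this identity is established, the statement that snakes form a polyomino class requires no separate argument: by Remark~\ref{rem:submatrix_avoidance_implies_class} every set of the form $\Avp(\mathcal{M})$ is downward closed for $\polypattern$, hence is a polyomino class. So the entire burden falls on the characterization, and it is worth stressing that one should \emph{not} try to prove downward-closure of $\mathcal{S}$ directly (this is delicate, since deleting rows or columns may bring previously non-adjacent cells into contact and so create new local configurations). Deriving class-hood from the avoidance description sidesteps this entirely.

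To prove $\mathcal{S}=\Avp(\mathcal{M})$ I would follow the two-inclusion, path-based template already used for directed-convex polyominoes (Proposition~\ref{prop:m-basis_directed_convex}) and for $L$-convex polyominoes (Proposition~\ref{prop:m-basis_lconvex}). For the inclusion $\mathcal{S}\subseteq\Avp(\mathcal{M})$ (soundness), I would take a snake $P$ and argue by contraposition that an occurrence of any $M\in\mathcal{M}$ inside $P$ produces a local obstruction incompatible with the winding thin shape of Figure~\ref{serp}$(b)$; the care needed here is that an occurrence of $M$ as a \emph{submatrix} selects rows and columns of $P$ that need not be adjacent, so the argument must be carried out at the level of the genuine edge-adjacencies of $P$, using connectivity and the absence of cut points, rather than by reading off matrix entries. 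For the reverse inclusion $\Avp(\mathcal{M})\subseteq\mathcal{S}$ (completeness), I would take a polyomino $P$ avoiding every pattern of $\mathcal{M}$ and show it has the snake shape, again by contraposition: isolate a minimal local feature witnessing that $P$ is \emph{not} a snake, and extract, by discarding the intervening rows and columns, one of the displayed patterns as a submatrix. By Proposition~\ref{prop:description_p-basis} and the uniqueness of the $p$-basis (Proposition~\ref{prop:polyomino_basis}), it then suffices to verify that the patterns in $\mathcal{M}$ are exactly the minimal elements of the complement of $\mathcal{S}$ for $\polypattern$.

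The main obstacle is the completeness direction, namely the \emph{exhaustiveness} of the list $\mathcal{M}$: I must be certain that every way in which a polyomino can fail to be a snake gives rise to at least one displayed pattern, and, conversely, that no listed pattern is redundant or fails minimality. This amounts to a finite but somewhat intricate case analysis of the possible local shapes a snake-violating feature can take, carried out up to the eight symmetries of the square, and it is complicated precisely by the gap between matricial containment and geometric adjacency noted above: the clean equivalence ``$P$ contains a pattern of $\mathcal{M}$ if and only if $P$ fails to be a snake'' has to be proved geometrically, not by matrix bookkeeping alone. Once that equivalence is secured, both inclusions follow and the proposition is complete.
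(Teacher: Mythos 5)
Your overall strategy is sound and matches how the paper handles the neighbouring results: prove the identity $\mathcal{S}=\Avp(\mathcal{M})$ by two inclusions, then read off class-hood from Remark~\ref{rem:submatrix_avoidance_implies_class}, exactly as is done for directed-convex polyominoes (Proposition~\ref{prop:m-basis_directed_convex}) and $L$-convex polyominoes (Proposition~\ref{prop:m-basis_lconvex}). For what it is worth, the paper itself states this proposition with \emph{no} proof at all, so your plan is already more explicit than what the thesis provides.

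Nevertheless, as a proof attempt it has a genuine gap: the entire mathematical content is deferred. You correctly identify that everything hinges on the exhaustiveness and soundness of the list $\mathcal{M}$ --- that every polyomino failing to be a snake contains one of the displayed patterns as a submatrix, and that no snake does --- and then you stop precisely there, calling it ``a finite but somewhat intricate case analysis'' to be ``secured.'' That case analysis \emph{is} the proof; without it nothing has been established. The difficulty is compounded by two issues you do not confront. First, the paper never defines ``snake'' formally (it only points to Figure~\ref{serp}~$(b)$), and a completeness argument cannot even begin until the class $\mathcal{S}$ is pinned down combinatorially (e.g.\ as polyominoes in which every cell has at most two edge-neighbours and no $2\times 2$ block of cells occurs, or some equivalent local description). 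Second, your appeal to Proposition~\ref{prop:polyomino_basis} to reduce the problem to identifying the minimal non-snakes is slightly misplaced: that proposition presupposes that $\mathcal{S}$ is downward closed, which is exactly what you have declared you will not prove directly. Knowing that $\mathcal{M}$ is the set of minimal non-snakes yields $\Avp(\mathcal{M})\subseteq\mathcal{S}$ by well-foundedness, but the inclusion $\mathcal{S}\subseteq\Avp(\mathcal{M})$ (no snake contains a listed pattern, even as a non-contiguous submatrix) still needs the direct geometric argument you sketch, and that argument is nowhere carried out.
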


\paragraph{Hollow stacks.} Let us recall that a stack polyomino is a convex polyomino containing two adjacent corners of its minimal bounding rectangle (see Fig.~\ref{hollow}~$(a)$). Stack polyominoes clearly form a polyomino class, described by the avoidance of the patterns:
\begin{center}
\includegraphics[width=4.5cm]{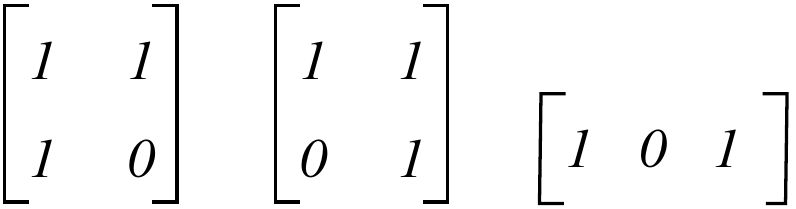}
\end{center}
A {\em hollow stack} (polyomino) is a polyomino obtained from a stack polyomino $P$ by removing from $P$ a stack polyomino $P'$ which is geometrically contained in $P$ and whose basis lie on the basis of the minimal bounding rectangle of $P$. Figure~\ref{hollow}~$(b)$, $(c)$  depict two hollow stacks. 

\begin{figure}[htd]
\begin{center}
\includegraphics[width=11cm]{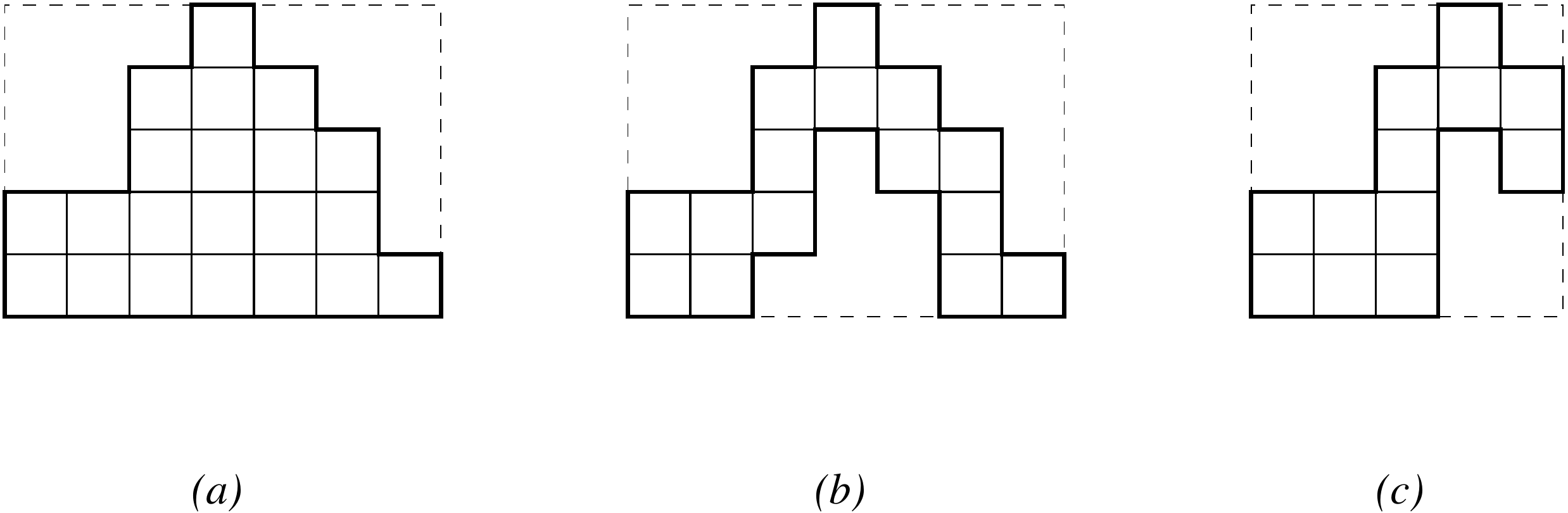}
\caption{$(a)$ a stack polyomino; $(b)$, $(c)$: hollow stacks.}
\label{hollow}
\end{center}
\end{figure}

\begin{proposition}
The family $\cal H$ of hollow stack polyominoes forms a polyomino class with $p$-basis given by:
\begin{center}
\includegraphics[width=4.5cm]{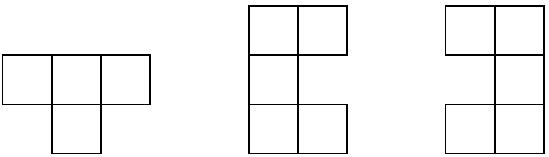}
\end{center}

\end{proposition}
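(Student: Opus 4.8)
The plan is to describe $\H$ intrinsically at the level of binary matrices, and then to derive both that $\H$ is a polyomino class and what its minimal obstructions are, the latter being exactly the content of Proposition~\ref{prop:polyomino_basis}. For a polyomino with $n$ columns write $t_i$ (resp. $b_i$) for the index of the topmost (resp. bottommost) $1$ in column $i$. The first step is to prove that a polyomino is a hollow stack if and only if it is column-convex, the top profile $(t_1,\dots,t_n)$ is unimodal (non-decreasing then non-increasing), and the bottom profile satisfies $b_i=1$ outside a single contiguous block $[l,r]$ of columns, on which the cavity depths $b_i-1$ form a unimodal sequence. The forward implication is immediate from the definition: writing a hollow stack as $P\setminus P'$ with $P$ a stack and $P'$ a stack lying on the base of $P$, column-convexity and unimodality of $(t_i)$ are properties of stack polyominoes, whereas the support of $(b_i-1)$ being an interval and the unimodality of the cavity depths come from $P'$ being a stack on the base. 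For the converse I would reconstruct $P$ by filling each column from the base up to $t_i$ and take $P'$ to be the cavity; one checks that both are stacks, that $P'$ is geometrically contained in $P$ with base on the base of $P$, and that $P\setminus P'$ is the original polyomino, taking care of the degenerate cases (empty cavity, cavity block touching a side).

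Next I would use this characterization to prove that $\H$ is downward closed for $\polypattern$, and hence a polyomino class in the sense of Definition~\ref{def:polyomino_class}. Deleting a column removes one term from each of $(t_i)$ and $(b_i)$: a subsequence of a unimodal sequence is unimodal, and an interval of columns stays an interval after deletion, so both the top and bottom conditions persist. Deleting a row preserves column-convexity, since removing one cell from a contiguous column leaves it contiguous after relabelling, and it only shifts the two profiles rigidly, preserving their unimodality. Thus every polyomino obtained from a hollow stack by deleting rows and columns is again a hollow stack, so $\H$ is a polyomino class (this can also be seen a posteriori from Remark~\ref{rem:submatrix_avoidance_implies_class} once $\H$ is written as an avoidance set, but the direct argument is what licenses the use of Proposition~\ref{prop:polyomino_basis}).

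With $\H$ established as a class, Proposition~\ref{prop:polyomino_basis} identifies its $p$-basis as the unique antichain of minimal polyominoes not lying in $\H$. A polyomino fails to be a hollow stack precisely when one of the three conditions above fails: it is not column-convex, or its top profile has a valley, or its bottom profile is bad — that is, it has a $1$ sitting strictly between two deeper cavity columns (two cavities), a cavity whose depths are not unimodal, or a genuine interior hole not reaching the base. For each such type of failure I would exhibit the minimal polyomino that realises it and verify, by a finite inspection, that each of its proper sub-polyominoes is a hollow stack; this produces exactly the list of patterns drawn in the statement. Conversely, given any polyomino $Q\notin\H$, I would locate a failure of one of the three conditions and read off, from the cells around it, an occurrence of the corresponding minimal pattern, so that $Q$ contains an element of the claimed basis; together with Proposition~\ref{prop:description_p-basis} this pins down the $p$-basis uniquely.

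The routine parts here are the class property and the finite check that each displayed pattern is minimally outside $\H$. The main obstacle is the completeness half of the last step: showing that \emph{every} polyomino outside $\H$ already contains one of the finitely many listed patterns. The difficulty is that a defective bottom profile — a second cavity, a non-unimodal cavity, or a hole detached from the base — may be spread across arbitrarily many columns and rows, so I must argue that one can always select a bounded window of rows and columns exhibiting the forbidden configuration while keeping the selected cells connected, so that the extracted witness is itself a legitimate polyomino pattern rather than a mere submatrix.
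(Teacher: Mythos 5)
The paper states this proposition without proof, so there is nothing of the authors' to compare your plan against; I can only assess it on its own terms. Your structural characterization of hollow stacks (column-convex, unimodal top profile, bottom profile equal to $1$ outside one contiguous block of columns on which the cavity depths form a unimodal sequence) is correct, and your downward-closure argument is essentially sound, though two points need repair. Deleting a row does \emph{not} shift the profiles rigidly: a column whose span lies entirely above the deleted row shifts down by one, while a column whose span contains it only loses a cell, so you must check explicitly that subtracting $1$ from exactly those entries of a unimodal sequence that meet a threshold preserves unimodality (it does, but not for the reason you give). Also, downward closure \emph{forces} the empty-cavity case into $\H$ --- every hollow stack contains stacks and rectangles as patterns --- so the ``degenerate case'' you defer is not optional: it determines how the definition must be read for the first claim of the proposition to be true at all.

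The genuine gap is the second half of the statement. You never exhibit the basis patterns, and the completeness step --- that the displayed patterns are \emph{all} of the minimal polyominoes outside $\H$, equivalently that every polyomino outside $\H$ other than the listed ones properly contains a smaller polyomino outside $\H$ --- is precisely the content of the $p$-basis claim, and you explicitly leave it as ``the main obstacle'' without resolving it. Moreover your description of that obstacle misidentifies the difficulty: for $P \polypattern Q$ one does not need the selected cells of $Q$ to be connected inside $Q$; one needs the submatrix of $Q$ on the selected rows and columns to coincide \emph{exactly} with the matrix of $P$, zero entries included. So the issue is not connectivity of a window but matching the $0$s --- for instance, witnessing a two-cavity defect requires choosing rows and columns whose intersection pattern is precisely something like $\left[\begin{smallmatrix}1&1&1\\0&1&0\end{smallmatrix}\right]$ --- and a descent argument (any polyomino outside $\H$ that is not one of the listed patterns admits a row or column whose deletion leaves a polyomino still outside $\H$) is the cleaner route than direct extraction of a bounded window. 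Until that case analysis is actually carried out, the $p$-basis half of the proposition remains unproved.
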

%

\paragraph{Rectangles with rectangular holes.} Let $\cal R$ be the class of polyominoes obtained from a rectangle by removing sets of cells which have themselves a rectangular shape, and such that there is no more than one connected set of $0$'s for each row and column. The family $\cal R$ can easily be proved to be a polyomino class, and moreover:

\begin{figure}[htd]
\begin{center}
\includegraphics[width=4.5cm]{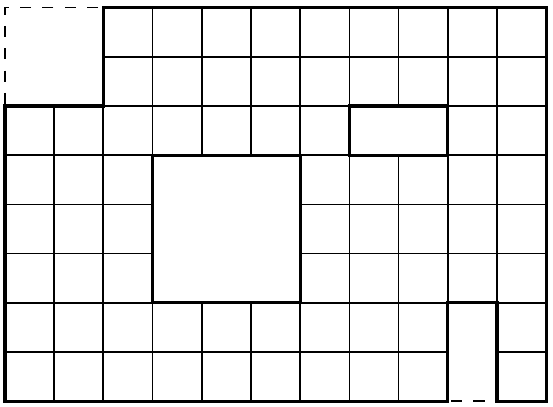}
\caption{A rectangle with rectangular holes.}
\label{hollow}
\end{center}
\end{figure}

\begin{proposition}
The class $\cal R$ can be described by the avoidance of the patterns:
$$\left[ \begin{array}{ccc}
0 & 1 & 0 \end{array} \right], \, 
\left[ \begin{array}{c}
0 \\
1 \\
0 \end{array} \right] \, 
\left[ \begin{array}{cc}
1 & 0 \\
0 & 0 \end{array} \right] \, 
\left[ \begin{array}{cc}
0 & 1 \\
0 & 0 \end{array} \right] \, 
\left[ \begin{array}{cc}
0 & 0 \\
1 & 0 \end{array} \right] \, 
\left[ \begin{array}{cc}
0 & 0 \\
0 & 1 \end{array} \right] \, .$$
\end{proposition}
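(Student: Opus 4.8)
The plan is to prove the two inclusions $\mathcal{R} \subseteq \Avp(\mathcal{N})$ and $\Avp(\mathcal{N}) \subseteq \mathcal{R}$, where $\mathcal{N}$ denotes the set of six forbidden patterns: the two triples $V'=\left[\begin{smallmatrix}0&1&0\end{smallmatrix}\right]$ and $H'$, together with the four $2\times 2$ matrices having exactly one $1$ entry. I would first record the elementary reformulation that avoiding $V'$ and $H'$ is equivalent to requiring that the $0$-entries of every row and every column form a single contiguous block: a pattern $\left[\begin{smallmatrix}0&1&0\end{smallmatrix}\right]$ (resp. its transpose) occurs in a line precisely when some $1$ lies strictly between two $0$'s of that line, so its avoidance forces the $0$'s of each row and column to be consecutive. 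This is exactly the part of the definition of $\mathcal{R}$ asking that there be at most one connected set of $0$'s per row and per column.

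\emph{First direction} ($P \in \mathcal{R} \Rightarrow P$ avoids $\mathcal{N}$). Avoidance of $V',H'$ is immediate from the reformulation above, since each hole of $P$ is a rectangle and no two holes share a row or a column. For the four $2\times 2$ patterns, suppose $P$ contained one, say with its single $1$ at the corner $(i_2,j_1)$ and $0$'s at $(i_1,j_1),(i_1,j_2),(i_2,j_2)$, for rows $i_1<i_2$ and columns $j_1<j_2$. Since distinct holes share neither a row nor a column, the three $0$-cells all belong to one rectangular hole $Q$: the $0$'s at $(i_1,j_2)$ and $(i_2,j_2)$ force $Q$ to occupy column $j_2$ on both rows, and $(i_1,j_1)$ lies in the same row-run as $(i_1,j_2)$. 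As $Q$ is a rectangle containing $(i_1,j_1)$ and $(i_2,j_2)$, it must also contain $(i_2,j_1)$, contradicting $M(i_2,j_1)=1$. The remaining three corner positions are handled identically up to symmetry.

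\emph{Second direction} ($P$ avoids $\mathcal{N} \Rightarrow P \in \mathcal{R}$). From avoidance of $V',H'$ the $0$'s of $P$ form a single run in each row and column, so every connected component $C$ of the $0$-set is HV-convex; it remains to show each such $C$ is a rectangle. Writing $[\ell(r),u(r)]$ for the $0$-run of $C$ in a row $r$ of its row-range $[r_1,r_2]$, I would argue that if $C$ is not a rectangle then some pair of consecutive rows $i,i+1$ satisfies, say, $u(i)<u(i+1)$. A short connectivity lemma (any path inside $C$ joining its cells in rows $\le i$ to its cells in rows $\ge i+1$ must cross via a vertical edge between rows $i$ and $i+1$) yields a column $j_0$ with $M(i,j_0)=M(i+1,j_0)=0$; combined with $M(i,u(i+1))=1$ and $M(i+1,u(i+1))=0$ this exhibits the pattern $\left[\begin{smallmatrix}0&0\\0&1\end{smallmatrix}\right]$, a contradiction. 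The case $u(i)>u(i+1)$ and the analogous statements for the left endpoints $\ell(r)$ produce the other three corner patterns. Hence each component is a rectangle, and since every row and column carries at most one $0$-run, distinct rectangular components share neither a row nor a column. Thus $P$ is its bounding rectangle with disjoint rectangular holes removed, one per row and column at most, i.e. $P\in\mathcal{R}$.

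The main obstacle is the rectangularity argument of the second direction, and specifically the small connectivity lemma guaranteeing the overlap column $j_0$ between two consecutive rows of a component; once that is in place, the single-run reformulation of $V',H'$-avoidance and the routine case analysis over the four symmetric corner configurations complete the proof.
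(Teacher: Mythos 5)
Your argument is correct. Note first that the paper itself states this proposition without any proof (it appears in the list of new classes defined by submatrix avoidance, where the claims are left unproved), so there is no in-paper argument to compare against; your write-up supplies the missing justification. Both inclusions check out: the reformulation of avoiding $\left[\begin{smallmatrix}0&1&0\end{smallmatrix}\right]$ and its transpose as ``the $0$'s of each row and each column form a single contiguous run'' is exactly right, and it is the hinge of the whole proof. In the first direction, the observation that the two $0$'s in a common column (resp.\ row) must lie in the same hole --- because that column (resp.\ row) meets at most one hole --- correctly forces all three $0$'s of a would-be $2\times 2$ occurrence into one rectangle, whose fourth corner then contradicts the $1$ entry. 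In the second direction, the facts you need are all available: a connected component of the $0$-set meets every row of its row-range, its intersection with such a row is the \emph{entire} $0$-run of that row (since a contiguous run is connected), and edge-connectivity across consecutive rows $i,i+1$ yields the overlap column $j_0$ with $0$'s in both rows; together with the $1$ at $(i,u(i+1))$ and the $0$ at $(i+1,u(i+1))$ this is a $2\times 2$ submatrix with exactly one $1$, hence one of the four forbidden corners (which one depends on the bottom-to-top row convention, but all four are excluded, so it does not matter). The only caveat worth recording is definitional rather than logical: the avoidance conditions do not force the rectangular holes to be interior to the bounding rectangle (e.g.\ an L-shaped polyomino avoids all six patterns), so the proposition --- and your proof --- are valid under the literal reading of $\cal R$ as ``rectangle minus pairwise row- and column-disjoint rectangles,'' not under a reading that requires topological holes.
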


\section{Some directions for future research}
\label{sec:further_research}

Our work opens numerous and various directions for future research. We introduced a new approach of submatrix avoidance in the study of permutation and polyomino classes. 

\smallskip

In both cases permutation and polyomino classes, 
we have described several notions of bases for these classes: $p$-basis, $m$-bases, canonical $m$-basis, minimal $m$-bases. 
Section~\ref{sec:from_one_basis_to_another} explains how to describe the $p$-basis from any $m$-basis. 
Conversely, we may ask how to transform the $p$-basis into an ``efficient'' $m$-basis. Of course, the $p$-basis is itself an $m$-basis, but we may wish to describe the canonical one, or a minimal one. 

Many questions may also be asked about the canonical and minimal $m$-bases themselves. 
For instance: 
When does a class have a unique minimal $m$-basis? 
Which elements of the canonical $m$-basis may belong to a minimal $m$-basis? 
May we describe (or compute) the minimal $m$-bases from the canonical $m$-basis? 

Finally, we can study the classes for which the $p$-basis is itself a minimal $m$-basis of the class (see the examples of the polyomino classes of vertical bars, or of parallelogram polyominoes). 

\smallskip

Submatrix avoidance in 	permutation classes has allowed us to derive a statement (Corollary~\ref{cor:WE}) from which infinitely many Wilf-equivalences follow. 
Such general results on Wilf-equivalences are rare in the permutation patterns literature, and it would be interesting to explore how much further we can go in the study of Wilf-equivalences with the submatrix avoidance approach.

\smallskip

The most original concept of this work is certainly the introduction of the polyomino classes, which opens many directions for future research. 

One is a systematic study of polyomino classes defined by pattern avoidance. 
Because enumeration is the biggest open question about polyominoes, we should study the enumeration of such classes, and see whether some interesting bounds can be provided. Notice that the Stanley-Wilf-Marcus-Tardos theorem~\cite{marcTard} on permutation classes implies that the permutations in any given class represent a negligible proportion of all permutations. We don't know if a similar statement holds for polyomino classes. 

As we have reported in Section~\ref{sec:papc}, the poset $({\poly}, \polypattern )$ of polyominoes was introduced in \cite{CR}, where the authors proved that it is a ranked poset, and contains infinite antichains. There are however some combinatorial and algebraic properties of this poset which are still to explore, in particular w.r.t. characterizing some simple intervals in this poset.  

\smallskip

Finally, we have used binary matrices to import some questions on permutation classes to the context of polyominoes. But a similar approach could be applied to any other family of combinatorial objects which are represented by binary matrices.

\pagestyle{plain}

\phantomsection
\addcontentsline{toc}{chapter}{Bibliography}


\end{document}